\documentclass[a4paper]{article}
\usepackage{amsmath}
\usepackage{amsfonts}
\usepackage{amssymb}
\usepackage{graphicx}
\pagenumbering{gobble}
\usepackage{verbatim}
\usepackage{amsmath, comment}
\usepackage{amssymb, mathrsfs}
\usepackage{amsfonts}
\usepackage{epic,gastex}
\usepackage{eucal}
\usepackage{array}
\usepackage{url}
\usepackage{color,colortbl}
\usepackage{wrapfig}
\usepackage{mathptmx}
\usepackage[all,cmtip]{xy}    
\newtheorem{thm}{Theorem}[section]
\newtheorem{lem}[thm]{Lemma}
\newtheorem{corollary}[thm]{Corollary}
\newtheorem{pro}[thm]{Proposition}
\newtheorem{example}[thm]{Example}
\newtheorem{definition}[thm]{Definition}
\newtheorem{remark}[thm]{Remark}
\newenvironment{proof}{\paragraph{Proof}}{\hfill$\square$}

\DeclareMathOperator{\p}{\mathrm{\textbf{p}}}
\DeclareMathOperator{\dau}{\mathrm{\textbf{d}}}
\DeclareMathOperator{\s}{\mathrm{\textbf{s}}}
\DeclareMathOperator{\rt}{\mathrm{\textbf{r}}}
\DeclareGraphicsRule{*}{eps}{*}{}

\newcommand{\dom}{\mathop{\rm dom}\nolimits}
\newcommand{\im}{\mathop{\rm im}\nolimits}
\newcommand{\id}{\mathop{\rm id}\nolimits}
\newcommand{\lev}{\mathop{\rm lev}\nolimits}

\DeclareMathOperator{\K}{\mathscr{K}}
\DeclareMathOperator{\Lg}{\mathscr{L}}
\DeclareMathOperator{\R}{\mathscr{R}}
\DeclareMathOperator{\D}{\mathscr{D}}
\DeclareMathOperator{\On}{\mathcal{O}}

\DeclareMathOperator{\J}{\mathscr{J}}
\DeclareMathOperator{\Hg}{\mathscr{H}}
\DeclareMathOperator{\T}{\mathcal{T}}
\newcommand{\const}{\mathop{\rm c}\nolimits}
\def\institute#1{\gdef\@institute{#1}}

\immediate\write18{texcount -tex -sum  \jobname.tex > \jobname.wordcount.tex}

\providecommand{\keywords}[1]
{
  \small	
  \textbf{\textit{Keywords:}} #1
}

\title{\textbf{\textsc{$\R$-cross-sections of the monoid of  order-preserving transformations on a finite chain}} \thanks{Supported by the Ministry of Science and Higher Education of the Russian Federation (Ural Mathematical Center project No. 075-02-2022-877)}}
\author{Eugenija A. Bondar  \\
        \small Ural Federal University\\
        \small{e.a.bondar@urfu.ru}
}
\date{} 

\begin{document}
\maketitle

\begin{abstract}
We classify the $\R$-cross-sections of the monoid of order-preserving transformations on the $n$-element chain in terms of certain binary trees.
\end{abstract} \hspace{10pt}

\keywords{Order-preserving transformation, Cross-section, Binary tree}

\maketitle

\section{Introduction}

\label{intro}
Let $\rho$ be an equivalence relation on a semigroup $S$. A subsemigroup $S'$ of $S$ is called a $\rho$-\emph{cross-section} in $S$ if $S'$ contains exactly one representative of each $\rho$-class. In general, given $S$ and $\rho$, a $\rho$-cross-section of $S$ need not exist, and if it exists, it need not be unique. Therefore, admitting a $\rho$-cross-section with respect to an equivalence $\rho$, chosen to be tightly enough connected with the multiplication in $S$, imposes a non-trivial restriction on $S$. Studying semigroups under such restrictions may shed some light on the structure of $S$ as the $\rho$-cross-section $S'$ may be thought of as a skeleton of $S$ to which the flesh of $\rho$-classes is attached.

As Howie~\cite{How02} writes, ``on encountering a new semigroup, almost the first question one asks is `What are the Green relations like?'" Therefore, it is not a surprise that studying $\rho$-cross-sections has started from $\rho$ being one of these all-pervading  Green equivalencies $\Hg,\R,\Lg,\D,\J$. There are many publications devoted to $\K$-cross-sections, $\K\in\{\Hg,\R,\Lg,\D,\J\}$, in various transformation monoids and other semigroups with transparent Green structure; see Chapter 12 of \cite{Gan_Maz_book} and references therein.

The present paper deals with $\R$- and $\Lg$-cross-sections in the monoid $\On_n$ of all order-preserving transformations on the $n$-element chain. This is a very well-studied regular submonoid of the symmetric monoid $\T_n$; see \cite[Chapter 14]{Gan_Maz_book} and references therein for various structural and combinatorial aspects of $\On_n$. However, to the best of our knowledge, the $\R$- and $\Lg$-cross-sections of $\On_n$ have not yet been classified. The present paper closes this gap.

Since $\On_n$ is a regular submonoid of $\T_n$, its Green equivalencies $\R$ and $\Lg$ are nothing but the restrictions of the corresponding equivalencies on $\T_n$. One might therefore  expect that the $\R$- and $\Lg$-cross-sections in $\On_n$ are sort of restrictions of the $\R$- and $\Lg$-cross-sections of $\T_n$ whose descriptions were found by respectively Pekhterev \cite{Pekh2003} and the present author \cite{Bondar2014,Bondar2016}. We will see that this is true only for the $\Lg$-cross-sections. In contrast, for the $\R$-cross-sections, the situation turns out to be very different, and their classification requires certain novel ingredients. Surprisingly enough, a classical data structure from the theory of combinatorial algorithms, namely binary search trees, plays a crucial role here.

The paper is organized as follows. In Sect.~2 we first explain the situation with the $\Lg$-cross-sections in $\On_n$ and then provide a few ways to construct various $\R$-cross-sections in $\On_n$. In Sect. 3 we collect necessary definitions and notation from the theory of binary trees. In Sect. 4 we propose a construction in terms of certain binary trees that produces all $\R$-cross-sections of $\On_n$. In Sect. 5 we discuss  connections between $\Lg$- and $\R$-cross-sections of $\On_n$. Section 6 is devoted to the classification of $\R$-cross-sections of $\On_n$ up to isomorphism.

\section{$\Lg$- and $\R$-cross-sections of $\On_n$: first examples}
\label{sec:1}
\subsection{Preliminaries}
We assume the reader's acquaintance with a few basic concepts of semigroup theory, including the concept of Green's relations.

We denote by $\overline{n}$ the set $\{1,2,\dots,n\}$ and by $\T_n$ the monoid of all transformations on $\overline{n}$ acting on the right. Let $\overline{n}$ be ordered in a natural way. A transformation $\alpha\in\T_n$ is called \emph{order-preserving} if $x\leq y$ implies $x\alpha\leq y\alpha$ for all $x,y\in\overline{n}$. The set of all order-preserving transformations is a submonoid in $\T_n$ denoted by $\On_n$. A subsemigroup $R$ of $\On_n$ constitutes an $\R$-\emph{cross-section} of $\On_n$ if $R$ contains exactly one representative from each $\R$-class of $\On_n$. Dually, a subsemigroup $L$ of $\On_n$ is an $\Lg$-\emph{cross-section} of $\On_n$ if $L$ contains exactly one representative from each $\Lg$-class of $\On_n$. We write $\R(\On_n)$ and $\Lg(\On_n)$ for the set of all $\R$- and $\Lg$-cross-sections of $\On_n$, respectively. Analogously, we denote by $\R(\T_n)$ and $\Lg(\T_n)$ the sets of the $\R$- and respectively $\Lg$-cross-sections of  $\T_n$.

We write $\id_{\overline{n}}$ for the identity transformation on $\overline{n}$. For $x\in\overline{n}$, we write $\const_x$ for the constant transformation with the image $\{x\}$.
For a transformation $\alpha\in\mathcal{T}_n$, let $\im{(\alpha)}$ stand for its image and $\ker{(\alpha)}$ for its kernel. If $A_1,A_2,\dots,A_k$ are the $\ker{(\alpha)}$-classes of $\overline{n}$ and $A_1\alpha=a_1$, $A_2\alpha=a_2$, \dots, $A_k\alpha=a_k$, we represent $\alpha$ as follows:
   $$\alpha=\begin{pmatrix}
    A_1       & A_2      &  \dots & A_k \\
    a_1       &a_2       &  \dots & a_k \\
   \end{pmatrix}.$$

The Green equivalencies on $\On_n$ are known to be the restrictions of the corresponding equivalencies on $\T_n$; see \cite[Chapter 14]{Gan_Maz_book}, but be warned that transformations act on the left in \cite{Gan_Maz_book}. Thus, for all $\alpha,\beta\in\On_n$,
 \begin{itemize}
 \item[a)] $\alpha\R\beta$ if and only if $\ker{(\alpha)}=\ker{(\beta)}$;
 \item[b)] $\alpha\Lg\beta$ if and only if $\im{(\alpha)}=\im{(\beta)}$;
 \item[c)] $\alpha\Hg\beta$ if and only if $\ker{(\alpha)}=\ker{(\beta)}$ and $\im{(\alpha)}=\im{(\beta)}$;
 \item[d)] $\alpha\D\beta$ if and only if $\alpha\J\beta$ if and only if $|\im{(\alpha)}|=|\im{(\beta)}|$.
 \end{itemize}
\subsection{$\Lg$-cross-sections of $\On_n$}

For brevity, we say that two sets \emph{intersect} if their intersection is non-empty. We need the following observation.
\begin{lem}\label{lemma_cr_sec_of_reg_subsem}
Let $S$ be a semigroup, $T$ its regular subsemigroup, and $\K\in\{\Lg,\R\}$. A subset $C$ of $T$ is a $\K$-cross-section of $S$ if and only if $T$ intersects each $\K_S$-class and
$C$ is a $\K$-cross-section of $T$.
\end{lem}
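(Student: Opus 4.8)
The plan is to reduce everything to the classical fact that the Green relations $\R$ and $\Lg$ of a regular subsemigroup are inherited from the ambient semigroup: if $T$ is a regular subsemigroup of $S$ and $\K\in\{\R,\Lg\}$, then $\K_T=\K_S\cap(T\times T)$. I would first recall its short proof for $\K=\R$ (the case $\K=\Lg$ being dual). The inclusion $\K_T\subseteq\K_S$ is immediate from $T^1\subseteq S^1$. For the reverse inclusion, take $a,b\in T$ with $a\,\R_S\,b$, say $b=as$ and $a=bt$ with $s,t\in S^1$, and choose an inverse $a'$ of $a$ and an inverse $b'$ of $b$ inside $T$ (this is where regularity of $T$ is used). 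Then $a(a'b)=aa'(as)=(aa'a)s=as=b$ and $b(b'a)=bb'(bt)=(bb'b)t=bt=a$ with $a'b,b'a\in T$, so $a\,\R_T\,b$. Since $C\subseteq T\subseteq S$, the set $C$ is a subsemigroup of $S$ if and only if it is a subsemigroup of $T$, so below we may speak of ``subsemigroup'' without ambiguity.

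Now for the equivalence, assume first that $C\subseteq T$ is a $\K$-cross-section of $S$. Then $C$, and hence $T$, meets every $\K_S$-class, which is the stated intersection condition. To see that $C$ is a $\K$-cross-section of $T$, fix $t\in T$ and let $N$ be its $\K_T$-class. As $\K_T\subseteq\K_S$, $N$ lies inside the $\K_S$-class of $t$, which contains a unique element $c\in C$. Since $c,t\in T$ and $c\,\K_S\,t$, the inherited-relations fact yields $c\,\K_T\,t$, so $c\in N$; and any two elements of $C\cap N$ are $\K_S$-equivalent, hence equal. Thus $C$ meets every $\K_T$-class in exactly one point.

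Conversely, assume $T$ meets every $\K_S$-class and $C$ is a $\K$-cross-section of $T$. Then $C$ is a subsemigroup of $S$. Let $M$ be an arbitrary $\K_S$-class, choose $t\in T\cap M$, and let $N$ be the $\K_T$-class of $t$, so $N\subseteq M$. The unique element of $C\cap N$ then lies in $C\cap M$, which gives existence of a representative. For uniqueness, if $c,c'\in C\cap M$ then $c\,\K_S\,c'$ with $c,c'\in T$, whence $c\,\K_T\,c'$ by the inherited-relations fact, and therefore $c=c'$. Hence $C$ is a $\K$-cross-section of $S$.

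The only genuinely non-trivial ingredient is the inherited-relations fact $\K_T=\K_S\cap(T\times T)$, which is classical (it is already invoked in the excerpt for $\On_n$), so I expect no real obstacle here: once that fact is in hand, the lemma is a formal ``one representative per class'' bookkeeping argument.
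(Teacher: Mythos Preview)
Your proof is correct and follows essentially the same route as the paper: both arguments rest on the classical fact $\K_T=\K_S\cap(T\times T)$ for a regular subsemigroup $T$ (the paper cites it from Higgins, you supply the short proof), and then carry out the same one-representative-per-class bookkeeping for the two implications. Your version is slightly more explicit, but there is no substantive difference in strategy.
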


\begin{proof}
$T$ being regular implies that $\K_T=\K_S\cap(T\times T)$; see, e.g., \cite[Lemma 1.2.13]{Higgins_book}.

For the `only if' part, if $C$ is a $\K$-cross-section of $S$, then by the definition of a $\K$-cross-section, $C$ intersects each $\K_S$-class and so does $T\supseteq C$. Further,  $C$ is a subsemigroup of $S$, and hence, of $T$ with exactly one element in each $\K_T$-class, so $C$ is a $\K$-cross-section of $T$.

For the `if' part, since the intersection of $T$ with any $\K_S$-class is non-empty, the intersection is a $\K_T$-class. Since $C$ is a $\K$-cross-section of $T$, it has exactly one element in this $\K_T$-class. Hence, $C$ exactly one element in each $\K_S$-class and is a $\K$-cross-section of $S$.
\end{proof}

Fig.~\ref{fig:lemma1} illustrates the lemma. The outer and the inner rectangles represent the semigroup $S$ and its subsemigroup $T$ respectively, the horizontal lines show the partition into $\K$-classes, and the dots symbolize the representatives that form the subsemigroup $C$ serving as a $\K$-cross-section in both $S$ and $T$.
 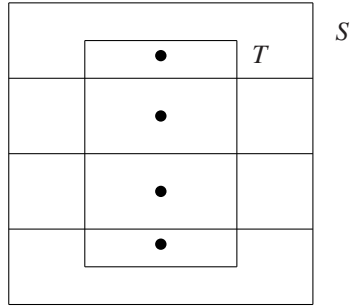
\begin{figure}[h]
\begin{center}
 \begin{picture}(40,40)
\gasset{AHnb=0,Nw=1.5,Nh=1.5,Nframe=n,Nfill=y}
\gasset{ExtNL=y,NLdist=1.5,NLangle= 60}
  \node(1)(20,8){}
  \node(2)(20,15){}
  \node(3)(20,25){}
  \node(4)(20,33){}
  \put(43,35){\mbox{$S$}}
  \drawline(0,0)(40,0)
  \drawline(0,10)(40,10)
  \drawline(0,20)(40,20)
  \drawline(0,30)(40,30)
  \drawline(0,40)(40,40)
  \drawline(0,0)(0,40) \drawline(40,0)(40,40)
  \drawline(10,5)(30,5)
  \drawline(10,5)(10,35)
  \drawline(10,35)(30,35)
  \drawline(30,5)(30,35)
  \put(32,32){\mbox{$T$}}
\end{picture}
\caption{An illustration of Lemma~\ref{lemma_cr_sec_of_reg_subsem}}\label{fig:lemma1}
\end{center}
\end{figure}
The regular submonoid $\On_n$ of $\mathcal{T}_n$ intersects each $\Lg$-class of $\mathcal{T}_n$. Indeed, every $\Lg$-class of $\mathcal{T}_n$ has the form $L_A=\{\alpha\in\mathcal{T}_n\mid \im(\alpha)=A\}$ where $A$ is a non-empty subset of $\overline{n}$, and it is easy to exhibit an order-preserving transformation in $L_A$. For instance, if $A=\{a_1,a_2,\dots,a_k\}$ with $a_1<a_2<\dots<a_k$, the following mapping does the job:
\[
\begin{pmatrix}
    \{1\}     & \{2\}    &  \dots & \{k,\dots,n\} \\
    a_1       &a_2       &  \dots & a_k \\
\end{pmatrix}.
\]
Now, Lemma~\ref{lemma_cr_sec_of_reg_subsem} shows that every $\Lg$-cross-section of $\On_n$ is an $\Lg$-cross-section of $\mathcal{T}_n$. A complete classification of $\Lg$-cross-sections of $\mathcal{T}_n$ has been obtained in \cite{Bondar2014,Bondar2016}. Hence, for a description of $\Lg$-cross-section of $\On_n$, one needs to select from the $\Lg$-cross-sections of $\mathcal{T}_n$ those which are contained in $\On_n$, that is, consist entirely of order-preserving transformation.

We will reproduce the classification from \cite{Bondar2014,Bondar2016} in Sect.~\ref{sec_L_cr_sec} below. Here we only mention that the classification shows that for each $L\in\Lg(\mathcal{T}_n)$, there is a linear order $i_1\prec i_2\prec\dots\prec i_n$ on $\overline{n}$ such that all transformations in $L$ respect this order. If $\pi$ is the permutation of $\overline{n}$ defined by $k\pi=i_k$ for $k=1,2,\dots,n$, the mapping $\alpha\mapsto\pi\alpha\pi^{-1}$ is an automorphism of $\mathcal{T}_n$. The image of $L$ under this automorphism is contained in $\On_n$ and remains an $\Lg$-cross-section of $\mathcal{T}_n$, so it is an $\Lg$-cross-section of $\mathcal{O}_n$ by Lemma~\ref{lemma_cr_sec_of_reg_subsem}. We conclude that up to isomorphism, the $\Lg$-cross-sections of $\mathcal{T}_n$ and $\On_n$ coincide.

\subsection{Dense $\R$-cross-sections}
 We start with exhibiting two series of $\R$-cross-sections of $\On_n$. The first series consists of ``dense'' cross-sections and it is inspired  by the description of $\R(\T_n)$. The second series consists of ``dual'' cross-sections and it comes from the connection between $\On_n$ and the dual of $\On_{n+1}$, combined with the description of $\Lg(\T_n)$. Then we give an example of an $\R$-cross-section of $\On_n$, which is neither dense, nor dual.

 A description of $\R$-cross-sections of $\mathcal{T}_n$ is known \cite{Pekh2003}.
For the reader's convenience, we recall  the description of $\R(\T_n)$  in what follows.
 \par Fix a linear order $\prec$ on $\overline{n}$. Let $\overline{n}=\{u_1\prec u_2\prec \ldots \prec u_n\}$. For a nonempty set $A\subset \overline{n}$ denote by $\min(A)$ the minimal element of $A$ with respect to $\prec$. If $A,B \subset \overline{n}$  are nonempty and disjoint, we will  write $A\prec B$ provided that $\min(A)\prec \min(B)$. Let $R(\prec)$ denote the set of all elements of $\mathcal{T}_n$, which have the form
$$\alpha=\begin{pmatrix}
    A_{1}       & A_{2} &  \dots & A_{k} \\
    u_{1}       & u_{2} &  \dots & u_{k} \\
   \end{pmatrix},$$
   where $A_{1}  \prec A_{2} \prec  \dots \prec A_{k}$ for all disjoint partitions $A_{1}\cup A_{2}\cup\ldots A_k$ of $\overline{n}$.

 Pekhterev has proved the following result:
  \begin{thm}[\cite{Pekh2003}] For every linear order $\prec$ on $\overline{n}$ the set $R(\prec)$ is an $\R$-cross-section of $\mathcal{T}_n$. Conversely, each $\R$-cross-section of $\mathcal{T}_n$ has the form $R(\prec)$ for some linear order $\prec$ on $\overline{n}$.
\end{thm}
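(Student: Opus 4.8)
The plan is to prove Pekhterev's theorem in two directions, dealing first with the easier claim that each $R(\prec)$ is an $\R$-cross-section, and then with the harder converse that every $\R$-cross-section arises this way.

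For the first direction, fix a linear order $\prec$ with $\overline{n}=\{u_1\prec\dots\prec u_n\}$. I would first check that $R(\prec)$ meets every $\R$-class exactly once. Recall that the $\R$-classes of $\T_n$ are indexed by kernels, i.e.\ by unordered set partitions of $\overline{n}$. Given a partition into blocks, there is a \emph{unique} way to list the blocks as $A_1,\dots,A_k$ with $A_1\prec A_2\prec\dots\prec A_k$ (since the blocks are disjoint, their $\prec$-minima are distinct, so $\prec$ linearly orders the blocks); sending $A_i\mapsto u_i$ then produces the unique element of $R(\prec)$ with that kernel. So $R(\prec)$ is a transversal of the $\R$-classes by construction. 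The real content is that $R(\prec)$ is closed under composition. Take $\alpha,\beta\in R(\prec)$ and consider $\gamma=\alpha\beta$. I would argue that if $A_1\prec\dots\prec A_k$ are the kernel classes of $\alpha$, then the kernel classes of $\gamma$ are obtained by \emph{merging consecutive blocks}: since $A_i\alpha=u_i$ and $\beta$ has blocks $B_1\prec\dots\prec B_m$ with $B_j\beta=u_j$, the block of $\gamma$ sending to $u_j$ is the union of those $A_i$ with $u_i\in B_j$. Because $B_j$ is a $\prec$-interval-free set of $u$'s — more precisely because the preimage structure respects the listing — the resulting blocks of $\gamma$, listed in $\prec$-order of minima, send to $u_1,u_2,\dots$ consecutively; hence $\gamma\in R(\prec)$. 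The one point demanding care is verifying that the $\prec$-minimum of $\bigcup_{u_i\in B_j}A_i$ is an increasing function of $j$, which follows because $\min(A_i)=?$ — here one uses that $u_1\in A_1$ always (the block mapping to $u_1$ contains $u_1$ itself), and inductively $\min(A_i)\preceq u_i$, combined with $B_1\prec\dots\prec B_m$.

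For the converse — the harder part — let $R$ be an arbitrary $\R$-cross-section of $\T_n$. The strategy is to \emph{reconstruct} the order $\prec$ from $R$ and then show $R=R(\prec)$. The natural place to find $\prec$ is among the constant maps: the $\R$-class of rank-one transformations whose kernel is the single block $\overline{n}$ must be represented in $R$ by exactly one constant map, say $\const_{u_1}$; this picks out $u_1$. To get the rest of the order, I would look at the $\R$-class with kernel having blocks $\{\overline{n}\setminus\{x\},\{x\}\}$ type, or more systematically, examine idempotents and the action of $R$ on itself. A cleaner route: for each $2\le k\le n$ consider the element of $R$ representing the $\R$-class whose kernel is $\{\{x\}: x\ne y\}\cup\{\{y,y'\}\}$ — actually the slickest approach is to use the rank-$(n-1)$ elements of $R$. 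There are $\binom{n}{2}$ kernels of rank $n-1$, one for each pair to be identified; the corresponding elements of $R$, being order-preserving-like transversals, must be ``elementary collapses'' and force $\prec$ to be a linear order via a transitivity argument: if $R$ identifies a pair $\{a,b\}$ by sending it to the smaller (in $R$'s implicit order), composing such collapses forces consistency. I expect the main obstacle to be exactly this step: showing that the local information carried by the low-rank elements of $R$ assembles into a \emph{globally consistent} linear order $\prec$, and that this $\prec$ then forces \emph{every} element of $R$ — including high-rank ones — into the form required by $R(\prec)$. The key lemma will likely be that $R$, being closed under multiplication and a transversal, must contain $R(\prec)$ for the reconstructed $\prec$, and then a counting argument ($|R|=|R(\prec)|=$ the number of set partitions, i.e.\ the Bell number $B_n$) upgrades the inclusion to equality.

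Throughout, the guiding principle is that closure under composition is extremely rigid: multiplying a transversal element on the right by another can only coarsen the kernel, and the transversal condition pins down which coarsenings are ``allowed,'' which is precisely what a linear order encodes. I would organize the converse as: (i) extract $\prec$ from rank-one and rank-$(n-1)$ elements; (ii) check $\prec$ is a total order using closure; (iii) show $R(\prec)\subseteq R$ by induction on rank, building each element of $R(\prec)$ as a product of lower-rank elements already known to lie in $R$; (iv) conclude $R=R(\prec)$ by cardinality. The induction in step (iii) and the consistency check in step (ii) are where the real work lies.
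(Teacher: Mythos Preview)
The paper does not contain its own proof of this theorem: it is quoted verbatim as a result of Pekhterev \cite{Pekh2003} and used only as background to motivate the discussion of $\R$-cross-sections in $\On_n$. There is therefore no in-paper argument to compare your proposal against.

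On its own merits: your forward direction is fine. The transversal claim is immediate, and for closure under composition you have correctly isolated the one nontrivial point, namely that if $C_j=\bigcup_{u_i\in B_j}A_i$ are the kernel classes of $\alpha\beta$, then $\min_\prec C_j$ is increasing in $j$ and $C_j$ is sent to $u_j$ (not merely to some $u_{j_s}$). Both follow once you observe that $\min_\prec B_j\preceq u_k$ whenever $B_j$ meets $\{u_1,\dots,u_k\}$, so the relevant $B_j$'s are exactly $B_1,\dots,B_\ell$ for some $\ell$, and $\min_\prec C_j=\min_\prec A_{s_j}$ where $u_{s_j}=\min_\prec B_j$; since $s_1<\dots<s_\ell$ you are done.

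For the converse, your outline (i)--(iv) is a reasonable scaffold, but step (iii) as stated cannot work: you propose to show $R(\prec)\subseteq R$ by ``building each element of $R(\prec)$ as a product of lower-rank elements already known to lie in $R$''. Products in $\T_n$ never increase rank, so a rank-$k$ element is never a product of elements of rank less than $k$. The argument has to run the other way: take the unique $\alpha\in R$ with a prescribed kernel, and constrain its \emph{image} by multiplying $\alpha$ on the right by elements already known to lie in $R$ (e.g.\ the constant $\const_{u_1}$ forces $u_1\alpha=u_1$; suitable rank-two or idempotent elements then pin down the remaining values). With that correction your plan matches the standard route to Pekhterev's result.
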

 An $\R$-cross-section of $\mathcal{T}_n$, $n>2$, however,  does not belong to $\On_n$.  Indeed, the kernel classes of an order-preserving transformation are the convex subsets of the chain $\overline{n}$. That is if $x,y\in A$ for $A\in \overline{n}/\ker(\alpha)$ then $\{x\mid a<x<b\}\subset A$. But transformations of $\mathcal{T}_n$ have non-convex partitions in general. Apparently, the following fact holds.
 \begin{pro}\label{pro_dense_L} Let $(\overline{n},<)$ be a naturally ordered, $A_1, A_2,\ldots ,A_t$ be a partition of $\overline{n}$ into $t$ disjoint intervals, $1\leq t\leq n$. Then each of the sets
 \begin{gather*}
R(<)\cap\On_n=\left\{\left(
\begin{array}{cccc}
A_1 &A_2&\ldots &A_t\\
1&2&\ldots &t
\end{array}\right)\in \On_n\mid 1\leq t\leq n\right\} \mbox{ and }\\
R(<^{-1})\cap\On_n=\left\{
\left(
\begin{array}{cccc}
A_1& A_2&\ldots &A_t\\n-(t-1)&n-(t-2)&\ldots &n
\end{array}
\right)\in \On_n\mid 1\leq t\leq n\right\}
\end{gather*}
constitutes the $\R$-cross-section of $\On_n$.
\end{pro}

We call such $\R$-cross-sections to be \textbf{dense}. Hense for $n>2$ there always exist at least two (dense) $\R$-cross-sections of $\On_n$.  We will see also that the description of  $\R(\On_n)$ dose not  reduce to $\R(\T_n)\cap \On_n$ also.

\begin{example}\label{ex_dense} The dense $\R$-cross-sections of $\On_4$:
\begin{multline*}
R_1=\left\{
\left(
\begin{array}{ccc}
\{12\}&\{3\}&\{4\}\\
1&2&3
\end{array}
\right)
\left(
\begin{array}{ccc}
\{1\}&\{23\}&\{4\}\\
1&2&3
\end{array}
\right),\right.\\
\left.
\left(
\begin{array}{ccc}
\{1\}&\{2\}&\{34\}\\
1&2&3
\end{array}
\right),
\left(
\begin{array}{cc}
\{1\}&\{234\}\\
1&2
\end{array}
\right)
\left(
\begin{array}{cc}
\{12\}&\{34\}\\
1&2
\end{array}
\right),
\right.\\
\left.
\left(
\begin{array}{cccc}
\{123\}&\{4\}\\
1&2
\end{array}
\right),
\left(
\begin{array}{c}
\{1234\}\\
1
\end{array}
\right),
\left(
\begin{array}{cccc}
\{1\}&\{2\}&\{3\}&\{4\}\\
1&2&3&4
\end{array}
\right)
\right\},
\end{multline*}

\begin{multline*}
R_2=\left\{
\left(
\begin{array}{ccc}
\{12\}&\{3\}&\{4\}\\
2&3&4
\end{array}
\right)
\left(
\begin{array}{ccc}
\{1\}&\{23\}&\{4\}\\
2&3&4
\end{array}
\right),\right.\\
\left.
\left(
\begin{array}{ccc}
\{1\}&\{2\}&\{34\}\\
2&3&4
\end{array}
\right),
\left(
\begin{array}{cc}
\{1\}&\{234\}\\
3&4
\end{array}
\right)
\left(
\begin{array}{cc}
\{12\}&\{34\}\\
3&4
\end{array}
\right),
\right.\\
\left.
\left(
\begin{array}{cccc}
\{123\}&\{4\}\\
3&4
\end{array}
\right),
\left(
\begin{array}{c}
\{1234\}\\
4
\end{array}
\right),
\left(
\begin{array}{cccc}
\{1\}&\{2\}&\{3\}&\{4\}\\
1&2&3&4
\end{array}
\right)
\right\}.
\end{multline*}
\end{example}

\subsection{Dual $\R$-cross-sections of $\On_n$}\label{subsec_dual}
However, there exist other examples of $\R$-cross-sections in $\On_n$.
  Higgins~\cite{Higgins_div_On} has showed that there exists an injective homomorphism $*$ from $\On_n$ to the  dual $\On_{n+1}^*$ on $(n+1)$-element set. The  homomorphism  maps $\alpha\in \On_n$ to $\alpha^*\in\On_{n+1}^*$, where $\alpha^*$ is defined as follows.

Denote by $K=\{k_1,k_2,\ldots,k_{t}\}$  the set of the maximal members of  kernel classes of $\alpha$,  written in ascending order. Let $\im{(\alpha)}=\{r_1,r_2,\ldots, r_{t}\}$ with $k_i\alpha=r_i$ for all $1\leq i\leq t$. For each $x\in [n+1]$ define
\[
x\alpha^*=\begin{cases}
1 &\text{if } x\leq r_1,\\
k_{i}+1 &\text{if }   r_{i}< x\leq r_{i+1},\ 1\leq i< t,\\
n+1 &\text{if } x>r_t.
\end{cases}
\]
The idea is: if we take an $\Lg$-cross-section $L(\On_n)$ of $\On_n$ and consider its dual $L^*$, will we get an $\R$-cross-section of $\On_{n+1}$? Can we get all $\R$-cross-section of $\On_{n+1}$ in that way? For now we can claim the following:
\begin{pro}\label{pro_dual_L} The duals $L(\On_n)^*\cup\{\const_1\}$ and $L(\On_n)^*\cup\{\const_{n+1}\}$ are $\R$-cross-sections of $\On_{n+1}$.
\end{pro}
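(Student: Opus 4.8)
The plan is to use that Higgins' map $*$ is an injective \emph{anti}-homomorphism from $\On_n$ into $\On_{n+1}$ — it is a homomorphism into the dual $\On_{n+1}^*$, so it converts the left order of $\On_n$ into the right order of $\On_{n+1}$ and thereby carries $\Lg$-classes onto $\R$-classes. Write $L=L(\On_n)$ and $L^*=\{\alpha^*\mid\alpha\in L\}$. The first (routine) point is that $L^*$ is a subsemigroup of $\On_{n+1}$: as the image of the subsemigroup $L$ under an anti-homomorphism it is closed, since $\alpha^*\beta^*=(\beta\alpha)^*\in L^*$ for $\alpha,\beta\in L$.

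Next I would read $\ker(\alpha^*)$ directly off the displayed formula defining $\alpha^*$: if $\im(\alpha)=\{r_1<\dots<r_t\}$, then $\ker(\alpha^*)$ is the partition of $\overline{n+1}$ into the consecutive intervals obtained by cutting exactly after $r_1,\dots,r_t$. Hence $\ker(\alpha^*)$ both determines and is determined by $\im(\alpha)$, so $\alpha\Lg\beta$ in $\On_n$ if and only if $\alpha^*\R\beta^*$ in $\On_{n+1}$. Since $L$ is an $\Lg$-cross-section, $\alpha\mapsto\im(\alpha)$ is a bijection from $L$ onto the nonempty subsets of $\overline n$; translating through the last equivalence, $\gamma\mapsto\ker(\gamma)$ is a bijection from $L^*$ onto the interval partitions of $\overline{n+1}$ having at least two blocks. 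Equivalently, $L^*$ meets every $\R$-class of $\On_{n+1}$ exactly once, with the single exception of the $\R$-class of the constant transformations (those with kernel the one-block partition of $\overline{n+1}$).

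The key observation — the step I expect to carry the weight — is that every $\alpha^*$ fixes both endpoints of $\overline{n+1}$: from the formula, $1\alpha^*=1$ because $1\le r_1$, and $(n+1)\alpha^*=n+1$ because $n+1>r_t$. Given this, closing up is immediate. For $\const_1$: $\const_1\const_1=\const_1$; for $\gamma\in L^*$ one has $\gamma\const_1=\const_1$ (its image is $\{1\}$) and $\const_1\gamma=\const_{1\gamma}=\const_1$ by the endpoint-fixing property. Thus $L^*\cup\{\const_1\}$ is a subsemigroup, and since $\const_1$ lies in the unique $\R$-class missed by $L^*$, it is an $\R$-cross-section of $\On_{n+1}$. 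The argument for $L^*\cup\{\const_{n+1}\}$ is symmetric, using $(n+1)\gamma=n+1$. The main obstacle here is conceptual rather than computational: recognizing that $*$ swaps $\Lg$ with $\R$, and that its image consists solely of transformations fixing $1$ and $n+1$ — which is precisely what singles out $\const_1$ and $\const_{n+1}$, among all constants, as admissible choices for the missing $\R$-class.
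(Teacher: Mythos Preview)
Your proof is correct and follows essentially the same approach as the paper: both arguments use that $*$ is a homomorphism into the dual (hence an anti-homomorphism into $\On_{n+1}$) to get closure of $L^*$, observe that $\im(\alpha)\leftrightarrow\ker(\alpha^*)$ sets up a bijection between nonempty subsets of $\overline n$ and interval partitions of $\overline{n+1}$ with at least two blocks, and then use the endpoint-fixing property $1\alpha^*=1$, $(n+1)\alpha^*=n+1$ to adjoin a constant. Your version is somewhat more explicit than the paper's in verifying the closure of $L^*\cup\{\const_1\}$ (the paper simply asserts it after noting the fixed points), but the underlying idea is identical.
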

\begin{proof}
By the definition for $\alpha\in L(\On_n)$, $\im(\alpha)$ goes through all non-empty subsets of $\overline{n}$. The points of each subset of $\overline{n}$ breaks $[n+1]$ into convex partition. Hence, the transformations     from $L(\On_n)$ induce  $2^{n}-1$ convex partitions on $[n+1]$ (except the 1-element partition $[n+1]$ itself). Therefore the dual of $L(\On_n)$ contains exactly one representative from
  each $\R$-class of $\On_{n+1}^*$, except the constant one. Using the fact  that $^*$ is a homomorphism
  we get that $L(\On_{n})^*$ is closed under the multiplication. By the construction we have $1\alpha^*=1$ and $(n+1)\alpha^*=n+1$ for all $\alpha^*\in L(\On_n)^*$. Thus $L(\On_n)^*\cup\{\const_1\}$ and $L(\On_n)^*\cup\{\const_{n+1}\}$ constitute the $\R$-cross-sections of $\On_{n+1}$.
\end{proof}

\begin{example}
  To illustrate Proposition~\ref{pro_dual_L} we give the present example: in  Fig.~\ref{fig_dual_L} we depicted  the transformations of $\Lg$-cross-section $L$ of $\On_3$ with  solid  arrows. It is an easy exercise to verify that the set $L$ constitutes an $\Lg$-cross-section. The duals of the transformations are presented  with  dashed arrows.
\unitlength 1mm{
\begin{figure}[h]
\begin{picture}(60,75)
\gasset{AHLength=2.0,AHlength=2,AHangle=11}
\gasset{Nw=1.5,Nh=1.5,Nframe=y,Nfill=n,NLdist=3}
  \node[NLangle= 180](Na)(5,70){\small{$1$}}
  \node[NLangle= 180](Nb)(5,60){\small{$2$}}
  \node[NLangle= 180](Nc)(5,50){\small{$3$}}
  \node[NLangle= 0](N1)(15,70){\small{$1$}}
  \node[NLangle= 0](N2)(15,60){\small{$2$}}
  \node[NLangle= 0](N3)(15,50){\small{$3$}}
\drawedge(Na,N1){}
\drawedge(Nb,N2){}
\drawedge(Nc,N2){}
\gasset{AHLength=2.0,AHlength=2,AHangle=11}
\gasset{Nw=1.5,Nh=1.5,Nframe=y,Nfill=n,NLdist=3}
  \node[NLangle= 180](Na)(30,70){\small{$1$}}
  \node[NLangle= 180](Nb)(30,60){\small{$2$}}
  \node[NLangle= 180](Nc)(30,50){\small{$3$}}
  \node[NLangle= 0](N1)(40,70){\small{$1$}}
  \node[NLangle= 0](N2)(40,60){\small{$2$}}
  \node[NLangle= 0](N3)(40,50){\small{$3$}}
\drawedge(Na,N1){}
\drawedge(Nb,N3){}
\drawedge(Nc,N3){}
\gasset{AHLength=2.0,AHlength=2,AHangle=11}
\gasset{Nw=1.5,Nh=1.5,Nframe=y,Nfill=n,NLdist=3}
  \node[NLangle= 180](Na)(55,70){\small{$1$}}
  \node[NLangle= 180](Nb)(55,60){\small{$2$}}
  \node[NLangle= 180](Nc)(55,50){\small{$3$}}
  \node[NLangle= 0](N1)(65,70){\small{$1$}}
  \node[NLangle= 0](N2)(65,60){\small{$2$}}
  \node[NLangle= 0](N3)(65,50){\small{$3$}}
\drawedge(Na,N2){}
\drawedge(Nb,N3){}
\drawedge(Nc,N3){}
\gasset{AHLength=2.0,AHlength=2,AHangle=11}
\gasset{Nw=1.5,Nh=1.5,Nframe=y,Nfill=n,NLdist=3}
  \node[NLangle= 180](Na)(80,70){\small{$1$}}
  \node[NLangle= 180](Nb)(80,60){\small{$2$}}
  \node[NLangle= 180](Nc)(80,50){\small{$3$}}
  \node[NLangle= 0](N1)(90,70){\small{$1$}}
  \node[NLangle= 0](N2)(90,60){\small{$2$}}
  \node[NLangle= 0](N3)(90,50){\small{$3$}}
\drawedge(Na,N1){}
\drawedge(Nb,N1){}
\drawedge(Nc,N1){}
\gasset{AHLength=2.0,AHlength=2,AHangle=11}
\gasset{Nw=1.5,Nh=1.5,Nframe=y,Nfill=n,NLdist=3}
  \node[NLangle= 180](Na)(105,70){\small{$1$}}
  \node[NLangle= 180](Nb)(105,60){\small{$2$}}
  \node[NLangle= 180](Nc)(105,50){\small{$3$}}
  \node[NLangle= 0](N1)(115,70){\small{$1$}}
  \node[NLangle= 0](N2)(115,60){\small{$2$}}
  \node[NLangle= 0](N3)(115,50){\small{$3$}}
\drawedge(Na,N2){}
\drawedge(Nb,N2){}
\drawedge(Nc,N2){}
\gasset{AHLength=2.0,AHlength=2,AHangle=11}
\gasset{Nw=1.5,Nh=1.5,Nframe=y,Nfill=n,NLdist=3}
  \node[NLangle= 180](Na)(40,30){\small{$1$}}
  \node[NLangle= 180](Nb)(40,20){\small{$2$}}
  \node[NLangle= 180](Nc)(40,10){\small{$3$}}
  \node[NLangle= 0](N1)(50,30){\small{$1$}}
  \node[NLangle= 0](N2)(50,20){\small{$2$}}
  \node[NLangle= 0](N3)(50,10){\small{$3$}}
\drawedge(Na,N3){}
\drawedge(Nb,N3){}
\drawedge(Nc,N3){}
\gasset{AHLength=2.0,AHlength=2,AHangle=11}
\gasset{Nw=1.5,Nh=1.5,Nframe=y,Nfill=n,NLdist=3}
  \node[NLangle= 180](Na)(65,30){\small{$\textcolor{blue}{\emph{1}}$}}
  \node[NLangle= 180](Nb)(65,20){\small{\textcolor{blue}{$\emph{2}$}}}
  \node[NLangle= 180](Nc)(65,10){\small{$\textcolor{blue}{\emph{3}}$}}
  \node[NLangle= 0](N1)(75,30){\small{$\textcolor{blue}{\emph{1}
  }$}}
  \node[NLangle= 0](N2)(75,20){\small{$\textcolor{blue}{\emph{2}}$}}
  \node[NLangle= 0](N3)(75,10){\small{$\textcolor{blue}{\emph{3}}$}}
\drawedge(Na,N1){}
\drawedge(Nb,N2){}
\drawedge(Nc,N3){}
\gasset{Nw=1.5,Nh=1.5,Nframe=y,Nfill=y,NLdist=3}
  \node[NLangle= 180](Na')(5,75){\small{$\textcolor{blue}{\emph{1}}$}}
  \node[NLangle= 180](Nb')(5,65){\small{\textcolor{blue}{$\emph{2}$}}}
  \node[NLangle= 180](Nc')(5,55){\small{$\textcolor{blue}{\emph{3}}$}}
  \node[NLangle= 180](Nd')(5,45){\small{$\textcolor{blue}{\emph{4}}$}}
  \node[NLangle= 0](N1')(15,75){\small{$\textcolor{blue}{\emph{1}
  }$}}
  \node[NLangle= 0](N2')(15,65){\small{$\textcolor{blue}{\emph{2}}$}}
  \node[NLangle= 0](N3')(15,55){\small{$\textcolor{blue}{\emph{3}}$}}
  \node[NLangle= 0](N4')(15,45){\small{$\textcolor{blue}{\emph{4}}$}}
\gasset{dash={1.5}0}
\drawedge(N1',Na'){}
\drawedge(N2',Nb'){}
\drawedge(N3',Nd'){}
\drawedge(N4',Nd'){}
\gasset{Nw=1.5,Nh=1.5,Nframe=y,Nfill=y,NLdist=3}
  \node[NLangle= 180](Na')(30,75){\small{$\textcolor{blue}{\emph{1}}$}}
  \node[NLangle= 180](Nb')(30,65){\small{\textcolor{blue}{$\emph{2}$}}}
  \node[NLangle= 180](Nc')(30,55){\small{$\textcolor{blue}{\emph{3}}$}}
  \node[NLangle= 180](Nd')(30,45){\small{$\textcolor{blue}{\emph{4}}$}}
  \node[NLangle= 0](N1')(40,75){\small{$\textcolor{blue}{\emph{1}
  }$}}
  \node[NLangle= 0](N2')(40,65){\small{$\textcolor{blue}{\emph{2}}$}}
  \node[NLangle= 0](N3')(40,55){\small{$\textcolor{blue}{\emph{3}}$}}
  \node[NLangle= 0](N4')(40,45){\small{$\textcolor{blue}{\emph{4}}$}}
\gasset{dash={1.5}0}
\drawedge(N1',Na'){}
\drawedge(N2',Nb'){}
\drawedge(N3',Nb'){}
\drawedge(N4',Nd'){}
\gasset{Nw=1.5,Nh=1.5,Nframe=y,Nfill=y,NLdist=3}
  \node[NLangle= 180](Na')(55,75){\small{$\textcolor{blue}{\emph{1}}$}}
  \node[NLangle= 180](Nb')(55,65){\small{\textcolor{blue}{$\emph{2}$}}}
  \node[NLangle= 180](Nc')(55,55){\small{$\textcolor{blue}{\emph{3}}$}}
  \node[NLangle= 180](Nd')(55,45){\small{$\textcolor{blue}{\emph{4}}$}}
  \node[NLangle= 0](N1')(65,75){\small{$\textcolor{blue}{\emph{1}
  }$}}
  \node[NLangle= 0](N2')(65,65){\small{$\textcolor{blue}{\emph{2}}$}}
  \node[NLangle= 0](N3')(65,55){\small{$\textcolor{blue}{\emph{3}}$}}
  \node[NLangle= 0](N4')(65,45){\small{$\textcolor{blue}{\emph{4}}$}}
\gasset{dash={1.5}0}
\drawedge(N1',Na'){}
\drawedge(N2',Na'){}
\drawedge(N3',Nb'){}
\drawedge(N4',Nd'){}
\gasset{Nw=1.5,Nh=1.5,Nframe=y,Nfill=y,NLdist=3}
  \node[NLangle= 180](Na')(80,75){\small{$\textcolor{blue}{\emph{1}}$}}
  \node[NLangle= 180](Nb')(80,65){\small{\textcolor{blue}{$\emph{2}$}}}
  \node[NLangle= 180](Nc')(80,55){\small{$\textcolor{blue}{\emph{3}}$}}
  \node[NLangle= 180](Nd')(80,45){\small{$\textcolor{blue}{\emph{4}}$}}
  \node[NLangle= 0](N1')(90,75){\small{$\textcolor{blue}{\emph{1}
  }$}}
  \node[NLangle= 0](N2')(90,65){\small{$\textcolor{blue}{\emph{2}}$}}
  \node[NLangle= 0](N3')(90,55){\small{$\textcolor{blue}{\emph{3}}$}}
  \node[NLangle= 0](N4')(90,45){\small{$\textcolor{blue}{\emph{4}}$}}
\gasset{dash={1.5}0}
\drawedge(N1',Na'){}
\drawedge(N2',Nd'){}
\drawedge(N3',Nd'){}
\drawedge(N4',Nd'){}
\gasset{Nw=1.5,Nh=1.5,Nframe=y,Nfill=y,NLdist=3}
  \node[NLangle= 180](Na')(105,75){\small{$\textcolor{blue}{\emph{1}}$}}
  \node[NLangle= 180](Nb')(105,65){\small{\textcolor{blue}{$\emph{2}$}}}
  \node[NLangle= 180](Nc')(105,55){\small{$\textcolor{blue}{\emph{3}}$}}
  \node[NLangle= 180](Nd')(105,45){\small{$\textcolor{blue}{\emph{4}}$}}
  \node[NLangle= 0](N1')(115,75){\small{$\textcolor{blue}{\emph{1}
  }$}}
  \node[NLangle= 0](N2')(115,65){\small{$\textcolor{blue}{\emph{2}}$}}
  \node[NLangle= 0](N3')(115,55){\small{$\textcolor{blue}{\emph{3}}$}}
  \node[NLangle= 0](N4')(115,45){\small{$\textcolor{blue}{\emph{4}}$}}
\gasset{dash={1.5}0}
\drawedge(N1',Na'){}
\drawedge(N2',Na'){}
\drawedge(N3',Nd'){}
\drawedge(N4',Nd'){}
\gasset{Nw=1.5,Nh=1.5,Nframe=y,Nfill=y,NLdist=3}
  \node[NLangle= 180](Na')(40,35){\small{$\textcolor{blue}{\emph{1}}$}}
  \node[NLangle= 180](Nb')(40,25){\small{\textcolor{blue}{$\emph{2}$}}}
  \node[NLangle= 180](Nc')(40,15){\small{$\textcolor{blue}{\emph{3}}$}}
  \node[NLangle= 180](Nd')(40,5){\small{$\textcolor{blue}{\emph{4}}$}}
  \node[NLangle= 0](N1')(50,35){\small{$\textcolor{blue}{\emph{1}
  }$}}
  \node[NLangle= 0](N2')(50,25){\small{$\textcolor{blue}{\emph{2}}$}}
  \node[NLangle= 0](N3')(50,15){\small{$\textcolor{blue}{\emph{3}}$}}
  \node[NLangle= 0](N4')(50,5){\small{$\textcolor{blue}{\emph{4}}$}}
\gasset{dash={1.5}0}
\drawedge(N1',Na'){}
\drawedge(N2',Na'){}
\drawedge(N3',Na'){}
\drawedge(N4',Nd'){}
\gasset{Nw=1.5,Nh=1.5,Nframe=y,Nfill=y,NLdist=3}
  \node[NLangle= 180](Na')(65,35){\small{$\textcolor{blue}{\emph{1}}$}}
  \node[NLangle= 180](Nb')(65,25){\small{\textcolor{blue}{$\emph{2}$}}}
  \node[NLangle= 180](Nc')(65,15){\small{$\textcolor{blue}{\emph{3}}$}}
  \node[NLangle= 180](Nd')(65,5){\small{$\textcolor{blue}{\emph{4}}$}}
  \node[NLangle= 0](N1')(75,35){\small{$\textcolor{blue}{\emph{1}
  }$}}
  \node[NLangle= 0](N2')(75,25){\small{$\textcolor{blue}{\emph{2}}$}}
  \node[NLangle= 0](N3')(75,15){\small{$\textcolor{blue}{\emph{3}}$}}
  \node[NLangle= 0](N4')(75,5){\small{$\textcolor{blue}{\emph{4}}$}}
\gasset{dash={1.5}0}
\drawedge(N1',Na'){}
\drawedge(N2',Nb'){}
\drawedge(N3',Nc'){}
\drawedge(N4',Nd'){}
\end{picture}
\caption{$\Lg$-cross-section of $\mathcal{T}_3$ and its dual in $\mathcal{T}_{4}$}\label{fig_dual_L}
\end{figure}
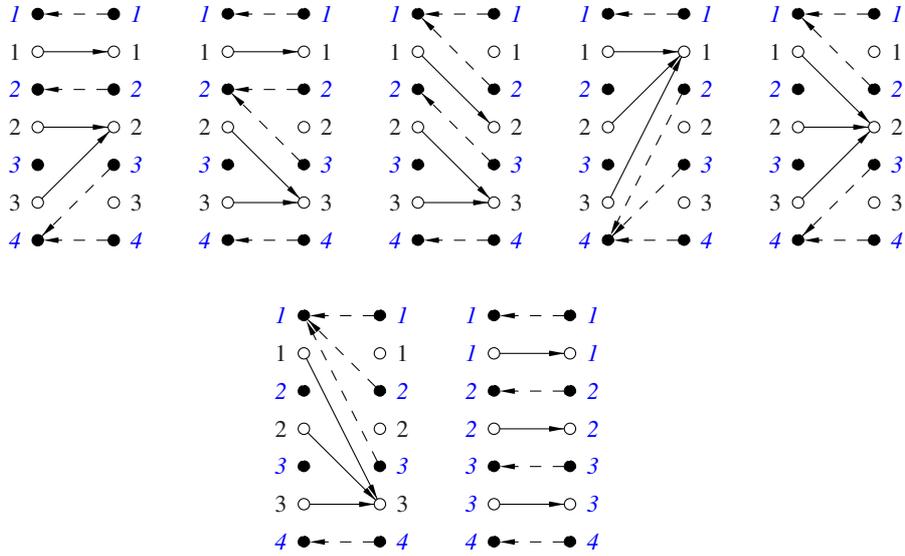}

Consider the the set $L^*$ of the dual transformations of $L$:

\begin{gather*}
L^*=\left\{
\left(
\begin{array}{ccc}
\{1\}&\{2\}&\{34\}\\
1    &2    &4
\end{array}
\right)
\left(
\begin{array}{ccc}
\{1\}&\{23\}&\{4\}\\
1     &2    &4
\end{array}
\right),
\left(
\begin{array}{ccc}
\{12\}&\{3\}&\{4\}\\
1     &2&4
\end{array}
\right)
\right.,\\
\left.
\left(
\begin{array}{cc}
\{1\}&\{234\}\\
1   &4
\end{array}
\right),
\left(
\begin{array}{cc}
\{12\}&\{34\}\\
1&4
\end{array}
\right),
\left(
\begin{array}{cc}
\{123\}&\{4\}\\
1       &4
\end{array}
\right),
\left(
\begin{array}{cccc}
\{1\}&\{2\}&\{3\}&\{4\}\\
1&2&3&4
\end{array}
\right)
\right\}.
\end{gather*}
It is easy to verify that  each of the sets $L^*\cup \{\const_1\}$ or $L^*\cup \{\const_4\}$ forms an $\R$-cross-section of $\On_{4}$. The $\R$-cross-sections of $\On_{n}$ obtained in this way we will call  \textbf{dual} for brevity.
\end{example}

The direct calculations show that the dual  preimages of $R_1\setminus \const_1$ and $R_2\setminus \const_4$ of dense cross-sections $R_1$ and $R_2$ do not form the $\Lg$-cross-sections of $\On_3$.


\begin{definition}
We say that an  $\R$-cross-section $R$ \textbf{has a fixed point} $x$, if $xf=x$ for all non-constant $f\in R$.
\end{definition}
Note that each of dense $\R$-cross-sections of $([n+1],\prec)$ always has the only fixed point: either 1, or $n+1$. On the other hand, by the construction each set $L(\On_n)^*$ always has at least  two fixed  points, namely, 1  and $n+1$.

\begin{example}\label{ex_neither_nor} However, there exists an $\R$-cross-section of $\On_4$ which is not dense but has the only fixed point 1:
\begin{gather*}
R_5=\left\{
\left(
\begin{array}{ccc}
\{1\}&\{2\}&\{34\}\\
1&2&3
\end{array}
\right),
\left(
\begin{array}{ccc}
\{1\}&\{23\}&\{4\}\\
1&3&4
\end{array}
\right),\right.\\
\left.
\left(
\begin{array}{ccc}
\{12\}&\{3\}&\{4\}\\
1&3&4
\end{array}
\right)
\left(
\begin{array}{cc}
1&\{234\}\\
1&3
\end{array}
\right),
\left(
\begin{array}{cc}
\{12\}&\{34\}\\
1&3
\end{array}
\right),\right.\\
\left.
\left(
\begin{array}{cc}
\{123\}&4\\
1&3
\end{array}
\right),
\left(
\begin{array}{c}
\{1234\}\\
1
\end{array}
\right),
\left(
\begin{array}{cccc}
\{1\}&\{2\}&\{3\}&\{4\}\\
1&2&3&4
\end{array}
\right)
\right\}.
\end{gather*}
\end{example}

Thus, at the moment we have examples of two types of $\R$-cross-sections of $\On_n$ with completely different origins. We have also the evidence of the existing of other ones. In fact, we will show that all $\R$-cross-sections of $\On_n$  can be generated in some sense by a sequence of  $\Lg$-cross-sections of $\On_{i_1},\On_{i_2},\ldots\On_{i_k}$, with $i_1+i_2+\ldots i_k=n$. The sequence in turn is tightly connected with a certain type of binary search trees and their duals, so-called inner trees. We will  state the necessary types of binary trees, examples and   interpretations trees as diagrams in the following section.

\section{Trees and diagrams}\label{sec_trees_and_diagrams}
  Recall that \textit{a rooted binary tree} is an  acyclic connected graph in which one vertex is specified as a root, each vertex $v$  has at most two children  and  a unique parent (except the root, which has no one).  The trees occurring in this paper assumed to be rooted binary trees unless otherwise stated. We denote the root of a binary tree by $\rt$, the parent of a vertex $v$ by $\p(v)$. The children are referred to as the left child (the son of a vertex) and the right child (the daughter of a vertex). The son and the daughter of $v$ are denoted by $\s(v)$ and $\dau(v)$, respectively. A leaf is a vertex of the tree which has  no children. Vertices which are not leaves are called internal nodes. Each internal vertex $v$ of a tree has the left and right  subtree.  A subtree may be empty.  The non-empty subtree consists of a vertex (a child of $v$) and  the descendants of the vertex in the tree.  A binary tree is said to be full if each its internal vertex has exactly two children.

  \par There exists  a unique path from the root to a vertex in the tree. We denote by $\omega(v)$ the set $\{v,\p(v),\p(\p(v)),\ldots,\rt\}$ containing the path of a  vertex  $v$. The level of a vertex is the length of its path. Hence, the level of the root equals $0$; the level of a non-root vertex is the level of its parent plus 1.

    \par Let $T(n)$ denote a tree whose vertices are labeled with $1,2,\ldots n$. We do not make a difference between  a vertex and its label for convenience. So, if a vertex $v$ is labeled with  a number $a$ with  $a< b$ for some $b\in \overline{n}$ we write $v<b$.

%

\subsection{Order-preserving binary tree}

 \par In computer science, a binary search tree (BST) \cite{Cormen}  is a rooted binary tree with the following property: for a vertices $x, y$ of the tree if $y$ is a vertex in the left subtree of $x$ then $y\leq x$. If $y$ is a vertex in the right subtree of $x$ then $y\geq x$.
 We need to modify the notion of a BST for our purposes.

 \begin{definition}\label{def_ord_pres}
\textbf{An order-preserving tree} is a rooted binary tree $T(n)$  with the following property:  for a vertices $x, y\in T(n)$ if $y$ is a vertex in the left subtree of $x$ then $y<x$. If $y$ is a vertex in the right subtree of $x$ then $x<y$.
\end{definition}
Thus, for a natural $n$, an order-preserving binary tree is a kind of strict BST with the vertex set $\{1,2,\ldots, n\}$.
It is also important for us to fix  the bounds of each vertex in the tree:
\begin{definition}\label{def_can_b} We define  the \textbf{ canonical bounds} of a vertex of an order-preserving binary tree  $T(n)$  by induction:

1)  For the root the canonical bounds are  $1\leq\rt\leq n$.

2) Let  $v\in T(n)$ be a  vertex   with the canonical bounds $a\leq v\leq b$ which has been already defined. If $v$ has a son (a daughter)  then a child has the following canonical bounds
 $$a\leq\s(v)<v\mbox{ \ and \ }v<\dau(v)\leq b \mbox{ \ respectively}.$$
\end{definition}

\begin{figure}[h]
\begin{center}
 \unitlength=0.50mm
\begin{picture}(135,70)
\gasset{AHnb=0}
\gasset{Nw=10,Nh=10,Nmr=5}
\put(0,70){$T(4)$}
\node(1)(5,60){1}
\node(2)(15,45){2}
\node(4)(25,30){4}
\node(6)(15,15){3}
\drawedge(1,2){}
\drawedge(2,4){}
\drawedge(4,6){}
\put(55,70){$T_1(5)$}
\node(t1)(60,60){2}
\node(t2)(50,45){1}
\node(t3)(70,45){4}
\node(t4)(60,30){3}
\node(t5)(80,30){5}
\drawedge(t1,t2){}\drawedge(t1,t3){}
\drawedge(t3,t4){}\drawedge(t3,t5){}
\put(130,70){$T_2(5)$}
\node(s1)(135,60){5}
\node(s2)(120,45){1}
\node(s4)(135,30){4}
\node(s5)(120,15){3}
\node(s6)(105,0){2}
\drawedge(s1,s2){}
\drawedge(s2,s4){}\drawedge(s4,s5){}
\drawedge(s5,s6){}
\end{picture}
\caption{Examples of order-preserving binary trees for $n=4,5$. }\label{fig:ord_pres_derevya}
\end{center}
\end{figure}
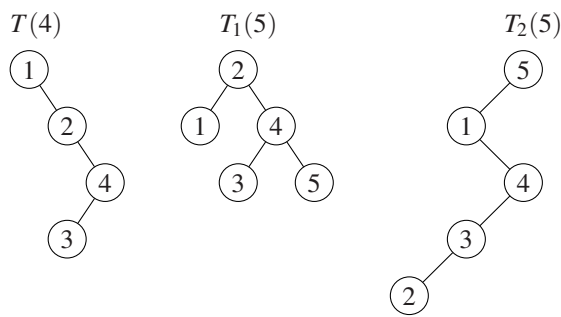
The definition implies that each canonical bound of a vertex $v$ is either belongs to $\omega(v)$, or to the set $\{1,n\}$.

Note that an order-preserving binary tree on $T(n)$ can be regarded as a model $(\overline{n},\leq,\prec)$, where $\prec$ is a strong partial order on  $\overline{n}$, defined as follows: for all $x,y\in \overline{n}$ set $x\prec y$ if $x$ is a descendant of $y$. Since $\overline{n}$ assumed to be equipped with the natural order $\leq$ through the paper, we write $(\overline{n},\prec)$ instead $(\overline{n},\leq,\prec)$, or just $T(n)$.

\par  It is convenient for our purposes to picture an order-preserving binary tree  of $(\overline{n},\prec)$ in an "unfolded"\  form (diagram) which is described below. The main advantage of the diagrams is that it ``preserves the scale'' of the tree and makes the ``inner'' trees (which we define further) more visual.

By $U_q$, $0\leq q\leq n-1$, we denote the set of all vertices in the $q$-th level of an order-preserving tree $T(n)$.  To each  vertex $x\in U_q$ of the tree we assign the point $(x,q)$ and the line segment with the endpoints $(x,q)$ and $(x,n-1)$ of the coordinate plane (see Fig.~\ref{fig:primer diagrammi}).
\par Since $T(n)$ is order-preserving, the children of each $(x,q)$ are determined by the diagram of the tree in a unique way. The points of the same level are incomparable with respect to $\prec$.

\begin{figure}[h]
\begin{center}
\begin{picture}(40,50)
\gasset{AHnb=0,linewidth=0.4}
  \drawline[linewidth=0.1](0,10)(40,10)
  \drawline[linewidth=0.1](0,15)(40,15)
  \drawline[linewidth=0.1](0,20)(40,20)
  \drawline(0,25)(5,25) \drawline[linewidth=0.1](5,25)(15,25) \drawline(15,25)(20,25)\drawline[linewidth=0.1](20,25)(35,25)\drawline(35,25)(40,25)
  \drawline(0,30)(20,30)\drawline[linewidth=0.1](20,30)(30,30)\drawline(30,30)(40,30)
  \drawline[linewidth=0.1](0,35)(10,35)\drawline(10,35)(30,35)\drawline[linewidth=0.1](30,35)(40,35)
  \drawline[linewidth=0.1](0,40)(40,40)
 \put(-8,40){$U_0$}
  \put(-8,35){$U_1$}
  \put(-8,30){$U_2$}
  \put(-10,22){$\ldots$}
  \put(-15,15){$U_{n-2}$}
  \put(-15,10){$U_{n-1}$}
  \put(-1,43){$1$}
  \put(4,43){$2$}
  \put(13,43){$\ldots$}
  \put(24,43){$r$}
  \put(30,43){$\ldots$}
  \put(39,43){$n$}
  \drawline(0,10)(0,40)
  \drawline(5,10)(5,40)
  \drawline(10,10)(10,40)
  \drawline(15,10)(15,40)
  \drawline(20,10)(20,40)
  \drawline(25,10)(25,40)
  \drawline(30,10)(30,40)
  \drawline(35,10)(35,40)
  \drawline(40,10)(40,40)
  \gasset{AHnb=0,linewidth=0.8, Nw=1.5,Nh=1.5,Nframe=y, Nfill=y, NLdist=3}
  \node(r)(25,40){}
  \node[NLangle= 45](r_21)(10,35){}
  \node[NLangle= 45](r_22)(30,35){}
  \node(r_21)(0,30){}
  \node(r_22)(20,30){}
  \node(r_23)(40,30){}
  \node(r_31)(5,25){}
  \node(r_32)(15,25){}
  \node(r_33)(35,25){}
    \drawline(25,10)(25,40)
    \drawline(10,10)(10,35)
    \drawline(30,10)(30,35)
     \drawline(0,10)(0,30)
    \drawline(20,10)(20,30)
    \drawline(40,10)(40,30)
    \drawline(5,10)(5,25)
    \drawline(15,10)(15,25)
    \drawline(35,10)(35,25)
  \end{picture}
\caption{"Unfolded"\ representation (diagram) of the order-preserving binary tree  //$(\overline{n},\leq,\prec)$.}\label{fig:primer diagrammi}
\end{center}
\end{figure}
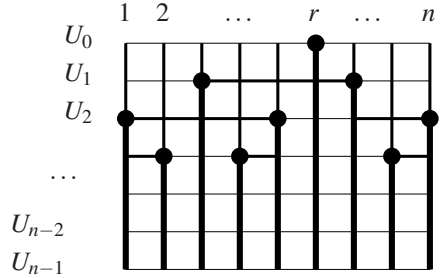
Note that the diagram  of an order-preserving binary tree completely determines the original tree.
For example, the diagrams of the trees  from Fig.~\ref{fig:ord_pres_derevya} are presented in Fig.~\ref{fig:diagramms}.

\begin{figure}[ht]
\begin{center}
\unitlength0.8mm
\begin{picture}(90,35)
\gasset{AHnb=0,linewidth=0.4}
\put(5,0){$T(4)$}
  \drawline[linewidth=0.1](0,10)(10,10)\drawline(10,10)(15,10)
  \drawline[linewidth=0.1](0,15)(5,15)\drawline(5,15)(15,15)
  \drawline(0,20)(5,20)\drawline[linewidth=0.1](5,20)(15,20)
  \drawline[linewidth=0.1](0,25)(15,25)
  \drawline(0,10)(0,25)
  \drawline(5,10)(5,25)
  \drawline(10,10)(10,25)
  \drawline(15,10)(15,25)
 \put(-5,24){$0$}
  \put(-5,19){$1$}
  \put(-5,15){$2$}
  \put(-5,9){$3$}
  \put(-1,27){$1$}
  \put(4,27){$2$}
  \put(9,27){$3$}
  \put(14,27){$4$}
\put(35,0){$T_1(5)$}
  \drawline[linewidth=0.1](30,10)(50,10)
  \drawline[linewidth=0.1](30,15)(50,15)
  \drawline[linewidth=0.1](30,20)(40,20)\drawline(40,20)(50,20)
  \drawline(30,25)(45,25)\drawline[linewidth=0.1](45,25)(50,25)
  \drawline[linewidth=0.1](30,30)(50,30)
  \drawline(30,10)(30,30)
  \drawline(35,10)(35,30)
  \drawline(40,10)(40,30)
  \drawline(45,10)(45,30)
  \drawline(50,10)(50,30)
  \put(25,29){$0$}
 \put(25,24){$1$}
  \put(25,19){$2$}
  \put(25,15){$3$}
  \put(25,9){$4$}
  \put(29,33){$1$}
  \put(34,33){$2$}
  \put(39,33){$3$}
  \put(44,33){$4$}
  \put(49,33){$5$}
\put(75,0){$T_2(5)$}
  \drawline[linewidth=0.1](65,10)(70,10)\drawline(70,10)(75,10)\drawline[linewidth=0.1](75,10)(85,10)
  \drawline[linewidth=0.1](65,15)(75,15)\drawline(75,15)(80,15)\drawline[linewidth=0.1](80,15)(85,15)
  \drawline(65,20)(80,20)\drawline[linewidth=0.1](80,20)(85,20)
  \drawline(65,25)(85,25)
  \drawline[linewidth=0.1](65,30)(85,30)
   \drawline(65,10)(65,30)
  \drawline(70,10)(70,30)
  \drawline(75,10)(75,30)
  \drawline(80,10)(80,30)
  \drawline(85,10)(85,30)
  \put(60,29){$0$}
 \put(60,24){$1$}
  \put(60,19){$2$}
  \put(60,15){$3$}
  \put(60,9){$4$}
  \put(64,33){$1$}
  \put(69,33){$2$}
  \put(74,33){$3$}
  \put(79,33){$4$}
  \put(84,33){$5$}
\gasset{AHnb=0,linewidth=0.8, Nw=1.5,Nh=1.5,Nframe=y, Nfill=y, NLdist=3}
  \node(r)(0,25){}
  \node(r_21)(5,20){}
  \node(r_22)(10,10){}
  \node(r_21)(15,15){}
  \drawline(0,25)(0,10)
  \drawline(5,20)(5,10)
  \drawline(15,15)(15,10)
  \node(r_22)(30,25){}\drawline(30,25)(30,10)
  \node(r_23)(35,30){}\drawline(35,30)(35,10)
  \node(r_31)(40,20){}\drawline(40,20)(40,10)
  \node(r_32)(45,25){}\drawline(45,25)(45,10)
  \node(r_33)(50,20){}\drawline(50,20)(50,10)
  \node(r_22)(85,30){}\drawline(85,30)(85,10)
  \node(r_23)(65,25){}\drawline(65,10)(65,25)
  \node(r_33)(80,20){}
  \node(r_31)(75,15){}\drawline(75,15)(75,10)
  \node(r_32)(70,10){}\drawline(80,20)(80,10)
  \drawline(85,25)(85,10)
  \end{picture}
\caption{Diagrams of the order-preserving binary trees shown on Fig.\ref{fig:ord_pres_derevya}.}\label{fig:diagramms}
\end{center}
\end{figure}
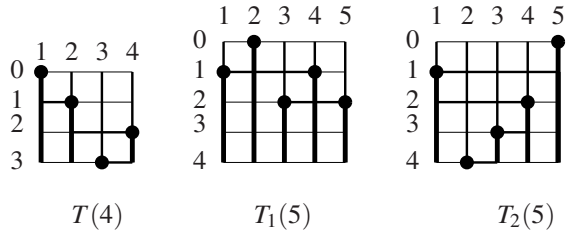
\par Everywhere below the maximal element in an interval with respect to $\prec$  we call the \textbf{highest}, while the word ``maximal'' stands for the maximal element with respect to $\leq$. We also say the element is the \textbf{lowest} if it is minimal with respect to $\prec$, while the word ``minimal'' relates with usual order.

\par Our goal now to give the notion of a decreasing tree, which is  a special case of order-preserving trees. This leads us to the notion of so-called "inner"  binary tree of an order-preserving tree, whose vertices are marked by intervals. Inner trees can be extremely well seen  in the diagram of the order-preserving tree. We discuss this construction more detail in the next subsection.

\subsection{Inner binary tree}

 Let $i,j\in \overline{n}$ and $i\leq j$. The interval $[i,j]$ is the set $\{k\in \overline{n}\mid i\leq k\leq j\}$. We write $[i]$ instead of $[i,i]$.
\par Consider the diagram of an order-preserving tree $T(n)$.  Label the cells in diagrams  from left to right in following way: the cell which is next after vertex $i$, $1\leq i< n$, we label by $i'$.
\begin{definition}
We define the \textbf{inner tree} $\Gamma$ of an order-preserving tree $T(n)$ as a full binary tree  labeled with intervals of $[1',(n-1)']$, defined by induction as follows:
\par 1. The interval $[1',(n-1)']$ is the root of $\Gamma$.
\par 2. Let $[i',j']\in\Gamma$. If $i'= j'$ then $[i']$ is a leaf. Let $i'\ne j'$, $x\in T(n)$ be the highest point with $i<x<j+1$. Then $[i',(x-1)']$ and $[x',j']$ are the daughter and the son of $[i',j']$ respectively.
 \end{definition}
 \begin{example}\label{ex_inner_tree}
According to the previous definition the inner tree of $T(5)$ has the form (see Fig.~\ref{fig:derevya_otrezkov}):
\begin{figure}[h]
\begin{center}
\unitlength=1.05mm
\begin{picture}(60,35)
\gasset{AHnb=0, Nw=9,Nh=9}
\node(t1)(50,35){$[1',4']$}
\node[NLangle= 150](t2)(45,25){$[1']$}
\node[NLangle= 30](t3)(55,25){$[2',4']$}
\drawedge(t1,t2){}\drawedge(t1,t3){}
\node[NLangle= 30](t4)(50,15){$[2',3']$}
\node[NLangle= 30](t5)(60,15){$[4']$}
\drawedge(t3,t4){}\drawedge(t3,t5){}
\node[NLangle= 30](t6)(45,5){$[2']$}
\node[NLangle= 30](t7)(55,5){$[3']$}
\drawedge(t6,t4){}\drawedge(t7,t4){}
\put(60,35){$\Gamma$}
\gasset{AHnb=0,linewidth=0.4}
\put(5,5){$T(5)$}
  \drawline[linewidth=0.1](0,10)(20,10)
  \drawline[linewidth=0.1](0,15)(20,15)\drawline(10,15)(15,15)
  \drawline[linewidth=0.1](0,20)(15,20)\drawline(15,20)(20,20)
  \drawline(0,25)(20,25)\drawline[linewidth=0.1](15,25)(20,25)
  \drawline[linewidth=0.1](0,30)(20,30)
  \drawline(0,10)(0,30)
  \drawline(5,10)(5,30)
  \drawline(10,10)(10,30)
  \drawline(15,10)(15,30)
  \drawline(20,10)(20,30)
  \put(-5,29){$0$}
 \put(-5,24){$1$}
  \put(-5,19){$2$}
  \put(-5,15){$3$}
  \put(-5,9){$4$}
  \put(-1,33){$1$}
  \put(4,33){$2$}
  \put(9,33){$3$}
  \put(14,33){$4$}
  \put(19,33){$5$}
  \put(2,26){$1'$}
  \put(7,26){$2'$}
  \put(12,26){$3'$}
  \put(17,26){$4'$}
\gasset{AHnb=0,linewidth=0.8, Nw=1.5,Nh=1.5,Nframe=y, Nfill=y, NLdist=3}
  \node(r_22)(0,25){}\drawline(0,25)(0,10)
  \node(r_23)(5,30){}\drawline(5,30)(5,10)
  \node(r_32)(20,25){}\drawline(20,25)(20,10)
  \node(r_33)(10,15){}\drawline(10,15)(10,10)
  \node(r_33)(15,20){}\drawline(15,20)(15,10)
\end{picture}
\caption{The inner tree $\Gamma$ of the order-preserving tree $T(5)$.}\label{fig:derevya_otrezkov}
\end{center}
\end{figure}
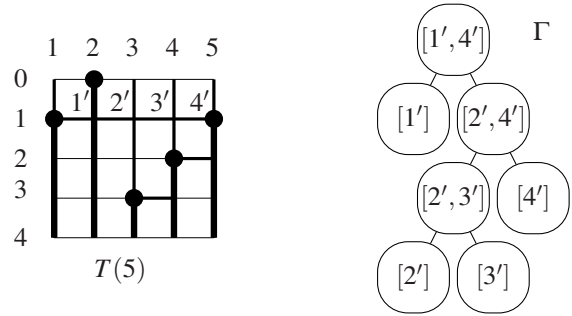

\par  Note that the inner tree is determined by an order-preserving tree in a unique way. Nevertheless the same inner tree could be associated with different order-preserving trees since an order-preserving tree is not full in general. For instance, $\Gamma$ is also the inner tree of $T_1(5)$ (see Fig.~\ref{fig:diagramms}).
\end{example}

Let $x\in T(n)$ be a vertex with the canonical bounds $a,b$, $a<b$. Then \textbf{the left inner tree} $\Gamma_l(x)$ and \textbf{the right inner tree} $\Gamma_r(x)$ of $x$ are the subtrees of the inner tree $\Gamma$ rooted at $[a',(x-1)']$ and $[x',(b- 1)']$ respectively. We define  $\Gamma_l(x)=\emptyset$ if $x=1$ and $\Gamma_r(x)=\emptyset$ if $x=n$.

\begin{example}
Consider tree $T(4)$ from Fig.~\ref{fig:diagramms}. For the vertex $2\in T(4)$ the left inner tree $\Gamma_l(2)$ is isomorphic to a point (1-element full binary tree), tree $\Gamma_r(2)$ is isomorphic to a 2-element full binary tree (see Fig.~\ref{fig:left_right_inner tree}). The right inner tree $\Gamma_r(4)$ of $4$ is empty.
\begin{figure}[h]
\begin{center}
\unitlength=1.05mm
\begin{picture}(60,25)
\gasset{AHnb=0, Nw=9,Nh=9}
\put(-5,18){$\Gamma_l(2)$}
\node(1)(10,9){$[1']$}
\put(40,22){$\Gamma_r(2)$}
\node(t1)(45,15){$[2',3']$}
\node[NLangle= 150](t2)(40,5){$[2']$}
\node[NLangle= 30](t3)(50,5){$[3']$}
\drawedge(t1,t2){}\drawedge(t1,t3){}
\end{picture}
\caption{The inner trees $\Gamma_l(2)$ and $\Gamma_r(2)$}\label{fig:left_right_inner tree}
\end{center}
\end{figure}
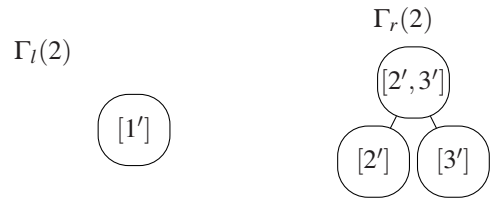
\end{example}
Exactly the interconnections between  inner trees of the vertices play a crucial role in the definition of decreasing binary trees.
\subsection{Decreasing binary tree}
 We will need ``to compare'' the trees frequently in the sense of following definitions.
 \begin{definition}
 By a \emph{homomorphism} between two trees $T_1$ and $T_2$ we mean a 1-1-map from the vertex
 set of $T_1$ into the vertex set of $T_2$ that sends the root of $T_1$ to the root of $T_2$ and preserves the parent--child relation and the genders of non-root vertices.
 \end{definition}

 \begin{definition} Given two trees $T_1$ and $T_2$, we say that $T_1$ \emph{subordinates} $T_2$
 (and write $T_1\hookrightarrow T_2$) if there exists a homomorphism $T_1\to T_2$ \cite{Bondar_Volkov_CRA}.
  The empty tree is assumed to subordinate every binary tree.
  \end{definition}

\par Recall that by $\omega(1)$, $\omega(n)$ we denote the paths from the vertex $1$ and $n$ respectively, to the root in  $(\overline{n},\prec)$.
\begin{definition}\label{def_ord_pres} We say an order-preserving tree  $(\overline{n},\prec)$ to be \textbf{decreasing}, if for all  $x, y\ne\rt\in (\overline{n},\prec)$ with $x\prec y$ the following conditions hold:
  \begin{enumerate}

 \item  if $x, y\in\omega(1)$ then  the right inner tree  of $x$  subordinates the right inner tree  of $y$;
 \item if $x, y\in\omega(n)$ then the left inner tree  of $x$  subordinates the left inner tree  of $y$;
  \item if $x, y\notin\omega(1)\cup\omega(n)$ then the inner trees of $x$ subordinate the respective inner trees of $y$.
  \end{enumerate}
\end{definition}
\begin{example} The tree $T(4)$ from Fig.~\ref{fig:diagramms} is not decreasing, since $4\prec 2$ and $2,4\in\omega(4)$, but it does not hold $\Gamma_l(4)\hookrightarrow\Gamma_l(2)$.
 Trees $T_1(5)$ and $T_2(5)$ are decreasing.
\end{example}

\subsection{Order-preserving tree of a convex partition of $\overline{n}$.}
It remains to define one more type of binary trees. The one is defined for a fixed order-preserving tree $(\overline{n},\prec)$  and a convex partition of $\overline{n}$.

 Let $\widetilde{K}=K_1\cup K_2\cup\ldots\cup K_m$ be a convex partition of $\overline{n}$ into $m$ intervals:

$$K_1=[k_0=1, k_1] \mbox{ and } K_i=[k_{i-1}+1, k_i], \mbox{ for } 2\leq i\leq m.$$

For every $a\in \overline{n}$  denote by $K^{(a)}$  the interval of $\widetilde{K}$ that contains $a$.

\begin{definition}\label{def_part_tree} Define a partition  order-preserving tree $T(\widetilde{K})$  with respect to $(\overline{n},\prec)$ by induction as follows.

1. Let $K^{(\rt)}\in\widetilde{K}$ be the root  of $T(\widetilde{K})$.

2. Assume $V$ is a vertex of $T(\widetilde{K})$. By $m_s\in \overline{n}$ we denote the highest and the  closest to $V$ on the left vertex of $(\overline{n},\prec)$, which does not belong to any of already defined vertices of $T(\widetilde{K})$. In dual way, by $m_d$ denote the highest and the  closest vertex to $V$ on the right. Then
 $$\s(V):=K^{(m_s)} \mbox{\ \ and \ }\dau(V):=K^{(m_d)}.$$
\end{definition}
For  brevity in what follows we refer to $T(\widetilde{K})$ as to the partition tree. The vertices $\rt$, $m_s$ ($m_d$) are said to be
the \textbf{leading} element of the interval.
\begin{example}\label{ex_partition_tree} Let  $T_3(5)$ be the decreasing tree (see Fig.\ref{fig:ex_part_tree}),  $\widetilde{K}=\{\{1,2\},\{3,4\},\{5\}\}$. Since $\rt=3$, we have $K^{(\rt)}=\{3,4\}$ is a root of $T(\widetilde{K})$. The leading elements of $\{1,2\}$ and $\{5\}$ are $m_s=1$, $m_d=5$ respectively. Thus $\s(\{3,4\})=\{1,2\}$,  $\dau(\{3,4\})=\{5\}$.

\begin{figure}[h]
\begin{center}
\unitlength0.8mm
\begin{picture}(100,35)
\gasset{AHnb=0,Nw=10.5,Nh=10.5, ,Nframe=y, Nfill=n}
\node(ti0)(95,25){$\{3,4\}$}
\node[NLangle= 150](ti1)(90,12){$\{1,2\}$}
\node[NLangle= 30](ti2)(100,12){$\{5\}$}
\drawedge(ti2,ti0){}\drawedge(ti1,ti0){}
\put(90,0){$T(\widetilde{K})$}
\gasset{AHnb=0,linewidth=0.4}
  \drawline[linewidth=0.1](0,10)(20,10)
  \drawline[linewidth=0.1](0,15)(20,15)
  \drawline(0,20)(5,20)\drawline[linewidth=0.1](5,20)(15,20)\drawline(15,20)(20,20)
  \drawline(0,25)(15,25) \drawline[linewidth=0.1](15,25)(20,25)
  \drawline[linewidth=0.1](0,30)(20,30)
  \drawline(0,10)(0,30)
  \drawline(5,10)(5,30)
  \drawline(10,10)(10,30)
  \drawline(15,10)(15,30)
  \drawline(20,10)(20,30)
  \put(-5,29){$0$}
 \put(-5,24){$1$}
  \put(-5,19){$2$}
  \put(-5,14){$3$}
  \put(-5,9){$4$}
  \put(-1,33){$1$}
  \put(4,33){$2$}
  \put(9,33){$3$}
  \put(14,33){$4$}
  \put(19,33){$5$}
\gasset{AHnb=0,linewidth=0.8, Nw=1.5,Nh=1.5,Nframe=y, Nfill=y, NLdist=3}
  \node(r_22)(0,25){}\drawline(0,25)(0,10)
  \node(r_23)(5,20){}\drawline(5,20)(5,10)
  \node(r_31)(10,30){}\drawline(10,30)(10,10)
  \node(r_32)(15,25){}\drawline(15,25)(15,10)
  \node(r_33)(20,20){}\drawline(20,20)(20,10)
  \put(3,0){$T_3(5)$}
\end{picture}
\caption{The partition tree $T(\widetilde{K})$ for $T_3(5)$, $\widetilde{K}=\{\{1,2\},\{3,4\},\{5\}\}$.}\label{fig:ex_part_tree}
\end{center}
\end{figure}
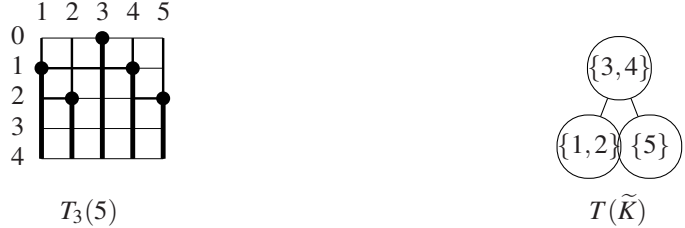
\end{example}

\section{Description of $\R$-cross-sections of $\On_n$}
In this section we introduce a semigroup $\Phi_\prec\subset \On_n$. We will prove that the semigroup $\Phi_\prec$ forms an $\R$-cross-section and conversely, each $\R$-cross-section of $\On_n$ is given by $\Phi_{\prec}$ for a certain $\prec$.

\subsection{Semigroup $\Phi_\prec$}
 Let $(\overline{n},\prec)$ be a decreasing  tree. Fix   a convex partition $\widetilde{K}$  of~$\overline{n}$ into $m$ intervals. Let  $T(\widetilde{K})$ be the partition tree associated with $(\overline{n},\prec)$. First we show that there exists a unique homomorphism from $ T(\widetilde{K})$ to $(\overline{n},\prec)$.
 \par Note that the subtree of $(\overline{n},\prec)$, whose vertex set is $\omega(1)$, contains only the  male  vertices (except the root).  The subtree of $(\overline{n},\prec)$, whose vertex set is $\omega(n)$, contains only the  female  vertices (except the root). We denote also by $\omega(K^{(1)})$ and $\omega(K^{(n)})$ the  subtrees of $T(\widetilde{K})$ whose vertex sets are the paths of $K^{(1)}$ and $K^{(n)}$ respectively.
 \begin{lem}\label{lemma leadings of omega}
For all $K\in\omega(K^{(1)})$ the leading element of $K$ belongs to $\omega(1)$. Dually, for all $K\in \omega(K^{(n)})$ the leading element of $K$ belongs to $\omega(n)$.
 \end{lem}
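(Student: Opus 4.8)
The plan is to prove the two claims in parallel by induction on the structure of the partition tree $T(\widetilde{K})$, reading off the definitions of the leading element, the path sets $\omega(K^{(1)})$ and $\omega(K^{(n)})$, and using the crucial structural fact recorded just before the lemma: the subtree of $(\overline{n},\prec)$ on $\omega(1)$ consists of the root together with only male vertices (every non-root vertex on the path to $1$ is a son), and dually $\omega(n)$ consists of the root together with only female vertices. I will treat the first assertion; the second follows by the left-right duality built into all the definitions.

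First I would handle the base case: the root of $T(\widetilde{K})$ is $K^{(\rt)} = K^{(1)}$'s ancestor in the path sense only if $\rt$ lies in $K^{(1)}$, but in general $K^{(\rt)}$ is the root and its leading element is $\rt \in \omega(1)$ by definition of $\omega(1)$ (the root lies on every root-path). So the claim holds for $K = K^{(\rt)}$, which is the root of $\omega(K^{(1)})$. For the inductive step, suppose $K \in \omega(K^{(1)})$ is not the root, so $K = \s(V)$ or $K = \dau(V)$ for its parent $V$, and since $K$ lies on the path from $K^{(1)}$ to the root, in fact $K$ must be the child of $V$ that again lies on that path. The key observation is: $K^{(1)}$ is reached from the root by repeatedly taking \emph{sons} (because in Definition~\ref{def_part_tree}, moving to $\s(V)$ picks up $m_s$, the highest vertex to the \emph{left} not yet covered, and the interval containing $1$ sits at the far left), so $\omega(K^{(1)})$ is a chain of sons $\s(\s(\cdots \s(K^{(\rt)})))$. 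Thus I only need: if $V \in \omega(K^{(1)})$ has leading element in $\omega(1)$, then the leading element of $\s(V)$ lies in $\omega(1)$ as well.

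The heart of the argument is therefore this single step. Let $v \in \omega(1)$ be the leading element of $V$, and let $m_s$ be the leading element of $\s(V)$, i.e. the $\prec$-highest vertex lying strictly to the left of $V$'s interval (more precisely to the left of $v$, or of the leftmost point covered so far) that has not yet been placed in a previously defined vertex of $T(\widetilde{K})$. I claim $m_s = \s(v)$ in $(\overline{n},\prec)$, which forces $m_s \in \omega(1)$ because $\s(v)$ is exactly the next vertex along $\omega(1)$. To see this: the son $\s(v)$ of $v$ in $(\overline{n},\prec)$ has canonical bounds $a \le \s(v) < v$ where $a$ is the lower bound of $v$, so $\s(v)$ is to the left of $v$; every vertex of its subtree is $\prec$-below it, so $\s(v)$ is the $\prec$-highest vertex in the entire left subtree of $v$; and all vertices outside $\omega(1)$ to the left of $v$ that are already covered lie in $T(\widetilde{K})$-vertices hanging off strictly above $v$ on $\omega(1)$ — these are the left subtrees of proper ancestors of $v$ on $\omega(1)$, which are disjoint from the left subtree of $v$. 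Hence nothing in the left subtree of $v$ has been used yet, $\s(v)$ is available and is $\prec$-highest among available left vertices, so $m_s = \s(v) \in \omega(1)$. (If $v$ has no son, then the whole segment $[a, v-1]$ is empty, meaning $v$ is the leftmost vertex, $K^{(v)} = K^{(1)}$, and the chain terminates.)

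The main obstacle I anticipate is bookkeeping the phrase ``does not belong to any of already defined vertices of $T(\widetilde{K})$'' precisely: one must verify that when we are at $V \in \omega(K^{(1)})$, the set of already-placed vertices, intersected with the points lying to the left of $V$, is exactly the union of the left subtrees (in $(\overline{n},\prec)$) of the proper ancestors of $v$ along $\omega(1)$, together with $\omega(1) \cap (\text{above } v)$ — and that this union is disjoint from $v$'s own left subtree. This is a clean but slightly fiddly statement about how the partition tree's DFS-like construction sweeps $(\overline{n},\prec)$ from the outside in; I would phrase it as an auxiliary invariant carried through the same induction rather than proving it separately, so that the induction hypothesis supplies both ``leading element of $V$ is in $\omega(1)$'' and ``the placed-so-far region to the left of $v$ avoids the left subtree of $v$.'' With that invariant in hand the step above goes through verbatim, and the dual invariant handles $\omega(K^{(n)})$ and $\omega(n)$.
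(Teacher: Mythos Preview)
Your overall strategy --- induct along the chain of sons $\omega(K^{(1)})$ and show each leading element stays on $\omega(1)$ --- is exactly the paper's. The base case and the observation that $\omega(K^{(1)})$ is a chain of sons are fine.

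The gap is in the inductive step. You claim that if $v\in\omega(1)$ is the leading element of $V$, then the leading element $m_s$ of $\s(V)$ equals $\s(v)$. This is false in general, because $V$ is an \emph{interval} and may already contain $\s(v)$. Concretely: take $(\overline{5},\prec)$ with root $5$, $\s(5)=3$, $\s(3)=1$, $\dau(3)=4$, $\dau(1)=2$, so $\omega(1)=\{5,3,1\}$. With $\widetilde{K}=\{[1],[2,5]\}$ we have $V=K^{(\rt)}=[2,5]$ with leading element $v=5$, but $\s(v)=3\in V$, so $3$ is \emph{not} ``available''; the actual $m_s$ is $1$. Your parenthetical ``to the left of $V$'s interval (more precisely to the left of $v$\ldots)'' is where the conflation happens: these are not the same thing, and your proposed invariant (``the placed-so-far region to the left of $v$ avoids the left subtree of $v$'') fails for the same reason --- $V$ itself is placed and can reach into the left subtree of $v$.

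The fix, which is what the paper does, is to work not with the leading element $v$ of $V$ but with the \emph{minimal} element $x$ of the interval $V$. One then argues: if $x\in\omega(1)$ the next leading element is $\s(x)\in\omega(1)$; if $x\notin\omega(1)$, the unique $\prec$-lowest ancestor $y\in\omega(1)$ with $y<x$ is the highest uncovered vertex to the left of $V$, hence $m_s=y\in\omega(1)$. Either way the induction goes through.
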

 \begin{proof} Without loss of generality we consider $\omega(K^{(1)})$. We induct on the level of a vertex $K\in\omega(K^{(1)})$. The root of $\omega(K^{(1)})$ always contains $\rt\in\omega(1)$. Therefore, the  induction base holds. Suppose $K\in\omega(K^{(1)})$ and the assumption holds: the leading element of $K$ is $a\in\omega(1)$.  If $1\in K$ there is nothing to prove. Suppose $1\notin K$. Then $a>1$. Let $x$ be the minimal element in $K$. Assume $x\notin \omega(1)$. By the construction of the order-preserving tree there exists  a unique minimal ancestor $y\in\omega(1)$ with $y<x$. Clearly, other  ancestors on the left are lower than $y$.  Thus, $y$ is the leading element of $\s(K)$. If $x\in \omega(1)$ then $\s(x)\in\omega(1)$ is a unique vertex satisfying Definition~\ref{def_part_tree}.

 The proof for $\omega(K^{(n)})$ is dual.
 \end{proof}
 \begin{lem}\label{lemma_path_to_path}
 If there exists a  homomorphism $f\colon T(\widetilde{K})\to (\overline{n},\prec)$ then
 $ \omega(K^{(1)})\stackrel{f}{\hookrightarrow}  \omega(1)$ and $ \omega(K^{(n)})\stackrel{f}{\hookrightarrow}  \omega(n)$. Moreover,
   if $K\notin\omega(K^{(1)})$ then $Kf\notin\omega(1)$;  if $K\notin\omega(K^{(n)})$ then $Kf\notin\omega(n)$.
 \end{lem}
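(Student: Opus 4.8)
The plan is to prove the two halves symmetrically, so I would treat $\omega(K^{(1)})$ and $\omega(1)$ in detail and remark that $\omega(K^{(n)})$, $\omega(n)$ are handled dually. The key structural fact to exploit is that, in the order-preserving tree $(\overline{n},\prec)$, the path $\omega(1)$ consists (apart from the root) entirely of male vertices, and dually $\omega(n)$ consists of female vertices; moreover $\omega(1)$ is precisely the set of vertices $v$ for which $\Gamma_l(v)=\emptyset$ (i.e. $v$ has no left subtree, equivalently the left canonical bound of $v$ equals $v$ itself in the sense that nothing lies strictly $\prec$-below it on the left), and analogously $\omega(n)$ is the set of vertices with $\Gamma_r(v)=\emptyset$. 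Since $f$ is a homomorphism of trees in the sense of the definition, it preserves the parent--child relation and the genders of non-root vertices. The first observation is that $f$ necessarily sends the root $K^{(\rt)}$ of $T(\widetilde{K})$ to $\rt$, and then, walking down from the root, $f$ must send the son-chain of $T(\widetilde{K})$ emanating from the root into the son-chain of $(\overline{n},\prec)$ emanating from the root, because genders are preserved. But the son-chain from the root in $(\overline{n},\prec)$ is exactly $\omega(1)$ minus the root, while by Lemma~\ref{lemma leadings of omega} the vertices of $\omega(K^{(1)})$ are exactly those obtained by repeatedly taking sons from the root of $T(\widetilde{K})$ — more precisely, $\omega(K^{(1)})$ is the path from $K^{(1)}$ to $\rt$, and each non-root step on it is a ``son'' step by the definition of the partition tree together with Lemma~\ref{lemma leadings of omega}. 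Hence $f$ carries $\omega(K^{(1)})$ into $\omega(1)$, which is the first assertion, and its restriction is a gender- and parent-preserving injection, i.e. $\omega(K^{(1)})\stackrel{f}{\hookrightarrow}\omega(1)$.

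For the ``moreover'' part, suppose $K\notin\omega(K^{(1)})$ but $Kf\in\omega(1)$, aiming for a contradiction. Consider the path in $T(\widetilde{K})$ from the root to $K$. Since $K$ is not on $\omega(K^{(1)})$, this path contains at least one ``daughter'' step: there is a first vertex $K'$ on the path that is the daughter of its parent $K''$. Then $K''\in\omega(K^{(1)})$ (it lies on the initial son-segment), so $K''f\in\omega(1)$ by the first part; and $K'f$ is the daughter of $K''f$ because $f$ preserves the parent--child relation and the gender of the non-root vertex $K'$. But a vertex of $\omega(1)$ other than the root has no daughter lying on $\omega(1)$ — its daughter, if any, starts a new subtree disjoint from $\omega(1)$ except at $K''f$ itself (and $K''f\ne\rt$ is impossible to be revisited). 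Hence $K'f\notin\omega(1)$. Now continue down the path from $K'$ to $K$: at each further step $f$ again preserves parenthood and gender, and once one has left $\omega(1)$ one can never re-enter it, since $\omega(1)$ is a downward-closed path from the root and a child of a vertex off $\omega(1)$ is again off $\omega(1)$. Therefore $Kf\notin\omega(1)$, contradiction. The dual argument gives the statement for $\omega(K^{(n)})$ and $\omega(n)$.

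The step I expect to be the main obstacle is making precise and airtight the claim that ``once $f$ leaves $\omega(1)$ it cannot come back'' — i.e. that a child (son or daughter) in $(\overline{n},\prec)$ of a vertex not on $\omega(1)$ is again not on $\omega(1)$. This is geometrically obvious from the diagram (leaving $\omega(1)$ means passing into a subtree hanging to the right of the leftmost branch, and all its descendants stay there), but to write it cleanly I would phrase $\omega(1)$ via canonical bounds: $v\in\omega(1)$ iff the left canonical bound of $v$ is $1$ and no vertex lies strictly between $1$ and $v$ in $\prec$-order on the left — equivalently $v\in\omega(1)$ iff every proper ancestor $w$ of $v$ satisfies $w<v$, i.e. $v$ is reached from the root by son-steps only. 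With that characterization, ``child of an off-$\omega(1)$ vertex is off $\omega(1)$'' is immediate, since any descendant of such a vertex has that vertex as a proper ancestor lying to its right or as a non-son ancestor. The rest of the argument is then bookkeeping on paths in finite trees.
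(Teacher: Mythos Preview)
Your proposal is correct and follows essentially the same approach as the paper: both use that $f$ preserves genders and parent--child relations, so the pure-son chain $\omega(K^{(1)})$ is carried into the pure-son chain $\omega(1)$, and for the ``moreover'' part both locate a female vertex on the path from the root to $K$ and derive a contradiction.

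The only difference worth noting is that the paper's contradiction for the second part is a bit more direct than yours. Rather than tracking the path step by step and arguing ``once $f$ leaves $\omega(1)$ it cannot re-enter,'' the paper simply observes that if $Kf\in\omega(1)$ then every ancestor of $Kf$ lies in $\omega(1)$ (since $\omega(1)$ is a path from the root), hence every non-root ancestor of $Kf$ is male; but the female ancestor $F$ of $K$ yields a female non-root ancestor $Ff$ of $Kf$, contradiction. This sidesteps entirely the ``main obstacle'' you flagged: you never need the statement that children of off-$\omega(1)$ vertices stay off $\omega(1)$, only the trivial dual statement that ancestors of on-$\omega(1)$ vertices stay on $\omega(1)$. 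Your extended discussion of canonical bounds at the end is therefore unnecessary.
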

 \begin{proof}
 Without loss of generality we assume $\rt\ne 1$ and consider  the left subtrees $\omega(1)$ and $\omega(K^{(1)})$. It is easy to see that  $|\omega(K^{(1)})|\leq|\omega(1)|$ by  Definition~\ref{def_part_tree}.  We induct on the level  of a vertex $K\in \omega(K^{(1)})$. If $f$ is a homomorphism then we have $K^{(\rt)}f=\rt$. Let the  assumption holds  for first $q+1$ levels of $\omega(K^{(1)})$ and $Kf=x$ with $K\in U_{q}(\omega(K^{(1)}))$, $x\in \omega(1)$. Since $f$ preserves the genders and parent-child relations, we have  $\s(K)f=\s(x)$ and first statement of the lemma for the left subtrees is proved.
 \par Assume the converse: let $K\notin\omega(K^{(1)})$ and  $Kf=y$ for $y\in \omega(1)$. Then  $K$ is a male vertex. Let $P\in \omega(K^{(1)})$ be the lowest ancestor of $K$.   Thus, there exists female ancestor $F\in T(\widetilde{K})$ of $K$  and $K\prec F\prec P$. Therefore, we get that the female  vertex $Ff$ is an ancestor of $Kf\in\omega(1)$ which is impossible.

If $\rt=n$ then it remains nothing to prove. Otherwise the proof is dual.
 \end{proof}
\begin{lem}\label{lemma_korrektnost'} For each decreasing tree $(\overline{n},\prec)$, a convex partition $\widetilde{K}$ and an interval tree $T(\widetilde{K})$ associated with $(\overline{n},\prec)$, there exists a unique homomorphism $f\colon T(\widetilde{K})\to (\overline{n},\prec)$.

\end{lem}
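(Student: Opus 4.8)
The plan is to construct the homomorphism $f\colon T(\widetilde{K})\to(\overline{n},\prec)$ recursively, following the inductive structure of Definition~\ref{def_part_tree}, and to verify separately that the recursion never gets stuck and that the resulting map is forced (uniqueness). First I would set $K^{(\rt)}f=\rt$; this is the only choice since any homomorphism must send the root to the root. For the inductive step, suppose $V$ is a vertex of $T(\widetilde{K})$ with $Vf=x\in(\overline{n},\prec)$ already defined; I must define $\s(V)f$ and $\dau(V)f$ whenever those children exist. The natural attempt is $\s(V)f:=\s(x)$ and $\dau(V)f:=\dau(x)$, i.e. send gender-children to gender-children; this is again forced, since $f$ must preserve genders and the parent--child relation, so the only thing to check is that $\s(x)$ (resp. $\dau(x)$) actually \emph{exists} in $(\overline{n},\prec)$ whenever $\s(V)$ (resp. $\dau(V)$) exists in $T(\widetilde{K})$, and that the resulting map is injective.

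The existence-of-the-child issue is where I expect the real work to be, and it is exactly what the three preceding lemmas are designed to handle. I would split into cases according to whether $V\in\omega(K^{(1)})$, $V\in\omega(K^{(n)})$, or neither. If $V\in\omega(K^{(1)})$ and $\s(V)$ exists, then Lemma~\ref{lemma leadings of omega} tells us the leading element of $\s(V)$ lies on $\omega(1)$, which forces $x$ to be a non-root vertex of $\omega(1)$ possessing a son in $(\overline{n},\prec)$; combined with the inductive hypothesis $Vf=x\in\omega(1)$ and the fact (noted in the text before Lemma~\ref{lemma leadings of omega}) that the subtree on $\omega(1)$ consists entirely of male vertices, one gets $\s(x)$ defined. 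For the symmetric branch $V\in\omega(K^{(n)})$ one uses $\omega(n)$ dually. For $V\notin\omega(K^{(1)})\cup\omega(K^{(n)})$, a child $\s(V)$ or $\dau(V)$ of $V$ corresponds, via the partition-tree construction, to a genuine ``inner'' splitting of the interval spanned by $V$, and the decreasing condition (Definition~\ref{def_ord_pres}, clauses 1--3) together with the subordination relations on inner trees guarantees that the corresponding child of $x$ exists in $(\overline{n},\prec)$; here I would phrase the argument in terms of the inner trees $\Gamma_l(x),\Gamma_r(x)$ and the fact that a nonempty inner subtree on the $T(\widetilde{K})$ side must be subordinated by a nonempty inner subtree on the $(\overline{n},\prec)$ side, so the relevant vertex of $(\overline{n},\prec)$ is present. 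Injectivity of $f$ is then automatic: distinct vertices of $T(\widetilde{K})$ have distinct paths of genders from the root, and $f$ preserves these paths, so their images are distinct.

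Finally, uniqueness is essentially built into the construction: at every stage the value of $f$ on a vertex is dictated by the requirements that $f$ fix the root and preserve genders and the parent--child relation, so any two homomorphisms agree by induction on the level. I would therefore state the proof as: (i) the map defined by the recursion above is well-defined, using Lemmas~\ref{lemma leadings of omega} and~\ref{lemma_path_to_path} plus the decreasing hypothesis to see that the needed children always exist; (ii) it is a homomorphism by construction; (iii) it is the unique one by the forced-choice argument. The main obstacle, and the step I would write out most carefully, is (i) in the case $V\notin\omega(K^{(1)})\cup\omega(K^{(n)})$, where one genuinely needs the subordination conditions of the decreasing tree rather than just Lemma~\ref{lemma_path_to_path}; the path cases reduce cleanly to the two path-lemmas, but the ``interior'' case is where the hypothesis that $(\overline{n},\prec)$ is \emph{decreasing} (not merely order-preserving) is used in an essential way, mirroring the failure exhibited for $T(4)$ in the earlier example.
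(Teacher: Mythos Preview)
Your overall strategy matches the paper's: induct on the level of $V\in T(\widetilde{K})$, set $K^{(\rt)}f=\rt$, and at each step force $\s(V)f=\s(x)$, $\dau(V)f=\dau(x)$, with uniqueness immediate and all the work in showing the required child of $x$ exists. That is exactly how the paper proceeds.

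Where your sketch is thin is precisely the place you flag as ``the main obstacle.'' The paper organises the existence check not around $V$ and the leading element of $\s(V)$, but around $x=Vf$ and the leading element $y$ of $V$ \emph{itself}; the key observation is that $\lev(y)\ge\lev(x)$, and the argument splits on whether $x\in K$ (so $x=y$ and children of $K$ correspond exactly to children of $x$) or $x\notin K$. In the latter case, for $K\in\omega(K^{(1)})$ one uses Lemmas~\ref{lemma leadings of omega} and~\ref{lemma_path_to_path} to place both $x$ and $y$ on $\omega(1)$ and then invokes clause~1 of the decreasing definition to compare $\Gamma_r(y)$ with $\Gamma_r(x)$. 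Your version (``the leading element of $\s(V)$ lies on $\omega(1)$, which forces $x$ \dots\ to possess a son'') is not quite the right chain of implications; it is the comparison of $y$ with $x$ that does the work.

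The genuine gap is the interior case $K\notin\omega(K^{(1)})\cup\omega(K^{(n)})$. You assert that the subordination clauses of Definition~\ref{def_ord_pres} suffice, but clause~3 only compares inner trees of a vertex and its \emph{ancestors}. When $y\prec x$ this is immediate, but in general $y$ need not be a descendant of $x$: the paper has to treat separately the cases where $y$ and $x$ have distinct lowest ancestors on $\omega(1)$ (chaining through $\Gamma_r(a)\hookrightarrow\Gamma_r(b)$), and where they share a lowest ancestor $a\in\omega(1)$. The last subcase is delicate: one must locate the lowest common ancestor $t$ of $x$ and $y$ off $\omega(1)$, use the inductive hypothesis to find $K'$ with $K'f=t$, and argue via the leading element $t'$ of $K'$ that $t'\prec t$, finally obtaining $\Gamma_r(t')\hookrightarrow\Gamma_r(t)$. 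Without this subcase analysis, the step ``the decreasing condition guarantees the corresponding child of $x$ exists'' is an assertion rather than an argument.
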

\begin{proof}

We construct a  homomorphism $f\colon T(\widetilde{K})\to (\overline{n},\prec)$ by induction on the level of $K_q\in T(\widetilde{K})$. Since $K^{(\rt)}$ is always can be defined, the base clearly holds: $K^{(\rt)}f=\rt$.

\par Let $k$ be the depth of $T(\widetilde{K})$. Suppose  $f$ is well-defined  for all vertices of $T(\widetilde{K})$ whose level is less than $q+1$, $0\leq q+1<k$. Let $Kf=x$ and $K\in U_{q}$. For the concreteness assume that $x<\rt$.
 Denote by  $y$ the leading element of the interval $K$. Note that by the construction of the partition tree we have $\lev(y)\geq\lev(x)$. There are following cases to consider.

\textit{ Case 1.} Suppose $x\in K$. Since the level of the leading element of the interval is the least one of the elements in $K$ and $\lev(y)\geq\lev(x)$, we get that  $x$ is the leading element of $K$. Thus,  $K$ has the son (the daughter)   in $T(\widetilde{K})$ whenever there exists the son (the daughter) of $x$. Therefore if $\s(K_q)$, $\dau(K_q)$ exist then $\s(K_q)f=\s(x)$, $\dau(K_q)f=\dau(x)$ as required.


\textit{ Case 2.} $x\notin K$. Thus we have $x\ne\rt$ and $\lev(y)> \lev(x)$.

 a) If $K\in \omega(K^{(1)})$ then by Lemma~\ref{lemma_path_to_path} $x\in \omega(1)$. Since $K\in \omega(K^{(1)})$, then by Lemma~\ref{lemma leadings of omega} we have $y\in \omega(1)$. The tree is decreasing, therefore right inner tree of $y$ subordinates the right inner tree of $ x$. Thus  $K$ has a child (children) whenever $y$ has a child (children), which in turn holds whenever  $x$ has a child (children). Thus $\s(K)f=\s(x)$, $\dau(K)f=\dau(x)$ are defined and unique.

b) Suppose $K\notin \omega(K^{(1)})$, whence $y\notin \omega(1)$. By Lemma \ref{lemma_path_to_path} it holds also $x\notin \omega(1)$.

\par 1.   Let $y\prec x$. Since $(\overline{n},\prec)$ is decreasing, the inner trees of $y$ subordinate the respective inner trees  of $x$. Hence, $\s(K_q)f$, $\dau(K_q)f$ are well-defined and unique.

\par 2.    Assume $y\nprec x$. Denote by $a, b\in \omega(1)$, $a
 \ne b$ the lowest ancestors of $y$ and $x$ respectively. We have $a\prec b$  and $\Gamma_r(K^{(a)})\hookrightarrow \Gamma_r(a)\hookrightarrow\Gamma_r(b)$. Thus $\s(K)f=\s(x)$, $\dau(K)f=\dau(x)$ are well-defined.

\par 3. Let  $y\nprec x$ and  $x,y$ have the same lowest ancestor, say $a\in \omega(1)$.  Denote by $t$ the lowest common ancestor of $x,y$ with $t\ne a$. Note that there always  exists at least one  ancestor $\s(a)$ of $x,y$ which satisfies these conditions.

\par Again, without loss of generality for concreteness we assume $y<t<x$. Furthermore, by the inductive assumption the pre-images of $\p(x),\ldots, t\in\omega(x)$ have been already defined. Let $K'f=t$. Since $f$ is a homomorphism of order-preserving trees, $f$ is order-preserving itself. Thus, $K'f=t<Kf=x$ implies $t'<y$ for the leading element $t'$ of $K'$. The level of $t'$ cannot be less than $t$, therefore $t'\prec t$.
Then we have $\Gamma_r(t')\hookrightarrow\Gamma_r(t)$, whence $\s(K)f=\s(x)$, $\dau(K)f=\dau(x)$ are well-defined.
 \end{proof}

\begin{definition}\label{def_varphi} Let $(\overline{n},\prec)$ be a decreasing  tree. Fix  a convex partition $\widetilde{K}$ of~$\overline{n}$ into $m$ blocks, $1\leq m\leq n$. Let $f: T(\widetilde{K})\to (\overline{n},\prec)$ be a homomorphism. Denote by \textbf{$\varphi^{\widetilde{K}}$} the  transformation of $\overline{n}$ with the partition $\widetilde{K}$ such that  $K_i\varphi^{\widetilde{K}}=\{x\varphi^{\widetilde{K}}\mid x\in K_i\}=K_if$ for all  $K_i\in \widetilde{K}$.
\end{definition}

\begin{example} Let $T_3(5)$,  $\widetilde{K}$ be as in Example~\ref{ex_partition_tree}. Since  $\{3,4\}f=3$,  $\{1,2\}f=\s(3)=1$, $\{5\}f=\dau(3)=4$, we have
$\varphi^{\widetilde{K}}=\left(
\begin{array}{ccc}
\{12\}&\{34\}&\{5\}\\
1    &3     &4
\end{array}
\right)$.


\begin{table}
\caption{Semigroup $\Phi_{\prec}$ for $T_3(5)$}
\label{SemPhi}
\begin{tabular}{c|c||c|c}
$\widetilde{K}$& $\varphi^{\widetilde{K}}_{\prec}$ &$\widetilde{K}$& $\varphi^{\widetilde{K}}_{\prec}$\\
\hline
\hline
$\{1\}\{2\}\{3\}\{4\}\{5\}$&
$\left(
\begin{array}{ccccc}
\{1\}&\{2\}&\{3\}&\{4\}&\{5\}\\
1&2&3&4&5
\end{array}
\right)$&  $\{12345\}$&
$\left(
\begin{array}{c}
\{12345\}\\
3
\end{array}\right)$ \\
\hline\hline
$\{1\}\{2\}\{3\}\{45\}$&
$\left(\begin{array}{cccc}
\{1\}&\{2\}&\{3\}&\{45\}\\
1     &2   &3    &4
\end{array}
\right)$&$\{12\}\{345\}$&
$\left(
\begin{array}{ccc}
\{12\}&\{34\}&\{5\}\\
1&    3   & 4
\end{array}
\right)$\\
\hline
$\{1\}\{2\}\{34\}\{5\}$&
$\left(\begin{array}{cccc}
\{1\}&\{2\}&\{34\}&\{5\}\\
1    &2    &3&    5
\end{array}
\right)$&$\{123\}\{45\}$&
$\left(
\begin{array}{cc}
\{123\}&\{45\}\\
3&4
\end{array}
\right)$ \\
\hline
$\{1\}\{23\}\{4\}\{5\}$&
$\left(\begin{array}{cccc}
\{1\}&\{23\}&\{4\}&\{5\}\\
1    &3    &4&5
\end{array}
\right)$&$\{1234\}\{5\}$&
$\left(
\begin{array}{cc}
\{1234\}&\{5\}\\
3&4
\end{array}
\right)$ \\
\hline
$\{12\}\{3\}\{4\}\{5\}$&
$\left(\begin{array}{cccc}
\{12\}&\{3\}&\{4\}&\{5\}\\
1&3&4&5
\end{array}
\right)$&
$\{1\}\{2345\}$&
$\left(
\begin{array}{cc}
\{1\}&\{2345\}\\
1&3
\end{array}
\right)$
 \\
\hline
\hline
$\{1\}\{23\}\{45\}$&
$\left(
\begin{array}{ccc}
\{1\}&\{23\}&\{45\}\\
1&3&4
\end{array}
\right)$&
$\{1\}\{234\}\{5\}$&
$\left(
\begin{array}{ccccc}
\{1\}&\{234\}&\{5\}\\
1&3&4
\end{array}
\right)$ \\
\hline
$\{12\}\{3\}\{45\}$&
$\left(
\begin{array}{ccc}
\{12\}&\{3\}&\{45\}\\
1&3&4
\end{array}
\right)$&
$\{12\}\{34\}\{5\}$&
$\left(
\begin{array}{ccc}
\{12\}&\{34\}&\{5\}\\
1&3&4
\end{array}
\right)$ \\
\hline
$\{123\}\{4\}\{5\}$&
$\left(
\begin{array}{ccc}
\{123\}&\{4\}&\{5\}\\
3&4&5
\end{array}
\right)$ &$\{1\}\{2\}\{345\}$&
$\left(
\begin{array}{ccc}
\{1\}&\{2\}&\{345\}\\
1&2&3
\end{array}
\right)$\\
\hline
\end{tabular}
\end{table}
\end{example}

We denote by $\Phi_{\prec}$ the set of  transformations $\varphi^{\widetilde{K}}$ for all possible convex partitions  $\widetilde{K}$  of $\overline{n}$. The  set $\Phi_{\prec}$ for $T_3(5)$ is presented in  Table \ref{SemPhi}.

\begin{lem}\label{lemma_Phi_is_R}
 The set $\Phi_{\prec}$ constitutes an $\R$-cross-section of semigroup $\On_n$.
\end{lem}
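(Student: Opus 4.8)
The plan is to verify two things: first, that $\Phi_{\prec}$ contains exactly one transformation in each $\R$-class of $\On_n$; second, that $\Phi_{\prec}$ is closed under composition. The first point is essentially built into the construction. Recall that by item a) of the Green-relations list, two transformations in $\On_n$ are $\R$-related iff they have the same kernel, and the kernels of order-preserving transformations are precisely the convex partitions of $\overline{n}$. For every convex partition $\widetilde{K}$, Lemma~\ref{lemma_korrektnost'} supplies a \emph{unique} homomorphism $f\colon T(\widetilde{K})\to(\overline{n},\prec)$, and Definition~\ref{def_varphi} turns this $f$ into a transformation $\varphi^{\widetilde{K}}$ whose kernel is exactly $\widetilde{K}$. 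So $\Phi_{\prec}$ meets each $\R$-class; uniqueness of $f$ gives that it meets each $\R$-class at most once. I would also check here the routine point that $\varphi^{\widetilde{K}}$ is genuinely order-preserving: since $f$ is a tree homomorphism it preserves the $\prec$-order and the genders, and one reads off from the diagram that consecutive blocks $K_i\prec^{?} K_{i+1}$ get images consistent with $\leq$ — more precisely, the leading element of $K_{i+1}$ is a son or daughter placed, in the diagram, to the appropriate side, so $x<y$ with $x\in K_i$, $y\in K_{i+1}$ forces $x\varphi^{\widetilde{K}}\le y\varphi^{\widetilde{K}}$. This uses the canonical-bounds bookkeeping of Definition~\ref{def_can_b}.

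The substantive part is closure: given convex partitions $\widetilde{K}$ and $\widetilde{L}$, I must show $\varphi^{\widetilde{K}}\varphi^{\widetilde{L}}=\varphi^{\widetilde{M}}$ where $\widetilde{M}=\ker(\varphi^{\widetilde{K}}\varphi^{\widetilde{L}})$. First note $\widetilde{M}$ is again a convex partition (the kernel of a composite of order-preserving maps is convex), so $\varphi^{\widetilde{M}}\in\Phi_{\prec}$ is defined; the product and $\varphi^{\widetilde{M}}$ automatically have the same kernel, so by the uniqueness just established it suffices to show the product lies in $\Phi_{\prec}$ at all — equivalently, that the composite, viewed as a map on $T(\widetilde{M})$, equals the unique homomorphism $T(\widetilde{M})\to(\overline{n},\prec)$. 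The cleanest route is: let $g\colon T(\widetilde{K})\to(\overline{n},\prec)$ and $h\colon T(\widetilde{L})\to(\overline{n},\prec)$ be the homomorphisms defining the two factors. The image $\im(\varphi^{\widetilde{K}})=\im(g)$ is a $\prec$-closed subset of $\overline{n}$ (it is the homomorphic image of a subtree, hence a subtree of $(\overline{n},\prec)$ containing $\rt$). Composing, each block of $\widetilde{M}$ is a union of blocks of $\widetilde{K}$, and $\varphi^{\widetilde{K}}\varphi^{\widetilde{L}}$ sends it into $\overline{n}$ through $h$ restricted to $\im(g)$. I would argue that the induced map on $T(\widetilde{M})$ is a tree homomorphism by checking it preserves the parent–child relation and genders, using that both $g$ and $h$ do and that the partition-tree construction (Definition~\ref{def_part_tree}) is "functorial" with respect to coarsening a convex partition along an order-preserving map. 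Concretely one tracks leading elements: the leading element of an $\widetilde{M}$-block maps under the composite to the leading element of the corresponding $\im(g)$-subtree, then via $h$ to a vertex of $(\overline{n},\prec)$, and Lemmas~\ref{lemma leadings of omega} and~\ref{lemma_path_to_path} control what happens on the spines $\omega(1)$, $\omega(n)$ versus off them — the three cases of the decreasing condition (Definition~\ref{def_ord_pres}) are exactly what is needed to ensure the requisite subordination of inner trees survives composition.

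I expect the main obstacle to be precisely this last verification: showing the composite restricted to $T(\widetilde{M})$ really is the partition-tree homomorphism. The danger is that a block of $\widetilde{M}$ might have a leading element whose $g$-image is not a leading element of anything convenient, so one has to re-run the case analysis of the proof of Lemma~\ref{lemma_korrektnost'} (Cases 1, 2a, 2b with its subcases $y\prec x$, $y\nprec x$ with equal or distinct lowest ancestors) relative to the composed map rather than a single $f$. I would organize this by first proving a lemma of the form: if $f\colon T(\widetilde{K})\to(\overline{n},\prec)$ is the homomorphism and $\widetilde{M}$ is a convex coarsening of $\widetilde{K}$ such that the partition of $\im(f)$ induced by $\widetilde{M}$ is itself realized by a homomorphism from the partition tree of that induced partition (w.r.t. the subtree $\im(f)$), then $\varphi^{\widetilde{M}}$ factors correctly; and then apply it with $\widetilde{K}$ replaced by the blocks of $\widetilde{K}$ grouped according to $\varphi^{\widetilde{L}}$. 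Once closure is in hand, $\Phi_{\prec}$ is a subsemigroup meeting each $\R$-class exactly once, i.e. an $\R$-cross-section, which is the claim. As a sanity check I would confirm the construction reproduces Table~\ref{SemPhi} for $T_3(5)$ and that dense and dual cross-sections arise from the appropriate caterpillar-shaped decreasing trees.
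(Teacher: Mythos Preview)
Your outline is correct and follows the same overall strategy as the paper: verify one element per $\R$-class via the uniqueness in Lemma~\ref{lemma_korrektnost'}, check that $\varphi^{\widetilde{K}}$ is order-preserving, and then prove closure by showing that the composite $\alpha\beta$ coincides with $\varphi^{\widetilde{M}}$ for $\widetilde{M}=\overline{n}/\ker(\alpha\beta)$, arguing by induction on the level of $T(\widetilde{M})$.

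The one place where you diverge from the paper is in the closure argument, and here you make it harder than necessary. You anticipate having to re-invoke the decreasing hypothesis (Definition~\ref{def_ord_pres}) and re-run the full case split of Lemma~\ref{lemma_korrektnost'} for the composite. The paper does not do this. Its inductive step is short and uses only that $\alpha$ and $\beta$ are already induced by tree homomorphisms: if $S=\s(P)$ in $T(\widetilde{M})$ with leading element $m$, then $S\alpha\beta=(m\alpha)\beta$; since $\alpha$ is a tree homomorphism one has $\p(m\alpha)\in\im(\alpha)$, and one checks $\p(m\alpha)$ cannot lie in the same $\widetilde{B}$-block as $m\alpha$ (else $m$ would not be the leading element of $S$). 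Hence $P\alpha\beta=(\p(m\alpha))\beta$, and since $B^{(m\alpha)}=\s(B^{(\p(m\alpha))})$ in $T(\widetilde{B})$, applying $\beta\in\Phi_\prec$ gives $(m\alpha)\beta=\s((\p(m\alpha))\beta)$, i.e.\ $S\alpha\beta=\s(P\alpha\beta)$. No appeal to the decreasing condition, to Lemmas~\ref{lemma leadings of omega}--\ref{lemma_path_to_path}, or to any ``functoriality'' lemma about coarsenings is needed. Your route would work, but the paper's is considerably cleaner; the decreasing property has done its job in Lemma~\ref{lemma_korrektnost'} and need not reappear.
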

\begin{proof} Let $(\overline{n},\prec)$ be a fixed order-preserving tree on $\overline{n}$, $T(\widetilde{K})$ be the order-preserving tree of a fixed partition $\widetilde{K}$ of $\overline{n}$. Since no other decreasing tree appears in the proof, an element of  $\Phi_\prec$ is  completely determined by the partition $\widetilde{K}$ of $\overline{n}$. Thus we will write $\varphi^{\widetilde{K}}$ for brevity. Let $K\in \widetilde{K}$, and $K\varphi^{\widetilde{K}}=v$. Then according to Definition~\ref{def_varphi}
\begin{gather*}
\s(K)=K'\Longleftrightarrow K'\varphi^{\widetilde{K}}=\s(v),\\
\dau(K)=K''\Longleftrightarrow K''\varphi^{\widetilde{K}}=\dau(v),
\end{gather*}
where $K',K''\in \widetilde{K}$. Since both trees are order-preserving, it holds $\varphi^{\widetilde{K}}\in\On_n$.

 By the construction $\Phi_{\prec}$ contains exactly one representative from each $\R$-class of $\On_n$. It remains to show that $\Phi_{\prec}$ constitutes a semigroup.
 \par Let $A$, $B$ be the convex partitions of $\overline{n}$ into $m$ and $t$ intervals respectively,  $\alpha=\varphi^{\widetilde{A}}$ and $\beta=\varphi^{\widetilde{B}}\in~\Phi_{\prec}$. Denote by $\widetilde{AB}$  the partition $\overline{n}/\ker{(\alpha\beta)}$ consisting of $k$ blocks:
  $$\widetilde{AB}=\{(\im{\alpha}\cap B_i)\alpha^{-1}\mid \im{\alpha}\cap B_i\ne\emptyset,\ B_i\in \widetilde{B}, 1\leq i\leq t\}.$$

We claim that  $\alpha\beta=\varphi^{\widetilde{AB}}$. First note that $\overline{n}/\ker(\alpha\beta)=\widetilde{AB}$.  We will show that $P(\alpha\beta)=P\varphi^{\widetilde{AB}}$ for all $P\in\widetilde{AB}$. We proceed by induction on level $q$ of a vertex $P\in T(\widetilde{AB})$.

 Let $q=0$. Clearly, the root of $T(\widetilde{AB})$ contains the root of $(\overline{n},\prec)$. By the definition  $\rt\alpha=\rt\beta=\rt$. If $P^{(\rt)}$ denotes the root of $T(\widetilde{AB})$  then $P^{(\rt)}\alpha\beta=(\rt)\alpha\beta=\rt=P^{(\rt)}\varphi^{\widetilde{AB}}$  as required.

 Assume that $P(\alpha\beta)=P\varphi^{\widetilde{AB}}$ for all $P$ in $q$-th level of $T(\widetilde{AB})$ for a natural $q$ with $q<s<k$.
 If $P$ has no children there is nothing to prove. Without loss of generality assume that $S\in T(\widetilde{AB})$ is the son of $P$. Our aim to show that $S\alpha\beta$ is the son of $P\alpha\beta.$
 \par Denote by $m$ the leading element of $S$.  Clearly,  $S=A_{k_1}\cup A_{k_2}\cup\ldots\cup A_{k_j}$ for $A_{k_i}\in \widetilde{A}$ with $A_{k_i}\alpha\in B^{(m\alpha)}$. Thus $S\alpha\beta=(m\alpha)\beta$. Note that the parent $\p(m\alpha)\in\im(\alpha)$ since $\alpha$ is induced by a homomorphism of trees. Furthermore, assume that $\p(m\alpha)\in B^{(m\alpha)}$. Again, by the definition of $\Phi_{\prec}$ and since $m\alpha\prec \p(m\alpha)$, we get then a contradiction with $m$ is the leading element of $S$. Therefore, $P$ contains the interval $A\in \widetilde{A}$ with $A\alpha=\p(m\alpha)$. So, we get $P\alpha\beta=(\p(m\alpha))\beta$.

 On the other hand, it is clear that $B^{(m\alpha)}$ is the son of $B^{(\p(m\alpha))}$ in $T(\widetilde{B})$. Thus $\beta\in \Phi_{\prec}$ implies $(m\alpha)\beta$ is the son of $(\p(m\alpha))\beta$, whence we get immediately that $S\alpha\beta$ is the son of $P\alpha\beta$. Therefore $\alpha\beta=\varphi^{\widetilde{AB}}$ as required.
 \end{proof}
\par The  converse statement also holds: each $\R$-cross-section of $\On_n$ coincides with  $\Phi_{\prec}$ for a decreasing  binary tree.

\subsection{Proof of the  converse statement}

First we need several auxiliary results.
Consider an $\R$-cross-section $R$ of $\On_n$. We begin with  simple properties of $R$.

\begin{lem}\label{lemma_nep_t} The following statements hold:
 \begin{enumerate}
 \item[(i)] Each $\R$-cross-section  of $\On_n$ has a fixed point.
 \item[(ii)] An $\R$-cross-section of $\On_n$ has at most two fixed points: $1$ and $n$.
 \end{enumerate}
\end{lem}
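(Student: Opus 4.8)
The plan is to extract the fixed point(s) from the interaction between idempotents and the semigroup structure of $R$. First I would observe that an $\R$-cross-section contains exactly one idempotent in each $\R$-class, because the $\R$-class of an idempotent $e$ is a right zero semigroup on its $\Hg$-classes only up to the $\Lg$-coordinate; more precisely, each $\R$-class of $\On_n$ contains at least one idempotent (every principal right ideal is generated by an idempotent since $\On_n$ is regular), and whichever representative $R$ picks in that $\R$-class must be idempotent when the $\R$-class is the minimal one. Concretely, the constant transformations form a single $\D$-class of rank $1$; there are $n$ of them, namely $\const_1,\dots,\const_n$, and each is its own $\R$-class (distinct kernels — the full set $\overline n$ is the only block, so actually all constants share the kernel $\{\overline n\}$). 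Hence the rank-$1$ $\R$-class is unique, and $R$ contains exactly one constant, say $\const_x$. Every $f\in R$ then satisfies $f\const_x=\const_x$ automatically, but I want $xf=x$ for non-constant $f$, so I instead use that $\const_x f=\const_{xf}$ lies in $R$, and $\const_{xf}$ has rank $1$, so $\const_{xf}=\const_x$, giving $xf=x$. This proves (i): the unique constant's image point is a fixed point of every non-constant element.

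For (ii), suppose $x$ is the fixed point produced above and suppose $y\ne x$ is another fixed point, i.e. $yf=y$ for all non-constant $f\in R$. I would derive a contradiction unless $\{x,y\}=\{1,n\}$ by exhibiting a specific non-constant order-preserving transformation that cannot fix two interior points once we know it is forced to lie in $R$. The key is that $R$ must contain a representative of every rank-$(n-1)$ $\R$-class, i.e. of every kernel with exactly one non-singleton block, and that block must be $\{i,i+1\}$ for some $i$ by convexity. Pick $i$ with $x\le i<i+1\le y$ (possible since $x<y$, up to swapping); the representative $g\in R$ of the kernel $\{1\},\dots,\{i,i+1\},\dots,\{n\}$ is an order-preserving transformation with image of size $n-1$ that must fix both $x$ and $y$. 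Since $g$ is order-preserving, non-constant, and fixes $x<y$, on the interval $[x,y]$ it restricts to an order-preserving map fixing both endpoints; but its image has size $n-1$, so exactly one value is omitted, and a short case analysis on where the collapsed pair $\{i,i+1\}$ sits relative to $[x,y]$ shows the only way to fix both $x$ and $y$ is $x=1$ and $y=n$ (otherwise some point strictly between is forced to move, or $x$ or $y$ itself is forced to move). Hence at most the two extreme points $1$ and $n$ can be fixed points.

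The main obstacle I anticipate is the case analysis in (ii): making rigorous the claim that a rank-$(n-1)$ order-preserving transformation fixing two points $x<y$ forces $x=1,y=n$. The cleanest route is probably to argue at the level of the unique rank-$(n-1)$ representatives $g_i\in R$ (one for each $i\in\{1,\dots,n-1\}$, collapsing $\{i,i+1\}$) and to note that $g_i$ maps the chain onto $\overline n\setminus\{c_i\}$ for a single omitted value $c_i$; an order-preserving surjection-onto-a-missing-point has an essentially forced form (it is the identity except it ``shifts'' across one gap), so if $g_i$ fixes a point $p$ then $p$ must lie on the ``unshifted'' side. Choosing $i$ between $x$ and $y$ then pins down that $x$ lies below the gap and $y$ above it, and combining the constraints from all relevant $i$ forces $x=1$ and $y=n$. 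I would also double-check the degenerate small cases $n=1,2$ separately, where the statement is vacuous or trivial.
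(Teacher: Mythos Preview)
Your argument for (i) is correct and matches the paper's proof exactly: the unique constant $\const_r\in R$ satisfies $\const_r\alpha=\const_{r\alpha}\in R$ for every $\alpha\in R$, whence $r\alpha=r$.

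For (ii), however, your rank-$(n-1)$ approach has a genuine gap. The claim that a rank-$(n-1)$ order-preserving map fixing two points $x<y$ forces $x=1$, $y=n$ is simply false: in $\On_5$, the map $(1,2,2,4,5)$ collapses $\{2,3\}$, has rank $4$, and fixes both $2$ and $4$. In fact for \emph{every} $i\in\{1,\dots,4\}$ there is a rank-$4$ order-preserving map with kernel block $\{i,i+1\}$ that fixes both $2$ and $4$, so ``combining the constraints from all relevant $i$'' does not by itself yield a contradiction either. To extract one you would have to multiply these elements down to lower rank and use closure of $R$, at which point you are essentially reinventing the paper's argument.

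The paper's proof of (ii) goes straight to rank $2$ rather than rank $n-1$, and this is where the idea becomes trivial. If $x\ne y$ are both fixed by every non-constant element of $R$, then in particular they are fixed by the representative of each $\R$-class with a $2$-block convex kernel $\{[1,k],[k+1,n]\}$; since that representative sends each block to a single point, $x$ and $y$ must lie in different blocks for every $k$. The only pair with this property is $\{1,n\}$. Replace your rank-$(n-1)$ analysis with this one-line rank-$2$ observation and the proof is complete.
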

\begin{proof}
 \par (i)
Let $\const_r\in R$ be a constant transformation. Since for all $\alpha\in R$ it holds $\const_r\alpha=\const_{r\alpha}\in R$, and thus $\const_{r\alpha}=\const_r\alpha$, we get $r\alpha=r$.
\par (ii) Note that the points $1$ and $n$ are the only ones that always belong to different classes of each 2-element convex partition of  $\overline{n}$. Therefore, if $R$ has two fixed points then the only possible ones are $1$ and $n$.
     \end{proof}

\begin{definition}
Given an $\R$-cross-section $R$ of $\On_n$ and its fixed point $r$, construct a strictly increasing chain
\begin{equation}\label{chain}
W_0=\{r\}\subset W_1\subset W_2\subset \ldots \subset W_t=\overline{n}
\end{equation}
 of subsets of $\overline{n}$ by the rule: $$ W_i=W_{i-1}\cup\{x\in\im{(\alpha)}\mid \alpha\in R,\ \ |\im{(\alpha)}|=i+1,\ x\notin W_{i-1}\}, \  1\leq i< n.$$
\end{definition}
In virtue of Lemma~\ref{lemma_nep_t} the chain always has at least two components.
 \par  By  $W_i'$ denote the set $W_0$ if $i=0$ and $W_{i}\setminus W_{i-1}$ if  $1\leq i\leq t$.

Recall that a transversal of a partition is called  a set that contains exactly one representative from each block of the partition.

\begin{definition}\label{def_set_theta} Let $v$ be a vertex of an order-preserving tree $(\overline{n},\prec)$ with  $|\omega(v)|=k$. Denote by $\Theta_k^v\subset \Phi_{\prec}$ the set of all transformations $\theta^v$ such that $\omega(v)$  is a transversal of $\overline{n}/\ker(\theta^v)$.
\end{definition}
We will write also just $\Theta^v$ if the length of the path does not matter for us. Note that by the definition of $\Phi_\prec$, we get immediately $\im(\theta^v)=\omega(v)$ for all $\theta^v\in \Theta^v$. Thus,  $\Theta^v\subset \Phi_{\prec}$ is a  set of full idempotents whose image is $\omega(v)$.  Furthermore, it is easy to see that  $\Theta^v$ is a semigroup of left zeroes for each $v\in(\overline{n},\prec)$.

\begin{lem}\label{lemma_stroenie W} Let $R$ be an $\R$-cross-section  of $\On_n$. Then the sets $W_0$, $W_1'$, $\ldots$, $W_t'$ form the levels of an order-preserving tree such that for every $v\in W_{t}$ it holds $\Theta^v\subset R$ .
\end{lem}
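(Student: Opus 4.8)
The plan is to analyze the chain $W_0\subset W_1\subset\dots\subset W_t=\overline n$ directly, proving three things at once by induction on $i$: (a) each $W_i$ is convex-compatible with a tree structure, i.e.\ the sets $W_0,W_1',\dots,W_i'$ can be arranged as the first $i+1$ levels of an order-preserving tree; (b) for every $v\in W_i$ there is a full idempotent $\theta^v\in R$ with $\im(\theta^v)=\omega(v)$, so that $\Theta^v\subset R$; and (c) the ``bounds'' behave canonically, meaning that the path $\omega(v)$ consists exactly of the elements of $W_0\cup W_1'\cup\dots$ lying on the branch down to $v$. The base case $i=0$ is immediate: $W_0=\{r\}$ where $r$ is a fixed point of $R$ (Lemma~\ref{lemma_nep_t}), so $r$ is the root, and $\const_r\in R$ is the required idempotent with image $\omega(r)=\{r\}$.

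For the induction step, suppose the levels $W_0,\dots,W_{i-1}'$ have been organized into a tree and each $v\in W_{i-1}$ carries its family $\Theta^v\subset R$. Take $x\in W_i'$, so $x$ first appears in the image of some $\alpha\in R$ with $|\im(\alpha)|=i+1$. The key step is to show $x$ has a well-defined parent in $W_{i-1}$. Consider $\alpha^2\in R$: since $R$ is a cross-section and $\R$-classes are determined by kernels, and $\alpha$ is order-preserving, one argues (using that $R$ meets each $\R$-class exactly once, hence the idempotent $\R$-equivalent to $\alpha$ lies in $R$) that $R$ contains the idempotent $\varepsilon$ with $\ker\varepsilon=\ker\alpha$, so $\im\varepsilon=\im\alpha$ and $\varepsilon$ is a full idempotent fixing its image pointwise. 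Now I intersect: for $\beta\in R$ with smaller image containing $r$, the product $\beta\varepsilon$ or $\varepsilon\beta$ stays in $R$ and has image a subset of $\im\varepsilon$; running over all such $\beta$ at level $i-1$ and using that $R$ has exactly one member of each $\R$-class, one extracts a unique element $p(x)\in W_{i-1}$ such that the idempotent with image $\{$branch from $r$ through $p(x)$ to $x\}$ lies in $R$. Convexity of kernel classes of order-preserving maps forces $p(x)$ to be order-adjacent to $x$ among $W_{i-1}\cup\{x\}$ on the appropriate side, which is exactly what makes $\omega(x)=\omega(p(x))\cup\{x\}$ an honest root-to-vertex path and lets us declare $x$ a son or daughter of $p(x)$ according to whether $x<p(x)$ or $x>p(x)$. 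That this assignment makes $W_0,\dots,W_i'$ the first $i+1$ levels of an order-preserving tree is then a bookkeeping check: two distinct vertices of $W_i'$ are $\prec$-incomparable because they enter at the same stage, and the left/right subtree condition of Definition~\ref{def_ord_pres} follows from the convexity of the images of the idempotents we produced.

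Finally, having placed $x$ in the tree with $\omega(x)$ determined, I must exhibit all of $\Theta^x$ inside $R$: for every transversal $Y$ of a convex partition of $\overline n$ with $Y=\omega(x)$ (as a set), the corresponding full idempotent $\theta^x$ with $\im\theta^x=\omega(x)$ must lie in $R$. This follows because $\theta^x$ is the unique idempotent in its $\R$-class, and its kernel is realized by some element of $R$ (namely, the $\R$-class of any $\alpha\in R$ with that kernel contains its idempotent, which is forced to be $\theta^x$ once we know $\im$ of everything in $R$ with that kernel equals $\omega(x)$ — and this last point is where the inductively established description of the tree and of the products of the previously found idempotents is used to pin down the image). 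Iterating to $i=t$ gives the full chain and $\Theta^v\subset R$ for every $v\in W_t=\overline n$.

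The main obstacle, I expect, is the uniqueness-of-parent argument: one has to rule out that $x$ could be reached along two incomparable branches, i.e.\ that two different elements of $W_{i-1}$ both qualify as $p(x)$. The tool for this is that $R$ contains \emph{exactly one} representative of each $\R$-class combined with the convexity of kernel classes of order-preserving transformations; carefully setting up which products $\alpha\beta$, $\beta\alpha$ to take so that their kernels are pairwise distinct (and hence land on distinct members of $R$, forcing equalities of images) is the delicate part. Once the parent map is shown well-defined and order-adjacent, the rest is the routine verification that Definitions~\ref{def_ord_pres} and~\ref{def_set_theta} are satisfied.
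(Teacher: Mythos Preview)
There is a genuine gap in your central step. You assert that for $\alpha\in R$ with $|\im(\alpha)|=i+1$, ``the idempotent $\R$-equivalent to $\alpha$ lies in $R$'', but this does not follow from $R$ being a cross-section: $R$ contains exactly one element with kernel $\ker(\alpha)$, namely $\alpha$ itself, and nothing yet forces $\alpha$ to be idempotent. The idempotency of the right elements of $R$ is precisely part of what the lemma is meant to establish, so invoking it here is circular. Your argument for $\Theta^x\subset R$ has the same defect: you write ``$\theta^x$ is the unique idempotent in its $\R$-class'', but in $\On_n$ this is false---for a fixed convex kernel partition there are in general many idempotents, one for each choice of a fixed point in every block.

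The paper avoids the problem by reversing the direction of the induction. Rather than starting from a new element $x\in W_i'$ and searching for its parent, it starts from a vertex $v\in W_s'$ whose family $\Theta^v$ is already known (inductively) to lie in $R$, and constructs the child. Concretely, one picks $\alpha\in R$ whose kernel has $\omega(v)\cup\{c\}$ as a transversal, with $c$ in the gap between $v$ and its canonical bound. For any $\theta^v\in\Theta^v$ one then has $\ker(\theta^v\alpha)=\ker(\theta^v)$, hence $\theta^v\alpha=\theta^v$ since both lie in the cross-section $R$; this forces $\alpha|_{\omega(v)}=\id_{\omega(v)}$ and squeezes the image $x$ of the new block into the correct interval. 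Idempotents in $R$ thus arise from the equation $\theta^v\alpha=\theta^v$, not from any a priori claim about $\R$-classes. Uniqueness of the child is then obtained by taking a putative second witness $\gamma\in R$ with $y\in\im(\gamma)$, $|\im(\gamma)|=s+2$, and observing that $x\in\im(\gamma\alpha)$ with $x\notin W_s$ forces $|\im(\gamma\alpha)|=s+2$, whence $\gamma\alpha=\gamma$, contradicting $y\alpha=x\ne y$. Your ``child-to-parent'' direction might be salvageable, but it would first require proving that every $\alpha\in R$ with $|\im(\alpha)|=i+1$ fixes its image pointwise---and that is essentially the content of the lemma.
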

 \par \begin{proof}  We  induct on the number of levels $t$.

 The base of induction holds. Indeed, if $\const_r\in R$ then $W_0=\{r\}$. Thus the root of required binary tree $\rt=r$, $1\leq r\leq n$. In this case $\theta^r=c_r\in R$.

  Suppose the lemma holds for a natural $s$, $1\leq s<t$: there defined a unique order-preserving tree with the vertex set $W_{s}$. The  tree's levels as sets are exactly  $W_0', W'_1,\ldots,$ $ W'_{s}$. Also we have $\Theta^v\subset R$ for all $v\in W_s$.  Let $v\in W_s'$ and  $a,b\in \overline{n}$ be the canonical bounds of $v$ as a vertex of the  tree, $a< b$.

  Without loss of generality we consider the left interval $[a,  v-1]$. Observe that if $v=1$ or $a\in \omega(v)$ and $v=a+1$ then there is nothing to prove. Otherwise we will construct an interval $K\subseteq [a,  v-1]$ with $K\cap W_s=\emptyset$; and  $\alpha\in R$ such that $K\subset\overline{n}/\ker(\alpha)$ and $K\alpha\in W_{s+1}'\cap K$. Then we show that $|W'_{s+1}\cap K|=1$.

   \textit{Case 1.} Let $a\notin \omega(v)$, therefore $a=1$. Let $K:=[1,v-1]$. By the definition of order-preserving binary tree $K\cap W_s=\emptyset$. Let $\alpha\in R$ be such that $K\in \overline{n}/\ker(\alpha)$ and the set $\{1\}\cup \omega(v)$ is a transversal of $\overline{n}/\ker(\alpha)$. Since $|\omega(v)|=s+1$ we get $|\im(\alpha)|=s+2$. Hence $K\alpha\in W'_{s+1}$. By the assumption $\Theta^v\subset R$. Let $\theta^v\in \Theta^v$.  We have  $\ker(\theta^v\alpha)=\ker(\theta^v)$, therefore $\theta^v\alpha=\theta^v$, whence $\alpha|_{\omega(v)}=\id_{\omega(v)}$. Since $\alpha$ is order-preserving, we get $K\alpha<v\alpha=v$. Thus, $K\alpha\in W'_{s+1}\cap K$.


   \textit{Case 2.} Let $a\in \omega(v)$. Then denote $K:=[a+1,v-1]$. Since $a$ is a canonical bound of $v$, we get $K\cap W_s=\emptyset$. Consider  $\alpha\in R$ such that  $\{a+1\}\cup \omega(v)$ is a transversal of $\overline{n}/\ker(\alpha)$, $K\in \overline{n}/\ker(\alpha)$.  Again, we get $\ker(\theta^v\alpha)=\ker(\theta^v),$ whence $$a=a\alpha<K\alpha<v\alpha=v.$$ Thus $(a+1)\alpha\in W'_{s+1}\cap K$.

\par  Now we proceed for both cases. We will show that $|W'_{s+1}\cap K|=1$.
  Denote $K\alpha=x$ for $x\in K$ and suppose there exists $y\in K\cap W_{s+1}'$ such that  $y\ne x$. That is, there exists  $\gamma\in R$ with  $y\in\im(\gamma)$ and $|\im(\gamma)|=s+2$. Consider the composition $\gamma\alpha$. Clearly, $x\in\im(\gamma\alpha)$. By the definition of a chain $W$ the condition $x\in W'_{s+1}$ implies $|\im(\gamma\alpha)|=s+2$. Consequently, $\ker{(\gamma\alpha)}=\ker{(\gamma)}$, whence $\gamma\alpha=\gamma$, which contradicts $x\ne y$. Therefore, $x\in W_{s+1}'$ is unique. Thus, $x$ is the son of $v$ and it is well-defined.
  \par  In particular, the above proof imply that for all $\beta\in R$ such that $\{x\}\cup \omega(v)$ is a transversal of $\overline{n}/\ker(\beta)$, it holds $\omega(v)\beta=\omega(v)$ and  $x\beta=x$. Therefore, by Definition~\ref{def_set_theta}  $\beta\in\Theta^{x}$. Hence, $\Theta^{x}\subset R$.
\end{proof}


\begin{corollary}\label{sled_x_p(x)_in_im}
 Let $R$ be an $\R$-cross-section  of $\On_n$; $W_0$, $W_1'$, $\ldots$, $W_t'$ the levels of the order-preserving tree $(\overline{n}, \prec)$ associated with $R$. Then the following statements hold.
\begin{itemize}
\item [(a)] It holds $|W_{i}'|\leq 2^{i}$ for all $i$, $0\leq i\leq t$.
 For all  $\alpha\in R$ with $\im{(\alpha)}\supseteq W_i$ it holds  $|\im{(\alpha)}|\leq\sum_{k=0}^{i}2^k$.
\item[(b)] Let $\alpha\in R$, $v\in\im{(\alpha})$.
Then $\omega(v)\subseteq \im{(\alpha}).$
\end{itemize}
\end{corollary}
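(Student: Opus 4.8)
Everything is read off the order-preserving tree $(\overline n,\prec)$ provided by Lemma~\ref{lemma_stroenie W}, together with the fact recorded there that $\Theta^{v}\subset R$ for every vertex $v$.

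Part (a) is tree-counting. By Lemma~\ref{lemma_stroenie W} the sets $W_0,W_1',\dots,W_t'$ are the successive levels of a binary tree, so, since every vertex has at most two children, an immediate induction gives $|W_0|=1=2^{0}$ and $|W_{i+1}'|\le 2|W_i'|\le 2^{i+1}$; summing, $|W_i|\le\sum_{k=0}^{i}2^{k}$. For the statement about $\alpha$ I would first note the elementary consequence of the definition of the chain~\eqref{chain}: for every $\alpha\in R$, each element of $\im(\alpha)$ is added to the chain no later than step $|\im(\alpha)|-1$, so $\im(\alpha)\subseteq W_{|\im(\alpha)|-1}$, and the cardinality estimate then applies.

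Part (b) is the substance. It is enough to prove that $\im(\alpha)$ is closed under the parent map, since iterating $v\mapsto\p(v)$ along $\omega(v)$ then yields $\omega(v)\subseteq\im(\alpha)$; the iteration is well founded because it terminates at $\rt$, and $\rt\in\im(\alpha)$ for all $\alpha\in R$ since $\rt\alpha=\rt$ by Lemma~\ref{lemma_nep_t}(i). So fix $\alpha\in R$, $v\in\im(\alpha)$, and $w:=\p(v)$ with $w\ne\rt$; assume w.l.o.g.\ that $v=\s(w)$, so that $v<w$, the canonical bounds of $v$ are $[a,w-1]$ with $a\le v$, and---by the definition of canonical bounds---no vertex of $\omega(w)$ lies strictly between $a$ and $w$, i.e.\ $v$ is the immediate predecessor of $w$ in the natural order on $\{v\}\cup\omega(w)$. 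The basic lever is the left-zero semigroup $\Theta^{w}\subset R$ combined with the uniqueness of $\R$-class representatives: for $\varepsilon\in\Theta^{w}$ one has $\ker(\varepsilon\alpha)\supseteq\ker(\varepsilon)$, and if equality held, then $\varepsilon\alpha$, being the element of $R$ with kernel $\ker(\varepsilon)$, would coincide with $\varepsilon$, so $\alpha$ would fix $\im(\varepsilon)=\omega(w)$ pointwise and, in particular, $w\in\im(\alpha)$. Hence, if $w\notin\im(\alpha)$, then $\alpha$ collapses two distinct vertices of $\omega(w)$.

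It remains to exclude this last configuration, and this is where I expect the real difficulty. Assume $w\notin\im(\alpha)$. Using that the $\ker(\alpha)$-classes are intervals of $\overline n$, I would track the interval $\alpha^{-1}(v)$ and the elements of $\im(\alpha)$ adjacent to $v$ in the natural order, and, together with the position of $v$ between $a$ and $w$ forced by the canonical bounds, derive one of two contradictions: either $\alpha^{-1}(v)$ is empty, contrary to $v\in\im(\alpha)$, or one can produce a vertex $u$ with $w\in\omega(u)$ such that $\omega(u)$ is a transversal of $\ker(\alpha)$, so that $\theta^{u}\in\Theta^{u}\subset R$ has the same kernel as $\alpha$, forcing $\theta^{u}=\alpha$ and $\im(\alpha)=\omega(u)\ni w$. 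The cardinality estimates of part~(a) are used to bound $|\im(\alpha)|$ against the available depth of the tree below $w$. The delicate point---in my view the crux---is to verify that such a $u$ exists and that $\Theta^{u}$ contains an idempotent with the prescribed kernel, i.e.\ that $\omega(u)$ is compatible with the partition tree of $\ker(\alpha)$ in the sense of Definition~\ref{def_part_tree}; the rest is bookkeeping with intervals and levels.
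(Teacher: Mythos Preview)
Your treatment of part~(a) is fine and matches the paper's one-line appeal to the binary-tree structure.

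Part~(b) has a genuine gap, which you yourself flag: after reducing to the case ``$\alpha$ collapses two vertices of $\omega(w)$'' you offer only a sketch (``track the interval $\alpha^{-1}(v)$ \ldots\ derive one of two contradictions''), and the sketch is not a proof. In particular, the existence of a $u$ with $\omega(u)$ a transversal of $\ker(\alpha)$ is exactly what is at stake and cannot be assumed; the bookkeeping you allude to does not obviously close.

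The paper's argument avoids all of this by multiplying on the \emph{right} by $\theta^{v}$ rather than on the left by $\theta^{\p(v)}$. Here is the idea. If $v\in W_{k-1}'$ with $k=|\omega(v)|$, then by the very definition of the chain~\eqref{chain} no transformation of rank $<k$ in $R$ has $v$ in its image; hence $v\in\im(\alpha)$ forces $|\im(\alpha)|\ge k$. Now for any $\theta^{v}\in\Theta_{k}^{v}$ one has $v\theta^{v}=v$, so $v\in\im(\alpha\theta^{v})$ as well, giving $|\im(\alpha\theta^{v})|\ge k$. But $\im(\alpha\theta^{v})\subseteq\im(\theta^{v})=\omega(v)$, a $k$-element set, so $\im(\alpha\theta^{v})=\omega(v)$. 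This says $\im(\alpha)$ meets every $\ker(\theta^{v})$-class. Letting $\theta^{v}$ range over $\Theta_{k}^{v}$ (so that the interval containing a given $p\in\omega(v)$ can be shrunk), one forces each $p\in\omega(v)$ into $\im(\alpha)$.

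Two things to note about why your route stalls while this one does not. First, right-multiplication keeps $v$ in the image, which immediately converts the level of $v$ into a rank lower bound matching the rank upper bound $|\omega(v)|$; your left-multiplication $\varepsilon\alpha$ loses control of the image. Second, the paper works with $\Theta^{v}$, not $\Theta^{\p(v)}$: you never need to climb one step and then argue by contradiction---the whole path $\omega(v)$ drops out at once.
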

\begin{proof} (a) Since $W'_i=U_i$, the proof follows directly from the definition of a binary tree.

   \par (b) Suppose $v\in U_{k-1}\cap\im{(\alpha})$, $1\leq k\leq t+1$.
     The condition $v\in U_{k-1}$ implies $|\im(\alpha)|\geq k$. Let $\theta^v\in\Theta_{k}^v$ be a fixed transformation. Then we have   $v\in\im{(\alpha\theta^v)}$, hence $|\im(\alpha\theta^v)|\geq k$ for all $\theta^v\in\Theta_{k}^v$. Since $|\im(\theta^v)|= k$, then $\im{(\alpha)}$ contains a transversal of a partition of $\theta^v$. In virtue of arbitrariness $\theta^v\in \Theta_{k}^v$, we have
    $$\im{(\alpha)}\supseteq\{p_0, p_1, \ldots, p_{k-1}\}.$$
   \end{proof}

    We denote by $(\overline{n},\prec_R)$ the order-preserving binary tree associated with $R$.
\begin{lem}\label{lemma_nevozr_der}
 The order-preserving tree $(\overline{n},\prec_R)$ is decreasing.
\end{lem}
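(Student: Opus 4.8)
The plan is to verify, one pair at a time, the three subordination conditions defining a decreasing tree, for an arbitrary $x\prec_R y$ with $x,y\ne\rt$ in $(\overline n,\prec_R)$. First I would remove condition~2 by duality: conjugation by the reflection $i\mapsto n+1-i$ is an automorphism of $\On_n$ carrying $R$ to another $\R$-cross-section and carrying $(\overline n,\prec_R)$ to the order-preserving tree of that cross-section, while interchanging $\omega(1)\leftrightarrow\omega(n)$ and $\Gamma_l\leftrightarrow\Gamma_r$; hence it interchanges conditions~1 and~2, and it suffices to treat conditions~1 and~3. Throughout I would lean on three facts already at hand: (i) for every vertex $v$, $R$ contains the whole left-zero band $\Theta^v$ of order-preserving idempotents with image $\omega(v)$ (Lemma~\ref{lemma_stroenie W}); (ii) the image of every element of $R$ is a rooted subtree of $(\overline n,\prec_R)$, i.e.\ is closed under taking $\prec_R$-ancestors (Corollary~\ref{sled_x_p(x)_in_im}(b)); (iii) $R$ contains exactly one transformation of each convex kernel, and the level cardinalities obey Corollary~\ref{sled_x_p(x)_in_im}(a). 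From (i) and (iii) I would first isolate a rigidity principle: if $\alpha\in R$ is injective on a path $\omega(v)$, then $\alpha$ fixes $\omega(v)$ pointwise --- for $\theta\in\Theta^v$ one has $\ker(\theta\alpha)=\ker(\theta)$, so $\theta\alpha=\theta$ by (iii), whence $\alpha$ fixes $\im(\theta)=\omega(v)$; in particular every $\alpha\in R$ fixes $\rt$ and sends the block containing $\rt$ onto $\rt$.

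Next I would record that, up to the standard relabelling of cells, $\Gamma_l(v)$ is the inner tree of the left subtree $T_l(v)$ of $v$ in $(\overline n,\prec_R)$ and $\Gamma_r(v)$ the inner tree of its right subtree $T_r(v)$. So for $x\prec_R y$ with $x,y\in\omega(1)$ --- where one checks $x<y$ and $x$ is reached from $y$ by a chain of left steps, so that $T_r(x)$ actually sits inside $T_l(y)$ --- condition~1 asks for a tree homomorphism from the inner tree of $T_r(x)$ to the inner tree of $T_r(y)$. I would build it along the recursion defining an inner tree, ``split the current interval at its $\prec_R$-highest vertex''. To a node $[i',j']$ of $\Gamma_r(x)$ I would associate a transformation $\alpha\in R$ whose convex kernel has $\omega(y)$ as a transversal (so by rigidity $\alpha$ fixes $\omega(y)$) and whose blocks inside the right region of $y$ mimic, interval by interval, the block pattern recorded so far on the $x$-side. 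Since $\im(\alpha)$ is a rooted subtree, $\alpha$ is order-preserving, and it fixes the two endpoints bounding $y$'s right region, the part of $\im(\alpha)$ lying in that region is forced to be a rooted subtree of $T_r(y)$; and applying rigidity one level deeper (composing $\alpha$ with suitable idempotents of $\Theta^{\,\cdot}$ and re-using (iii)) forces the distinguished block to be sent to the $\prec_R$-highest available vertex, after which the son- and daughter-sides are handled independently. Reading off these forced values level by level yields the homomorphism $\Gamma_r(x)\to\Gamma_r(y)$. Condition~3 is treated by the same mechanism, splitting according to whether $x\in T_l(y)$ or $x\in T_r(y)$ and transporting both the left- and right-inner structure of $x$ into the relevant region of $y$; the ``nested'' sub-case ($x$'s left subtree inside $y$'s left subtree) and the ``crossing'' sub-case are dual.

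The step I expect to be the main obstacle is this forcing: proving that the element of $R$ realizing a prescribed layer of convex blocks must act on those blocks exactly as the inner-tree recursion dictates --- leading block to the $\prec_R$-highest vertex, then independent continuation on the son- and daughter-sides. This is essentially a uniform replay of the case analysis in the proof of Lemma~\ref{lemma_stroenie W}, combining (i)--(iii), but it has to be carried out while tracking which canonical bound of $x$ happens to equal $1$ or $n$; that bookkeeping is precisely why conditions~1 and~2 (the spines $\omega(1),\omega(n)$, where one boundary degenerates) are separated from condition~3 in the statement. Once the forcing is in place, passing from $x$ to $y$ costs only one further composition with an idempotent of $\Theta^{\,\cdot}$, and the subordinations follow at once.
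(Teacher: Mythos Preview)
Your plan is sound and uses exactly the toolkit the paper uses: the rigidity principle you isolate (if $\alpha\in R$ is injective on $\omega(v)$ then $\theta^v\alpha=\theta^v$ forces $\alpha$ to fix $\omega(v)$), the rooted-subtree property of images, and level induction. The duality reduction of condition~2 to condition~1 via the reflection automorphism is a clean touch the paper leaves implicit.

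The paper's execution is more economical than your recursive scheme. Instead of attaching a fresh $\alpha$ to each node of $\Gamma_r(x)$ and ``mimicking the block pattern on the $x$-side inside the $y$-region'', the paper chooses a \emph{single} $\alpha\in R$ whose kernel has as leading elements $[x,\p(x)-1]\cup\omega(\p(y))$, so that every vertex of the right subtree $T_r(x)$ lies in its own singleton block; a straight level induction then shows that $\alpha|_{T_r(x)}$ is itself a tree homomorphism $T_r(x)\to T_r(y)$, and the inner-tree subordination drops out. Condition~3 is handled the same way with one $\beta$ having leading elements $\omega(\p(y))\cup T(x)$. Once you see this, your anticipated ``main obstacle'' (the forcing step) collapses to the same computation done once, not once per node.
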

\begin{proof}
Case when $n=1,2$ is trivial. Assume $n>2$. Let $x,y\in\overline{n}$ be such that $x\prec y$. Let $x\in W_k$, $y\in W_m$, $k<m$. To prove that the tree is order-preserving we need to verify conditions 1)-3) of Definition~\ref{def_ord_pres}. Consider  the following cases:

1.  Let $x,y\in\omega(1)$.  We denote by $T_r(x)$ and $T_r(y)$ the right-side subtrees of $(\overline{n},\prec_R)$ rooted at $\s(x)$ and $\s(y)$ respectively (the respective tree is empty if the vertex  has no son). To prove that condition 1) of Definition~\ref{def_ord_pres} holds  we will  show that there exists a homomorphism between $T_r(x)$ and $T_r(y)$. Apparently, the existence of  the homomorphism  implies that the right inner tree of $x$ subordinates the right inner tree of $y$. We will construct the transformation $\alpha\in R$ such that if $T_r(x)\ne\emptyset$ then  $T_r(x)\alpha\subseteq T_r(y)$. We will show that $\alpha|_{T_r(x)}$ is a homomorphism.

Let $\alpha\in R$ be such that $[x,\p(x)-1]\cup\omega(\p(y))$ are the leading elements of the partition $\overline{n}/\ker(\alpha)$. Since $\p(x), y$ are higher than $x$, we get  $\p(x)\alpha=y\alpha=\p(y)\alpha$.

We claim that $x\alpha=y$, $\p(x)\alpha=\p(y)$. Indeed, let $\theta^x\in \Theta_{k+1}^x$, $\theta^{\p(y)}\in \Theta_{m}^{\p(y)}$. Note that $|\im{(\theta^x\alpha)}|=m+1$. Thus, $x\alpha\in W_{m}$. Further, we have $\ker(\theta^{\p(y)}\alpha)=\ker(\theta^{\p(y)})$. Consequently, $\p(x)\alpha=y\alpha=\p(y)\alpha=\p(y)$.
By the definition of an order-preserving tree and since $\alpha\in \On_n$, $y$  is the only element in $W_m$ which is less than $\p(x)\alpha=\p(y)$. Therefore, $x\alpha=y$.

 If $x$ has no daughter then $T_r(x)$ is empty, whence $T_r(x)$ subordinates $T_r(y)$ by the definition. Suppose $|T_r(x)|\geq1$. We will show that $T_r(x)\hookrightarrow T_r(y)$. We induct on the level of a vertex of $T_r(x)$. Consider the root $\dau(x)$ of the tree. We claim that $\dau(x)\alpha\in W_{m+1}$. Indeed, the condition $|\im(\theta^{\dau(x)}\alpha)|=|\im(\theta^{x}\alpha)|+1=m+2$, implies the required. Moreover, since $\alpha\in \On_n$, we get $$x\alpha=y<\dau(x)\alpha<\p(y)=\p(x)\alpha.$$
   Whence the only possibility is $\dau(x)\alpha=\dau(y)$ and the induction base holds.
   \par Suppose   $\alpha$ induces  a homomorphism $T_r(x)\hookrightarrow T_r(y)$ for the first  $l$ levels of $T_r(x)$. Note that as was shown above $f$ preserves the canonical bounds of the vertices of the subtree. Let $v\in T_r(x)\cap W_{k+l}$ be a vertex, $v_1, v_2$ be its canonical bounds, $v_1<v_2$. By the assumption we have $v\alpha\in T_r(y)\cap W_{m+l}$.  Without loss of generality suppose $v$ has a son. Then we have $|\im(\theta^{\s(v)}\alpha)|=|\im(\theta^{v}\alpha)|+1$. Therefore,  $\s(v)\alpha\in W_{m+l+1}$ and $v_1\alpha<\s(v)\alpha<v\alpha$. Hence, $\s(v)\alpha$  is the son of $v\alpha$ by the definition of an order-preserving tree. Consequently, $T_r(x)\hookrightarrow T_r(y)$.

\par   If $y>\rt$  and $x,y\in\omega(n)$ then in dual way $T_l(x)\hookrightarrow T_l(y)$, where  $T_l(x)$ and $T_l(y)$ the left-side subtrees of $(\overline{n},\prec_R)$ rooted at $\s(x)$ and $\s(y)$ respectively. Hence condition 2) of Definition~\ref{def_ord_pres} also holds.

2. Let $x,y\notin\omega(1)\cup \omega(n)$. We denote by $T(x)$ and $T(y)$ the  subtrees of $(\overline{n},\prec_R)$ rooted at $x$ and $y$ respectively. Note that to prove condition 3) of Definition~\ref{def_ord_pres} it is enough to show that there exists a homomorphism between $T(x)$ and $T(y)$. As an the previous item we will construct the transformation $\beta\in R$ such that $T(x)\beta\subseteq T(y)$,  $\beta|_{T_a}$ is a homomorphism. Without loss of generality suppose $x<y<\rt$. The condition $x\prec y$ implies that there exists a minimal ancestor $c\in\omega(1)$ for both $x,y$. Furthermore, there are no $t\in \omega(y)$ with $x<t<y$. Thus, we have $c<x<y<\p(c)$. Moreover, let $a,b$ be the canonical bounds of $x$, $a<b$; $q,h\in\omega(y)$ be the canonical bounds of $y$. Then it holds $q\leq a<b\leq h$.
Let $\beta\in R$ be a transformation such that $\omega(\p(y))\cup T(x)$ are the leading elements of partition $\overline{n}/\ker(\beta)$. Then
 it holds $q\beta=a\beta$ and  $b\beta =y\beta=h\beta$, since $a,b,y,h$ are higher than $x$.

 Let $\theta^{\p(y)}\in \Theta_{m}^{\p(y)}$. Then $\theta^{\p(y)}\beta=\theta^{\p(y)}$. Thus, the set $T(x)\beta$ is bounded by the canonical bounds of $y$, i.\,e. $T(x)\beta\subseteq T(y)$.  Then $|\im(\theta^x\beta)|=|\omega(\p(y))+1|$, whence $x\beta\in W_{m}$. Thus, the only possibility is $x\beta=y$.

 If $T(x)$ is 1-element then the required is proved. Suppose $\beta$ induces a homomorphism of the first $l$ levels of $T(x)$ to  $T(y)$.  Let $v\in T(x)\cap W_{m+l}$; $v_1, v_2$ be the canonical bounds of the vertex, $v_1<v_2$.  Without loss of generality suppose $v$ has the son. Just as in the previous item one can show that $\s(v)\beta$ defines the son of $v\beta\in T(y)$. Dually,  $\dau(v)\beta$ defines the daughter of $v\beta\in T(y)$.
 Therefore, $T(x)$ subordinates $T(y)$ as required.

If $y<x<\rt$ then it holds $q\beta=y\beta=a\beta$ and  $b\beta =h\beta$. The further proof is exactly the same. If  $\rt>y$ we also get the same cases.\end{proof}

Now we are ready to prove
\begin{lem}\label{lemma_obratnoe}
For every $\R$-cross-section $R$ of $\On_n$ coincides with $\Phi_\prec$ for a fixed decreasing binary tree  $(\overline{n},\prec)$.
\end{lem}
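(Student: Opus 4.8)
The plan is to assemble the converse from the structural results already proved about an arbitrary $\R$-cross-section $R$ of $\On_n$. By Lemma~\ref{lemma_stroenie W} the sets $W_0,W_1',\dots,W_t'$ are the levels of an order-preserving tree, and by Lemma~\ref{lemma_nevozr_der} this tree $(\overline{n},\prec_R)$ is decreasing; so $\Phi_{\prec_R}$ is defined and, by Lemma~\ref{lemma_Phi_is_R}, is itself an $\R$-cross-section of $\On_n$. Since both $R$ and $\Phi_{\prec_R}$ contain exactly one element in each $\R$-class of $\On_n$ — equivalently, exactly one transformation with each convex partition $\widetilde K$ of $\overline{n}$ as its kernel — it suffices to prove that for every convex partition $\widetilde K$ the corresponding element $\alpha_{\widetilde K}\in R$ equals $\varphi^{\widetilde K}\in\Phi_{\prec_R}$. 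Both transformations have kernel $\widetilde K$, so what must be shown is that they have the same image on each block; by Definition~\ref{def_varphi} this amounts to showing that the map $K\mapsto K\alpha_{\widetilde K}$ on blocks of $\widetilde K$ is precisely the unique homomorphism $f\colon T(\widetilde K)\to(\overline{n},\prec_R)$ whose existence and uniqueness are guaranteed by Lemma~\ref{lemma_korrektnost'}.

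The core of the argument is therefore to verify that $K\mapsto K\alpha_{\widetilde K}$ is a tree homomorphism $T(\widetilde K)\to(\overline{n},\prec_R)$; uniqueness in Lemma~\ref{lemma_korrektnost'} then forces it to coincide with $f$ and finishes the proof. I would do this by induction on the level of a vertex $K$ in $T(\widetilde K)$, mirroring the computation inside the proof of Lemma~\ref{lemma_Phi_is_R}. For the base, the root $K^{(\rt)}$ of $T(\widetilde K)$ must be sent to $\rt$: indeed $\rt$ is a fixed point of $R$ (it is the root of the tree built from the chain $W_0\subset\dots\subset W_t$ via a constant $\const_{\rt}\in R$), and since $\rt\in K^{(\rt)}$ order-preservation of $\alpha_{\widetilde K}$ together with Corollary~\ref{sled_x_p(x)_in_im}(b) pins down $K^{(\rt)}\alpha_{\widetilde K}=\rt$. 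For the inductive step, suppose $K\alpha_{\widetilde K}=v$ and $S=\s(K)$ with leading element $m$; one composes $\alpha_{\widetilde K}$ with a suitable idempotent $\theta^v\in\Theta^v\subseteq R$ (available by Lemma~\ref{lemma_stroenie W}) to see that $\alpha_{\widetilde K}$ fixes $\omega(v)$ pointwise, then argues as in Case~1/Case~2 of Lemma~\ref{lemma_stroenie W} that $S\alpha_{\widetilde K}$ is the unique element of the appropriate $W_{\ell+1}'$ lying strictly between the relevant canonical bounds of $v$ — which is exactly $\s(v)$ — and symmetrically $\dau(K)\alpha_{\widetilde K}=\dau(v)$. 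This shows $K\mapsto K\alpha_{\widetilde K}$ preserves genders and the parent–child relation, i.e.\ is a homomorphism.

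The main obstacle, as usual in this circle of arguments, is the inductive step controlling where the leading element of a child block is sent: one has to be careful that the preimage under $\alpha_{\widetilde K}$ of the parent of $m\alpha_{\widetilde K}$ is not swallowed into the child block $S$ (the same subtlety handled via ``$m$ is the leading element'' in the proof of Lemma~\ref{lemma_Phi_is_R}), and that order-preservation plus the canonical-bound bookkeeping genuinely isolate $\s(v)$ as the only candidate. Once the homomorphism property is established, invoking the uniqueness clause of Lemma~\ref{lemma_korrektnost'} is immediate and we conclude $\alpha_{\widetilde K}=\varphi^{\widetilde K}$ for every $\widetilde K$, hence $R=\Phi_{\prec_R}$.
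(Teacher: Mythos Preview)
Your proposal is correct and follows essentially the same route as the paper: build $(\overline{n},\prec_R)$ from $R$, then prove by induction on the level in $T(\widetilde K)$ that the unique $\alpha\in R$ with kernel $\widetilde K$ sends each block to the same vertex as $\varphi^{\widetilde K}$, using the idempotents $\Theta^v\subseteq R$ together with Corollary~\ref{sled_x_p(x)_in_im}(b) in the inductive step. The only cosmetic difference is that you package the induction as ``$K\mapsto K\alpha$ is a tree homomorphism, hence equals the unique $f$ of Lemma~\ref{lemma_korrektnost'}'', whereas the paper argues directly that $S\alpha=\s(x)$ by counting $|\im(\theta^v\alpha)|=|\omega(x)|+1$ and invoking $\omega(y)\subseteq\im(\theta^v\alpha)$; the underlying computation is the same.
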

\par \begin{proof} Consider a $(t+1)$-level decreasing binary tree $(\overline{n},\prec_R~)$ that arises with an $\R$-cross-section $R$ of $\On_n$. We claim that a transformation $\alpha\in R$  with a partition $\widetilde{K}$ has the form $\varphi^{\widetilde{K}}\in\Phi_{\prec_R}$. The proof is by induction on the level $U_i$ of the partition tree  $T(\widetilde{K})$.
 \par  Suppose $i=0$. Then  $K^{(\rt)}\alpha=\rt$ by Lemma~\ref{lemma_nep_t}. So the base of induction holds.  Assume that there exists $k$, $0\leq k\leq t$, such that $\alpha|_{U_0\cup\ldots\cup U_{k-1}}=\varphi^{\widetilde{K}}|_{U_0\cup\ldots\cup U_{k-1}}$.
\par   Let $X\in U_{k-1}$.  By the assumption  $X\alpha=X\varphi^{\widetilde{K}}$. Let $X\alpha=x$, $a,b$ be the canonical bounds of $x$, $a<b$. By the construction of $\varphi^{\widetilde{K}}$ we have $x\in W_{m}'$, $m\leq k-1$. If $X$ has no children then there is nothing to prove.
 Without loss of generality suppose $X$ has the son $S\in T(\widetilde{K})$. Let $S\alpha=y$. Since $\alpha\in \On_n$ we get $a, x$ are the canonical bounds of $y$. In other words, $y$ is a vertex of the subtree of $(\overline{n},\prec_R)$ rooted at $\s(x)$. Therefore, $\omega(x)\subseteq \omega(y)$. Note that for all $v\in S$ we have $y\in\im(\theta^v\alpha)$. On one hand, we have  $|\im(\theta^v\alpha)|=|\omega(x)|+1$. On the other hand, by Corollary~\ref{sled_x_p(x)_in_im} we get $\omega(y)\subseteq \im(\theta^v\alpha)$. Therefore $y=\s(x)=S\varphi^{\widetilde{K}}$ as required.
\end{proof}


Thus, Lemmas \ref{lemma_Phi_is_R} and \ref{lemma_obratnoe} imply immediately the following fact.
\begin{thm}\label{th_description}
Let $(\overline{n},\prec)$ be a decreasing  binary tree. Then semigroup $\Phi_{\prec}$  is an  $\R$-cross-section of $\On_n$. Conversely, each $\R$-cross-section of $\On_n$ is given by the semigroup $\Phi_{\prec}$ for a decreasing  binary tree.
\end{thm}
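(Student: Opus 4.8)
The plan is to read the theorem off the two halves already assembled in Section~4, so the proof itself is a two-line citation of Lemmas~\ref{lemma_Phi_is_R} and~\ref{lemma_obratnoe}; what follows sketches why each half works and where the genuine work lies.

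For the direct implication -- that $\Phi_\prec$ is an $\R$-cross-section whenever $(\overline n,\prec)$ is decreasing -- I would invoke Lemma~\ref{lemma_Phi_is_R}. Its skeleton is: first, convex partitions $\widetilde K$ of $\overline n$ are exactly the kernels of order-preserving transformations, so assigning to each such $\widetilde K$ the transformation $\varphi^{\widetilde K}$ of Definition~\ref{def_varphi} -- well-defined because $\prec$ being decreasing yields a unique homomorphism $T(\widetilde K)\to(\overline n,\prec)$ by Lemma~\ref{lemma_korrektnost'} -- puts exactly one element of $\Phi_\prec$ in each $\R$-class. Second, $\varphi^{\widetilde K}$ is order-preserving because both the partition tree and $(\overline n,\prec)$ are order-preserving trees and the homomorphism respects genders. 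Third, and this is the only non-formal point, closure under multiplication: for $\alpha=\varphi^{\widetilde A}$ and $\beta=\varphi^{\widetilde B}$ one shows $\alpha\beta=\varphi^{\widetilde{AB}}$ with $\widetilde{AB}=\overline n/\ker(\alpha\beta)$, by inducting on the level of a vertex of $T(\widetilde{AB})$ and tracking the leading element of each block first through $\alpha$ and then through $\beta$.

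For the converse -- that an arbitrary $\R$-cross-section $R$ of $\On_n$ equals $\Phi_\prec$ for some decreasing tree -- I would follow the chain of auxiliary lemmas culminating in Lemma~\ref{lemma_obratnoe}. Starting from $R$: by Lemma~\ref{lemma_nep_t} choose a fixed point and form the chain $W_0\subset W_1\subset\cdots\subset W_t=\overline n$; by Lemma~\ref{lemma_stroenie W} the sets $W_0,W_1',\dots,W_t'$ are the successive levels of a unique order-preserving tree $(\overline n,\prec_R)$, and moreover $\Theta^v\subset R$ for every vertex $v$, so $R$ contains all the idempotent witnesses we shall need; by Lemma~\ref{lemma_nevozr_der} this tree is in fact decreasing. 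It then remains to identify $R$ with $\Phi_{\prec_R}$: for $\alpha\in R$ with kernel partition $\widetilde K$, induct on the level of $T(\widetilde K)$, using at the inductive step that $\omega(x)\subseteq\im(\theta^v\alpha)$ for the appropriate idempotent $\theta^v$ (Corollary~\ref{sled_x_p(x)_in_im}) together with order-preservation of $\alpha$ to force the image of a child block to be the corresponding child in $(\overline n,\prec_R)$; this pins $\alpha$ down to $\varphi^{\widetilde K}$.

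The main obstacle is Lemma~\ref{lemma_nevozr_der}: verifying that $(\overline n,\prec_R)$ is decreasing, i.e. that along $\omega(1)$, along $\omega(n)$, and off both paths the relevant left/right inner trees subordinate one another. For each instance $x\prec y$ one must actually manufacture a transformation $\alpha\in R$ -- prescribing its leading elements so that the block containing $x$ is sent onto $y$ and the whole subtree rooted at $x$ lands inside the subtree rooted at $y$ -- and then check, again by induction on levels and repeatedly using the $\Theta$-idempotents and order-preservation, that $\alpha$ restricts to a tree homomorphism between the inner trees. The bookkeeping with canonical bounds and the three positional cases (both endpoints on $\omega(1)$, both on $\omega(n)$, neither) is the combinatorial core; everything else is either formal or a routine induction.
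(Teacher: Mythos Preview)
Your proposal is correct and follows exactly the paper's approach: the theorem is stated as an immediate consequence of Lemmas~\ref{lemma_Phi_is_R} and~\ref{lemma_obratnoe}, and your sketch of each lemma's content---including the identification of Lemma~\ref{lemma_nevozr_der} as the combinatorial core---matches the paper's development faithfully.
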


\section{On $\Lg$-cross-sections of $\On_{n}$ and $\R$-cross-sections of $\On_{n+1}$}\label{sec_L_cr_sec}
We have pointed  already that there is a connection between $\Lg$-cross-sections of $\T_{n}$ and   $\R$-cross-sections of $\On_{n+1}$. In this section we discuss the connection in more detail. A key notion in the description of $\Lg$-cross-sections of $\T_{n}$ plays a notion of a respectful tree (the definition  will be stated below). It turns out, that the structure of an $\R$-cross-section of $\On_n$ has an alternative interpretation  in terms of respectful trees.
\par \subsection{Respectful trees}

Recall the notion of a respectful tree as it was proposed in \cite{Bondar_Volkov_CRA}.
 If $u$ and $v$ are two vertices of the same tree $T$, we say that $u$ subordinates $v$ if the subtree rooted at $u$ subordinates the subtree rooted at~$v$.

\begin{definition}
A full binary tree is said to be \emph{respectful} if it satisfies two
conditions:
\par (S1) if a male vertex has a nephew, the nephew subordinates his uncle;
\par (S2) if a female vertex has a niece, the niece subordinates her aunt.
\end{definition}
For an illustration, the tree shown in Fig.~\ref{fig:tree} satisfies (S1) but fails to satisfy (S2): the daughter of the root has a niece but this niece does not subordinates her aunt. On the other hand, the tree shown in Fig.~\ref{fig:respectful tree} is respectful. (In order to ease the insperation of this claim, we have shown the uncle--nephew and the aunt--niece relations in this tree with dotted and dashed arrows respectively.)

\begin{figure}[h]
\begin{center}
\unitlength 0.50mm
\begin{picture}(80,90)(0,5)
\gasset{AHnb=0}
\gasset{Nw=10,Nh=10,Nmr=5}
\node(1)(10,30){}
\node(2)(20,10){}
\node(3)(40,10){}
\node(4)(30,30){}
  \drawedge(2,4){}
  \drawedge(3,4){}
\node(5)(20,50){}
  \drawedge(1,5){}
  \drawedge(4,5){}
\node(6)(50,30){}
\node(7)(70,30){}
\node(8)(60,50){}
  \drawedge(6,8){}
  \drawedge(7,8){}
\node(9)(40,70){}
  \drawedge(5,9){}
  \drawedge(8,9){}
\node(10)(70,70){}
\node(11)(55,90){}
  \drawedge(9,11){}
  \drawedge(10,11){}
\end{picture}
\caption{An example of a non-respectful tree}\label{fig:tree}
\end{center}
\end{figure}
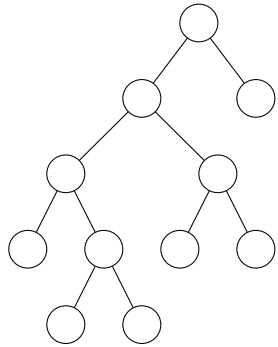

\begin{figure}[h]
\begin{center}
\unitlength 0.50mm
\begin{picture}(100,90)(0,5)
\gasset{AHnb=0}
\gasset{Nw=10, Nh=10, Nframe=y, Nfill=n}
\node(1)(10,30){}
\node(2)(20,10){}
\node(3)(40,10){}
\node(4)(30,30){}
  \drawedge(2,4){}
  \drawedge(3,4){}
\node(5)(20,50){}
  \drawedge(1,5){}
  \drawedge(4,5){}
\node(6)(50,30){}
\node(7)(70,30){}
\node(8)(60,50){}
  \drawedge(6,8){}
  \drawedge(7,8){}
\node(9)(40,70){}
  \drawedge(5,9){}
  \drawedge(8,9){}
\node(10)(90,70){}
\node(11)(55,90){}
  \drawedge(9,11){}
  \drawedge(10,11){}
\node(12)(80,50){}
\node(13)(100,50){}
  \drawedge(12,10){}
  \drawedge(13,10){}
\gasset{AHnb=1,dash={0.2 1.2}0}
  \drawedge[sxo=7,syo=-3,exo=-6,eyo=5](9,12){}
  \drawedge[sxo=7,syo=-3,exo=-7,eyo=4](5,6){}
  \drawedge[sxo=4,syo=-6,exo=-2,eyo=4](1,2){}
\gasset{AHnb=1,dash={1.5}{1.5}}
  \drawedge[curvedepth=-2,exo=3.5,eyo=5.5](10,8){}
  \drawedge[curvedepth=-2,exo=3.5,eyo=5.5](8,4){}
\end{picture}
\caption{An example of a respectful tree}\label{fig:respectful tree}
\end{center}
\end{figure}
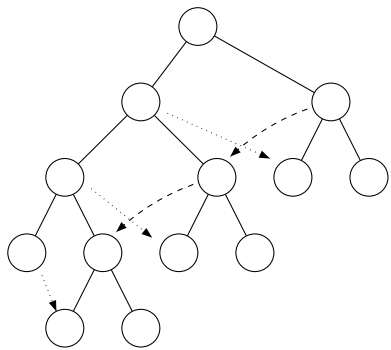
  To preserve the notations of previous papers concerning respectful trees we will denote the respectful tree  by $\Gamma$.  There are also equivalent definition of respectful tree (see, \cite{Bondar2016}). Using current terminology the one can be stated as follows:
 \begin{pro}[\cite{Bondar2016}, Proposition 1]\label{Pr_altern_def} A full binary tree $\Gamma$ is respectful if and only if every non-root vertex subordinates his (her) parent.
 \end{pro}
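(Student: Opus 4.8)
The plan is to base both implications on a single elementary observation about homomorphisms of rooted binary trees. Let $\phi\colon T_1\to T_2$ be such a homomorphism and let $c$ be a child of the root of $T_1$. Since $\phi$ sends root to root, preserves the parent--child relation and preserves the genders of non-root vertices, $\phi(c)$ is forced to be the child of the root of $T_2$ of the same gender as $c$, and $\phi$ carries the entire subtree rooted at $c$ into the subtree rooted at $\phi(c)$; restricting $\phi$ to that subtree is again a homomorphism, so $c$ subordinates $\phi(c)$. I would record this as a one-line lemma and invoke it twice.

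For the implication ``if every non-root vertex of $\Gamma$ subordinates its parent, then $\Gamma$ is respectful'' I would check (S1) and (S2) directly. To verify (S2), let $f=\dau(p)$ be a female vertex possessing a niece; fullness of $\Gamma$ then guarantees that $\s(p)$ and its daughter exist, and the niece is $z=\dau(\s(p))$ with aunt $f$. By hypothesis $\s(p)$ subordinates its parent $p$; applying the lemma to a witnessing homomorphism, the daughter $z$ of $\s(p)$ is sent to the daughter $f$ of $p$, and the restricted homomorphism shows that $z$ subordinates $f$, which is exactly (S2). Dually, (S1) follows from $\dau(p)$ subordinating $p$, this time tracking the son $\s(\dau(p))$ and its uncle $\s(p)$.

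For the converse, ``$\Gamma$ respectful $\Rightarrow$ every non-root vertex subordinates its parent'', I would induct on the height of the subtree rooted at the vertex $v$ under consideration; set $p:=\p(v)$. If $v$ is a leaf there is nothing to prove, since a one-vertex tree subordinates every non-empty tree. Otherwise fullness gives $v$ a son $\s(v)$ and a daughter $\dau(v)$, and gives $p$ both children. Assuming $v=\s(p)$ (the case $v=\dau(p)$ being dual, with the roles of (S1) and (S2) exchanged), put $f=\dau(p)$ and build a homomorphism $\psi$ from the subtree at $v$ into the subtree at $p$ as follows: send $v$ to $p$; map the subtree at $\s(v)$ into the subtree at $\s(p)=v$ using the induction hypothesis applied to $\s(v)$ (whose subtree is strictly shorter), which says $\s(v)$ subordinates its parent $v$; and map the subtree at $\dau(v)$ into the subtree at $\dau(p)=f$, which is possible because $\dau(v)$ is the niece of the female vertex $f$ and hence subordinates $f$ by (S2). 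The two pieces have disjoint images lying below the son and the daughter of $p$ respectively, so $\psi$ is a well-defined $1$-$1$ map respecting roots, genders and the parent--child relation; hence $v$ subordinates $p$.

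I expect the only delicate point to be the bookkeeping in the inductive step: one child of $v$ maps back onto the corresponding child of $p$ by the induction hypothesis applied to that child, while the subtree at the other child is accommodated in the subtree at its sibling by exactly one respectfulness axiom of the matching gender, and it is fullness of $\Gamma$ that guarantees every vertex named in the argument actually exists. I would also remark that, by transitivity of $\hookrightarrow$, the condition in the proposition is equivalent to requiring each vertex to subordinate all of its ancestors.
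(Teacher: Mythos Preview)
Your argument is correct. Note, however, that the paper itself does not supply a proof of this proposition: it is quoted from \cite{Bondar2016} and used as a black box, so there is no ``paper's own proof'' to compare against. Your write-up would serve perfectly well as a self-contained proof. The one-line lemma you isolate (that a tree homomorphism restricts to a homomorphism between subtrees rooted at children of the same gender) is exactly the right tool, and the inductive step is organised correctly: for $v=\s(p)$ you feed the subtree at $\s(v)$ into the subtree at $\s(p)=v$ via the induction hypothesis, and the subtree at $\dau(v)$ into the subtree at $\dau(p)$ via (S2); fullness of $\Gamma$ guarantees both children of $p$ exist, and the images are disjoint so the glued map is injective and gender-preserving. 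The dual case $v=\dau(p)$ uses (S1) in place of (S2) and the induction hypothesis on $\dau(v)$ instead of $\s(v)$, just as you say.
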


 \subsection{The faithful interval marking of a respectful tree}
We have already used the intervals as labels of binary tree vertices when were spoken about the inner trees and the partition trees. In our construction for $\Lg$-cross-sections of $\T_n$,  we also use certain markings  of respectful trees by intervals of the set $\overline{n}$  considered as a chain under a fixed linear order:
 $$u_1\prec u_2\prec \ldots\prec u_n.$$

 \par Let $\Gamma$ be a fixed respectful tree.
If $i,j\in\overline{n}$ and $i\leq j$, the \emph{interval} $[u_i,u_j]$ is the set $\{k\in \overline{n}\mid u_i\preceq k\preceq u_j\}$. We write $[u_i]$ instead of $[u_i,u_i]$. Now, a \emph{faithful interval marking} of a tree $\Gamma$ is a map $\mu_{\prec}$ from the vertex set of $\Gamma$ into the set of all intervals in $\overline{n}$ such that for each vertex $v$,
\begin{itemize}
\item  the number of leaves in the subtree $\Gamma(v)$ rooted at $v$ is equal to the number  of elements in the interval $v\mu_{\prec}$;
\item if $v\mu_{\prec}=[u_p,u_q]$ for some $u_p,u_q\in\overline{n}$ with $u_p\prec u_q$, and $s$ and $d$ are respectively the son and the daughter of $v$, then $s\mu_{\prec}=[u_p,u_t]$ and $d\mu_{\prec}=[u_{t+1},u_q]$ for some $t$ such that $p\le t<q$.
\end{itemize}
It easy to see that given a fixed linear order $\prec$ every respectful tree $\Gamma$ admits a unique faithful interval marking $\mu_{\prec}$.

Fig.~\ref{fig:marking} demonstrates a faithful interval marking   $\mu_{<}$ for usual order $<$ of the tree from Fig.~\ref{fig:respectful tree}.
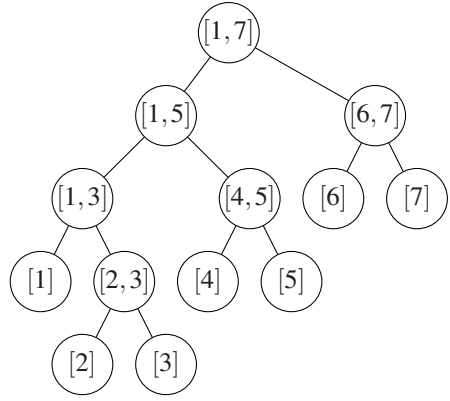
\begin{figure}[hb]
\begin{center}
\unitlength 0.55mm
\begin{picture}(100,81)(0,5)
\gasset{AHnb=0}
\node(1)(10,30){$[1]$}
\node(2)(20,10){$[2]$}
\node(3)(40,10){$[3]$}
\node(4)(30,30){$[2,3]$}
  \drawedge(2,4){}
  \drawedge(3,4){}
\node(5)(20,50){$[1,3]$}
  \drawedge(1,5){}
  \drawedge(4,5){}
\node(6)(50,30){$[4]$}
\node(7)(70,30){$[5]$}
\node(8)(60,50){$[4,5]$}
  \drawedge(6,8){}
  \drawedge(7,8){}
\node(9)(40,70){$[1,5]$}
  \drawedge(5,9){}
  \drawedge(8,9){}
\node(10)(90,70){$[6,7]$}
\node(11)(55,90){$[1,7]$}
  \drawedge(9,11){}
  \drawedge(10,11){}
\node(12)(80,50){$[6]$}
\node(13)(100,50){$[7]$}
  \drawedge(12,10){}
  \drawedge(13,10){}
\end{picture}
\caption{A faithful interval marking of the tree from Fig.~\ref{fig:respectful tree}}\label{fig:marking}
\end{center}
\end{figure}
\par For the respectful tree $\Gamma$ with  its faithful interval marking $\mu_{\prec}$ we will not make a difference between  a vertex of the tree and the interval associated with it.
 \subsection{$\Lg$-cross-sections of $\On_n$}
   First we recall the  description of $\Lg$-cross-sections of $\T_n$ for convenience.
   Let $\Gamma$ be a respectful tree with  its faithful interval marking.
   For each $M\subset\overline{n}$ we construct inductively the transformation with the image $M$,  using partial transformations, whose domains go through vertices of  $\Gamma$ bottom up.

 \par For functions $f$ and $g$ with disjoint domains, we denote by $f\cup g$ the union of $f$ and $g$ (viewed as sets of pairs). In other words, if $h = f\cup g$, then $\dom{(h)} = \dom{(f)}\cup \dom{(g)}$ and for all $x\in\dom{(h)}$, $xh = xf$ if $x \in \dom{(f)}$, and $xh = xg$ if $x\in \dom{(g)}$.
  \par Let  $M\subset\overline{n}$, by  $\langle M\rangle\in \Gamma$ we denote the intersection of all vertices of  $\Gamma$ which contains $M$. Given $\langle M\rangle=B$ and $A\in\Gamma$, we define the mapping $\alpha^{A}_M$  inductively as follows:
\begin{itemize}
\item[(a)] if $A=\emptyset$ then $\alpha^{A}_M =\emptyset$ (empty mapping);
\item[(b)] if $M=\{m'\}$ and $A\ne\emptyset$, then $\dom{(\alpha^{A}_M)}=A$ and $x'\alpha^{A}_M=m'$ for every $x'\in A$;
\item[(c)] if $|M|>1$ and $A\ne\emptyset$, then $\alpha^{A}_M=\alpha^{\s(A)}_{M\cap \s(B)}\cup\alpha^{\dau(A)}_{M\cap \dau(B)}$.
 \end{itemize}
   We denote by $\alpha_M$ the full transformation $\alpha^{\overline{n}}_M$. The description of $\Lg(\T_n)$ gives the following
\begin{thm}[\cite{Bondar2014}, Theorem 1]\label{th_descr_LT} For every respectful tree $\Gamma$ with $n$ leaves and its faithful interval marking $\mu_{\prec}$, the set $L^{\Gamma}=\{\alpha_M\mid M\subset \overline{n}\}$ forms an $\Lg$-cross-section of $\T_n$. Conversely, every $\Lg$-cross-section of $\T_n$ is given by $L^{\Gamma}$ for a respectful tree $\Gamma$ with a faithful interval marking.
\end{thm}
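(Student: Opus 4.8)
The plan is to prove the two assertions separately, in the usual manner for such classifications.

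For the direct part, fix a respectful tree $\Gamma$ together with its unique faithful interval marking $\mu_{\prec}$. The first point to settle is that for every nonempty $M\subseteq\overline{n}$ the recursively defined partial map $\alpha_M=\alpha^{\overline{n}}_M$ is a \emph{full} transformation of $\overline{n}$ with $\im(\alpha_M)=M$. I would prove this by carrying the following invariant down the recursion: whenever a call $\alpha^{A}_M$ with $A\neq\emptyset$ is issued, the subtree rooted at $\langle M\rangle$ subordinates the subtree rooted at $A$. At the root this holds because, by Proposition~\ref{Pr_altern_def}, every non-root vertex of a respectful tree subordinates its parent, so by reflexivity and transitivity of subordination every vertex subordinates the root. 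In the recursive step (c), if $h$ witnesses that the subtree at $B=\langle M\rangle$ subordinates the subtree at $A$, then $B$ is internal (its marking interval has at least $|M|\geq 2$ points), $h$ sends $B$ to $A$, and since $\Gamma$ is full and $h$ preserves genders, $h$ sends $\s(B)$ to $\s(A)$ and $\dau(B)$ to $\dau(A)$; as $M\cap\s(B)\subseteq\s(B)\mu_{\prec}$ we get $\langle M\cap\s(B)\rangle\hookrightarrow\s(B)\hookrightarrow\s(A)$ and likewise on the daughter side, so the invariant passes to both recursive calls. Because rooted subtrees of a full binary tree are full, subordination does not decrease the number of leaves, so the invariant yields that $A$ has at least $|\langle M\rangle\mu_{\prec}|\geq|M|$ leaves; in particular $A$ is internal as soon as $|M|\geq 2$, which is exactly what is needed for case (c) never to ``fall off'' $\Gamma$. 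A routine induction then gives $\dom(\alpha_M)=\overline{n}$ and $\im(\alpha_M)=M$. Since the $\Lg$-classes of $\T_n$ are precisely the sets of transformations with a fixed image, it follows that $L^{\Gamma}=\{\alpha_M\mid\emptyset\neq M\subseteq\overline{n}\}$ meets every $\Lg$-class of $\T_n$ in exactly one element.

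The remaining, and hardest, part of the direct statement is that $L^{\Gamma}$ is closed under composition. The identity I would aim at is
\begin{equation*}
\alpha_M\,\alpha_N=\alpha_{M\alpha_N}\qquad(\emptyset\neq M,N\subseteq\overline{n}),
\end{equation*}
which is meaningful since $\im(\alpha_M\alpha_N)=M\alpha_N$, so the right-hand side is indeed an element of $L^{\Gamma}$. I would prove it by induction on the depth of a vertex of $\Gamma$, comparing the action of $\alpha_M\alpha_N$ and of $\alpha_{M\alpha_N}$ on the interval labelling that vertex. The subtle issue is that the recursive scaffolding of $\alpha_{M\alpha_N}$ is governed by $\langle M\alpha_N\rangle$, which has to be matched at once against the scaffolding of $\alpha_M$ (governed by $\langle M\rangle$) and against the way $\alpha_N$ transports the marking intervals; I would therefore first prove an auxiliary lemma expressing $\langle M\alpha_N\rangle$ --- and, more generally, the $\alpha_N$-image of a marking interval --- in terms of $\langle M\rangle$, $\langle N\rangle$ and $\Gamma$. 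Respectfulness enters again here, through the subordination invariant above, to keep the two recursions in step. This is the step I expect to be the main obstacle.

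For the converse, let $L$ be an arbitrary $\Lg$-cross-section of $\T_n$. A few normalisations come first: the $\Lg$-class of image $\overline{n}$ is the symmetric group and its unique representative $\sigma\in L$ satisfies $\sigma=\sigma^{2}$, hence $\id_{\overline{n}}\in L$; all $n$ constants lie in $L$; and every rank-$2$ element of $L$ is idempotent. Next I would reconstruct the combinatorial data from $L$: examining the kernels of the elements of $L$ of ranks $2,3,\dots$ --- multiplied in an order that makes them successively refine one another --- yields a linear order $\prec$ on $\overline{n}$ and a full binary tree $\Gamma$ whose vertices are $\prec$-intervals, with the induced interval marking $\mu_{\prec}$. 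The heart of the matter is to verify that this $\Gamma$ is respectful: if (S1) or (S2) failed at some vertex, then by Proposition~\ref{Pr_altern_def} some non-root vertex would not subordinate its parent, and from such a failure one builds explicitly a product of two elements of $L$ whose image is attained by no element of $L$, contradicting that $L$ is a cross-section. Finally, a concluding induction on $|M|$ that mirrors the recursion defining $\alpha_M$ shows that the unique member of $L$ with image $M$ is precisely $\alpha_M$, so $L=L^{\Gamma}$. I expect the extraction of $\Gamma$ and the verification of respectfulness to be the delicate steps of this direction.
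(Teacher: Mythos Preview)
The paper does not contain a proof of this theorem: it is quoted verbatim as Theorem~1 of \cite{Bondar2014} and used as a black box to deduce the description of $\Lg$-cross-sections of $\On_n$ (Theorem~\ref{th_descr_L_On}). There is therefore nothing in the present paper to compare your proposal against.

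That said, your outline follows the standard architecture of the original proof in \cite{Bondar2014}: for the direct part, well-definedness of $\alpha_M$ via a subordination invariant and closure under composition via the key identity $\alpha_M\alpha_N=\alpha_{M\alpha_N}$; for the converse, reconstruction of $(\Gamma,\mu_\prec)$ from the low-rank elements of $L$ followed by verification of respectfulness. Your identification of the composition identity and the respectfulness check as the two genuine obstacles is accurate. If you want feedback on the details, you should consult \cite{Bondar2014} (and its companion \cite{Bondar2016}) directly, since the present paper only imports the statement.
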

It remains to recall the notion of similar respectful trees (see~ \cite{Bondar2016}). We have already define a homomorphism of the binary trees. It is natural to define \emph{an isomorphism } between two trees as a bijective homomorphism between the ones.

By \emph{an anti-isomorphism} between two trees $T_1$ and $T_2$ we mean the bijective mapping that sends root of $T_1$ to the root of $T_2$,  preserves  the parent-child relations and  inverses the genders of the vertices.
\begin{definition}
 Recall that respectful trees $\Gamma_1$, $\Gamma_2$ are called \textit{similar} (write $\Gamma_1\sim\Gamma_2$)   if there exists either isomorphism or anti-isomorphism between the trees.
 \end{definition}
In other words, $\Gamma_1\sim\Gamma_2$  if  $\Gamma_1$ is a mirror reflection of $\Gamma_2$.
\begin{example}\label{ex_sim_trees}
Consider an example of similar respectful trees.
\begin{figure}[hb]
\begin{center}
\unitlength 0.55mm
\begin{picture}(100,75)
\put(10,75){$\Gamma_1$}
\gasset{AHnb=0}
\node(1)(0,10){$[1]$}
\node(2)(20,10){$[2]$}
\node(3)(30,30){$[3]$}
\node(4)(10,30){$[1,2]$}
  \drawedge(1,4){}
  \drawedge(2,4){}
\node(5)(20,50){$[1,3]$}
  \drawedge(3,5){}
  \drawedge(4,5){}
\node(6)(40,50){$[4]$}
\node(7)(30,70){$[1,4]$}
  \drawedge(5,7){}
  \drawedge(6,7){}
\put(80,75){$\Gamma_2$}
\gasset{AHnb=0}
\node(1)(130,10){$[4]$}
\node(2)(110,10){$[3]$}
\node(3)(100,30){$[2]$}
\node(4)(120,30){$[3,4]$}
  \drawedge(1,4){}
  \drawedge(2,4){}
\node(5)(110,50){$[2,4]$}
  \drawedge(3,5){}
  \drawedge(4,5){}
\node(6)(90,50){$[1]$}
\node(7)(100,70){$[1,4]$}
  \drawedge(5,7){}
  \drawedge(6,7){}
\end{picture}
\caption{A  pair of similar respectful trees}\label{fig:sim_trees}
\end{center}
\end{figure}
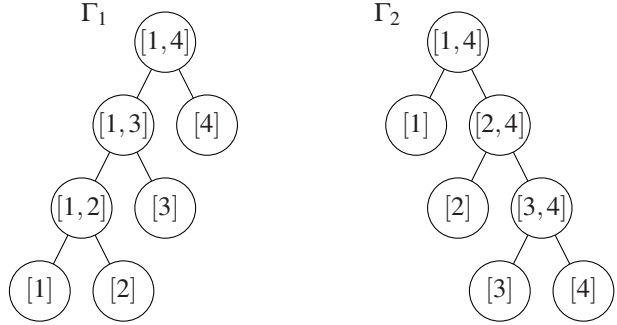

\end{example}
\par We need to recall also the following lemma.
\begin{lem}[\cite{Bondar2016}, Lemma 2]\label{lem_L1=L2Bondar2016}
 Let $\Gamma_1$, $\Gamma_2$ be the respectful trees with $n$ leaves and the faithful interval markings $\mu_{\prec_1}$, $\mu_{\prec_2}$ respectively. Then the $\Lg$-cross-sections associated with $\Gamma_1$, $\Gamma_2$ are coincide if and only if one of the following conditions holds:
 \begin{itemize}
\item[(i)] $\Gamma_1=\Gamma_2$;
\item[(ii)] there exists an anti-isomorphism from $\Gamma_1$ to $\Gamma_2$ and $\prec_2 = \prec_1^{-1}$.
\end{itemize}
\end{lem}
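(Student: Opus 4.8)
The plan is to prove both implications by tracing carefully how the transformations $\alpha_M$ are built from the faithful interval markings, and by pinning down which data about the pair $(\Gamma,\mu_\prec)$ are actually recoverable from the set $L^\Gamma$ alone.

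For the `if' direction, case (i) is immediate. For case (ii), suppose $\phi\colon\Gamma_1\to\Gamma_2$ is an anti-isomorphism and $\prec_2=\prec_1^{-1}$. First I would observe that under these hypotheses, for every vertex $A$ of $\Gamma_1$ the interval $A\mu_{\prec_1}$ and the interval $(A\phi)\mu_{\prec_2}$ coincide as subsets of $\overline n$: the anti-isomorphism swaps son and daughter, while reversing the order $\prec$ swaps ``left half'' and ``right half'' of an interval, so the two effects cancel, and an easy induction on the height of $A$ (using that $A$ and $A\phi$ have the same number of leaves below them) gives $A\mu_{\prec_1}=(A\phi)\mu_{\prec_2}$. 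With this in hand, a second induction following the recursive clauses (a)--(c) in the definition of $\alpha_M^A$ shows that the mapping $\alpha_M$ computed from $(\Gamma_1,\mu_{\prec_1})$ equals the mapping computed from $(\Gamma_2,\mu_{\prec_2})$: in clause (c) the decomposition $M\cap\s(B)$, $M\cap\dau(B)$ relative to $\Gamma_1$ becomes the decomposition into $M\cap\dau(B\phi)$, $M\cap\s(B\phi)$ relative to $\Gamma_2$, which is the same partition of $M$ into two pieces, and the recursion descends into corresponding subtrees. Hence $L^{\Gamma_1}=L^{\Gamma_2}$.

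For the `only if' direction, assume $L^{\Gamma_1}=L^{\Gamma_2}=:L$. The key is that several invariants of $(\Gamma,\mu_\prec)$ are determined by $L$ as an abstract subset of $\T_n$. First, the linear order underlying the construction is recoverable up to reversal: by the remarks in Sect.~2 (reproduced before Theorem~\ref{th_descr_LT}), all transformations in an $\Lg$-cross-section $L$ respect a common linear order on $\overline n$, and from $L$ one reads off this order (as a chain) but not its direction, so $\prec_2\in\{\prec_1,\prec_1^{-1}\}$. Second, I would recover the tree from the family of images together with the mappings: for $M\subset\overline n$ the element $\alpha_M\in L$ is the unique member of $L$ with image $M$, and the collection of ``branching points'' — namely, for which pairs $M',M''$ the transformation $\alpha_{M'\cup M''}$ restricts to $\alpha_{M'}$ and $\alpha_{M''}$ on complementary unions of vertices — encodes exactly the vertex set of $\Gamma$ (the intervals $\langle M\rangle$) and the son/daughter structure, except that swapping the global order also swaps the two children at every node. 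Concretely, the idempotents in $L$ are the $\alpha_M$ with $M$ a vertex of $\Gamma$, their images give the vertex set, and for a vertex $B=[u_p,u_q]$ with $|B|>1$ the decomposition $B=\s(B)\sqcup\dau(B)$ is detected as the unique splitting of $B$ into two intervals (in the $\prec$-order) each of which is again an image of an idempotent in $L$ lying below $B$ in the containment order; the labels ``son'' vs.\ ``daughter'' are then fixed once a direction of $\prec$ is chosen.

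Putting these together: if $\prec_2=\prec_1$, the reconstruction procedure returns the same labelled tree from $L$ in both cases, forcing $\Gamma_1=\Gamma_2$ and giving alternative (i). If $\prec_2=\prec_1^{-1}$, then reversing the order exchanges $\s$ and $\dau$ at every vertex and sends each interval $[u_p,u_q]$ to its complementary description, so the identity map on the common vertex set is an anti-isomorphism $\Gamma_1\to\Gamma_2$, giving alternative (ii). The main obstacle I anticipate is the bookkeeping in the reconstruction step — verifying rigorously that the son/daughter partition of each vertex is uniquely determined by $L$ and that no further ambiguity beyond the global order-reversal can arise; this is where one must use property (S1)--(S2) of respectful trees (equivalently, Proposition~\ref{Pr_altern_def}) to rule out spurious splittings, since it guarantees that the faithful interval marking is itself uniquely determined by $\Gamma$ and $\prec$, so that the only freedom in the pair $(\Gamma,\mu_\prec)\mapsto L^\Gamma$ is the one exhibited in the `if' direction.
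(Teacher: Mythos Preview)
This lemma is not proved in the present paper --- it is quoted from \cite{Bondar2016} (Lemma~2 there) and used as a black box --- so there is no in-paper argument to compare your proposal against.

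That said, your `if' direction is sound: the induction showing $A\mu_{\prec_1}=(A\phi)\mu_{\prec_2}$ works because an anti-isomorphism preserves leaf counts of subtrees while swapping sons and daughters, and reversing $\prec$ swaps initial and terminal subintervals; the recursion defining $\alpha_M$ then unwinds identically on both sides.

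Your `only if' direction, however, contains a concrete error. You assert that ``the idempotents in $L$ are the $\alpha_M$ with $M$ a vertex of $\Gamma$'', and build the reconstruction on this. This is false in both directions. Take $n=3$ with the natural order and the respectful tree whose root is $[1,3]$, with $\s([1,3])=[1,2]$, $\dau([1,3])=[3]$, and $[1,2]$ having leaf children $[1]$ and $[2]$. Then $M=\{1,2\}$ \emph{is} a vertex, yet $\alpha_{\{1,2\}}$ sends $1\mapsto1$, $2\mapsto1$, $3\mapsto2$ and is not idempotent; conversely $M=\{1,3\}$ is \emph{not} a vertex, yet $\alpha_{\{1,3\}}$ sends $1\mapsto1$, $2\mapsto1$, $3\mapsto3$ and is idempotent. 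So the idempotent set of $L$ does not recover the vertex set of $\Gamma$, and your reconstruction collapses here. A workable substitute is to look instead at kernels: every $\alpha_M$ with $|M|=2$ has kernel $\{\s(\overline n),\dau(\overline n)\}$ (check this from clause~(c)), so the common kernel of all rank-$2$ elements of $L$ recovers the top split, and one can then recurse into each half. With that repair your outline can be completed; as written, the key step is wrong.
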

Now we are ready to describe $\Lg(\On_n)$.
\begin{thm}\label{th_descr_L_On} Let  $\Gamma$ be respectful tree with $n$ leaves and a faithful interval marking $\mu_{<}$, where $<$ is the  usual order on $\overline{n}$. The set $L^{\Gamma}=\{\alpha_M\mid M\subset \overline{n}\}$ forms an $\Lg$-cross-section of $\On_n$. Conversely, every $\Lg$-cross-section of $\On_n$ is given by $L^{\Gamma}$ for a respectful tree $\Gamma$ with a faithful interval marking $\mu_{<}$.
\end{thm}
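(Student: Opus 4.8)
The plan is to prove both implications by transferring the classification over $\T_n$ (Theorem~\ref{th_descr_LT}) along Lemma~\ref{lemma_cr_sec_of_reg_subsem}, and then to single out exactly which linear orders $\prec$ produce a cross-section that lives inside $\On_n$. \emph{Sufficiency.} I would first verify that when $\Gamma$ is respectful and carries the faithful interval marking $\mu_{<}$ relative to the usual order, every $\alpha_M$, $M\subseteq\overline{n}$, is order-preserving; this is read off the recursive definition of $\alpha^A_M$. Take $x<y$ in $\overline{n}$. The leaves $[x]$ and $[y]$ of $\Gamma$ follow a common path down to their lowest common ancestor $A=[i,j]$ and then part, with $x$ entering $\s(A)=[i,t]$ and $y$ entering $\dau(A)=[t+1,j]$ (this forced order of splitting is exactly where $\mu_{<}$ is used). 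At $A$ the construction replaces the running image set $M'$ by $M'\cap\s(B')$ on the left branch and by $M'\cap\dau(B')$ on the right, where $B'=\langle M'\rangle$. Since $\mu_{<}$ makes $\s(B')$ an initial and $\dau(B')$ the complementary final segment of the interval $B'$, every value producible on the left branch is $<$ every value producible on the right; as $\mathrm{im}(\alpha^{A'}_{M'})\subseteq M'$ throughout, $x\alpha_M\in M'\cap\s(B')$ and $y\alpha_M\in M'\cap\dau(B')$ (both non-empty when $|M'|>1$, while if $|M'|=1$ clause (b) already assigns $x$ and $y$ the common value $m'$), whence $x\alpha_M\le y\alpha_M$. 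Thus $L^{\Gamma}\subseteq\On_n$; since $L^{\Gamma}$ is an $\Lg$-cross-section of $\T_n$ by Theorem~\ref{th_descr_LT}, the `only if' part of Lemma~\ref{lemma_cr_sec_of_reg_subsem} (with $S=\T_n$, $T=\On_n$, $\K=\Lg$) yields that $L^{\Gamma}$ is an $\Lg$-cross-section of $\On_n$.

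\emph{Necessity.} Let $L$ be an $\Lg$-cross-section of $\On_n$. As recalled in Section~\ref{sec:1}, $\On_n$ meets every $\Lg$-class of $\T_n$, so the `if' part of Lemma~\ref{lemma_cr_sec_of_reg_subsem} shows $L$ is an $\Lg$-cross-section of $\T_n$; by Theorem~\ref{th_descr_LT}, $L=L^{\Gamma}$ for a respectful tree $\Gamma$ with some faithful interval marking $\mu_{\prec}$, say $u_1\prec\dots\prec u_n$. The crux is that $L\subseteq\On_n$ forces $\prec\in\{{<},{<^{-1}}\}$. To see it I would examine the rank-$2$ members of $L^{\Gamma}$: unwinding the recursion shows that for distinct $a,b$ the map $\alpha_{\{a,b\}}$ sends all of $\s(\rt)$ to the $\prec$-smaller of $a,b$ and all of $\dau(\rt)$ to the $\prec$-larger, so every rank-$2$ element of $L^{\Gamma}$ has kernel $\{\s(\rt),\dau(\rt)\}$. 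For all of these to be order-preserving, $\s(\rt)$ and $\dau(\rt)$ must be $<$-intervals, i.e.\ $\{\s(\rt),\dau(\rt)\}=\{[1,s],[s+1,n]\}$ for some $s$; and then, according to which of the two blocks is the initial segment, order-preservation of $\alpha_{\{a,b\}}$ for \emph{every} pair says either that the $\prec$-smaller member of each pair is its $<$-smaller member (so $\prec={<}$) or that it is its $<$-larger member (so $\prec={<^{-1}}$).

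It remains to absorb the case $\prec={<^{-1}}$. Let $\Gamma'$ be a full binary tree anti-isomorphic to $\Gamma$; since an anti-isomorphism exchanges conditions (S1) and (S2), $\Gamma'$ is again respectful, and it has $n$ leaves. Applying Lemma~\ref{lem_L1=L2Bondar2016}(ii) with $\prec_1={<^{-1}}$ and $\prec_2=\prec_1^{-1}={<}$ gives $L=L^{\Gamma}=L^{\Gamma'}$, where $\Gamma'$ carries the faithful interval marking $\mu_{<}$; together with the case $\prec={<}$ (for which $\Gamma$ itself works) this shows every $\Lg$-cross-section of $\On_n$ has the asserted form, completing Theorem~\ref{th_descr_L_On}. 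I expect the most delicate point to be the rank-$2$ computation: one must argue carefully that, regardless of how far below $\rt$ the vertex $\langle\{a,b\}\rangle$ sits, the recursion collapses to a single value on $\s(\rt)$ and on $\dau(\rt)$ as soon as the running image set drops to a singleton — and the same bookkeeping underlies making the monotonicity induction of the sufficiency part fully rigorous.
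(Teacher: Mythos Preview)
Your proposal is correct and follows essentially the same strategy as the paper: both directions are transferred from $\T_n$ via Lemma~\ref{lemma_cr_sec_of_reg_subsem} and Theorem~\ref{th_descr_LT}, with the necessity part reducing to showing $\prec\in\{<,<^{-1}\}$ and then invoking Lemma~\ref{lem_L1=L2Bondar2016}(ii). The only minor difference is that, to force $\prec\in\{<,<^{-1}\}$, the paper examines the idempotents $\alpha_A$ for all internal vertices $A\in\Gamma$ (showing each $A$ is $<$-convex and that sons precede daughters), whereas you extract the same conclusion more economically from the rank-$2$ maps $\alpha_{\{a,b\}}$ alone.
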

\begin{proof} Indeed, the natural order on $\overline{n}$ provides that the vertices of $\Gamma$ with $\mu_{<}$ are the convex intervals. Consequently, each $\alpha_M\in L^{\Gamma}$ has a convex partition. Moreover, if $A\in\Gamma$  then by the definition of $\mu_{<}$ it holds $x<y$ for all $x\in\s(A), y\in\dau(A)$. Thus by the construction $\alpha_M$ is order-preserving for every $M\subseteq\overline{n}$.  Therefore $L^{\Gamma}$ is an $\Lg$-cross-section of $\On_n$ by Lemma~\ref{lemma_cr_sec_of_reg_subsem}.
 \par Conversely, it is clear that for all $M\subseteq\overline{n}$ there exists $\alpha\in\On_n$ with $\im(\alpha)=M$. Thus, every $\Lg$-cross-section of $\On_n$ is an $\Lg$-cross-section of $\T_n$. Therefore, it has the form $L^{\Gamma}$ for a respectful tree $\Gamma$ with a faithful interval marking $\mu_{\prec}$. We claim that $\overline{n}$ is naturally ordered, i.e. $\mu_\prec=\mu_<$. On the one hand, for every $A\in \Gamma$ there exits $\alpha_M\in \On_n$ with $A\in\overline{n}/\ker(\alpha_M)$. Therefore, $A$ is convex. On the other hand, since $\alpha_A\in \On_n$ for all $A\in\Gamma$ with $|A|\ne 1$, by  the definition of $\alpha_A$ we have that $x<y$ for all $x\in\s(A), y\in\dau(A)$. Thus, either  $\prec$ is equal to $<$, or $\prec$ is equal to $<^{-1}$. By Lemma~\ref{lem_L1=L2Bondar2016}, (ii) without loss of generality we may assume $n$ is  naturally ordered.
 \end{proof}

 \subsection{Elementary trees}
We define now a special type of decreasing trees, called elementary, which, we will see, generate in a sense all decreasing trees.
 \begin{definition} Say a subtree $T^x$ of the decreasing tree  to be \textbf{\emph{elementary}}, if $U_0\cup U_1=\{x,\p(x)\}$ and for all $v\in T^x$, $|T^x|>2$ it holds either $x<v<\p(x)$ or $\p(x)<v<x$.
\end{definition}
\par Looking at the diagram of an order-preserbing tree (decreasing, in particular) it is easy to see that the tree can be considered as a union of elementary trees. However, at the moment we are interested in a special case when the decreasing tree is elementary itself. That is, it holds either $(\overline{n},\prec)=T^1$ or $(\overline{n},\prec)=T^n$. In this case, clearly, the elementary trees correspond to $\R$-cross-sections of $\On_n$ with two fixed points. Again, if an $\R$-cross-section has two fixed points then by Definition~\ref{def_varphi} we get $U_0\cup U_1=\{1,n\}$. Thus the cross-section corresponds to one of the elementary trees  $T^1$, $T^n$.

 On the other hand, by Proposition~\ref{pro_dual_L} each dual to an $\Lg$-cross-section of $\On_{n}$ gives us the pair of $\R$-cross-sections of $\On_{n+1}$  with two fixed points. In turn, each $\Lg$-cross-section is determined by a respectful tree. We will see now that a respectful tree determines the pair of elementary trees on $[n+1]$.
\begin{lem}\label{lemma_resp_imply_elem} Given a respectful tree $\Gamma(n)$ with $n$-leaves, there exists a unique (up to the choice of the root) diagram of $(n+1)$-element elementary trees $T^1$, $T^{n+1}$  such that $\Gamma$ is the common inner tree of the trees.
\end{lem}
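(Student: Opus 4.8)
The plan is to make the correspondence between respectful trees with $n$ leaves and elementary trees on $[n+1]$ completely explicit, and then to verify separately existence, the decreasing property, and uniqueness. Fix $\Gamma$ together with its faithful interval marking $\mu_<$, so that the $n$ leaves of $\Gamma$ are marked by the singletons $[1],\dots,[n]$ and the $n-1$ internal nodes by the remaining intervals of $\overline{n}$; throughout, identify these leaves with the cells $[1'],\dots,[n']$ of Section~\ref{sec_trees_and_diagrams} (here the gender conventions of $\mu_<$ and of the inner-tree construction differ by a global son/daughter swap, but this is immaterial, since such a swap preserves both respectfulness and every subordination relation used below). The internal nodes of $\Gamma$ will be the interior vertices $2,\dots,n$ of the trees we construct. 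Concretely, let $S$ be the order-preserving tree on the chain $\{2,\dots,n\}$ whose underlying shape is the binary tree formed by the internal nodes of $\Gamma$ (with the inherited parent--child and son/daughter relations), and whose labelling by $2,\dots,n$ is the unique strict-BST labelling of that shape; such an $S$ exists and is unique because, once the abstract shape of a binary tree is fixed, the order-preserving labelling of a given chain is forced by the in-order traversal. Then let $T^1$ be the tree with root $n+1$, son $1$, and $S$ attached below $1$ as its daughter subtree, and let $T^{n+1}$ be the tree with root $1$, daughter $n+1$, and the same $S$ attached below $n+1$ as its son subtree.

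First I would check existence, i.e.\ that $\Gamma$ is the common inner tree of $T^1$ and $T^{n+1}$. This is a straightforward induction that unwinds the recursive definition of the inner tree: the root $[1',n']$ splits at the $\prec$-highest vertex lying strictly between $1$ and $n+1$, which is exactly the root of $S$, and a split at a vertex $v$ of $S$ produces as its daughter and son the inner trees of, respectively, the son-subtree and the daughter-subtree of $v$ in $S$ — which is precisely the local rule encoded in the passage from $\Gamma$ to $S$. Since neither the framing root ($n+1$, resp.\ $1$) nor the level-one vertex ($1$, resp.\ $n+1$) is ever chosen as a split point, both $T^1$ and $T^{n+1}$ yield the same full binary tree on the cells, namely $\Gamma$.

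The main point is to show that $T^1$ (and, symmetrically, $T^{n+1}$) is a decreasing tree, so that the word ``elementary'' genuinely applies; this is where respectfulness of $\Gamma$ is used. In $T^1$ the path from $1$ to the root is $\omega(1)=\{1,n+1\}$ and the path from the top element $n+1$ is $\{n+1\}$, so conditions~1) and~2) of Definition~\ref{def_ord_pres} (of a decreasing tree) are vacuous, and every interior vertex $2,\dots,n$ lies outside these paths; hence being decreasing reduces to condition~3) applied to pairs $x\prec y$ of interior vertices. For such a pair, the node of $\Gamma$ attached to $x$ is a descendant of the node attached to $y$, and the left and right inner trees $\Gamma_l(x),\Gamma_r(x)$ are exactly the two child-subtrees of the $\Gamma$-node of $x$ (and likewise for $y$); so condition~3) says that for any node of $\Gamma$ lying below another, each of its two child-subtrees subordinates the corresponding child-subtree of the higher node. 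Because a homomorphism of trees carries the left subtree into the left subtree and the right into the right, this is equivalent — first for parent--child pairs, then by transitivity of $\hookrightarrow$ for all ancestor pairs — to the statement that every non-root vertex of $\Gamma$ subordinates its parent, which by Proposition~\ref{Pr_altern_def} is exactly respectfulness of $\Gamma$. I expect the delicate part of the whole argument to be precisely this translation: pinning down $\Gamma_l(x)$ and $\Gamma_r(x)$ as the correct subtrees of $\Gamma$ under the correspondence $x\leftrightarrow$ internal node, and checking that the pairing of left with left and right with right demanded by condition~3) is respected throughout.

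Finally, uniqueness. By the description of elementary trees, an elementary tree on $[n+1]$ is either of the shape of $T^1$ (root $n+1$, son $1$, and an order-preserving tree on $\{2,\dots,n\}$ hung below $1$) or of the shape of $T^{n+1}$; selecting between the two is the ``choice of the root'' in the statement. In each case, reading the inner-tree computation of the existence step in reverse shows that requiring the inner tree to be $\Gamma$ forces the abstract shape of the order-preserving subtree on $\{2,\dots,n\}$, hence forces it to equal $S$ by the strict-BST argument, hence determines the whole tree. This gives the asserted uniqueness and completes the proof.
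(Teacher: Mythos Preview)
Your argument is correct and more direct than the paper's. The paper proceeds by induction on $n$: for the level-$2$ vertex $b$ of $T^{k+1}$ and a child $a$ of $b$ it checks $\Gamma_l(a)\hookrightarrow\Gamma_l(b)$ via Proposition~\ref{Pr_altern_def} and $\Gamma_r(a)\hookrightarrow\Gamma_r(b)$ via the niece/aunt condition~(S2), and then invokes the inductive hypothesis on the smaller respectful trees $\Gamma_l(b)$, $\Gamma_r(b)$ to handle the deeper vertices. You instead verify condition~3 of Definition~\ref{def_ord_pres} for \emph{all} pairs $x\prec y$ of interior vertices at once, by identifying $\Gamma_l(x),\Gamma_r(x)$ with the two child-subtrees of the internal node of $\Gamma$ attached to $x$, observing that $x\prec y$ makes the node of $x$ a descendant of the node of $y$, and then applying Proposition~\ref{Pr_altern_def} together with transitivity of $\hookrightarrow$. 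This bypasses the induction and also makes the construction and the uniqueness explicit, which the paper leaves implicit.

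One small point to tighten: the son/daughter swap you flag is not purely notational --- it enters the construction of $S$. With $S$ taken to inherit the genders of the internal nodes of $\Gamma$ literally, the inner tree of your $T^1$ is the gender-swap of $\Gamma$, not $\Gamma$ itself (since, as your own computation shows, the son of a vertex of $S$ corresponds to the \emph{daughter} of the associated $\Gamma$-node and vice versa). To obtain $\Gamma$ on the nose you must swap genders when passing from the internal nodes of $\Gamma$ to $S$. This is a one-line correction and, as you say, it leaves the decreasing-property argument untouched, since Proposition~\ref{Pr_altern_def} and subordination are gender-symmetric.
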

\begin{proof}
  We induct on the number $n$ of leaves of $\Gamma(n)$. Let $n=1$. Obviously $T^1$, $T^{2}$ are the elementary trees with with the common inner tree $\Gamma(1)$.

 \par Let $k$ be  a natural number. Suppose the claim holds  for  all natural $l<k$. Consider a respectful tree $\Gamma(k)$. Without loss of generality denote the root of $\Gamma(k)$ by $\rt':=[1',k']$.

 Consider the binary trees $T^1$, $T^{k+1}$  such that  $U_0\cup U_1=\{1, k+1\}$ and $\Gamma(k)$ is the inner  tree for both of trees. For concreteness consider  $T^{k+1}$. Let $\dau(1):=b$. Let there exist $a\in T^{k+1}$ with $y\prec x$. Without loss of generality assume that $a<b$ (see Fig.~\ref{fig:resp_to_order_pres}). Then the left inner tree of $a$ subordinates the left inner tree of $b$ by Proposition~\ref{Pr_altern_def}. The right-hand  inner tree of $a$ is the niece of $[b',k']$. Hence, the right-hand  inner tree of $a$ subordinates the right-hand inner tree of $b$. Analogously the inner trees of $b$'s daughter (if the daughter exists) subordinates respective inner trees of $b$.  The order-preserving trees whose inner trees are $\Gamma_l(x)$ and $\Gamma_r(x)$  are  decreasing by the induction assumption. Therefore, we conclude that  $T^{k+1}$ is decreasing and hence, elementary.

 \end{proof}
\begin{figure}[h]
\begin{center}
\begin{picture}(40,40)
\gasset{AHnb=0,linewidth=0.4}
  \drawline[linewidth=0.1](0,10)(40,10)
  \drawline[linewidth=0.1](0,15)(40,15)
  \drawline[linewidth=0.1](0,20)(40,20)
 \drawline[linewidth=0.1](0,25)(40,25)
  \drawline[linewidth=0.1](0,30)(40,30)
  \drawline[linewidth=0.1](0,35)(40,35)
  \drawline[linewidth=0.1](0,40)(40,40)
  \put(-1,43){$1$}
  \put(10,43){$a$}
  \put(20,43){$b$}
  \put(39,43){$k+1$}
  \drawline(0,10)(0,40)
  \drawline(5,10)(5,40)
  \drawline(10,10)(10,40)
  \drawline(15,10)(15,40)
  \drawline(20,10)(20,40)
  \drawline(25,10)(25,40)
  \drawline(30,10)(30,40)
  \drawline(35,10)(35,40)
  \drawline(40,10)(40,40)
  \gasset{AHnb=0,linewidth=0.8, Nw=1.5,Nh=1.5,Nframe=y, Nfill=y, NLdist=3}
  \node(r_21)(0,40){}
  \node(r_22)(20,30){}
  \node(r_23)(40,35){}
  \node(r_31)(15,20){}
  \node(r_32)(10,25){}
   \node(r_32)(5,20){}
  \node(r_33)(35,25){}
  \node(r_33)(30,20){}
  \node(r_33)(25,15){}
    \drawline(0,10)(0,40)
    \drawline(5,10)(5,20)
    \drawline(10,10)(10,25)
    \drawline(15,10)(15,20)
    \drawline(20,10)(20,30)
    \drawline(25,10)(25,15)
    \drawline(30,10)(30,20)
    \drawline(35,10)(35,25)
    \drawline(40,10)(40,35)
\gasset{AHnb=1,dash={0.2 1.2}0}
 \drawline(0,30)(20,30)
 \drawline(0,25)(10,25)
 \drawline(20,25)(35,25)
\gasset{AHnb=1,dash={1.5}{1.5}}
 \drawline(20,30)(40,30)
 \drawline(35,25)(40,25)
 \drawline(10,25)(20,25)
  \end{picture}
\caption{A connection between respectful and decreasing trees}\label{fig:resp_to_order_pres}
\end{center}
\end{figure}
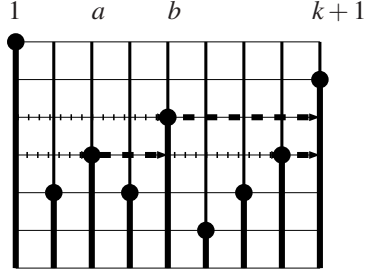

The converse statement also holds: given an elementary tree determines the respectful tree.
\begin{lem}\label{lemma_inner tree is resp} Let $(\overline{n}, \prec)$ be a decreasing binary tree, $x, \p(x)\in\overline{n}$. The inner tree of elementary tree $T^x$  is respectful.
\end{lem}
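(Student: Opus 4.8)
The plan is to mimic the induction used for Lemma~\ref{lemma_resp_imply_elem}, read backwards.

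First I would reduce to the case $x<\p(x)$: reversing the order on $\overline n$ is an anti-automorphism of order-preserving trees that interchanges sons and daughters, hence it carries decreasing trees to decreasing trees, elementary trees to elementary trees, and respectful trees to respectful trees (it swaps conditions (S1) and (S2)), so the two cases are equivalent. Next, since the vertex set of any subtree of an order-preserving tree is an interval, one checks that $T^x$ occupies exactly the interval $[x,\p(x)]$ and that $x$ has no son; relabelling, we may take $T^x$ to be an order-preserving tree on $\overline m$, $m=\p(x)-x+1$, whose root is $m$ and whose unique child is $1$, carrying the restriction of $\prec$. By Proposition~\ref{Pr_altern_def} it suffices to show that in the inner tree $\Gamma$ of $T^x$ every non-root vertex subordinates its parent.

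The induction is on $m$; for $m\le 2$ the tree $\Gamma$ is empty or a single leaf. For $m\ge 3$ let $z=\dau(1)$ be the unique level-two vertex, equivalently the $\prec$-highest among $2,\dots,m-1$. Directly from the definition of the inner tree, $\Gamma$ has root $[1',(m-1)']$ whose daughter is the inner tree of the order-preserving tree $A$ cut out by $\prec$ on $[1,z]$ and whose son is the inner tree of the order-preserving tree $B$ cut out by $\prec$ on $[z,m]$. One verifies that $A$ (root $1$, unique child $z$) and $B$ (root $m$, unique child $z$) are again elementary trees sitting inside decreasing trees: here one uses that the left and right subtrees of $z$ in $T^x$ are exactly $[2,z-1]$ and $[z+1,m-1]$, and that each non-root vertex of $A$ and of $B$ retains the canonical bounds (Definition~\ref{def_can_b}) and the left and right inner trees it has in $T^x$, so that the conditions of Definition~\ref{def_ord_pres} for $A$ and $B$ follow from those for $T^x$. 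Since $2\le z\le m-1$, both $A$ and $B$ have fewer than $m$ vertices, so by induction their inner trees are respectful.

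It then remains, by Proposition~\ref{Pr_altern_def} applied inside the daughter- and son-subtrees, only to produce homomorphisms from the daughter and from the son of the root of $\Gamma$ into $\Gamma$. I would build each one recursively: send root to root, the daughter-subtree into the daughter-subtree, the son-subtree into the son-subtree, and continue. At every step the task splits in two; one half is again "a daughter (son) of a respectful tree subordinates its parent", true by the inductive hypothesis via Proposition~\ref{Pr_altern_def}, while the other half, after unwinding the identification of the subtree of $\Gamma$ rooted at $[i',j']$ with the inner tree of the order-preserving tree induced on the consecutive vertices $i,i+1,\dots,j+1$, is exactly the assertion that the left (respectively right) inner tree of a vertex of $T^x$ subordinates the left (respectively right) inner tree of one of its $\prec$-ancestors --- which is precisely what conditions 1)--3) of Definition~\ref{def_ord_pres} give for the decreasing tree $T^x$.

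The main obstacle is this last step: keeping straight the dictionary between subtrees of $\Gamma$ and left/right inner trees of vertices of $T^x$. There is a systematic off-by-one --- the subtree of $\Gamma$ at $[i',j']$ encodes the induced tree on the $j-i+2$ consecutive vertices $i,\dots,j+1$, whose root is one of the two extreme vertices and an ancestor of the rest --- and it behaves asymmetrically for male and for female vertices, so the matching of the recursion against conditions 1), 2), 3) must be carried out with some care, in particular separating the "mixed" situation in which the two $\prec$-comparable vertices sit differently relative to $\omega(1)$ and $\omega(m)$. Once this dictionary is in place, the rest is routine.
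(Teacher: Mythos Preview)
Your approach is correct and follows essentially the same inductive skeleton as the paper: induct on the size of $T^x$, identify the daughter- and son-subtrees of $\Gamma$ as inner trees of smaller elementary trees, and use the decreasing hypothesis for the remaining ``top-level'' condition. The paper differs only in how it handles that top level: instead of Proposition~\ref{Pr_altern_def}, it checks (S1)/(S2) directly at the children of the root of $\Gamma$. With $b=\s(x)$ and $a=\s(b)$, the nephew $[a',(b-1)']=\Gamma_r(a)$ and the uncle $[b',(x-1)']=\Gamma_r(b)$, so (S1) is exactly $\Gamma_r(a)\hookrightarrow\Gamma_r(b)$, immediate from condition~3) since $a\prec b$; (S2) is dual. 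Your route via Proposition~\ref{Pr_altern_def} asks for the stronger statement that each child of the root subordinates the root, which you obtain by splicing together one hom coming from the inductive respectfulness of $\Gamma_A$ (or $\Gamma_B$) and one hom coming from condition~3) applied to $\s(z)\prec z$ (or $\dau(z)\prec z$). Both work; the paper's version is a one-line check, yours a two-piece splice.

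Two small remarks on your write-up. First, the ``recursion'' you describe actually terminates after a single step: one half is already a full homomorphism by the inductive hypothesis, and the other half is already a full homomorphism supplied directly by condition~3); no further unwinding is needed. Second, your worry about ``mixed'' cases relative to $\omega(1)$ and $\omega(m)$ is unfounded: every interior vertex of $T^x$ (in the original $(\overline n,\prec)$) lies strictly between $x$ and $\p(x)$, hence outside $\omega(1)\cup\omega(n)$, so only condition~3) of Definition~\ref{def_ord_pres} is ever invoked. Once you note this, the dictionary you fear becomes trivial and the off-by-one bookkeeping disappears.
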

\par \begin{proof}
Without loss of generality assume $\p(x)<x$. Consider the inner tree $\Gamma$ of $T^x$. We induct on the number leaves of $\Gamma$. If $|\Gamma|\leq2$ then there is nothing to prove. Let $\Gamma$ have $k$ leaves, $k>2$. Denote by $b$ the son of $x$.  Without loss of generality assume that $b$ has the son $a$ (again, one may use for the illustration Fig.~\ref{fig:resp_to_order_pres} having in mind that $U_0\cup U_1=\{\p(x),x\}$. By the  definition of a decreasing tree,  we have that the inner trees of $a$ subordinate the respective inner trees of $b$, which implies directly that the  niece $[a',(b-1)']\in \Gamma$ subordinates her aunt $[b', (x-1)']\in \Gamma$. In the same way if there exists the nephew of $[\p(x)',(b-1)']$ then the nephew subordinates his uncle.
\par Apparently, $\Gamma_l(b)$ and $\Gamma_r(b)$ have less than $k$ leaves. To conclude that $\Gamma_l(b)$, $\Gamma_r(b)$ are respectful it remains to point out the elementary trees whose inner trees are $\Gamma_l(b)$ and $\Gamma_r(b)$. The tree $T^b\subset T^x$ is elementary. It's inner tree is exactly $\Gamma_r(b)$. Thus, by the induction assumption $\Gamma_r(b)$ is respectful. Consider the tree $T$ whose first levels $U_0\cup U_1=\{\p(x), b\}$ and the inner tree coincides with $\Gamma_l(b)$. By the condition the diagram determines an order-preserving tree. Thus, $T$ is elementary. Therefore, by the induction assumption the inner tree is respectful which completes the proof.

\end{proof}

We need now to  generalize the notion of canonical bounds for the vertices of a partition tree with respect to an elementary tree. Let $T^x$ be an elementary tree, $\widetilde{K}$ be a convex $(t+1)$-element partition of $[\p(x), x]$, $t>1$. Consider the partition tree $T(\widetilde{K})$. Let $k$ be the depth of the tree. Since $T^x$ is elementary, the vertices of first two  levels of $T(\widetilde{K})$ always contains the end-points of the interval $[\p(x), x]$. Therefore, we define the canonical bounds for the vertices  $X\in U_i$, $2\leq i\leq k$. We induct on the level $i$ of a vertex $X\in\widetilde{K}$. Note that in the case we have always $|U_2|=1$. Without loss of generality assume $\p(x)<x$. Let $i=2$, then define $K^{(\p(x))}<X<K^{(x)}$. Let $X\in U_q$, $2\leq q<k$. Assume $A$, $B\in T(\widetilde{K})$ be the canonical bounds of $X$,  $A<X<B$. If $X$ has the son $\s(X)$ then $A<\s(X)<X$. If $X$ has the daughter $\dau(X)$ then $X<\dau(X)<B$.

Now we will see that due to the connection between respectful and elementary trees, having a respectful tree one can get immediately the diagram of the  dual $({L^{\Gamma}})^*$.

\begin{lem}\label{lemma_image_alpha_dual}
Let $\Gamma$ be the inner tree of an elementary tree $([n+1],\prec)$. Then
  $$({L^{\Gamma}})^*=\Phi_{\prec}\setminus\{\const_r\}.$$
\end{lem}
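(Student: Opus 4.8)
The plan is to show that the Higgins homomorphism $*$, restricted to $L^{\Gamma}$, is a bijection onto $\Phi_{\prec}\setminus\{\const_r\}$, where $r$ denotes the root of the elementary tree $([n+1],\prec)$. First I would pin down how the two sides are indexed. Since $([n+1],\prec)$ is elementary, $r\in\{1,n+1\}$ and $\const_r=\varphi^{\{[n+1]\}}$ is the only constant transformation in $\Phi_{\prec}$ (Lemma~\ref{lemma_Phi_is_R}), which contains exactly one transformation for each convex partition of $[n+1]$. By Theorem~\ref{th_descr_L_On}, $L^{\Gamma}=\{\alpha_M\mid\emptyset\ne M\subseteq\overline{n}\}$ with $\im(\alpha_M)=M$ by construction; from the definition of $*$, the kernel classes of $(\alpha_M)^*$ are precisely the intervals of $[n+1]$ cut off immediately after the points of $M$, so as $M$ ranges over the nonempty subsets of $\overline{n}$ the partitions $\ker((\alpha_M)^*)$ run, each exactly once, through all convex partitions of $[n+1]$ into at least two intervals, and $({L^{\Gamma}})^*$ contains no constant. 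Hence it suffices to prove, for every nonempty $M\subseteq\overline{n}$, that $(\alpha_M)^*=\varphi^{\widetilde K}$ where $\widetilde K:=\ker((\alpha_M)^*)$; together with the bijection $M\mapsto\widetilde K$ this yields $({L^{\Gamma}})^*=\{\varphi^{\widetilde K}\mid\widetilde K\ne\{[n+1]\}\}=\Phi_{\prec}\setminus\{\const_r\}$.

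For the equality $(\alpha_M)^*=\varphi^{\widetilde K}$ I would use the characterisation of $\varphi^{\widetilde K}$ isolated in the proof of Lemma~\ref{lemma_Phi_is_R}: it is the unique transformation with kernel $\widetilde K$ whose induced map on blocks sends the root $K^{(r)}$ of the partition tree $T(\widetilde K)$ to $r$ and, for every block $V$ of image $v$, sends the son (respectively daughter) of $V$ in $T(\widetilde K)$ to $\s(v)$ (respectively $\dau(v)$); equivalently, by Lemma~\ref{lemma_korrektnost'}, its block map is the unique homomorphism $T(\widetilde K)\to([n+1],\prec)$. Now $(\alpha_M)^*$ is constant on each block of $\widetilde K$ and its image has exactly one element per block, so it induces an injective map $g$ on the blocks of $\widetilde K$; moreover $g(K^{(r)})=r$, because $r\in\{1,n+1\}$ and, by the proof of Proposition~\ref{pro_dual_L}, every element of $({L^{\Gamma}})^*$ fixes both $1$ and $n+1$. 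Thus the claim reduces to checking that $g$ preserves genders and the parent--child relation of $T(\widetilde K)$, i.e.\ that $g$ is a homomorphism.

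I would verify this by induction on the level of a block in $T(\widetilde K)$, running the two sides in parallel. Writing $M=\{m_1<\dots<m_t\}$ and denoting by $[l_j,r_j]$ the $\ker(\alpha_M)$-class on which $\alpha_M$ takes the value $m_j$ (so $l_1=1$, $r_t=n$ and $r_j+1=l_{j+1}$), the formula defining $*$ gives at once that $(\alpha_M)^*$ sends the block $[1,m_1]$ to $1$, the block $[m_j+1,m_{j+1}]$ to $l_{j+1}$ for $1\le j<t$, and the block $[m_t+1,n+1]$ to $n+1$; so I need to know the endpoints $l_j$ of the kernel classes of $\alpha_M$, and these are read off the faithfully marked respectful tree $\Gamma$ through the recursive construction of $\alpha_M$ (clause (c): the domain is split along $\s(A),\dau(A)$ while $M$ is split along $\langle M\rangle$). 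On the other side, the son/daughter structure of $T(\widetilde K)$ and the ancestry in $([n+1],\prec)$ are controlled, through Definition~\ref{def_part_tree} and the inner-tree construction, by the same tree $\Gamma$ (Lemmas~\ref{lemma_resp_imply_elem} and~\ref{lemma_inner tree is resp}): passing to the son (respectively daughter) subtree of a vertex of $([n+1],\prec)$ corresponds on the $\Gamma$-side to passing to its left (respectively right) inner subtree, which is exactly the recursion step used in defining both $\alpha_M$ and the inner tree. Matching these two descriptions — the inductive step amounting to the assertion that the leading element of the son block $S$ of $V$ is carried by $g$ to $\s(g(V))$, and dually for daughters — completes the proof, the base case being the root computation above.

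The step I expect to be the main obstacle is precisely this last matching, for two reasons. First, the gender conventions are transposed between the two pictures: in the inner tree the \emph{daughter} of a vertex carries the left block of cells, whereas the faithful interval marking of $\Gamma$ assigns the left block to the \emph{son} — a transposition that reflects the passage to the dual monoid $\On_{n+1}^{*}$ and must be tracked throughout. Second, one has to recognise, inside the faithfully marked $\Gamma$, the left endpoints $l_j$ of the $\ker(\alpha_M)$-classes as exactly the vertices of $([n+1],\prec)$ lying in $\im((\alpha_M)^{*})\setminus\{1,n+1\}$, which is the real reason the images line up. An alternative, perhaps technically lighter route would be to note that $({L^{\Gamma}})^{*}\cup\{\const_r\}$ is an $\R$-cross-section of $\On_{n+1}$ by Proposition~\ref{pro_dual_L}, hence coincides with $\Phi_{\prec_R}$ for the decreasing tree $\prec_R$ it determines (Theorem~\ref{th_description}), and then to prove $\prec_R=\prec$ by computing level by level the increasing chain of subsets of $[n+1]$ attached to this cross-section; this still relies on the same description of the kernel classes of $\alpha_M$ in terms of $\Gamma$, so the essential difficulty is unchanged.
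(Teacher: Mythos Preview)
Your proposal is correct and follows essentially the same route as the paper: both reduce to showing $(\alpha_M)^*=\varphi^{\widetilde{K_M}}$ for each nonempty $M$, and both prove this by induction on the level of a vertex in the partition tree $T(\widetilde{K_M})$, matching the recursive clause~(c) defining $\alpha_M$ against the son/daughter splitting of the corresponding vertex $A=[x',(y-1)']$ of $\Gamma$. Your framing is a bit more explicit about the bijection $M\leftrightarrow\widetilde{K}$ and the characterisation of $\varphi^{\widetilde K}$ via Lemma~\ref{lemma_korrektnost'}, and you correctly flag the gender transposition between the inner-tree convention (daughter left, son right) and the faithful-marking convention (son left, daughter right), which the paper handles only implicitly in its inductive step.
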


\begin{figure}[h]
\center{\includegraphics[width=0.5\linewidth]{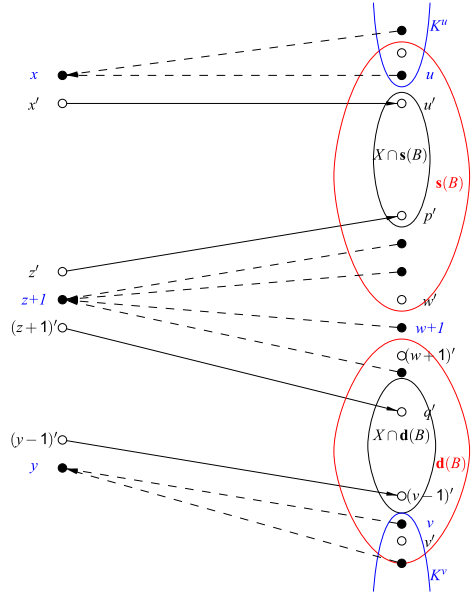}}
\caption{Proof of Lemma \ref{lemma_image_alpha_dual}, the step of induction.}
\label{fig_proof}
\end{figure}

\begin{proof} Consider the $\Lg$-cross-section $L^{\Gamma}$, associated with $\Gamma$ and the $\R$-cross-section $\Phi_\prec$, associated with $([n+1],\prec)$.
 Let $\alpha_M\in L^{\Gamma}$ with $M=\{r_1',r_2',\ldots, r_t'\}$. Then $M$ induces the (t+1)-element convex partition $\widetilde{K_M}$ of $[n+1]$:
$$\widetilde{K_M}=[1,r_1]\cup[r_1+1,r_2]\cup\ldots\cup[r_t+1,n+1].$$
 \par We will  show now that $\alpha_M^*=\varphi^{\widetilde{K_M}}$.
 We proceed the proof  by induction on the level $i$ of the tree $T(\widetilde{K_M})$, $0\leq i\leq k$. The induction base holds since by the definition of a dual transformation, we always have $K^{(1)}\alpha_M^*=1$ and  $K^{(n+1)}\alpha_M^*=n+1$.

Let the statement holds for the for first  $i$ levels of $T(\widetilde{K_M})$.  Consider a vertex $K^{(u)}\in U_{i}$ of $T(\widetilde{K_M})$. Let $K^{(w)}, K^{(v)}$ be the canonical bounds of the vertex. Without loss of generality consider $K^{(u)}$ with its right canonical  bound $K^{(v)}$. We also may assume that $u$ is the right end-point of $K^{(u)}$ and $v$ is the left end-point of $K^{(v)}$ or redenote the intervals if otherwise holds. Then  $u',(v-1)'\in \im(\alpha)$ (see Fig.~\ref{fig_proof}).

 By the assumption we have $K^{(u)}\alpha^*_M=K^{(u)}\varphi^{\widetilde{K_M}}$ and $K^{(v)}\alpha^*_M=K^{(v)}\varphi^{\widetilde{K_M}}$. Let $K^{(u)}\alpha^*_M=x$, $K^{(v)}\alpha^*_M=y$. Since $\varphi^{\widetilde{K_M}}$ is induced  by the homomorphism of the $i$-levels subtrees between $T(\widetilde{K_M})$ and $([n+1], \prec)$, we get that $y$ is the right canonical bound of $x$. Therefore by the construction of the inner tree $\Gamma$ it holds $A:=[x',(y-1)']\in\Gamma$.

 The condition $|A|=1$, i.e. $x=y-1$, implies that $u'=(v-1)'$,  therefore $K^{(u)}$ has no daughter and there is nothing  to prove.  If $|A|>1$ and $u'=(v-1)'$ there is also nothing  to prove. Let $X=\im(\alpha)\cap[u',(v-1)']$ and $|X|>2$. Let $\langle A\alpha_M\rangle=B\in\Gamma$.

  Let $z+1\in ([n+1],\prec)$ be the vertex induced by $\s(A)$ and $\dau(A)$. That is,
    $$\s(A)=[x',z'],\ \dau(A)=[(z+1)', (y-1)'].$$
    Similarly, let $w+1\in ([n+1],\prec)$ be the vertex induced by $\s(B)$ and $\dau(B)$. Therefore, $K^{(w+1)}$ is the daughter of $K^{(u)}$ and $K^{(w+1)}\varphi^{\widetilde{K_M}}=z+1$.

 Let $$ X\cap \s(B)=[u',p'],\ X\cap \dau(B)=[q',(v-1)'].$$
   By the definition $\alpha^{A}_X=\alpha^{\s(A)}_{X\cap \s(B)}\cup\alpha^{\dau(A)}_{X\cap \dau(B)}$, whence $z'\alpha^{A}_X=p'$, $(z+1)'\alpha^{A}_X=q'$.
 Consequently $p+1, q\in K^{(w+1)}$ and $K^{(w+1)}\alpha_M^*=z+1$ as required.
\end{proof}
\par Therefore, we get the following fact.

\begin{thm}\label{th_dual}  The $\R$-cross-section of $\On_{n+1}$ has two fixed points if and only if it is produced by an $\Lg$-cross-section of $\On_{n}$.
\end{thm}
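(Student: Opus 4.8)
The plan is to prove both implications of Theorem~\ref{th_dual} using the structural results already established.

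First I would treat the ``if'' direction. Suppose an $\R$-cross-section $R$ of $\On_{n+1}$ is produced by an $\Lg$-cross-section $L$ of $\On_n$, i.e. $R=L^*\cup\{\const_1\}$ or $R=L^*\cup\{\const_{n+1}\}$ in the sense of Proposition~\ref{pro_dual_L}. Since $^*$ is built so that $1\alpha^*=1$ and $(n+1)\alpha^*=n+1$ for every $\alpha\in\On_n$, every non-constant element of $R$ fixes both $1$ and $n+1$. Hence $R$ has two fixed points, and by Lemma~\ref{lemma_nep_t}(ii) these are necessarily $1$ and $n+1$. This direction is short once Proposition~\ref{pro_dual_L} is invoked.

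For the ``only if'' direction, let $R$ be an $\R$-cross-section of $\On_{n+1}$ with two fixed points; by Lemma~\ref{lemma_nep_t}(ii) they are $1$ and $n+1$. By Theorem~\ref{th_description}, $R=\Phi_\prec$ for a decreasing binary tree $(\overline{n+1},\prec)$. Since both $1$ and $n+1$ are fixed points, the constant transformations $\const_1$ and $\const_{n+1}$ both lie in $R$; but an $\R$-cross-section contains exactly one representative of the $\R$-class of constants, so this forces one of them to be $\const_r$ with $r\in\{1,n+1\}$ and the other to coincide with it --- the resolution, as noted in the discussion of elementary trees, is that $U_0\cup U_1=\{1,n+1\}$, i.e. the root is $1$ or $n+1$ and its unique child is the other. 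Thus $(\overline{n+1},\prec)$ is an elementary tree $T^1$ or $T^{n+1}$. Now apply Lemma~\ref{lemma_inner tree is resp}: the inner tree $\Gamma$ of this elementary tree is respectful. Equip $\Gamma$ with its faithful interval marking $\mu_{<}$ for the natural order; by Theorem~\ref{th_descr_L_On} the set $L^\Gamma=\{\alpha_M\mid M\subset\overline{n}\}$ is an $\Lg$-cross-section of $\On_n$. Finally, Lemma~\ref{lemma_image_alpha_dual} gives $(L^\Gamma)^*=\Phi_\prec\setminus\{\const_r\}$, where $r$ is the root of the elementary tree; adding back $\const_r$ recovers $R=\Phi_\prec=(L^\Gamma)^*\cup\{\const_r\}$, exactly the form described in Proposition~\ref{pro_dual_L}. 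Hence $R$ is produced by an $\Lg$-cross-section of $\On_n$.

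The main obstacle is making the bookkeeping in the ``only if'' direction fully rigorous: one must check that the constant in $\Phi_\prec$ really is $\const_r$ with $r$ the root (which follows from $\rt\varphi^{\widetilde{K}}=\rt$ for the one-block partition), that ``two fixed points'' is genuinely equivalent to ``elementary tree with root in $\{1,n+1\}$'' rather than merely implied by it, and that the choice of $\mu_{<}$ (versus some other order) is the correct one so that Lemma~\ref{lemma_image_alpha_dual} applies verbatim. All the heavy lifting, however, is already packaged in Lemmas~\ref{lemma_inner tree is resp} and~\ref{lemma_image_alpha_dual} together with Theorems~\ref{th_description} and~\ref{th_descr_L_On}, so the proof is essentially an assembly of these pieces.
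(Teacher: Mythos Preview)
Your proposal is correct and follows essentially the same route as the paper: for the ``if'' direction you invoke Proposition~\ref{pro_dual_L} (and the built-in property $1\alpha^*=1$, $(n+1)\alpha^*=n+1$), while for the ``only if'' direction you pass from two fixed points to an elementary tree, apply Lemma~\ref{lemma_inner tree is resp} to get a respectful inner tree, and then use Lemma~\ref{lemma_image_alpha_dual} to recover $R$ as $(L^\Gamma)^*\cup\{\const_r\}$. The only wrinkle is the sentence ``the constant transformations $\const_1$ and $\const_{n+1}$ both lie in $R$'': this is not true (an $\R$-cross-section contains a single constant), and you should simply drop it and go straight to the observation, already made in the text preceding Lemma~\ref{lemma_resp_imply_elem}, that two fixed points forces $U_0\cup U_1=\{1,n+1\}$ via Definition~\ref{def_varphi}.
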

\par \begin{proof} Necessity. Suppose $R$ has two fixed points and let  $([n+1],\prec)$ be the decreasing tree associated with $R$. As we observed the  tree $([n+1],\prec)$ is elementary. Thus, on one hand $\const_{\rt}\in\{\const_1,\const_{n+1}\}$. On the other hand by Lemma~\ref{lemma_inner tree is resp} the inner tree $\Gamma$ is respectful. Therefore, by previous lemma $\Phi_{\prec}=(L^{\Gamma})^*\cup\{\const_{\rt}\}$.

Proposition~\ref{pro_dual_L} implies immediately the sufficiency.
\end{proof}

\subsection{Decomposition of the decreasing trees}

\par Clearly, every order-preserving tree can be presented as a union of elementary trees in many ways. Consider the set $\omega(\overline{n},\rt)=\omega(1)\cup\omega(n)$ ordered with respect to $<$. We will denote it also by $\omega(R)$. The set is defined by the tree in a unique way and always has at least  two elements $1$ and $n$.
We can present  the original tree as a union of the following elementary  subtrees (see Fig.~\ref{fig:decomposition}):
\begin{equation}\label{decomposition}
T^{1}, T^{\p(1)}, \ldots, T^{\s(\rt)}, T^{\dau(\rt)},\ldots,  T^{\p(n)}, T^{n}.
\end{equation}
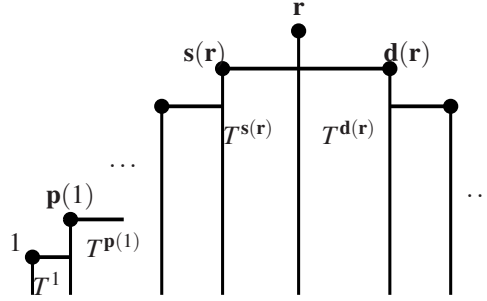
\begin{figure}[h]
\begin{center}
\begin{picture}(70,40)
\gasset{AHnb=0,linewidth=0.4}
  \drawline(35,5)(35,40)
   \drawline(25,5)(25,35)
  \drawline(17,5)(17,30)
    \put(10,22){$\ldots$}
  \drawline(5,15)(5,5)
  \drawline(0,10)(0,5)
  \drawline(47,5)(47,35)
  \drawline(55,5)(55,30)
  \put(57,18){$\ldots$}
  \drawline(62,5)(62,15)
  \drawline(70,10)(70,5)
\gasset{AHnb=0,linewidth=0.5, Nw=1.5,Nh=1.5,Nframe=y, Nfill=y, NLdist=3}
                 \node(r)(35,40){$\rt$}
  \node[NLangle= 140](s1)(25,35){$\s(\rt)$}
  \node[NLangle= 45](s2)(17,30){}
                \node(s3)(5,15){$\p(1)$}
  \node[NLangle= 140](s4)(0,10){$1$}
  \drawline(25,35)(35,35)
  \put(25,25){$T^{\s(\rt)}$}
  \drawline(17,30)(25,30)
  \drawline(12,15)(5,15)
  \put(7,10){$T^{\p(1)}$}
  \drawline(0,10)(5,10)
  \put(0,5){$T^{1}$}
\node[NLangle= 40](d1)(47,35){$\dau(\rt)$}
\node[NLangle= 60](d2)(55,30){}
             \node(d4)(70,10){$n$}
\drawline(47,35)(35,35)
\put(38,25){$T^{\dau(\rt)}$}
\drawline(47,30)(55,30)
\drawline(70,10)(62,10)
\put(64,5){$T^n$}
   \end{picture}
\caption{A minimal decomposition of an order-preserving tree}\label{fig:decomposition}
\end{center}
\end{figure}

On one hand, it is easy to see that  any other decomposition of $T(n)$ into elementary trees,  contains more components. On the other hand, for all $a\in\omega(R)$ there is no  elementary tree $T^x\subseteq (\overline{n},\preceq)$ such that  $T^a\subset T^x$, $x\in\overline{n}$. We say the decomposition (\ref{decomposition}) is \textit{minimal}.
\par Now the definition of decreasing tree and Lemma~\ref{lemma_inner tree is resp}  imply immediately  the following
 \begin{thm} Let $a_1=1<a_2<\ldots <a_t<\ldots<a_{k-1}<a_k=n$ be a  sequence of natural numbers with a marked point $a_t$, $1\leq t\leq k\leq n$. Let $n_i=a_{i+1}-a_i$, $\Gamma(n_i)$ denote a respectful tree on $n_i$-element set such that
  $$\Gamma(n_1)\hookrightarrow \Gamma(n_2)\hookrightarrow\ldots\hookrightarrow \Gamma(n_{t-1}),$$
 $$\Gamma(n_{k-1})\hookrightarrow \Gamma(n_{k-2}) \ldots \hookrightarrow \Gamma(n_t).$$ The sequence with a marked point $a_t$ and the trees define a unique decreasing tree.
 \end{thm}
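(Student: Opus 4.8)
The plan is to establish a bijection between decreasing binary trees on $\overline{n}$ and the data in the statement, using the minimal decomposition~(\ref{decomposition}) together with Lemmas~\ref{lemma_resp_imply_elem} and~\ref{lemma_inner tree is resp} as the dictionary between elementary and respectful trees, and the transitivity of $\hookrightarrow$ (composition of tree homomorphisms) to pass from consecutive links of a chain to arbitrary pairs. To build a tree from the data, for each $i<t$ I would apply Lemma~\ref{lemma_resp_imply_elem} to $\Gamma(n_i)$ to obtain the elementary tree $P_i$ on the chain $[a_i,a_{i+1}]$ in the orientation whose root is the larger endpoint $a_{i+1}$ (so $a_i=\s(a_{i+1})$), and for each $i\ge t$ the elementary tree $P_i$ on $[a_i,a_{i+1}]$ in the orientation whose root is the smaller endpoint $a_i$ (so $a_{i+1}=\dau(a_i)$). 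Since consecutive chains $[a_i,a_{i+1}]$ overlap only in one endpoint and every $P_i$ is order-preserving on its subchain, gluing the $P_i$ along the shared vertices $a_2,\dots,a_{k-1}$, all of which pass through $a_t=\rt$, yields an order-preserving tree $(\overline{n},\prec)$ with $\omega(1)=\{a_1,\dots,a_t\}$, $\omega(n)=\{a_t,\dots,a_k\}$, root $a_t$; and by Lemma~\ref{lemma_resp_imply_elem} the inner tree of $P_i$ is exactly $\Gamma(n_i)$.

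Next I would verify that this glued tree is decreasing, i.e. satisfies conditions 1)--3) of Definition~\ref{def_ord_pres}. Condition 3) concerns two vertices $x\prec y$ off both spines; such a pair necessarily lies in the interior of a single piece $P_i$ (being interior, $y$ has its whole subtree inside $P_i$), and the inner trees of $x$ and $y$ are computed from canonical bounds that stay inside $[a_i,a_{i+1}]$, so the condition for this pair is just the statement that $P_i$ is elementary. For conditions 1)--2) the crucial point is that for $a_i\in\omega(1)$ with $i<t$ the canonical bounds of $a_i$ are $1$ and $a_{i+1}$, whence $\Gamma_r(a_i)$ is rooted at $[a_i',(a_{i+1}-1)']$ and therefore coincides with the inner tree of $P_i$, namely $\Gamma(n_i)$; dually $\Gamma_l(a_j)=\Gamma(n_{j-1})$ for $a_j\in\omega(n)$ with $j>t$. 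Since Definition~\ref{def_ord_pres} never involves $\rt$, conditions 1)--2) reduce exactly to $\Gamma(n_i)\hookrightarrow\Gamma(n_j)$ for all $i<j$ in $\{1,\dots,t-1\}$ and $\Gamma(n_i)\hookrightarrow\Gamma(n_j)$ for all $j<i$ in $\{t,\dots,k-1\}$, and both of these are precisely the transitive closures of the two given subordination chains.

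Conversely, from a decreasing tree $(\overline{n},\prec)$ I would read off $\omega(R)=\omega(1)\cup\omega(n)=\{a_1<\dots<a_k\}$, put $a_t:=\rt$, and take the minimal decomposition~(\ref{decomposition}) into elementary pieces; by Lemma~\ref{lemma_inner tree is resp} the inner tree of each piece is respectful, with the correct number $n_i=a_{i+1}-a_i$ of leaves, and conditions 1)--2) of Definition~\ref{def_ord_pres} applied to consecutive spine vertices yield the two required subordination chains. The two passages are mutually inverse: running the forward construction on the data extracted from a tree rebuilds the same pieces in the same positions, hence the original tree (the minimal decomposition both determines and is determined by the tree, since its pieces exhaust all vertices and edges); and running the backward construction on a tree built from given data returns that very data, since the built tree has exactly the prescribed spine vertices, root, and piece inner trees. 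The degenerate cases $n\le2$ and $t=1$ or $t=k$ (one of the two chains empty) are immediate.

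The step I expect to be the main obstacle is the identification $\Gamma_r(a_i)=\Gamma(n_i)$ together with its mirror image $\Gamma_l(a_j)=\Gamma(n_{j-1})$: one must show that the inner tree of a spine vertex computed in the whole tree is ``local'', i.e. equals the inner tree of the corresponding elementary piece, and this rests on pinning down the precise form of the canonical bounds of spine vertices and of interior vertices. Once that identification is secured, the remainder is bookkeeping about the orientations of the pieces and the index shift between the left-spine and the right-spine parts, after which the statement falls out of Lemmas~\ref{lemma_resp_imply_elem} and~\ref{lemma_inner tree is resp} and the transitivity of $\hookrightarrow$.
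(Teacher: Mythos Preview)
Your proposal is correct and follows exactly the approach the paper intends: the paper states only that the result is implied ``immediately'' by the definition of decreasing tree together with Lemma~\ref{lemma_inner tree is resp}, and you have simply written out the details of that implication, additionally invoking Lemma~\ref{lemma_resp_imply_elem} for the forward construction (which the paper's one-line justification omits but clearly needs). Your identification $\Gamma_r(a_i)=\Gamma(n_i)$ via the canonical bounds of spine vertices, and the reduction of conditions 1)--3) of Definition~\ref{def_ord_pres} to the two subordination chains plus the local decreasingness of each piece, is precisely the content the paper is gesturing at.
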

Since every decreasing tree determines the $\R$-cross-section, we can regard the last theorem as the alternative description of $\R$-cross-sections of $\On_{n}$ in terms of respectful trees.

 In particular,  the sequence $\omega(\overline{n},\rt)=\{1,n\}$  with a  respectful tree,  define the dual  $\R$-cross-section up to the choice of the root.  The other special case is when $\omega(\overline{n},\rt)=\{1,2,\ldots,n\}$ with $\rt=1$ or $\rt=n$. We get in these cases the dense cross-sections.

\section{Classification of $\R$-cross-sections of $\On_n$ up to an isomorphism}
\subsection{Necessary conditions }\label{subsec_necess_cond}
 Let $R$ and  $\hat{R}$ be the $\R$-cross-sections of $\On_n$ associated with diagrams $(\overline{n},\prec_{1})$ and $(\overline{n},\prec_{2})$ respectively.

First we will look for necessary conditions two $\R$-cross-sections to be isomorphic.  We will see in the following lemma  that the  sets of idempotents $\Theta^x$ play an important role in the structure of $\R$-cross-sections.
 To avoid the ambiguous we  write $\hat{U_i}$ for the $i$-th level of $\hat{R}$.


\begin{lem}\label{lemma_Theta_x_phi=Theta_y} Let $R\stackrel{\xi}{\cong}\hat{R}$. Then for each $x\in U_{k}$ with $0\leq k< n$  there exists $y\in \hat{U}_{k}$ such that $$\Theta^{x}\xi=\Theta^{y}.$$
Moreover, the mapping $\sharp:(\overline{n},\prec_1)\to (\overline{n},\prec_2)\colon x\mapsto y$ if and only if  $\Theta^x\xi=\Theta^{y},$
      is well-defined, bijective and preserves the levels of the vertices.
\end{lem}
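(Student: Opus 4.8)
The key structural fact I would exploit is that the sets $\Theta^x$ are precisely the maximal left-zero subsemigroups of $R$ of a given "height": indeed, by the remark following Definition~\ref{def_set_theta}, each $\Theta^v$ is a left-zero semigroup of idempotents all having image $\omega(v)$, and $|\omega(v)| = k+1$ exactly when $v\in U_k$. So the plan is to characterize the $\Theta^x$ purely in terms of the multiplication in $R$, so that any isomorphism $\xi\colon R\to\hat R$ must carry each $\Theta^x$ onto some $\Theta^y$ in $\hat R$. First I would observe that an element $\theta\in R$ lies in some $\Theta^x$ if and only if $\theta$ is an idempotent and the left-zero semigroup $\{\varphi\in R : \ker(\varphi)=\ker(\theta)\}$ together with the $\R$-class of $\theta$ — no, more carefully: an idempotent $e\in R$ belongs to some $\Theta^x$ exactly when $\im(e)=\omega(x)$ where $x$ is the vertex with $\ker(e)$ having $\omega(x)$ as a transversal. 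I would instead give an internal (multiplication-only) description: $e\in\Theta^x$ for some $x\in U_k$ iff $e$ is idempotent, $eR e = \{e\}$ is not quite right either; rather I use that $\Theta^x = \{\theta\in R : \theta\text{ idempotent},\ \im(\theta)=\omega(x)\}$ and that $\im(\theta)$ for an idempotent $\theta$ of height $k$ equals the smallest image among idempotents $\R$-above none — concretely, the idempotents of $R$ with a fixed image form a single $\Theta$-set, and left-zero multiplication detects "same image". The cleanest route: two idempotents $e,f\in R$ satisfy $ef=f$ and $fe=e$ iff they have the same image (this is the left-zero relation, which by Lemma~\ref{lemma_stroenie W} holds within each $\Theta^v$), and this relation is preserved by any isomorphism; its classes of a fixed size partition into the various $\Theta^x$. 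Combined with the fact that $|\Theta^x|$ grows with the level, this forces $\Theta^x\xi$ to be a full $\Theta^y$ with $y$ at the same level.

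More precisely, the steps are: (1) Show that for idempotents $e,f\in R$, one has $e\,\Lg\,f$ in $R$ (equivalently $\im(e)=\im(f)$) if and only if $eR^1=fR^1$ fails but the pair satisfies $efe=e$, $fef=f$; pin down an expression in the multiplication of $R$ alone that says "$e$ and $f$ are both in the same $\Theta^v$". Since $\Theta^v$ is a left-zero semigroup and, by Lemma~\ref{lemma_stroenie W}, $\Theta^v\subseteq R$ for every vertex $v$, and two idempotents lie in a common left-zero subsemigroup of $R$ iff they have equal image (images of idempotents in $R$ are exactly the sets $\omega(v)$, by the construction of $\Phi_\prec$), this characterization is available. (2) Conclude that $\xi$ permutes the family $\{\Theta^v\}$: since $\xi$ is an isomorphism, it sends idempotents to idempotents and preserves the relation from (1), hence sends each maximal left-zero set of idempotents onto a maximal left-zero set of idempotents, i.e. $\Theta^x\xi$ is contained in some $\Theta^y$ and, by applying the same to $\xi^{-1}$, equals it. (3) Read off the level: $|\Theta^x|$ equals the number of transversals of convex partitions whose a transversal is $\omega(x)$ — a quantity that, crucially, together with the fact that all of $\Theta^x, \Theta^{\p(x)}, \dots$ sit inside $R$ and $\Theta^x\cdot\Theta^{\p(x)}$-type products land in lower levels, determines $k = |\omega(x)|-1$. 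Actually the level is recovered even more directly: $x\in U_k$ iff every $\theta\in\Theta^x$ has $|\im(\theta)| = k+1$, and $|\im(\theta)|$ is an isomorphism invariant because, for any $\alpha\in R$, $|\im(\alpha)|$ is the size of the largest left-zero subsemigroup of $R$ meeting the $\R$-class of $\alpha$... I would instead note the simplest invariant: $|\im(\alpha)| = $ the length of the longest chain $\alpha R^1 \supsetneq \cdots$ of principal right ideals below $\alpha$, or equivalently use that $\Theta^x \Theta^y \subseteq \Theta^{\min}$ in a way that exposes the partial order on levels. From "$\xi$ preserves $|\im|$ on idempotents" — which follows since $\R$-classes and hence the $\D=\J$ order are preserved, and $|\im|$ is constant on $\D$-classes by item (d) of the preliminaries — we get that $\Theta^x\xi\subseteq\hat U_k$ whenever $x\in U_k$.

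(4) Finally, well-definedness and bijectivity of $\sharp$: $\Theta^x\xi$ is a single $\Theta^y$ (step 2), so $y=x^\sharp$ is unambiguous; distinct $x,x'$ in $(\overline n,\prec_1)$ give disjoint $\Theta^x,\Theta^{x'}$ (distinct images), whose $\xi$-images are disjoint, so $\sharp$ is injective; applying the same argument to $\xi^{-1}$ gives a two-sided inverse $(\ )^\flat$, so $\sharp$ is bijective. Level-preservation is step 3. The main obstacle I anticipate is step (1)–(2): giving a clean first-order/semigroup-theoretic characterization of the family $\{\Theta^v\}$ that is manifestly isomorphism-invariant — in particular verifying that there are no "spurious" left-zero subsemigroups of $R$ consisting of idempotents other than sub-left-zero-semigroups of the $\Theta^v$, and that the maximal ones are exactly the $\Theta^v$ themselves. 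This requires knowing the full idempotent structure of $\Phi_\prec$, for which I would lean on Lemma~\ref{lemma_Phi_is_R}'s description of products $\varphi^{\widetilde A}\varphi^{\widetilde B}=\varphi^{\widetilde{AB}}$: an idempotent $\varphi^{\widetilde A}$ must satisfy $\widetilde{AA}=\widetilde A$, which forces $\im(\varphi^{\widetilde A})$ to be a transversal of $\widetilde A$ and, via the homomorphism-to-$(\overline n,\prec)$ condition, forces $\im(\varphi^{\widetilde A})$ to be exactly some $\omega(v)$ — so the idempotents are precisely $\bigcup_v \Theta^v$, and the left-zero relation among them is "equal image", giving the $\Theta^v$ as its classes.
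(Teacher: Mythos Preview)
Your approach has a genuine gap at the step you yourself flag as the main obstacle: the claim that every idempotent of $R=\Phi_\prec$ lies in some $\Theta^v$, i.e.\ has image equal to a path $\omega(v)$, is false. The homomorphism $f\colon T(\widetilde K)\to(\overline n,\prec)$ in Definition~\ref{def_varphi} forces $\im(\varphi^{\widetilde K})$ to be a rooted subtree of $(\overline n,\prec)$, but not a \emph{path}. For a concrete counterexample, take $T_3(5)$ from Example~\ref{ex_partition_tree} and the element $\varphi^{\widetilde K}$ with $\widetilde K=\{\{1\},\{2\},\{3\},\{4,5\}\}$, which appears explicitly in Table~\ref{SemPhi}: it is an idempotent with image $\{1,2,3,4\}$, and this set is not $\omega(v)$ for any $v$, since the paths in $T_3(5)$ are $\{3\}$, $\{1,3\}$, $\{3,4\}$, $\{1,2,3\}$, $\{3,4,5\}$. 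Consequently the equivalence classes of idempotents of $R$ under the relation ``$ef=e$ and $fe=f$'' (equivalently, ``same image'') are \emph{not} just the sets $\Theta^v$, and your steps (1)--(2) do not yield an isomorphism-invariant description of the family $\{\Theta^v\}$. Repairing this would require a further multiplicative condition singling out the path-image idempotents among all idempotents, which is not obvious.

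The paper avoids any global description of the idempotents by arguing inductively on the level $k$. The base is $\Theta^{\rt_1}=\{\const_{\rt_1}\}$, the unique zero, so $\Theta^{\rt_1}\xi=\Theta^{\rt_2}$. For the inductive step, with $\sharp$ already defined on $U_0\cup\cdots\cup U_{k-1}$, one takes $x\in U_k$ with parent $a\in U_{k-1}$ and studies $\im(\theta^x\xi)$: the relations $\theta^c\theta^x=\theta^c$ for $c\in\omega(a)$ force $\omega(a^\sharp)\subset\im(\theta^x\xi)$, and Corollary~\ref{sled_x_p(x)_in_im}(b) together with a short contradiction using the inductive hypothesis pins the remaining image point down to a child of $a^\sharp$. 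A product argument then shows all of $\Theta^x\xi$ lands in a single $\Theta^y$ (not split between the two children of $a^\sharp$), and equality $\Theta^x\xi=\Theta^y$ follows by considering preimages under $\xi$. This level-by-level argument uses only the multiplicative interaction between $\Theta$-sets at adjacent levels and never needs to identify which elements of $R$ are idempotent in general.
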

\begin{proof}  We proceed by induction on the level $i$ of a vertex $v\in R$.
Denote by $\rt_1$, $\rt_2$ the roots of $R$ and $\hat{R}$ respectively. Since $\const_{\rt_1}\xi=\const_{\rt_2}$, we have $\Theta^{\rt_1}\xi=\Theta^{\rt_2}$, $\rt_1^\sharp=\rt_2$, so the induction base holds.
\par Suppose the statement is true for all $U_i$, $\hat{U}_i$ with $1\leq i<k$.

Let $a\in U_{k-1}$. First note that $\omega(a^\sharp)=\omega(a)^\sharp$. Indeed, for all $b\in\omega(a)$, $\theta^{b}\in\Theta^{b}$ and only for them it holds $\theta^b\theta^a=\theta^b$. Therefore, $\im(\theta^{a^{\sharp}})=\{a^{\sharp},\p(a)^{\sharp},\ldots,\rt_1^{\sharp}\}=\omega(a)^\sharp$. On the other hand, $\im(\theta^{a^{\sharp}})=\omega(a^\sharp)$. Therefore,
\begin{equation*}\label{eq_w sharp}
\omega(a^\sharp)=\omega(a)^\sharp=\im(\theta^{a^{\sharp}}).
\end{equation*}
 \par Let $x\in U_k$ be the son (the daughter) of $a$. Consider the transformation $\theta^x\xi$. For all $c\in\omega(x)$ and only for them we have $\theta^{c}\theta^x=\theta^{c}$, whence by the induction assumption we get   $\omega(a^{\sharp})\subset\im{(\theta^x\xi)}$.  Let $y\in\im{(\theta^x\xi)}$, $y\notin\omega(a^{\sharp})$. Then  by Corollary~\ref{sled_x_p(x)_in_im} it holds $\omega(y)\subseteq \im{(\theta^x\xi)}$. Since for all $z\in\omega(y)$ it holds $\theta^z(\theta^x\xi)=\theta^z$, the only possibility is when

  $$\im{(\theta^x\xi)}=\omega(a^{\sharp})\cup\{y\} \mbox{ with }\p(y)\in \omega(a^{\sharp}).$$

In fact, we will show  that $y$ is either $\s(a^{\sharp})$, or $\dau(a^{\sharp})$. Suppose the converse. Then $\theta^x\xi\notin\Theta^t$ for all $t\in\overline{n}$. Since $\p(y)\in \omega(a^{\sharp})$, we get $y\in \hat{U}_m$, $m\leq k-1$. Therefore, by the induction assumption $y=v^{\sharp}$ for some $v\in U_m$.
  Then, on one hand, we have $\theta^x\theta^v\in \Theta^{\p(v)}$. Therefore, by the induction assumption we get $(\theta^x\theta^v)\xi\in \Theta^{\p(v)^\sharp}$.
  On the other hand, it holds $(\theta^x\theta^v)\xi=(\theta^x\xi)\theta^{v^{\sharp}}\in\Theta^{v^{\sharp}}$, which contradicts the assumption.
  Hence, we have  that $y$ is either $\s(a^{\sharp})$, or $\dau(a^{\sharp})$.

\par  Further, assume $\alpha,\beta\in \Theta^x$ are such that  $\alpha\xi=\theta^{\s(a^{\sharp})}$ and $\beta\xi=\theta^{\dau(a^{\sharp})}$.
Then  we have $\alpha\beta\in \Theta^x$ and $(\alpha\beta)\xi=(\alpha\xi)(\beta\xi)=\theta^{\s(a^{\sharp})}\theta^{\dau(a^{\sharp})}\in\Theta^{a^{\sharp}}$, we get a contradiction. Thus, we get either $\Theta^x\xi\subseteq \Theta^{\s(a^{\sharp})}$, or  $\Theta^x\xi\subseteq \Theta^{\dau(a^{\sharp})}$.

 \par Let $\alpha\in R$ be such that $\alpha\xi\in\Theta^{x^{\sharp}}$. Then $\alpha$ is idempotent and for all $\theta^x$ it holds $(\alpha\theta^x)\xi=\alpha\xi$. Therefore $\alpha\theta^x=\alpha$, whence $\im(\alpha)=\omega(x)$. Consequently, $\alpha\in \Theta^x$ as required and $x^\sharp$ is well-defined. Since $\xi$ is isomorphic we get $\sharp$ is bijective. \end{proof}

Consider the mapping $\sharp$ from the proof of Lemma~\ref{lemma_Theta_x_phi=Theta_y} in more detail.
\begin{corollary}\label{corol_sharp} Let $R\stackrel{\xi}{\cong}\hat{R}$. Then for all $a,b\in(\overline{n},\prec_{1})$, $\alpha\in R$ the following holds:
\begin{itemize}
\item [(1)]   $\omega(a^\sharp)=\omega(a)^\sharp$;
\item [(2)] the condition $a\prec b$ holds  if and only if $a^\sharp\prec b^\sharp$;
 \item [(3)] $\im(\alpha\xi)=\im(\alpha)^{\sharp}$.
 \end{itemize}
\end{corollary}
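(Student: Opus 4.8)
The plan is to settle (1) and (2) at once and then concentrate on (3). Item (1) requires nothing new: the identity $\omega(a^\sharp)=\omega(a)^\sharp$ (indeed $=\im(\theta^{a^\sharp})$) was already obtained inside the proof of Lemma~\ref{lemma_Theta_x_phi=Theta_y}. For item (2), observe that by definition $a\prec_1 b$ means precisely that $a\neq b$ and $b$ lies on the path $\omega(a)$. Since $\sharp$ is a bijection, $a\neq b\iff a^\sharp\neq b^\sharp$, and by (1) one has $b\in\omega(a)\iff b^\sharp\in\omega(a)^\sharp=\omega(a^\sharp)$; hence $a\prec_1 b\iff a^\sharp\prec_2 b^\sharp$.

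The substance lies in (3), and I would prove it first for an idempotent $\epsilon\in R$, for which $\im(\epsilon)$ coincides with the fixed-point set of $\epsilon$. Fix a vertex $v$ and some $\theta^v\in\Theta^v$, so $\im(\theta^v)=\omega(v)$. Then $v\in\im(\epsilon)\iff\omega(v)\subseteq\im(\epsilon)\iff\theta^v\epsilon=\theta^v\iff(\theta^v\xi)(\epsilon\xi)=\theta^v\xi\iff\omega(v^\sharp)\subseteq\im(\epsilon\xi)\iff v^\sharp\in\im(\epsilon\xi)$: the first and last equivalences are Corollary~\ref{sled_x_p(x)_in_im}(b) (in $R$, resp.\ in $\hat{R}$); the second and the fifth record that $\theta^v\epsilon=\theta^v$ holds iff $\epsilon$ fixes $\im(\theta^v)$ pointwise, which --- $\epsilon$ being idempotent --- means $\omega(v)\subseteq\im(\epsilon)$ (and similarly for $\theta^v\xi$, $\epsilon\xi$ in $\hat{R}$, using $\theta^v\xi\in\Theta^v\xi=\Theta^{v^\sharp}$ from Lemma~\ref{lemma_Theta_x_phi=Theta_y}, so that $\im(\theta^v\xi)=\omega(v^\sharp)$); the middle equivalence is that $\xi$ is an isomorphism. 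Hence $\im(\epsilon\xi)=\im(\epsilon)^\sharp$ for every idempotent $\epsilon\in R$.

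For an arbitrary $\alpha\in R$ I would invoke the auxiliary fact that every $\R$-cross-section of $\On_n$ contains an idempotent with the same image as any prescribed one of its elements; concretely, there is an idempotent $\epsilon_0\in R$ with $\im(\epsilon_0)=\im(\alpha)$. Since, for an idempotent $\beta$, one has $\alpha\beta=\alpha\iff\im(\alpha)\subseteq\im(\beta)$, the existence of $\epsilon_0$ yields the description
\[
\im(\alpha)=\bigcap\{\,\im(\beta)\ :\ \beta\in R\ \text{idempotent},\ \alpha\beta=\alpha\,\}.
\]
The isomorphism $\xi$ carries this onto the corresponding description of $\im(\alpha\xi)$ in $\hat{R}$: it maps idempotents to idempotents bijectively, preserves the relation $\alpha\beta=\alpha$, sends each $\im(\beta)$ to $\im(\beta)^\sharp=\im(\beta\xi)$ by the idempotent case, and $\sharp$ commutes with intersections. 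Therefore $\im(\alpha\xi)=\im(\alpha)^\sharp$, which --- for arbitrary $\alpha$ --- is statement (3).

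It remains to justify the auxiliary fact, and this is the step I expect to be the main obstacle. Given $\alpha\in R$, set $M=\im(\alpha)$; by Corollary~\ref{sled_x_p(x)_in_im}(b) the set $M$ is down-closed in $(\overline{n},\prec_1)$, i.e.\ $\omega(v)\subseteq M$ whenever $v\in M$. For $x\in\overline{n}$ let $\rho(x)$ be the $\prec_1$-lowest vertex of the chain $\omega(x)\cap M$, which is non-empty since $\rt\in M$. Then $\rho$ retracts $\overline{n}$ onto $M$, and I would show $\rho=\varphi^{\widetilde{K}}$ for $\widetilde{K}=\ker(\rho)$ by verifying that $\rho$ is order-preserving with convex kernel classes, that each $v\in M$ is the $\prec_1$-highest --- hence the leading --- element of its $\rho$-class, and hence, by the uniqueness in Lemma~\ref{lemma_korrektnost'}, that the homomorphism $T(\widetilde{K})\to(\overline{n},\prec_1)$ attached to $\widetilde{K}$ sends every block to its $\rho$-value. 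Thus $\rho\in\Phi_{\prec_1}=R$ is an idempotent with image $M$. Carrying out this verification --- unwinding the definitions of the partition tree and of $\varphi^{\widetilde{K}}$ --- is the only non-routine part; everything else then follows as above.
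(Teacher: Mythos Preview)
Your proof is correct. For (1) and (2) you do essentially what the paper does --- the paper phrases (2) via the chain $\theta^b\theta^a=\theta^b\Leftrightarrow\Theta^b\Theta^a=\Theta^b\Leftrightarrow\Theta^{b^\sharp}\Theta^{a^\sharp}=\Theta^{b^\sharp}\Leftrightarrow a^\sharp\prec b^\sharp$, but your route through (1) is equivalent and just as short.

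For (3) the paper takes a more direct path that avoids your auxiliary fact entirely. Instead of first treating idempotents and then manufacturing an idempotent in $R$ with image $\im(\alpha)$, the paper right-multiplies $\alpha$ by $\theta^x$ for each $x\in\im(\alpha)$: since $\omega(x)\subseteq\im(\alpha)$ by Corollary~\ref{sled_x_p(x)_in_im}(b), one has $\im(\alpha\theta^x)=\omega(x)$, and the special case ``$\im(\beta)=\omega(x)\Rightarrow\im(\beta\xi)=\omega(x^\sharp)$'' (obtained from $\beta\theta^x=\beta$ for all $\theta^x\in\Theta^x$, hence $(\beta\xi)\theta^{x^\sharp}=\beta\xi$ for all $\theta^{x^\sharp}\in\Theta^{x^\sharp}$) then gives $\omega(x^\sharp)\subseteq\im(\alpha\xi)$. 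Letting $x$ range over $\im(\alpha)$ yields one inclusion, and the other follows by symmetry via $\xi^{-1}$. What your detour buys is an independently interesting statement --- that the retraction onto any realized image lies in $R$ --- and your sketch of it (down-closed $M$, define $\rho(x)$ as the $\prec$-lowest ancestor in $M$, check convexity of blocks and that each $m\in M$ is the leading element of its block) is sound. But for the corollary itself the paper's reduction to images of the form $\omega(x)$ is more economical, precisely because the idempotents $\Theta^x$ with those images are already known to sit in $R$ and to be carried to $\Theta^{x^\sharp}$ by $\xi$, so no new construction is required.
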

\begin{proof}
The first claim follows  immediately from the proof of Lemma~\ref{lemma_Theta_x_phi=Theta_y}. It remains to verify items (2)-(3).

(2) Indeed, it holds $$\theta^b\theta^a=\theta^b\Longleftrightarrow\Theta^b\Theta^a=\Theta^b\Longleftrightarrow \Theta^{b^\sharp}\Theta^{a^\sharp}=\Theta^{b^\sharp}\Longleftrightarrow a^\sharp\prec b^\sharp.$$

(3) Let $x\in \overline{n}$, $\beta\in R$ such that $\im(\beta)=\omega(x)$. Then for all $\theta^x\in\Theta^x$ it holds $(\beta\theta^x)\xi=\beta\xi=(\beta\xi)\theta^{x^\sharp}$. In virtue of arbitrariness of $\theta^x$, we get $\im(\beta\xi)=\omega(x^\sharp)$.

Let $\alpha\in R$ be such that $x\in \im{\alpha}$. By Corollary~\ref{sled_x_p(x)_in_im}, (b), we get $\omega(x)\subset \im{\alpha}$.
 Then it holds $\im(\alpha\theta^x)=\omega(x)$. Therefore for all $\theta^x\in\Theta^x$ $\im(\alpha\theta^x)\xi=\omega(x^\sharp)=\im(\alpha\xi)(\theta^{x^\sharp}),$
whence $\omega(x^\sharp)\subseteq\im(\alpha\xi)$.
 In virtue of arbitrariness $x$, we get $\im(\alpha\xi)=\im(\alpha)^{\sharp}$.
\end{proof}
\par We  also need the following lemma  in the sequel.

\begin{lem}\label{lemma_kardinality_Theta} Given a pair of isomorphic $\R$-cross-sections $R, \hat{R}$ of $\On_n$, $x\in \omega(R)$, the following statements hold:
 \begin{itemize}
\item [(1)] $|\Theta^{\rt}|= 1, |\Theta^{x}|=|\Theta^{\p(x)}|\cdot|\p(x)-x|,$
where $|\p(x)-x|$ is the absolute difference of $\p(x)$ and $x$.
\item [(2)] If $x^{\sharp}\in \omega(\hat{R})$ then $|\p(x)-x|=|\p(x)^{\sharp}-x^{\sharp}|$.
\end{itemize}\end{lem}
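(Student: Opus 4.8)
The plan is to establish part (1) first by directly counting elements of $\Theta^x$, and then deduce part (2) from part (1) together with the level-preserving bijection $\sharp$ supplied by Lemma~\ref{lemma_Theta_x_phi=Theta_y}.

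For part (1), first recall that by Definition~\ref{def_set_theta} the elements of $\Theta^x$ are exactly the transformations $\theta^x\in\Phi_\prec$ whose partition has $\omega(x)$ as a transversal; equivalently (as noted after that definition) $\im(\theta^x)=\omega(x)$ and $\theta^x$ restricted to $\omega(x)$ is the identity. So counting $\Theta^x$ amounts to counting the convex partitions $\widetilde K$ of $\overline n$ for which $\omega(x)$ is a transversal and which additionally give rise to a $\varphi^{\widetilde K}$ with image $\omega(x)$; but by the homomorphism-from-$T(\widetilde K)$ construction (Definition~\ref{def_varphi}, Lemma~\ref{lemma_korrektnost'}), once $\widetilde K$ has $|\omega(x)|$ blocks each containing exactly one vertex of $\omega(x)$, the transformation $\varphi^{\widetilde K}$ automatically has image $\omega(x)$. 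Since $x\in\omega(R)=\omega(1)\cup\omega(n)$, the subtree on $\omega(x)$ is a path (all male below the root if $x\in\omega(1)$, all female if $x\in\omega(n)$), so the canonical bounds of $x$ are $1$ and $\p(x)$ (or $\p(x)$ and $n$), and the only "freedom" in choosing such a $\widetilde K$ lies in where the block boundaries fall strictly between consecutive elements of $\omega(x)$. Concretely, $|\Theta^{\rt}|=1$ because the only convex partition with transversal $\{\rt\}$ is $\overline n$ itself, giving $\const_{\rt}$. For the inductive step: a convex partition realizing $\Theta^x$ restricts to a convex partition realizing $\Theta^{\p(x)}$ on the complement of the open interval between $x$ and $\p(x)$, while inside that interval — which meets $\omega(x)$ only in the single point $x$ (or $\p(x)$, whichever is the interior endpoint sits just against it) — the block containing $x$ may start at any of the $|\p(x)-x|$ positions ranging from $x$ itself down to just after $\p(x)$ (respectively up to just before $\p(x)$). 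This gives the recurrence $|\Theta^x|=|\Theta^{\p(x)}|\cdot|\p(x)-x|$. I would make this precise by exhibiting the bijection
\[
\Theta^{x}\;\longleftrightarrow\;\Theta^{\p(x)}\times\{x,\ x-1,\ \dots,\ \p(x)+1\}\quad(\text{when }\p(x)<x),
\]
where the second coordinate records the $\le$-minimal element of the $\ker$-block of $\theta^x$ containing $x$; injectivity and surjectivity follow because every convex partition of $\overline n$ with transversal $\omega(x)$ decomposes uniquely as (a convex partition with transversal $\omega(\p(x))$) together with a choice of this one boundary, and conversely any such pair assembles into a valid convex partition whose associated $\varphi^{\widetilde K}\in\Phi_\prec$ lies in $\Theta^x$.

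For part (2), assume $x\in\omega(R)$ and $x^\sharp\in\omega(\hat R)$. By Corollary~\ref{corol_sharp}(1) we have $\omega(x^\sharp)=\omega(x)^\sharp$, and since $\sharp$ preserves levels (Lemma~\ref{lemma_Theta_x_phi=Theta_y}) it maps $\omega(x)$ bijectively onto $\omega(x^\sharp)$ with $\p(x)^\sharp=\p(x^\sharp)$ (the parent of $x$ lies one level up and in $\omega(x)$, so its image is the unique element of $\omega(x^\sharp)$ one level up, i.e.\ $\p(x^\sharp)$); in particular $\p(x)^\sharp\in\omega(\hat R)$. Because $\xi$ is an isomorphism and $\Theta^v\xi=\Theta^{v^\sharp}$ for every $v$, we get $|\Theta^x|=|\Theta^{x^\sharp}|$ and $|\Theta^{\p(x)}|=|\Theta^{\p(x^\sharp)}|$. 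Applying the recurrence from part (1) on both sides — legitimately, since both $x$ and $x^\sharp$ lie on the respective $\omega$-paths — yields
\[
|\Theta^{\p(x)}|\cdot|\p(x)-x|=|\Theta^{x}|=|\Theta^{x^\sharp}|=|\Theta^{\p(x^\sharp)}|\cdot|\p(x^\sharp)-x^\sharp|,
\]
and cancelling the equal, nonzero factors $|\Theta^{\p(x)}|=|\Theta^{\p(x^\sharp)}|$ gives $|\p(x)-x|=|\p(x)^\sharp-x^\sharp|$, as claimed.

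The main obstacle I anticipate is the bookkeeping in part (1): one must check carefully that a convex partition of $\overline n$ having $\omega(x)$ as a transversal really does decompose as cleanly as claimed, that the "interior boundary" choices are genuinely independent of the rest of the partition, and — crucially — that for $x\in\omega(R)$ the interval strictly between $x$ and $\p(x)$ contains no other vertex of $\omega(x)$ so that there is exactly one free boundary (this is where the hypothesis $x\in\omega(R)$, making the relevant branch a path, is essential; for a general internal vertex the count would involve both children and would not be a simple product). Verifying that every such assembled partition yields a genuine element of $\Phi_\prec$ lying in $\Theta^x$ — rather than some transformation with a larger image — is the point requiring the most care, and it rests on the uniqueness of the homomorphism $T(\widetilde K)\to(\overline n,\prec)$ from Lemma~\ref{lemma_korrektnost'} together with the observation that when $\omega(x)$ is a transversal the leading elements of the blocks along the relevant path are forced to be exactly the vertices of $\omega(x)$.
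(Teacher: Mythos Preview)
Your proposal is correct and follows essentially the same approach as the paper. For part~(1) the paper also argues by induction on the level of $x\in\omega(R)$, restricting $\theta^x$ to the interval beyond $\p(x)$ to recover an element of $\Theta^{\p(x)}$ and observing that the remaining block $K^{(x)}$ is determined by a single endpoint with $|\p(x)-x|$ possible values; your explicit bijection with $\Theta^{\p(x)}\times\{\p(x)+1,\dots,x\}$ is just a slightly more formal packaging of the same count. For part~(2) the paper's argument is identical to yours: use $\p(x)^\sharp=\p(x^\sharp)\in\omega(\hat R)$ from Corollary~\ref{corol_sharp}, apply the recurrence on both sides, and cancel $|\Theta^{\p(x)}|=|\Theta^{\p(x)^\sharp}|$.
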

\begin{proof} (1) Clearly, $|\Theta^{\rt}|= 1$. Assume the statement holds for the first $k$ levels of the tree, $k<n.$ Let $x\in U_{k+1}\cap \omega(R)$.  Without loss of generality assume $x<\rt$. Denote by $P$ the set $\overline{n}\setminus[1,\p(x)-1]$.  Clearly, $\Theta^x|_P\cong \Theta^{\p(x)}$. Since for all $\theta^x\in\Theta^x_k$, we have $1\theta^x=\ldots=x\theta^x=x$,  the interval $K^{(x)}$ is totaly determined by its right-hand end-point. The condition $\p(x)\theta^x=\p(x)$ implies that there are $|\p(x)-x|$ possibilities to choose the interval $K^{(x)}$. Hence  $|\Theta^{x}|=|\Theta^{\p(x)}|\cdot(\p(x)-x)$ as required.

\par (2) Let  $x^{\sharp}\in \omega(\hat{R})$. By Corollary~\ref{corol_sharp} it holds $\p(x)^\sharp=\p(x^\sharp)$. Therefore $\p(x)^\sharp\in \omega(\hat{R})$. According to the previous item we have
$$|\Theta^{x}|=|\Theta^{\p(x)}|\cdot |\p(x)-x|=|\Theta^{\p(x^\sharp)}|\cdot|\p(x)^\sharp-x^\sharp|=|\Theta^{x^\sharp}|.$$
Since $|\Theta^{\p(x)}|=|\Theta^{\p(x)^\sharp}|$, we get  $|\p(x)-x|=|\p(x)^{\sharp}-x^{\sharp}|$.\end{proof}

Now we are ready to investigate the connection between the sets $\omega(R)$ and $\omega(\hat{R})$ of isomorphic cross-sections.
\begin{lem}\label{lemma seq_to_seq} Let $R\stackrel{\xi}{\cong}\hat{R}$. Then it holds  $\omega(R)^\sharp=\omega(\hat{R})$ and for all $a_i\in \omega(R)$, we have either $a_i=a_i^\sharp$  or $a_i+a_i^\sharp=n+1$.
\end{lem}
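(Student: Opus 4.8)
The plan is to push everything through the level-preserving bijection $\sharp\colon(\overline{n},\prec_1)\to(\overline{n},\prec_2)$ produced in Lemma~\ref{lemma_Theta_x_phi=Theta_y}. Recall from Corollary~\ref{corol_sharp} that $\sharp$ is a rooted-tree isomorphism respecting $\prec$ (hence $\p(x)^\sharp=\p(x^\sharp)$ and $\omega(x)^\sharp=\omega(x^\sharp)$) and that $\im(\alpha\xi)=\im(\alpha)^\sharp$ for every $\alpha\in R$; also $\rt_1^\sharp=\rt_2$ since $\xi$ carries the constant of $R$ to that of $\hat R$. The first thing I would record is a purely order-theoretic reading of the spine: unwinding Definition~\ref{def_can_b}, a vertex $v$ lies on $\omega(1)$ exactly when its canonical lower bound equals $1$, i.e.\ when every strict ancestor of $v$ exceeds $v$, i.e.\ when $v=\min\omega(v)$; dually $v\in\omega(n)$ iff $v=\max\omega(v)$. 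Thus $\omega(R)$ is precisely the set of vertices that are $\le$-extreme inside their own path to the root, and for an arbitrary $v$ one always has $\min\omega(v)\in\omega(1)$ and $\max\omega(v)\in\omega(n)$.

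\textbf{Main step: $\omega(R)^\sharp=\omega(\hat R)$.} By the symmetry between $\omega(1)$ and $\omega(n)$ and between $\xi$ and $\xi^{-1}$, it suffices to show $\omega(1)^\sharp\subseteq\omega(\hat R)$ and to run the same argument for $\xi^{-1}$. Write $\omega(1)=(1=a_1\prec a_2\prec\cdots\prec a_p=\rt_1)$, so that $a_i=\s(a_{i+1})$, $1<a_2<\cdots<a_p$, and $a_i$ sits at level $p-i$; I would induct downward on the level. The base case $a_p^\sharp=\rt_2\in\omega(\hat R)$ is immediate. For the step, assume $a_{i+1}^\sharp\in\omega(\hat R)$; then $a_i^\sharp$ is a child of $a_{i+1}^\sharp=\p(a_i)^\sharp$. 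If $a_i^\sharp$ is the child continuing the arm of $\hat R$ through $a_{i+1}^\sharp$, we are done. Otherwise $a_i^\sharp$ is an interior vertex of $\hat R$, hence in the natural order it lies strictly between two consecutive members of $\omega(a_{i+1}^\sharp)$; a direct count of admissible $\ker$-classes then gives $|\Theta^{a_i^\sharp}|=|\Theta^{a_{i+1}^\sharp}|\cdot\frac{st}{s+t}$ with $s,t\ge 1$ and $s+t$ equal to the gap between those two members. On the other hand Lemma~\ref{lemma_kardinality_Theta}(1) yields $|\Theta^{a_i}|=|\Theta^{a_{i+1}}|\cdot(a_{i+1}-a_i)$, and $\xi$ transports the cardinalities ($|\Theta^{a_i^\sharp}|=|\Theta^{a_i}|$, $|\Theta^{a_{i+1}^\sharp}|=|\Theta^{a_{i+1}}|$), so $a_{i+1}-a_i=\frac{st}{s+t}\le\frac14(s+t)$. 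Tracing (via Lemma~\ref{lemma_kardinality_Theta}(2) and the already-established upper part of the induction) that $s+t$ is the next arm-gap $a_{i+2}-a_{i+1}$, and observing that an interior image propagates to all deeper arm vertices, this inequality pushed up the arm together with $a_1=1$ and $\max\omega(R)=n$ yields a contradiction. This cardinality bookkeeping --- showing that an interior image is ``too cheap'' to match $\Theta^{a_i}$ --- is the part I expect to be the main obstacle.

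\textbf{Conclusion.} Once $\omega(R)^\sharp=\omega(\hat R)$ is known, the rest is bookkeeping. Under $\prec$, $\omega(R)$ is a ``V'': the two maximal chains $\omega(1),\omega(n)$ share only the top $\rt_1$. Since $\sharp|_{\omega(R)}$ is a level-preserving $\prec$-isomorphism onto the analogous ``V'' $\omega(\hat R)$, it sends $\{\omega(1),\omega(n)\}$ onto $\{\omega(\hat 1),\omega(\hat n)\}$. If $\omega(1)^\sharp=\omega(\hat 1)$ and $\omega(n)^\sharp=\omega(\hat n)$, then writing $\omega(\hat 1)=(1=\hat a_1<\cdots<\hat a_{\hat p}=\rt_2)$, level-preservation forces $\hat p=p$ and $a_i^\sharp=\hat a_i$, while Lemma~\ref{lemma_kardinality_Theta}(2) gives $a_{i+1}-a_i=\hat a_{i+1}-\hat a_i$ for all $i$; summing from $a_1=\hat a_1=1$ yields $a_i^\sharp=\hat a_i=a_i$, and dually on $\omega(n)$. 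If instead $\omega(1)^\sharp=\omega(\hat n)$, write $\omega(\hat n)=(n=\hat b_1>\cdots>\hat b_{\hat q}=\rt_2)$; level-preservation gives $\hat q=p$ and $a_i^\sharp=\hat b_i$, Lemma~\ref{lemma_kardinality_Theta}(2) gives $a_{i+1}-a_i=\hat b_i-\hat b_{i+1}$, and summing from $a_1=1$, $\hat b_1=n$ yields $a_i-1=n-a_i^\sharp$, i.e.\ $a_i+a_i^\sharp=n+1$; the computation on $\omega(n)$ is identical. In either case every element of $\omega(R)$ is either fixed by $\sharp$ or sent to its mirror $n+1-a_i$, which is exactly the assertion.
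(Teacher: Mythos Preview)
Your overall architecture is sound and close to the paper's: induct along the spine using $|\Theta^v|$, and once $\omega(R)^\sharp=\omega(\hat R)$ is established, telescope Lemma~\ref{lemma_kardinality_Theta}(2) from the endpoints $1$ and $n$ to obtain the fixed/reflected dichotomy. Your Conclusion paragraph is correct and in fact tidier than the paper's handling of that part.

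The genuine gap is in your Main step. Under the hypothesis that $a_{i+1}^\sharp\in\omega(\hat R)$ while $a_i^\sharp\notin\omega(\hat R)$, you correctly obtain
\[
a_{i+1}-a_i \;=\; \frac{st}{s+t},\qquad s+t=|a_{i+2}^\sharp-a_{i+1}^\sharp|=a_{i+2}-a_{i+1},
\]
but this identity alone does \emph{not} produce a contradiction: $s$ and $t$ are constrained only through their sum, and many integer pairs $(s,t)$ satisfy it. Your appeal to ``pushing the inequality up the arm'' does not go through either --- once $a_i^\sharp$ is interior, every deeper $a_j^\sharp$ ($j<i$) lives inside a non-spine subtree, and the $\Theta$-count for those vertices follows a different recursion than the spine formula of Lemma~\ref{lemma_kardinality_Theta}(1); no uniform inequality of the shape you describe is available.

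What the paper uses to close the argument is one further consequence of Corollary~\ref{corol_sharp}: since $\sharp$ is a $\prec$-isomorphism of the two order-preserving trees, it sends the two children of $a_i$ bijectively to the two children of $a_i^\sharp$, hence matches the pair of subtree sizes at $a_i$ with the pair at $a_i^\sharp$ (as multisets). In your indexing, the subtrees of $a_i\in\omega(1)$ have sizes $a_i-1$ and $a_{i+1}-a_i-1$, while those of the interior vertex $a_i^\sharp$ have sizes $s-1$ and $t-1$; hence $\{s,t\}=\{a_i,\,a_{i+1}-a_i\}$ and in particular $s+t=a_{i+1}$. Combining this with your equation gives
\[
st=(s+t)(a_{i+1}-a_i)=a_{i+1}(a_{i+1}-a_i)\quad\text{and}\quad st=a_i(a_{i+1}-a_i),
\]
whence $a_{i+1}=a_i$, the required contradiction. (The paper phrases the same step via the sizes of the left and right \emph{inner} trees at the off-spine vertex.) So the repair is a single extra ingredient, but it is essential: the $\Theta$-cardinality bookkeeping by itself is not enough.
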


 \begin{proof}
 The proof is by induction on the level of a vertex.
 \par It is clear that $\rt_1^\sharp=\rt_2$. If $\rt_1\in\{1,n\}$ then  we get $|U_1|=1$. By Corollary~\ref{corol_sharp}, then $|\hat{U}_1|=1$, therefore  $\rt_2\in\{1,n\}$. Let $\rt_1\notin\{1,n\}$, $\dau:=\dau(\rt_1)$. Consider the set $D$  of $\alpha \in R$ with $\im(\alpha)=\{\rt_1,\dau\}$. By Corollary~\ref{corol_sharp}, (4), we have
 $$D\xi=\{\hat{\alpha} \in \hat{R}\mid \im(\hat{\alpha})=\{\rt_2, \dau^\sharp\}\}.$$

   We use the fact that $|D|=|D\xi|$. Clearly, $|D|=n-\rt_1$. For $D\xi$ we have two possibilities. If $1\leq \dau^\sharp<\rt_2$, we get $|D\xi|=\rt_2-1$.  If $\rt_2< \dau^\sharp\leq n$, we get $|D\xi|=n-\rt_2$. Hence,  we have either $\rt_1=\rt_2$  or $\rt_1+\rt_2=n+1$.


Suppose  the statement holds for $\omega(R)\cap U_m$ with $m\leq p$ for a natural $p$, $p<s$. Let  $a_i\in \omega(R)\cap U_p$. Without loss of generality assume  $\rt_1=\rt_2$ and $\rt_1<a_i\leq n$.
  By the assumption then we have  $a_i^\sharp\in \omega(\hat{R})$ and $a_i=a_i^\sharp$ (see  Fig.~\ref{fig:proof_of_lemma}). By Lemma~\ref{lemma_kardinality_Theta}, (2), it suffices to show that $a_{i+1}^\sharp\in\omega(\hat{R})$.
   By Corollary~\ref{corol_sharp}  we have that $a_{i+1}^\sharp$ is a child of $a_i^\sharp$. If $a_i-a_{i-1}=1$ then  $a_i^{\sharp}-a_{i-1}^{\sharp}=1$, so  $a_{i}^\sharp$ has a unique child. Thus, in this case the statement holds.

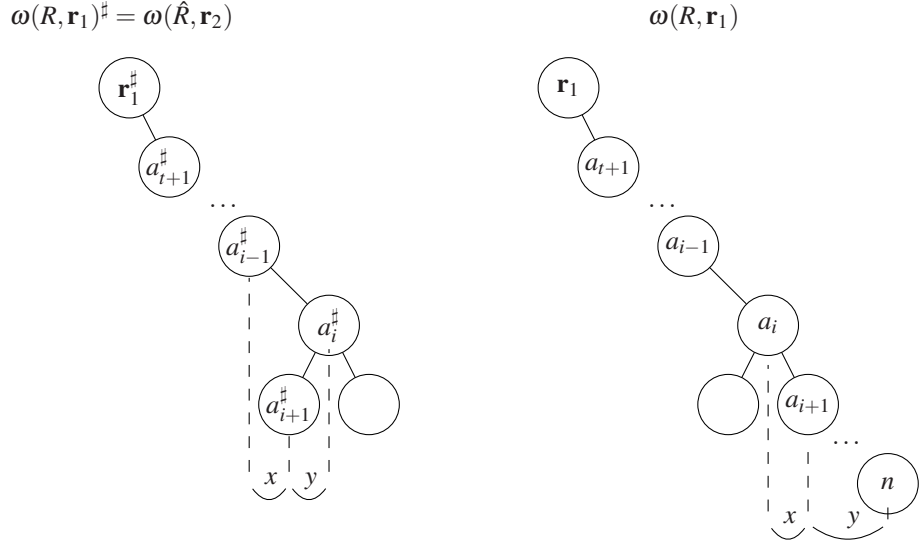
\begin{figure}[h]
\begin{center}
\unitlength=1.05mm
\begin{picture}(150,70)
\gasset{AHnb=0}
\put(5,68){$\omega(R,\rt_1)^\sharp=\omega(\hat{R},\rt_2)$}
\node[NLangle= 10](1)(20,60){$\rt_1^\sharp$}
\node[NLangle= 10](2)(25,50){$a_{t+1}^\sharp$}
\drawedge(1,2){}
\put(30,45){$\ldots $}
\node[NLangle= 10](3)(35,40){$a_{i-1}^\sharp$}
\node[NLangle= 10](4)(45,30){$a_{i}^\sharp$}
\drawedge(3,4){}
\node[NLangle= 0 ](5)(40,20){$a_{i+1}^\sharp$}
\drawedge(4,5){}
\node(6)(50,20){}
\drawedge(4,6){}
\put(85,68){$\omega(R,\rt_1)$}
\node[NLangle= 10](1')(75,60){$\rt_1$}
\node(2')(80,50){$a_{t+1}$}
\drawedge(1',2'){}
\put(85,45){$\ldots $}
\node[NLangle= 10](3')(90,40){$a_{i-1}$}
\node[NLangle= 20](4')(100,30){$a_{i}$}
\drawedge(3',4'){}
\node(5')(105,20){$a_{i+1}$}
\drawedge(4',5'){}
\node[NLangle=180](6')(95,20){}
\drawedge(4',6'){}
\put(108,15){$\ldots $}
\node[NLangle= 0](7')(115,10){$n$}
\gasset{AHnb=0,Nw=1.5,Nh=1.5,Nframe=n, Nfill=n, NLdist=3}
\node(x1)(35,10){}
\node(x2)(40,10){}
\node(x3)(45,10){}
\drawedge[curvedepth=-2](x1,x2){}\put(37,10){$x$}
\drawedge[curvedepth=-2](x2,x3){}\put(42,10){$y$}
\node(x1')(100,5){}
\node(x2')(105,5){}
\node(x3')(115,5){}
\drawedge[curvedepth=-2](x1',x2'){}\put(102,5){$x$}
\drawedge[curvedepth=-2](x2',x3'){}\put(110,5){$y$}
\gasset{AHnb=0,dash={1.5}{1.5}}
  \drawline(35,36)(35,10)
  \drawline(40,17)(40,10)
  \drawline(45,27)(45,10)
\drawline(100,25)(100,5)
  \drawline(105,15)(105,5)
  \drawline(115,7)(115,4)
\gasset{AHnb=1,Nw=1.5,Nh=1.5,Nframe=n, Nfill=n, NLdist=8}
\end{picture}
\caption{An illustration of the proof}\label{fig:proof_of_lemma}
\end{center}
\end{figure}
\par  Let $|a_i-a_{i-1}|=|a_i^{\sharp}-a_{i-1}^{\sharp}|>1$.  Suppose $a_{i+1}^\sharp\notin\omega(R)$. That  is, $a_{i+1}^\sharp$ is the son of $a_i^\sharp$. Hence, we have $a_{i-1}^\sharp<a_{i+1}^\sharp<b_i^\sharp$.
\par Suppose the left inner tree of $a_{i+1}$ is defined on $x$-element set (i.e., $a_{i+1}-a_i=x$) and the right tree of of $a_{i+1}$ is defined $y$-element set (i.e., $n-a_{i+1}=y$). By Corollary~\ref{corol_sharp}, (3), the inner trees associated with $a_{i+1}^\sharp$ are defined on  $x$- and $y$-element sets respectively:
 $$(a_{i+1}^\sharp-a_{i-1}^\sharp)
 (a_{i}^\sharp-a_{i+1}^\sharp)=xy,$$
 $$a_i^\sharp-a_{i-1}^\sharp=x+y.$$

Now, on the one hand, note that
 $$|\Theta^{a_{i+1}^\sharp}|=|\Theta^{a_{i-1}^\sharp}|xy=|\Theta^{a_{i-1}}|xy.$$
On the other hand, by the assumption it holds $a_i^\sharp-b_{i-1}^\sharp=a_i-a_{i-1}=x+y$, therefore we have
 $$|\Theta^{a_{i+1}}|= |\Theta^{a_{i-1}}|(a_i-a_{i-1})(a_{i+1}-a_i)=|\Theta^{a_{i-1}}|(x+y)x,$$
which contradicts the condition $|\Theta^{a_{i+1}}\xi|=|\Theta^{a_{i+1}}|$.
  Hence, $a_{i+1}^\sharp\in\omega(\hat{R})$. The second part of the statement of the lemma follows immediately from Lemma~\ref{lemma_kardinality_Theta},~(2).
 \end{proof}

 Thus, if $R\stackrel{\xi}{\cong}\hat{R}$ then ``the skeletons'' $\omega(R)$, $\omega(\hat{R})$ of the respective order-preserving trees are the same up to  a mirror reflection. Furthermore,  the elementary trees $T^{a}$ are also very ``close'' to $T^{a^\sharp}$,  $a\in\omega(R)$:
\begin{corollary}\label{corol_tree_to_tree} Let $R\stackrel{\xi}{\cong}\hat{R}$, then
 for all $a\in\omega(R)$ the mapping $\sharp\colon T^{a}\to T^{a^\sharp}$ is bijective and preserves the parent-child relations.
\end{corollary}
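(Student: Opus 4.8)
The plan is to show that the level-preserving bijection $\sharp\colon(\overline n,\prec_1)\to(\overline n,\prec_2)$ furnished by Lemma~\ref{lemma_Theta_x_phi=Theta_y} already restricts to the desired bijection between corresponding elementary components, so that no new map needs to be constructed. Fix $a\in\omega(R)$ and assume $a\ne\rt_1$ (otherwise no component $T^a$ occurs in the decomposition). I would first record that $\sharp$ preserves the parent--child relation: by Lemma~\ref{lemma_Theta_x_phi=Theta_y} it preserves levels, and by Corollary~\ref{corol_sharp}(2) it preserves the strict ancestor order $\prec$; since the parent of a vertex $v$ is its unique $\prec$-ancestor at level $\lev(v)-1$, this forces $\p(v)^\sharp=\p(v^\sharp)$ for every non-root $v$. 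In particular $\p(a)^\sharp=\p(a^\sharp)$, and, since $a^\sharp\in\omega(\hat R)$ by Lemma~\ref{lemma seq_to_seq} (whence also $\p(a^\sharp)=\p(a)^\sharp\in\omega(\hat R)$), the tree $T^{a^\sharp}$ is an elementary component of $\hat R$, so the statement is meaningful.

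Next I would describe the vertex set $V(T^a)$ of $T^a$ intrinsically. For $v\in\overline n$ let $\lambda(v)$ be the $\prec$-least vertex of $\omega(R)$ that is a weak ancestor of $v$; this is well defined because $\rt_1\in\omega(R)$. The crucial observation is that for $a\in\omega(R)\setminus\{\rt_1\}$ the interior of $T^a$ — the set of vertices strictly between $a$ and $\p(a)$ in the natural order — contains no vertex of $\omega(R)$: if $a\in\omega(1)$, then, $1$ being the least element, the path $\omega(1)$ strictly increases in the natural order, so every $\omega(1)$-vertex below $a$ is smaller than $a$ and thus lies in the son-subtree of $a$, disjoint from the interior $(a,\p(a))$; and no vertex of $\omega(n)$ can be a descendant of $a$ since $\omega(1)\cap\omega(n)=\{\rt_1\}$. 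The case $a\in\omega(n)$ is symmetric. In view of the structure of the minimal decomposition~(\ref{decomposition}) this yields $V(T^a)=\{\p(a)\}\cup\{v\in\overline n\mid\lambda(v)=a\}$, and the sets $\{v\mid\lambda(v)=a\}$, $a\in\omega(R)$, partition $\overline n$.

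Finally I would check that $\sharp$ carries this partition onto the analogous one for $\hat R$. Since $\sharp$ preserves $\prec$ and, by Lemma~\ref{lemma seq_to_seq}, maps $\omega(R)$ bijectively onto $\omega(\hat R)$, it sends weak ancestors to weak ancestors and hence $\widehat\lambda(v^\sharp)=\lambda(v)^\sharp$, where $\widehat\lambda$ is the analogue of $\lambda$ for $\hat R$. Therefore $\sharp$ restricts to a bijection of $\{v\mid\lambda(v)=a\}$ onto $\{u\mid\widehat\lambda(u)=a^\sharp\}$ and, after adjoining to these sets the respective extra elements $\p(a)$ and $\p(a)^\sharp=\p(a^\sharp)$, to a bijection of $V(T^a)$ onto $V(T^{a^\sharp})$. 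The parent--child relation inside an elementary component coincides with the one inherited from the ambient tree — for $v\in V(T^a)\setminus\{\p(a)\}$ the ambient parent of $v$ again belongs to $V(T^a)$ — so the observation of the first paragraph shows that this bijection preserves parent--child relations, which is precisely the assertion. I expect the only genuinely delicate step to be the intrinsic description of $V(T^a)$ in the second paragraph — verifying that the interior of an elementary component of the minimal decomposition is exactly $\{v\mid\lambda(v)=a\}$ and is disjoint from $\omega(R)$; once this is secured, the remainder is a formal consequence of the already established properties of $\sharp$, and one may alternatively prove only $\sharp(V(T^a))\subseteq V(T^{a^\sharp})$ and deduce equality from the equality of cardinalities $|V(T^a)|=|V(T^{a^\sharp})|$ given by Lemma~\ref{lemma_kardinality_Theta}(2).
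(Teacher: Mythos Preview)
Your argument is correct and follows essentially the same route as the paper: both deduce parent--child preservation from Corollary~\ref{corol_sharp}(2) together with level preservation, and both use Lemma~\ref{lemma seq_to_seq} to match the endpoints $\{a,\p(a)\}$ with $\{a^\sharp,\p(a^\sharp)\}$. The only difference is emphasis: the paper's proof is a three-line sketch that pins down $(T^a)^\sharp\cap\omega(\hat R)=\{a^\sharp,\p(a)^\sharp\}$ and then invokes the cardinality equality $|\p(a)-a|=|\p(a)^\sharp-a^\sharp|$ from Lemma~\ref{lemma_kardinality_Theta}(2) to force $(T^a)^\sharp=T^{a^\sharp}$, whereas you supply an explicit intrinsic description of $V(T^a)$ via the least-$\omega(R)$-ancestor map $\lambda$ and transport it through $\sharp$. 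Your version is more self-contained and makes precise the one point the paper leaves implicit---why the interior of an elementary component meets $\omega(R)$ trivially---while the paper's cardinality shortcut (which you also mention at the end) gets there faster once that point is granted.
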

\begin{proof} Let $a\in\omega(R)$. By Lemma~\ref{lemma seq_to_seq} it holds $a^\sharp\in\omega(\hat{R})$. We have $(T^{a})^{\sharp}\cap \omega(\hat{R})=\{a^{\sharp},\p(a)^{\sharp}=\p(a^\sharp)\}$ with $|\p(a)-a|=|\p(a)^{\sharp}-a^{\sharp}|$. By Corollary~\ref{corol_sharp}, (2), we get now the required.
 \end{proof}
\par In fact, the stronger statement holds: the inner trees of $T^{a}$ and $T^{a^\sharp}$ are similar, for all $a\in\omega(R)$ (see Proposition~\ref{prop_clasif_short R}). To prove this we use the connection between the dual cross-sections with $\Lg$-cross-sections (see Sec.~5). Recall the following result from \cite{Bondar2016}:

    \begin{thm} [\cite{Bondar2016}, Theorem 3]\label{th_clas_L} Two $\mathscr{L}$-cross-sections of $\mathscr{T}_n$ are isomorphic if and only if the respectful trees associated with them are similar.
\end{thm}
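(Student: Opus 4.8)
The plan is to prove the two implications separately. The ``if'' direction will be essentially bookkeeping built on Lemma~\ref{lem_L1=L2Bondar2016}, while the ``only if'' direction requires a rigidity statement: any isomorphism between two $\Lg$-cross-sections of $\T_n$ is the restriction of a conjugation of $\T_n$.

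First I would record an \emph{equivariance} observation. For a permutation $\sigma$ of $\overline{n}$, the conjugation $\alpha\mapsto\sigma^{-1}\alpha\sigma$ is an automorphism of $\T_n$ which carries the $\Lg$-cross-section $L^{\Gamma}$ built from a respectful tree $\Gamma$ and the faithful interval marking $\mu_{\prec}$ onto the cross-section built from the \emph{same} tree $\Gamma$ and the marking $\mu_{\prec'}$, where $\prec'$ is the transport of $\prec$ along $\sigma$; this is immediate from the inductive definition of the transformations $\alpha_M$, which refers to $\overline{n}$ only through the order $\prec$. In particular, for a fixed respectful tree the cross-sections obtained by varying the linear order are mutually isomorphic (indeed conjugate in $\T_n$). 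The ``if'' direction now follows. If $\Gamma_1\sim\Gamma_2$ via an isomorphism, then $\Gamma_1$ and $\Gamma_2$ are the same abstract gendered tree, so by Lemma~\ref{lem_L1=L2Bondar2016}(i) the cross-sections they determine for the natural order coincide, hence the cross-sections for arbitrary orders are isomorphic. If $\Gamma_1\sim\Gamma_2$ via an anti-isomorphism, then by Lemma~\ref{lem_L1=L2Bondar2016}(ii) the cross-section of $\Gamma_1$ with the natural order equals the cross-section of $\Gamma_2$ with the reversed order, and the equivariance observation identifies the latter, up to isomorphism, with the cross-section of $\Gamma_2$ for the natural order.

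For the ``only if'' direction, let $\xi\colon L^{\Gamma_1}\to L^{\Gamma_2}$ be a semigroup isomorphism. Every $\Lg$-cross-section of $\T_n$ contains all $n$ constant transformations $\const_m$, since these are exactly the rank-one maps and each sits alone in its $\Lg$-class; moreover $\{\const_m\mid m\in\overline{n}\}$ is precisely the minimal two-sided ideal of the cross-section, and it is a right-zero semigroup because $\const_m\const_{m'}=\const_{m'}$. Being the kernel, this set is preserved by $\xi$, so $\xi$ induces a permutation $\sigma$ of $\overline{n}$ through $\const_m\xi=\const_{\sigma(m)}$. Since $\const_m\alpha=\const_{m\alpha}$ for every $\alpha\in\T_n$, applying $\xi$ to this identity gives $\sigma(m\alpha)=\sigma(m)(\alpha\xi)$ for all $m\in\overline{n}$, whence $\alpha\xi=\sigma^{-1}\alpha\sigma$ in $\T_n$ and therefore $L^{\Gamma_2}=\sigma^{-1}L^{\Gamma_1}\sigma$. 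By the equivariance observation, $\sigma^{-1}L^{\Gamma_1}\sigma$ is the cross-section of the tree $\Gamma_1$ for a relabeled linear order, so this cross-section and $L^{\Gamma_2}$ \emph{coincide} as subsemigroups of $\T_n$. Lemma~\ref{lem_L1=L2Bondar2016} then forces either $\Gamma_1=\Gamma_2$ or the existence of an anti-isomorphism $\Gamma_1\to\Gamma_2$; in both cases $\Gamma_1\sim\Gamma_2$, which completes the proof.

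I expect the main obstacle to be the rigidity step in the ``only if'' direction, namely forcing $\xi$ into the conjugation form. The two facts that make it work are that the kernel of an $\Lg$-cross-section is exactly the right-zero semigroup of constant maps (hence an abstract-isomorphism invariant) and that an element of $\T_n$ is determined by how it multiplies the constants on the right; once $\xi$ is identified with conjugation by $\sigma$, the combinatorial content is entirely absorbed into the already-proved Lemma~\ref{lem_L1=L2Bondar2016}. A secondary point that needs care, though it is routine, is the equivariance claim, which follows by inspecting the inductive definitions of $\alpha_M$ and $\mu_{\prec}$.
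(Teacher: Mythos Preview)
The paper does not give its own proof of this statement; Theorem~\ref{th_clas_L} is simply quoted from \cite{Bondar2016} and used as a black box in the classification of $\R$-cross-sections. So there is nothing in the present paper to compare your argument against.

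That said, your proposal is a correct self-contained proof. The ``only if'' direction is the interesting part, and the key observation---that the constants form the minimal ideal of any $\Lg$-cross-section, that they constitute a right-zero band, and that the identity $\const_m\alpha=\const_{m\alpha}$ then forces any abstract isomorphism $\xi$ to be conjugation by the permutation $\sigma$ it induces on the constants---is exactly the right rigidity statement. Once $\xi$ is recognised as conjugation, your equivariance remark (that $\sigma^{-1}L^{\Gamma,\prec}\sigma=L^{\Gamma,\prec'}$ for the transported order $\prec'$) reduces everything to Lemma~\ref{lem_L1=L2Bondar2016}, which is the intended payoff. The ``if'' direction, as you say, is bookkeeping on top of the same lemma and the same equivariance.

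One cosmetic point: in your statement of the ``if'' direction for isomorphic trees you appeal to Lemma~\ref{lem_L1=L2Bondar2016}(i). That lemma concerns \emph{equality} of cross-sections and so implicitly requires the orders to match as well; what you actually use there is only the equivariance observation (same abstract tree, different orders $\Rightarrow$ conjugate cross-sections), and the lemma is not needed for that step. This does not affect correctness.
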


Note that according to Corollary~\ref{corol_tree_to_tree}, without loss of generality we may assume the diagram of an $\R$-cross-section to be ``one-sided'', i.e., it holds $\omega(R)=\omega(n)$, for instance. Therefore,  we assume further $\rt_1=1$ and  consider below only  the right-hand  sided diagrams, unless otherwise stated.

\begin{pro}\label{prop_clasif_short R} If
 $R\stackrel{\xi}{\cong}\hat{R}$ then the inner trees of $T^{a}$, $T^{a^\sharp}$ are pairwise similar for all $a\in\omega(R)$.
\end{pro}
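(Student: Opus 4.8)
The plan is to reduce the claim to the already-established classification of $\Lg$-cross-sections of $\T_n$ (Theorem~\ref{th_clas_L}), exploiting the bridge between elementary trees, their inner trees, and dual $\Lg$-cross-sections developed in Section~\ref{sec_L_cr_sec}. Fix $a\in\omega(R)$ and recall from Lemma~\ref{lemma seq_to_seq} that $a^\sharp\in\omega(\hat R)$, and from Corollary~\ref{corol_tree_to_tree} that $\sharp$ restricts to a level-preserving bijection $T^a\to T^{a^\sharp}$ that preserves the parent--child relation, together with $|\p(a)-a|=|\p(a)^\sharp-a^\sharp|$. Write $\Gamma_a$ for the inner tree of $T^a$ and $\Gamma_{a^\sharp}$ for the inner tree of $T^{a^\sharp}$; by Lemma~\ref{lemma_inner tree is resp} both are respectful. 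The goal is to show $\Gamma_a\sim\Gamma_{a^\sharp}$.

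\textbf{Key steps.} First I would localise the isomorphism $\xi$ to the ``elementary block'' sitting over $a$. Concretely, consider the set $\Theta$ of those transformations in $R$ whose image is contained in $[\p(a),a]\cup\omega(\p(a))$ in the appropriate one-sided sense; more precisely, the subsemigroup of $R$ generated by the $\Theta^v$ with $v\in T^a$ together with $\theta^{\p(a)}$, which by the construction of $\Phi_\prec$ is isomorphic to the dual of the $\Lg$-cross-section of $\On_{|T^a|-1}$ determined by $\Gamma_a$ (this is exactly the content of Lemma~\ref{lemma_image_alpha_dual} applied to the elementary subtree $T^a$, after the harmless identification $[\p(a),a]\cong[\,|T^a|\,]$). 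Call this sub-semigroup $S_a\le R$ and $S_{a^\sharp}\le\hat R$ the analogous one. Using Corollary~\ref{corol_sharp}, parts (1) and (3) — which say $\xi$ commutes with $\sharp$ on both $\omega$-values and images — I would check that $\xi$ carries $S_a$ onto $S_{a^\sharp}$: indeed $\xi$ sends $\Theta^v$ to $\Theta^{v^\sharp}$ for $v\in T^a$ by Lemma~\ref{lemma_Theta_x_phi=Theta_y}, and $v^\sharp$ ranges over $T^{a^\sharp}$ by Corollary~\ref{corol_tree_to_tree}, and it sends $\theta^{\p(a)}$ to $\theta^{\p(a)^\sharp}=\theta^{\p(a^\sharp)}$; since these generate $S_a$ and $S_{a^\sharp}$ respectively, the restriction $\xi|_{S_a}$ is an isomorphism $S_a\cong S_{a^\sharp}$.

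\textbf{Finishing.} Once $S_a\cong S_{a^\sharp}$, I would remove the fixed constant (the root of the elementary tree) from each side — this does not affect the isomorphism type since the constant is the unique zero-like element attached — to obtain $(L^{\Gamma_a})^*\cong(L^{\Gamma_{a^\sharp}})^*$ by Lemma~\ref{lemma_image_alpha_dual}. Since $*$ is an injective homomorphism (Higgins), $(L^{\Gamma})^*\cong L^{\Gamma}$ as semigroups, so $L^{\Gamma_a}\cong L^{\Gamma_{a^\sharp}}$ as $\Lg$-cross-sections of the appropriate $\T_m$. Theorem~\ref{th_clas_L} then forces $\Gamma_a\sim\Gamma_{a^\sharp}$, which is the desired conclusion. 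Running over all $a\in\omega(R)$ completes the proof.

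\textbf{Main obstacle.} The delicate point is the second step: verifying cleanly that $\xi$ really does respect the decomposition of $R$ into the elementary blocks $S_a$ — that $S_a$ can be recovered \emph{intrinsically} from the semigroup structure of $R$ (as a subsemigroup generated by identifiable idempotent sets $\Theta^v$) so that an abstract isomorphism is forced to map it to the corresponding block of $\hat R$. This is where Lemma~\ref{lemma_Theta_x_phi=Theta_y} and Corollaries~\ref{corol_sharp}, \ref{corol_tree_to_tree} do the heavy lifting: $\sharp$ is the unique level-preserving bijection compatible with the $\Theta$-idempotent sets, and since $\omega(R)^\sharp=\omega(\hat R)$ by Lemma~\ref{lemma seq_to_seq}, the ``spine'' vertices go to ``spine'' vertices, so the elementary subtree $T^a$ hanging off a spine vertex $\p(a)$ is carried to $T^{a^\sharp}$. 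Care is also needed to identify the dual $\Lg$-cross-section inside $S_a$ with the full one $L^{\Gamma_a}$ of $\On_{|T^a|-1}$ rather than a truncation, but this is precisely guaranteed by Lemma~\ref{lemma_image_alpha_dual}, whose hypothesis (an elementary tree with inner tree $\Gamma_a$) is met.
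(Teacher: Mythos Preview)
Your overall strategy matches the paper's: localise $\xi$ to a subsemigroup attached to the elementary block $T^a$, identify that subsemigroup with (the dual of) an $\Lg$-cross-section via Lemma~\ref{lemma_image_alpha_dual}, and invoke Theorem~\ref{th_clas_L}. The paper does exactly this, defining the block subsemigroup $R(a)\subseteq R$ by specifying its partitions (those induced by $\{x'\mid x\in\omega(b)\}\cup V$, $V\subseteq\Gamma^a$), so that $R(a)\cong\Phi_{\prec_1}$ by restriction, and then checking $R(a)\xi=R(a^\sharp)$ via Corollary~\ref{corol_sharp}(3). From there it builds the explicit map $\kappa\colon L^{\Gamma_a}\to L^{\Gamma_{a^\sharp}}$ by $\alpha_A\mapsto\hat\alpha_A$ with $\hat\alpha_A^*=(\alpha_A^*)\xi|_{T^{a^\sharp}}$ and verifies it is a homomorphism.

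The gap in your version is the definition of $S_a$ as the subsemigroup \emph{generated} by the $\Theta^v$, $v\in T^a$. This is in general strictly smaller than the paper's $R(a)$. Concretely, for the elementary tree on $\{1,\dots,5\}$ with $\rt=5$, $\s(5)=1$, $\dau(1)=4$, $\s(4)=3$, $\s(3)=2$, the element $\varphi=\begin{pmatrix}\{12\}&3&\{45\}\\1&4&5\end{pmatrix}\in\Phi_\prec$ has image $\omega(4)$ but is not idempotent, and one checks that any product of elements of $\bigcup_v\Theta^v$ of rank~$\ge3$ again lies in $\Theta^2\cup\Theta^3\cup\Theta^4$ (each $\Theta^v$ is a left-zero band and $\Theta^u\Theta^w\subseteq\Theta^{\max_\prec(u,w)}$ here), so $\varphi\notin\langle\bigcup_v\Theta^v\rangle$. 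Since your bridge to Lemma~\ref{lemma_image_alpha_dual} needs $S_a$ to coincide with the full dual cross-section $(L^{\Gamma_a})^*\cup\{\const\}$, the argument breaks at this point: from $S_a\cong S_{a^\sharp}$ alone you cannot read off $L^{\Gamma_a}\cong L^{\Gamma_{a^\sharp}}$. The fix is precisely the paper's: define the block subsemigroup by its partitions rather than by generation. A minor side remark: the map $*$ is a homomorphism $\On_n\to\On_{n+1}^*$, i.e.\ an \emph{anti}-homomorphism into $\On_{n+1}$, so $(L^\Gamma)^*$ is anti-isomorphic (not isomorphic) to $L^\Gamma$; this does not affect the conclusion, but the sentence ``$(L^{\Gamma})^*\cong L^{\Gamma}$'' should be adjusted.
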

\begin{proof}
\par Let  $|\omega(R)|=k$, $a\in \omega(R)$ with $a\ne\rt$. Consider the elementary tree $T^{a}$. We denote the inner trees of $T^a$, $T^{a^\sharp}$ by $\Gamma^{a}$, $\Gamma^{a^\sharp}$ respectively. If  $|T^{a}|\leq 3$ then  Corollary~\ref{corol_tree_to_tree} implies directly that $\Gamma^{a}{\sim}\Gamma^{a^\sharp}$. Suppose $|T^{a}|=m> 3$.
 \par Let $\Phi_{\prec_1}$, $\Phi_{\prec_2}$ be $\R$-cross-sections associated with the elementary trees $T^a$, $T^{a^\sharp}$ respectively. We will construct now a subsemigroup $R(a)\subseteq R$ such that $R(a)\cong \Phi_{\prec_1}$.
\par Recall that the subsets of $[\p(a)',(a-1)']$ induces an interval partition of the set $[\p(a), a]$ (see Subsection~\ref{subsec_dual}). If $\p(a)=\rt$ then we denote by $R(a)$ the set  of transformations $\gamma\in R$ whose partition induced by a subset $V$ of $\Gamma^a$, for each $V\subset \Gamma^a$. If $\p(a)\ne\rt$ then let $b\in\omega(a)$ be the parent of $\p(a)$. In this case we denote by $R(a)$ the set  of transformations $\gamma\in R$ whose partition induced by $\{x'\mid x\in\omega(b)\}\cup V$, for each $V\subset \Gamma^a$. According to the description of $\R(O_n)$ we have $x\gamma=x$ for all $x\in\omega(b)$, $T^a\gamma\subseteq T^a$. Clearly, it holds
$$R(a)\cong \{\gamma|_{T^a}\colon\gamma\in R(a)\}\cong\Phi_{\prec_1}.$$

In the same manner we define the subset $R(a^\sharp)\subseteq R$ with  $R(a^\sharp)\cong \Phi_{\prec_2}$.
By Corollary~\ref{corol_sharp}, (4), it holds $\im(\gamma\xi)=\im(\gamma)^\sharp$. Since $\omega(v)^\sharp=\omega(v^\sharp)$ for all $v\in(\overline{n},\prec)$ and Corollary~\ref{corol_tree_to_tree}, we get $R(a)\xi=R(a^\sharp)$.

Trees $T^a$, $T^{a^\sharp}$ are elementary. Consequently, their inner trees are respectful. Let $L$, $\hat{L}$ be order-preserving $\Lg$-cross-sections associated with the inner trees. Using the fact that $R(a)\xi=R(a^\sharp)$ we will show that $L\cong \hat{L}$, whence by Theorem~\ref{th_clas_L} we get the required immediately.

For $\alpha\in R(a)$ denote by $\alpha_A\in L$, $A\in\Gamma^a$, the transformation such that $\alpha_A^*=\alpha|_{T^a}$. By $\hat{\alpha}_A\in \hat{L}$  we denote the transformation such that $\hat{\alpha}_A^*=\alpha\xi|_{T^{a^\sharp}}$.

We claim that a mapping
 $$\kappa\colon L\to\hat{L}\colon \alpha_A \mapsto\hat{\alpha}_A$$
is an isomorphism. Clearly, by the construction of $*$ and since $R(a)\xi=R(a^\sharp)$, we get that $\kappa$ is bijective. Let $\beta\in R(a)$ and $\alpha_B^*=\beta|_{T^a}$, for $B\in\Gamma^a$. Let $\alpha_{C}\in L$, $C\in\Gamma^a$, stand for $\alpha_A\alpha_B$. It remains to show that $\hat{\alpha}_{C}=\hat{\alpha}_A\hat{\alpha}_B$.
Note that
$$\alpha_{C}^*=(\alpha_A\alpha_B)^*=\alpha_B^*\alpha_A^*=(\beta|_{T^a})(\alpha|_{T^a}).$$
 Since $\alpha, \beta\in R(a)$ we get $(\beta|_{T^a})(\alpha|_{T^a})=(\beta\alpha)|_{T^a}$.
 Consequently, $\alpha_{C}^*=(\beta\alpha)|_{T^a}$. Then $$\hat{\alpha}_{C}^*=(\beta\alpha)\xi|_{T^{a^\sharp}}=(\beta\xi)(\alpha\xi)|_{T^{a^\sharp}}.$$
 Since $\alpha\xi, \beta\xi\in R(a^\sharp)$, we get $(\beta\xi)(\alpha\xi)|_{T^{a^\sharp}}=(\beta\xi|_{T^{a^\sharp}})(\alpha\xi|_{T^{a^\sharp}})=
 \hat{\alpha}_B^*\hat{\alpha}_A^*=(\hat{\alpha}_A\hat{\alpha}_B)^*$.
 Therefore, $\hat{\alpha}_{C}=\hat{\alpha}_A\hat{\alpha}_B$. Hence, $\kappa$ is isomorphism.

\end{proof}

Furthermore, let $T^{a\sharp}=T^b$. We claim that $\sharp\colon T^a\to T^b$ directly depends on the isomorphism/the anti-isomorphism between $\Gamma^a$ and $\Gamma^b$.
To prove this  we need to find a way how to ``synchronize'' the vertices of the elementary trees  according to the isomorphism of their inner trees.
\begin{definition}\label{def_pi}  We denote by $\psi$ an isomorphism/an anti-isomorphism from $\Gamma^a$ to $\Gamma^b$. For every $x\in T^a$ define a mapping $\pi^{(a, b)}:T^a\to T^b$  induced by $\psi$ as follows:

\textit{ Case 1:} $\Gamma^{a}\stackrel{\psi}{\sim}\Gamma^{b}$, $\psi$ is an isomorphism
  $$x\pi^{(a, b)}=\begin{cases}
 y \text{ such that } x'\psi=y' &\text{if } x'\in \Gamma^a, \\
z \text{ such that }  z\in T^b, z'\notin\Gamma^b, &\text{if } x'\notin \Gamma^a.
\end{cases}$$

\textit{Case 2:} $\Gamma^{a}\stackrel{\psi}{\sim}\Gamma^{b}$, $\psi$ is an anti-isomorphism
  $$x\pi^{(a, b)}=\begin{cases}
 y \text{ such that }   x'\psi=(y-1)' &\text{if } x'\in \Gamma^a,\\
z \text{ such that }  z\in T^b, (z-1)'\notin\Gamma^b, &\text{if } x'\notin \Gamma^a.
\end{cases}$$
\end{definition}
As the following example shows the mapping $\pi^{(a, b)}$ is not an isomorphism of $T^a$ to $T^b$: it does not  preserve the root in general.
 \begin{example}\label{ex_pi[a,b]} Consider order-preserving trees pictured in Fig.~\ref{fig:pi[a,b]}. Then $\pi^{(5, 1)}\colon T^5\to T^1$ and $\pi^{(5, 9)}\colon T^5\to T^9$ have the form
 $$\pi^{(5, 1)}=\left(
\begin{array}{ccccc}
1&2&3&4&5\\
1&2&3&4&5
\end{array}
\right),\ \ \ \pi^{(5, 9)}=\left(
\begin{array}{ccccc}
1&2&3&4&5\\
9&8&7&6&5
\end{array}
\right)$$

\begin{figure}[h]
\begin{center}
\begin{picture}(65,35)
\gasset{AHnb=0,linewidth=0.4}
  \drawline[linewidth=0.1](-5,10)(15,10)
  \drawline[linewidth=0.1](-5,15)(15,15)
  \drawline[linewidth=0.1](-5,20)(15,20)
 \drawline[linewidth=0.1](-5,25)(15,25)
  \drawline[linewidth=0.1](-5,30)(15,30)
  \drawline[linewidth=0.1](-5,35)(15,35)
  \put(-5,38){$1$}
  \put(0,38){$2$}
  \put(5,38){$3$}
  \put(10,38){$4$}
  \put(15,38){$5$}
  \drawline(-5,10)(-5,35)
  \drawline(0,10)(0,35)
  \drawline(5,10)(5,35)
  \drawline(10,10)(10,35)
  \drawline(15,10)(15,35)
  \drawline[linewidth=0.1](35,10)(75,10)
  \drawline[linewidth=0.1](35,15)(75,15)
  \drawline[linewidth=0.1](35,20)(75,20)
 \drawline[linewidth=0.1](35,25)(75,25)
  \drawline[linewidth=0.1](35,30)(75,30)
  \drawline[linewidth=0.1](35,35)(75,35)
  \put(34,38){$1$}
  \put(39,38){$2$}
  \put(45,38){$3$}
  \put(50,38){$4$}
  \put(55,38){$5$}
  \put(60,38){$6$}
  \put(65,38){$7$}
  \put(70,38){$8$}
  \put(75,38){$9$}
  \drawline(35,10)(35,35)
  \drawline(40,10)(40,35)
  \drawline(45,10)(45,35)
  \drawline(50,10)(50,35)
  \drawline(55,10)(55,35)
  \drawline(60,10)(60,35)
  \drawline(65,10)(65,35)
  \drawline(70,10)(70,35)
  \drawline(75,10)(75,35)
  \gasset{AHnb=0,linewidth=0.8, Nw=1.5,Nh=1.5,Nframe=y, Nfill=y, NLdist=3}
  \node(r1)(-5,35){}
  \node(r2)(0,25){}
   \node(r3)(5,20){}
  \node(r4)(10,15){}
  \node(r5)(15,30){}\put(17,31){$T^5$}
    \drawline(-5,10)(-5,35)
    \drawline(0,10)(0,25)
    \drawline(5,10)(5,20)
    \drawline(10,10)(10,15)
    \drawline(15,10)(15,30)
  \put(31,31){$T^1$}
  \node(r_21)(35,30){}
  \node(r_22)(55,35){}
  \node(r_23)(40,25){}
  \node(r_31)(45,20){}
  \node(r_32)(50,15){}
   \node(r_32)(60,15){}
  \node(r_33)(65,20){}
  \node(r_33)(70,25){}
  \node(r_33)(75,30){}\put(79,31){$T^9$}
    \drawline(35,10)(35,30)
    \drawline(40,10)(40,25)
    \drawline(45,10)(45,20)
    \drawline(50,10)(50,15)
    \drawline(55,10)(55,35)
    \drawline(60,10)(60,15)
    \drawline(65,10)(65,20)
    \drawline(70,10)(70,25)
    \drawline(75,10)(75,30)
  \end{picture}
\caption{Illustration of Example~\ref{ex_pi[a,b]}}\label{fig:pi[a,b]}
\end{center}
\end{figure}
 \end{example}
Despite $\pi^{(n,n^\sharp)}$ is not an isomorphism, it has the following property:
 \begin{lem}\label{lem_property_pi} For all $x\in T^a, \p(a)\ne x\ne a$, it holds
 $\omega(x\pi^{(a,b)})=\omega(x)\pi^{(a,b)}.$
\end{lem}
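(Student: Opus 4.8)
We may assume $\p(a)<a$, the opposite case being obtained by reflecting every diagram. Then $T^a$ has interval $[\p(a),a]$ with root $\p(a)$ and $a=\dau(\p(a))$, every vertex other than $\p(a),a$ lies strictly between them, and these interior vertices form the subtree $S$ of $T^a$ rooted at $\s(a)$; here $\omega(x)$ denotes the path from $x$ to the root $\p(a)$ of $T^a$, so for $x\in S$ it runs $x\to\cdots\to\s(a)\to a\to\p(a)$ and $\omega(x)=\{a,\p(a)\}\sqcup(\omega(x)\cap S)$. The plan is to read $\omega(x)$ off the inner tree $\Gamma^a$ and then transport everything along the isomorphism or anti-isomorphism $\psi\colon\Gamma^a\to\Gamma^b$ that defines $\pi:=\pi^{(a,b)}$.

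\textbf{Step 1: the splitter description of $\omega(x)$.} For a node $[i',j']$ of $\Gamma^a$ with $i\ne j$ call its splitting vertex $v$ (the vertex with $i<v<j+1$ producing the children $[i',(v-1)']$ and $[v',j']$) the \emph{splitter} of the node; it is recovered from the labels of the two children. By induction on depth, using the definition of the inner tree and the elementarity of $T^a$, the interior vertices governed by a node always form a subtree of $T^a$ whose highest vertex equals its splitter; hence $x\mapsto\nu(x):=(\text{the node of }\Gamma^a\text{ with splitter }x)$ is a bijection from $S$ onto the node set of $\Gamma^a$, and for $x\in S$ the splitters met on the path in $\Gamma^a$ from the root to $\nu(x)$ are exactly the ancestors of $x$ inside $S$, $x$ included, i.e. $\omega(x)\cap S$. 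Thus
$$\omega(x)=\{a,\p(a)\}\cup\{\,\mathrm{splitter}(N):N\text{ lies on the root-to-}\nu(x)\text{ path of }\Gamma^a\,\},$$
and the same formula holds verbatim in $T^b$.

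\textbf{Step 2: two facts about $\pi$.} From Definition~\ref{def_pi} I would extract: (A) $\pi$ maps $\{\p(a),a\}$ onto $\{\p(b),b\}$ — the leaf $[\p(a)']$ is reached from the root of $\Gamma^a$ by always descending to the daughter, so $\psi$ sends it to the leaf reached from the root of $\Gamma^b$ by always descending to the daughter (Case~1) resp. to the son (Case~2), an extreme leaf of $\Gamma^b$; combining this with the clause of Definition~\ref{def_pi} governing the cell-less vertex $a$ forces $\{\pi(\p(a)),\pi(a)\}=\{\p(b),b\}$, and in particular $\pi(S)=S^{\,b}$, the interior of $T^b$. (B) For every $x\in S$ one has $\psi(\nu(x))=\nu(\pi(x))$, and more generally $\psi$ sends a node with splitter $s$ to a node with splitter $\pi(s)$: the leaf $[x']$ is joined to $\nu(x)$ by the maximal upward chain of nodes in which each is the daughter of the next, the topmost such node being the son of $\nu(x)$; since $\psi$ preserves (Case~1) or interchanges (Case~2) the son/daughter roles and sends $[x']$ to $[\pi(x)']$ (Case~1) resp. $[(\pi(x)-1)']$ (Case~2), it carries this chain to the corresponding chain in $\Gamma^b$ and hence $\nu(x)$ to $\nu(\pi(x))$.

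\textbf{Step 3: assembly, and the hard point.} Since $\psi$ maps roots to roots and preserves the parent--child relation, it carries the root-to-$\nu(x)$ path of $\Gamma^a$ onto the root-to-$\nu(\pi(x))$ path of $\Gamma^b$, and by (B) the splitters along it pass, node by node, to their $\pi$-images; hence $\pi(\omega(x)\cap S)=\omega(\pi(x))\cap S^{\,b}$. Adding fact (A) and the two displayed descriptions of $\omega(\cdot)$ gives $\omega(x)\pi=\omega(\pi(x))$, as claimed. The delicate point is step (B) in the anti-isomorphism case: one must reconcile the off-by-one shift ``$(\pi(x)-1)'$'' in Definition~\ref{def_pi} with the interchange of son and daughter, using that $\pi$ is then order-reversing on $[\p(a),a]$ (so $\pi(s-1)=\pi(s)+1$) to see that the relevant son-chain in $\Gamma^b$ still terminates at $\nu(\pi(x))$ rather than at a neighbouring node.
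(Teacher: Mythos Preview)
Your argument is correct and is organized differently from the paper's proof. The paper proceeds by a direct induction on the level of $x$ in $T^a$: assuming $\omega(y)\pi=\omega(y\pi)$ for the parent $y$ of $x$, it verifies by a short explicit computation with the interval labels that $x\pi$ is a child of $y\pi$ in $T^b$, whence $\omega(x)\pi=\omega(y)\pi\cup\{x\pi\}=\omega(y\pi)\cup\{x\pi\}=\omega(x\pi)$.

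You instead set up the splitter bijection $\nu$ from the interior $S$ of $T^a$ to the internal vertices of $\Gamma^a$ and prove the commutativity $\psi\circ\nu=\nu\circ\pi|_S$; the lemma then falls out because $\psi$ carries root-to-node paths in $\Gamma^a$ to root-to-node paths in $\Gamma^b$, and by Step~1 these paths encode exactly $\omega(\cdot)\cap S$. This is more structural: it identifies the conjugacy relation between $\psi$ and $\pi$ that lies behind the lemma, and your treatment of the anti-isomorphism case (the ``first is-a-daughter step from $[w']$ lands at $\nu(w+1)$'' dual of your son-chain observation) resolves the off-by-one issue cleanly. The paper's version has the advantage of being shorter and of avoiding the auxiliary bijection; yours has the advantage of explaining \emph{why} the lemma holds rather than just verifying it, and the relation $\psi\circ\nu=\nu\circ\pi$ you extract is in fact what underlies the later uses of $\pi^{(a,b)}$ in the classification section.

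One cosmetic point: your phrase ``onto the node set of $\Gamma^a$'' should read ``onto the set of internal (non-leaf) nodes of $\Gamma^a$''; the counts match ($|S|=a-\p(a)-1$ equals the number of internal nodes of a full binary tree with $a-\p(a)$ leaves), and your later use of $\nu$ already treats it this way.
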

\begin{proof} Let $T^a$ have $t$ levels. We induct on the level $i$ of $x\in T^a$, $2\leq i\leq t-1$. Clearly, $$\{a,\p(a)\}\pi^{(a,b)}=\{b,\p(b)\}.$$
 Therefore, if $x\in U_2$ then $$\omega(x)\pi^{(a,b)}=\{x,a,\p(a)\}\pi^{(a,b)}=\{x\pi^{(a,b)}, b,\p(b)\}=\omega(x\pi^{(a,b)}).$$
  Thus, the induction base holds. Suppose the statement holds for all $i<k$ for a natural $k$, $k<t$. Let $x\in U_k$ with the canonical bounds $y<x<z$. Without loss of generality assume $y$ is the parent of $x$. Let $[y',(z-1)']\psi=[p',(m-1)']$. Then $y\pi^{(a,b)}\in \{p,m\}$ and by the assumption $\omega(y)\pi^{(a,b)}=\omega(y\pi^{(a,b)})$. Suppose $[p',(l-1)']$, $[l',(m-1)']$ the children of $[p',(m-1)']$. Then on the one hand, by the construction of the inner tree $l$ is a child of $y\pi^{(a,b)}$. On the other hand, if $\psi$ is an isomorphism then $x'=l'$. If  $\psi$ is an anti-isomorphism then $x'=(l-1)'$. In both cases we get $x\pi^{(a,b)}=l$.  Thus,
  $$\omega(x)\pi^{(a,b)}=\{x\}\pi^{(a,b)}\cup\omega(y)\pi^{(a,b)}=\{l\}\cup \omega(y\pi^{(a,b)})=\omega(l)=\omega(x\pi^{(a,b)}).$$
\end{proof}

We also need to recall the following fact.
\begin{lem} [\cite{Bondar2016}, Lemma 5]\label{lem_image beta_tau} Let $L$, $\hat{L}$ be $\Lg$-cross-sections associated to similar respectful trees $\Gamma^a$, $\Gamma^b$, respectively . Let $\tau\colon L\to \hat{L}$ be an isomorphism. Then for all $A\in\Gamma^a$ and $\beta\in L$, it holds
$(A\beta)\psi=A\psi(\beta\tau)$.
\end{lem}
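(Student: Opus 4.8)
The plan is to show that every isomorphism $\tau\colon L\to\hat L$ is actually conjugation by a permutation $\pi$ of $\overline{n}$, to read $\psi$ off from $\pi$, and then to obtain the identity in one line. I would start from the constants. Every $\Lg$-cross-section of $\T_n$ contains all constants $\const_x$, $x\in\overline{n}$, because the $\Lg$-class of $\T_n$ with image $\{x\}$ is the singleton $\{\const_x\}$; and the constants are exactly the right zeros of $L$, since $\gamma\const_x=\const_x$ for all $\gamma$, while a right zero $\beta$ satisfies $\const_x\beta=\const_{x\beta}=\beta$ and so is constant. Hence $\tau$ maps right zeros to right zeros bijectively, giving a permutation $\pi$ of $\overline{n}$ with $\const_x\tau=\const_{x\pi}$. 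Applying $\tau$ to $\const_x\beta=\const_{x\beta}$ and comparing the two constants yields $(x\pi)(\beta\tau)=(x\beta)\pi$ for all $x\in\overline{n}$ and all $\beta\in L$, i.e.\ $\beta\tau=\pi^{-1}\beta\pi$; thus $\tau$ is conjugation by $\pi$ and $\pi^{-1}L\pi=\hat L$.

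Next I would locate $\psi$. Conjugation by $\pi$ is the automorphism of $\T_n$ transporting everything along $\pi$, so it carries $L=L^{\Gamma^a}$ with marking $\mu_{<}$ to the $\Lg$-cross-section obtained from $\Gamma^a$ with the order $<$ transported by $\pi$, and this coincides with $\hat L=L^{\Gamma^b}$ with marking $\mu_{<}$. Lemma~\ref{lem_L1=L2Bondar2016}, applied to this equality of $\Lg$-cross-sections, forces one of two cases: either $\pi$ preserves $<$, whence $\pi=\id$, $L=\hat L$, and $\psi$ is the identity isomorphism of the marked trees; or $\pi$ reverses $<$, whence $\pi$ is the reversal $i\mapsto n+1-i$ and there is an anti-isomorphism $\psi\colon\Gamma^a\to\Gamma^b$ compatible with the reversed interval marking. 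In either case I would check, using the faithful interval markings, that for every vertex $A\in\Gamma^a$ the interval $A\psi\subseteq\overline{n}$ coincides with the set $A\pi$ (with $\psi$ understood to act on arbitrary subsets through its action on leaves): for the anti-isomorphism this holds because the mirror of the $i$-th leaf from the left in $\Gamma^a$ is the $i$-th leaf from the right in $\Gamma^b$, which under $\mu_{<}$ is the element $n+1-i$, so $\psi$ acts on leaves --- hence, by the recursive son/daughter structure of the markings, on all vertices --- exactly as $\pi$.

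It then remains to compute, for a vertex $A\in\Gamma^a$ and $\beta\in L$,
\[
(A\beta)\psi=(A\beta)\pi=(A\pi)(\beta\tau)=(A\psi)(\beta\tau),
\]
where the middle equality holds because $(x\pi)(\beta\tau)=(x\beta)\pi$ for each $x\in A$, and the outer equalities are the relation $A\psi=A\pi$ established above. I expect the main obstacle to be the second paragraph: one must know that the conjugating permutation $\pi$ respects the two tree structures at all, and this is precisely the rigidity packaged in Lemma~\ref{lem_L1=L2Bondar2016}; once $\psi$ is in hand, producing the identity is immediate.
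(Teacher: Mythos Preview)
The paper does not prove this lemma: it is quoted verbatim from \cite{Bondar2016}, Lemma~5, and no argument is given here. So there is nothing in the present paper to compare your proof against.

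That said, your proposal is correct and is essentially the standard route one takes to establish this kind of statement. The key observation---that the constants are exactly the right zeros of $L$, so any isomorphism $\tau$ is conjugation by the permutation $\pi$ read off from $\const_x\tau=\const_{x\pi}$---is the heart of the matter, and the identity $(x\beta)\pi=(x\pi)(\beta\tau)$ you derive from it already gives the conclusion once one knows $\psi$ and $\pi$ agree on leaves. Your use of Lemma~\ref{lem_L1=L2Bondar2016} to pin $\pi$ down as the identity or the reversal is the right tool; the only step that deserves a word of care is the claim that $\pi^{-1}L^{\Gamma^a,\mu_<}\pi=L^{\Gamma^a,\mu_{\prec'}}$ for the $\pi$-transported order $\prec'$, but this is a straightforward relabelling check (conjugation by a permutation transports the faithful interval marking along with everything else). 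With that in hand, Lemma~\ref{lem_L1=L2Bondar2016} forces $\prec'\in\{<,<^{-1}\}$ and hence $\pi\in\{\id,\ i\mapsto n{+}1{-}i\}$, exactly as you say. The final verification that $A\psi=A\pi$ as subsets of $\overline{n}$, via the action of $\psi$ on leaves and the recursive son/daughter structure of the marking, is routine.
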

\begin{remark}\label{remark}It is convenient to interpret this result graphically, picturing the transformations like it was in Fig.~\ref{fig_dual_L}.  If $\psi$ is an isomorphism then we get the same depiction of transformation $\beta\tau\in \hat{L}$ as the $\beta$'s one (up to the labels of the vertices). If $\psi$ is an anti-isomorphism then $\Gamma^b$ is a mirror reflection of $\Gamma^a$. Therefore, to get the depiction of $\beta\tau$ it is enough to flip the $\beta$'s one upside-down.Consider an example.
\end{remark}
\begin{example}\label{ex_image beta_tau} Let $\Gamma_1$, $\Gamma_2$ be the trees from Example~\ref{ex_sim_trees}.  Apparently, $\psi$ is an anti-isomorphism, $\psi=\left(
\begin{array}{cccc}
1&2&3&4\\
4&3&2&1
\end{array}
\right)$. Without loss of generality consider $\beta \in L^{\Gamma_1}$ with $\im(\beta)=\{1,2,4\}$. by the definition of $\alpha_M$ we have $\beta=
\left(
\begin{array}{ccc}
\{12\}&\{3\}&\{4\}\\
1&2&4
\end{array}
\right).$ Then by Lemma~\ref{lem_image beta_tau}
$$\{12\}\psi(\beta\tau)=\{4,3\}\beta\tau,\ \{12\}\beta\psi=1\psi=4, \mbox{ whence } \{4,3\}\beta\tau=4.$$ Analogously
\begin{gather*}
\{3\}\psi(\beta\tau)=\{2\}\beta\tau,\ \{3\}\beta\psi=2\psi=3,\\
\{4\}\psi(\beta\tau)=\{1\}\beta\tau,\ \{4\}\beta\psi=4\psi=1.
\end{gather*}
Therefore, $\beta\tau=
\left(
\begin{array}{ccc}
\{1\}&\{2\}&\{34\}\\
1&3&4
\end{array}
\right)$ and its diagram is the  turned upside down $\beta$'s one (see Fig.~\ref{fig_image beta_tau}).

\begin{figure}[h]
\begin{picture}(60,45)
\gasset{AHLength=2.0,AHlength=2,AHangle=11}
\gasset{Nw=1.5,Nh=1.5,Nframe=y,Nfill=n,NLdist=3}
  \node[NLangle= 180](Na)(30,40){\small{$1$}}
  \node[NLangle= 180](Nb)(30,30){\small{$2$}}
  \node[NLangle= 180](Nc)(30,20){\small{$3$}}
  \node[NLangle= 180](Nd)(30,10){\small{$4$}}
  \node[NLangle= 0](N1)(40,40){\small{$1$}}
  \node[NLangle= 0](N2)(40,30){\small{$2$}}
  \node[NLangle= 0](N3)(40,20){\small{$3$}}
  \node[NLangle= 0](N4)(40,10){\small{$4$}}
\drawedge(Na,N1){}
\drawedge(Nb,N1){}
\drawedge(Nc,N2){}
\drawedge(Nd,N4){}
  \node[NLangle= 180](Na')(80,40){\small{$1$}}
  \node[NLangle= 180](Nb')(80,30){\small{$2$}}
  \node[NLangle= 180](Nc')(80,20){\small{$3$}}
  \node[NLangle= 180](Nd')(80,10){\small{$4$}}
  \node[NLangle= 0](N1')(90,40){\small{$1$}}
  \node[NLangle= 0](N2')(90,30){\small{$2$}}
  \node[NLangle= 0](N3')(90,20){\small{$3$}}
  \node[NLangle= 0](N4')(90,10){\small{$4$}}
\drawedge(Na',N1'){}
\drawedge(Nb',N3'){}
\drawedge(Nc',N4'){}
\drawedge(Nd',N4'){}
\end{picture}
\caption{ Illustration of Example~\ref{ex_image beta_tau}}\label{fig_image beta_tau}
\end{figure}
\end{example}
Now we are ready to prove the following lemma.

\begin{lem}\label{lem_sharp_psi} Let $(\overline{n},\prec_{1})=T^n$ and $(\overline{n},\prec_{2})$ be an elementary tree, $R\stackrel{\xi}{\cong}\hat{R}$. Then for all~$x$, $1<x<n$, it holds
 $$x^\sharp=x\pi^{(n, n^\sharp)}.$$
\end{lem}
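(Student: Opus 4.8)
The plan is to prove that the two maps $\sharp$ and $\pi^{(n,n^\sharp)}$, both of which send $T^n$ into $T^{n^\sharp}$, actually agree on every non-endpoint vertex, by inducting on the level of a vertex in $T^n$ (equivalently, the distance from $n$ along $\omega(n)$, unfolded into the diagram) and exploiting the fact that both maps send paths to paths. Write $b=n^\sharp$, so $T^{n^\sharp}=T^b$ by Lemma~\ref{lemma seq_to_seq} and Corollary~\ref{corol_tree_to_tree}; let $\psi\colon\Gamma^n\to\Gamma^b$ be the isomorphism or anti-isomorphism realizing $\Gamma^n\sim\Gamma^b$ from Proposition~\ref{prop_clasif_short R}, and let $\pi:=\pi^{(n,n^\sharp)}$ be the induced map from Definition~\ref{def_pi}. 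The base of the induction is the level-$2$ vertex (the unique vertex of $U_2$ of the elementary tree, below $\{1,n\}$, say $x$ with $1<x<n$): here both $\sharp$ and $\pi$ must send $x$ to the unique level-$2$ vertex of $T^b$, because $\sharp$ preserves levels by Lemma~\ref{lemma_Theta_x_phi=Theta_y} and $\pi$ does so by its defining cases; so $x^\sharp=x\pi$.

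For the inductive step, fix $x\in T^n$ at level $k\ge 3$ with canonical bounds $y<x<z$, and assume without loss of generality that $y$ is the parent of $x$ in $T^n$. By the inductive hypothesis $y^\sharp=y\pi$, and by Lemma~\ref{lem_property_pi} we have $\omega(y\pi)=\omega(y)\pi$, while Corollary~\ref{corol_sharp}(1) gives $\omega(y^\sharp)=\omega(y)^\sharp$; combining, $\omega(y)$ is carried to the same path $\omega(y^\sharp)=\omega(y\pi)$ by both maps. Now $x$ is the son (or daughter) of $y$ in $T^n$; since $\sharp$ preserves the parent--child relation by Corollary~\ref{corol_sharp}(2) combined with Corollary~\ref{corol_tree_to_tree}, $x^\sharp$ is the corresponding child of $y^\sharp$, and it lies strictly between the appropriate canonical bounds. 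The key point is then to identify that child from the side of the inner trees: the interval $A:=[y',(x-1)']\in\Gamma^n$ (or $[x',(z-1)']$, depending on gender) is exactly the node of $\Gamma^n$ splitting at the leading vertex $x$, and by the construction of $\pi$ the image $A\psi\in\Gamma^b$ splits at $x\pi$. I want to show $A\psi$ also splits at $x^\sharp$; this is where the connection with the dual $\Lg$-cross-sections does the work. Via Lemma~\ref{lemma_image_alpha_dual} the cross-sections $R(a)$, $R(a^\sharp)$ built in the proof of Proposition~\ref{prop_clasif_short R} are isomorphic to the duals of the order-preserving $\Lg$-cross-sections $L$, $\hat L$ attached to $\Gamma^n$, $\Gamma^b$, and the isomorphism $\xi$ restricts (as in that proof) to an isomorphism $\tau\colon L\to\hat L$; by Lemma~\ref{lem_image beta_tau}, $(A\beta)\psi=A\psi\,(\beta\tau)$ for every $\beta\in L$. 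Choosing $\beta$ with image forcing $A$ to map to the node splitting at $x$ (e.g.\ the idempotent $\theta$-type transformation whose dual corresponds to $\theta^x$), and translating back through the dual using Lemma~\ref{lemma_image_alpha_dual}, shows that $A\psi(\beta\tau)$ is precisely the node of $\Gamma^b$ whose leading vertex is $x^\sharp$. Hence the leading vertex of $A\psi$ is both $x\pi$ and $x^\sharp$, so $x^\sharp=x\pi$.

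The main obstacle is the bookkeeping in that last step: one must carefully match the ``leading vertex of the inner-tree node'' language of the elementary-tree/$\Phi_\prec$ side with the ``splitting point of $\mu_{<}$-intervals'' language of the respectful-tree/$L^\Gamma$ side, keeping track of the off-by-one shift (the $x'$ versus $(x-1)'$ distinction) that distinguishes the isomorphism case from the anti-isomorphism case in Definition~\ref{def_pi}, and making sure the transformation $\beta\in L$ chosen to probe the node $A$ really pins down a single vertex rather than a subtree. Once the dictionary between Lemma~\ref{lemma_image_alpha_dual} and Lemma~\ref{lem_image beta_tau} is set up cleanly, the induction closes routinely, and the two degenerate cases $x=1$, $x=n$ are excluded by hypothesis (they are the endpoints $\p(n)$-side and $n$ itself, handled by $\sharp$ and $\pi$ agreeing on $\{1,n\}$ via Lemma~\ref{lemma seq_to_seq}).
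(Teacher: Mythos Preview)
Your proposal uses the same essential ingredients as the paper (the dual construction of Lemma~\ref{lemma_image_alpha_dual}, the isomorphism $\kappa\colon L\to\hat L$ built in Proposition~\ref{prop_clasif_short R}, and Lemma~\ref{lem_image beta_tau}), but the inductive scaffolding you set up is superfluous, and the paper's argument is considerably more direct.

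The paper does not induct on the level of $x$. Instead, for an arbitrary $x$ with $1<x<n$ it picks $\alpha_M\in L$ with $\alpha_M^*\in\Theta^x$. By construction of $\kappa$ one has $\alpha_M^*\xi=\hat\alpha_M^*$, and by the definition of $\sharp$ this element lies in $\Theta^{x^\sharp}$. So the whole lemma reduces to checking $\hat\alpha_M^*\in\Theta^{x\pi}$, i.e.\ that $\im(\hat\alpha_M^*)=\omega(x\pi)$. The paper then splits on whether $\psi$ is an isomorphism or an anti-isomorphism and uses the ``flip the depiction'' observation of Remark~\ref{remark}: in the first case $\hat\alpha_M$ has the same picture as $\alpha_M$, so $\im(\hat\alpha_M^*)=\omega(x)=\omega(x\pi)$; in the second case the picture is reflected, giving $\im(\hat\alpha_M^*)=\omega(x)\pi$, which equals $\omega(x\pi)$ by Lemma~\ref{lem_property_pi}. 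That is the entire proof.

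Your inductive step, once you actually carry out the ``choose $\beta$ whose dual is $\theta^x$ and translate through Lemma~\ref{lem_image beta_tau}'' manoeuvre, is performing exactly this computation of $\im(\hat\alpha_M^*)$; the hypothesis $y^\sharp=y\pi$ for the parent is never used in that computation. In other words, the work you defer to the inductive step already proves the result outright for every $x$, so the induction, the inner-tree splitting-vertex language, and the off-by-one bookkeeping you flag as the main obstacle can all be dropped. Your route is not wrong, but it wraps the one-line core argument $\Theta^x\xi=\Theta^{x^\sharp}$ together with $\hat\alpha_M^*\in\Theta^{x\pi}$ in a layer of structure that buys nothing.
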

\begin{proof} Let $\Gamma$, $\hat{\Gamma}$ be the  inner trees of $R$, $\hat{R}$ respectively; $L,\hat{L}$ be $\Lg$-cross-sections of $\On_{n-1}$ associated with $\Gamma$, $\hat{\Gamma}$. Let $\kappa\colon L\to \hat{L}:\alpha_M\mapsto \hat{\alpha}_M$ be the isomorphism defined in the proof of Proposition~\ref{prop_clasif_short R}. Since $(\overline{n},\prec_{1})=T^n$ it holds $\alpha_M^*\in R$ for all $\alpha_M\in L$. Then for $\hat\alpha_M\in \hat{L}$ we have $\hat\alpha_M^*=\alpha_M^*\xi$.

\par Let $x\in\overline{n}$, $1<x<n$. Let $\alpha_M\in L$ be such that $\alpha_M^*\in\Theta^x$. Then  $\alpha_M^*\xi\in\Theta^{x^\sharp}$.  Thus, if we  show that $\hat\alpha_M^*\in\Theta^{x\pi^{(n, n^\sharp)}}$  we get the required.

\par Since $\kappa$ is an isomorphism we can apply Lemma~\ref{lem_image beta_tau} to $\alpha_M, \hat\alpha_M$.
According to Proposition~\ref{prop_clasif_short R} we have to consider two possibilities:

\par \textit{Case 1.} $\Gamma\stackrel{\psi}{\sim}\hat{\Gamma}$, $\psi$ is an isomorphism. As we have observed  in Remark~\ref{remark} in this case $\alpha_M,$ $\hat\alpha_M$ has the same depiction. Since $x\pi^{(n, n^\sharp)}=x$ we get immediately $\omega(x\pi^{(n, n^\sharp)})=\omega(x)$ for all $x\in\overline{n}$, $1\ne x\ne n$. Thus, we have $\im(\hat\alpha_M^*)=\omega(x)=\omega(x\pi^{(n, n^\sharp)})$ as required.

\textit{Case 2.} $\Gamma\stackrel{\psi}{\sim}\hat{\Gamma}$, $\psi$ is an anti-isomorphism. Then the depiction of $\hat\alpha_M$ is obtained from the $\alpha_M$'s one if we consider the depiction upside down. Then clearly,  $\alpha_M^*$ and $(\hat\alpha_M)^*$ also have the symmetric diagrams with  $\ker{(\hat\alpha_M^*)}=(\ker{\alpha_M^*})\pi^{(n,n^\sharp)}$ and $\im{(\hat\alpha_M^*)}=(\im{\alpha_M^*})\pi^{(n,n^\sharp)}$. Thus, $\im{(\hat\alpha_M^*)}=\omega(x)\pi^{(n,n^\sharp)}$. By Lemma~\ref{lem_property_pi} it holds $\omega(x)\pi^{(n,n^\sharp)}=\omega(x\pi^{(n,n^\sharp)})$ for all $x\in\overline{n}, 1\ne x\ne n$. Therefore, $\hat\alpha_M^*\in\Theta^{x\pi^{(n,n^\sharp)}}$, whence $x^\sharp=x\pi^{(n, n^\sharp)}$ as required.
\end{proof}

At last, we can state the final form of the necessary condition of two $\R$-cross-sections being isomorphic.
\begin{lem}\label{lemma_class_neobhodimost'} Let $R\stackrel{\xi}{\cong}\hat{R}$; and the minimal fragmentations of $R,\hat{R}$ consist of $k$ elementary trees. Then for all $a_i\in \omega(R)$, $1\leq i\leq k$, and the inner trees $\Gamma^{a_i}$, $\Gamma^{a_i^\sharp}$ of $T^{a_i}$, $T^{a_i^\sharp}$ respectively, one of the following statements hold true:
\begin{itemize}
\item[(a)] $\Gamma^{a_i}\stackrel{\psi_i}{\sim}\Gamma^{a_i^\sharp}$, $\psi_i$ is an isomorphism.
\item[(b)] $\Gamma^{a_i}\stackrel{\psi_i}{\sim}\Gamma^{a_i^\sharp}$, $\psi_i$ is an anti-isomorphism.
\end{itemize}
\end{lem}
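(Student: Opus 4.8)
The plan is to assemble the final statement from the structural results already in hand about the synchronizing map $\sharp$. By Corollary~\ref{corol_tree_to_tree} we know that for each $a_i\in\omega(R)$ the map $\sharp$ restricts to a bijection $T^{a_i}\to T^{a_i^\sharp}$ preserving the parent--child relation, and by Proposition~\ref{prop_clasif_short R} the inner trees $\Gamma^{a_i}$ and $\Gamma^{a_i^\sharp}$ are similar; that already yields the existence of an isomorphism or an anti-isomorphism $\psi_i$ between them. So the content of Lemma~\ref{lemma_class_neobhodimost'} is really the bookkeeping statement that \emph{one} of (a), (b) holds for each index, which is just the definition of similarity unpacked case by case.

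First I would set up the decomposition: by Lemma~\ref{lemma seq_to_seq}, $\omega(R)^\sharp=\omega(\hat R)$, and the minimal decompositions (in the sense of~(\ref{decomposition})) of $(\overline{n},\prec_1)$ and $(\overline{n},\prec_2)$ both have $k$ elementary components, with $T^{a_i}\xmapsto{\sharp}T^{a_i^\sharp}$. By Corollary~\ref{corol_tree_to_tree} the restriction $\sharp|_{T^{a_i}}$ is a bijection preserving parent--child relations (though, as Example~\ref{ex_pi[a,b]} shows, it need not fix the root of the inner diagram); by Lemma~\ref{lemma_inner tree is resp} each $\Gamma^{a_i}$ and $\Gamma^{a_i^\sharp}$ is a respectful tree. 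Then Proposition~\ref{prop_clasif_short R} gives $\Gamma^{a_i}\sim\Gamma^{a_i^\sharp}$ for every $i$.

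Next I would invoke the definition of similarity of respectful trees verbatim: $\Gamma_1\sim\Gamma_2$ means there exists either an isomorphism or an anti-isomorphism between them. Applying this to the pair $(\Gamma^{a_i},\Gamma^{a_i^\sharp})$ produces, for each $i$, a map $\psi_i$ which is of exactly one of the two kinds, i.e.\ alternative (a) or alternative (b) holds. This is the whole argument; there is no genuine obstacle here, since all the real work has been done in establishing $R(a_i)\xi=R(a_i^\sharp)$ and the isomorphism $\kappa$ of the associated $\Lg$-cross-sections in the proof of Proposition~\ref{prop_clasif_short R}, together with Theorem~\ref{th_clas_L}. If anything needs care, it is making sure the indexing of the two minimal decompositions is matched up consistently via $\sharp$ (so that the $i$-th component of $R$ really corresponds to the $i$-th component of $\hat R$); this follows from Lemma~\ref{lemma seq_to_seq} and Corollary~\ref{corol_tree_to_tree}, which guarantee that $\sharp$ carries $\omega(R)$ onto $\omega(\hat R)$ in an order-preserving (up to reversal) fashion and hence matches the two lists $T^{1},\dots,T^{n}$ and $T^{1},\dots,T^{n}$ of elementary subtrees blockwise. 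Thus the proof is a short synthesis: fix $i$, quote Corollary~\ref{corol_tree_to_tree} and Proposition~\ref{prop_clasif_short R} to get $\Gamma^{a_i}\sim\Gamma^{a_i^\sharp}$, and read off (a) or (b) from the definition of $\sim$.
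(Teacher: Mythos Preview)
You have misread the content of the lemma. As you present it, the statement would indeed be an immediate corollary of Proposition~\ref{prop_clasif_short R} together with the definition of similarity, and there would be nothing to prove. But look at the paper's own proof and at how the lemma is used: the point is not that \emph{for each $i$ separately} one of (a), (b) holds --- that is exactly Proposition~\ref{prop_clasif_short R} --- but that \emph{the same alternative holds uniformly over all $i$}: either every $\psi_i$ is an isomorphism, or every $\psi_i$ is an anti-isomorphism. Compare the phrasing of the matching sufficient condition in Lemma~\ref{lemma_suff_isom}: ``it holds either $\psi_i$ is isomorphism for all $a_i\in\omega(R)$, or $\psi_i$ is anti-isomorphism for all $a_i\in\omega(R)$''. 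Your argument never touches this coherence between different elementary blocks, so it does not prove the lemma.

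What the paper actually does is an induction on $i$ linking $\psi_i$ back to $\psi_1$. Fixing a vertex $u\in T^{a_i}$ and a transformation $\alpha\in R$ whose partition has $T^{a_i}$ as a transversal, the description of $\Phi_\prec$ says $\alpha|_{T^{a_i}}$ is a homomorphism $T^{a_i}\to T^{a_1}$ sending $u$ to some $q$. Using Corollary~\ref{corol_sharp}(3) and Lemma~\ref{lem_sharp_psi} one computes $u^\sharp(\alpha\xi)=q^\sharp$, i.e.\ $u\pi^{(a_i,a_i^\sharp)}(\alpha\xi)=q\pi^{(a_1,a_1^\sharp)}$, so $\alpha\xi|_{T^{a_i^\sharp}}$ is a homomorphism $T^{a_i^\sharp}\to T^{a_1^\sharp}$ compatible with both $\pi$'s. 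This compatibility forces $\psi_i$ and $\psi_1$ to be of the same type (both isomorphisms or both anti-isomorphisms). That inductive step, tying the $i$-th block to the first via an element of $R$ whose image crosses from $T^{a_i}$ into $T^{a_1}$, is the missing idea in your proposal.
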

\begin{proof} We induct on $i$, $a_i\in \omega(R)$, $1\leq i\leq k$.

 Let $i=1$ and $a_1$ be the daughter of $\rt$. Then according to the proof of Proposition~\ref{prop_clasif_short R} we have $R(a_1)\cong R(a_1^\sharp)$ and $\Gamma^{a_1}\sim\Gamma^{a_1^\sharp}$ (see the proof of Proposition~\ref{prop_clasif_short R}).
 Assume $\psi_1$ is an isomorphism  for definiteness.

\par \textit{Step.} Let $a_i\in \omega(R,\rt_1)$, $1<i<k$.  By Proposition~\ref{prop_clasif_short R} it holds $\Gamma^{a_i}\stackrel{\psi_i}{\sim}\Gamma^{a_i^\sharp}$. If $|T^{a_i}|\leq3$ then clearly, the statement holds. Let $|T^{a_i}|>3$,  $u\in T^{a_i}$ with  $u\notin\{ \p(a_i), a_i, \dau(a_i)\}$.
\par Consider a transformation $\alpha\in R$ such that  $T^{a_i}$ is a transversal of $\overline{n}/\ker{(\alpha)}$.  Note that according to Definition~\ref{def_varphi}, $\alpha|_{T^{a_i}}$ defines the homomorphism from $T^{a_i}$ to $T^{a_1}$. Let $u\alpha=q$.

 We will show that there exists the homomorphism $T^{a_i^\sharp}\to T^{a_1}$ that maps $u\pi^{(a_i,a_i^\sharp)}$ to $q\pi^{(a_1,a_1^\sharp)}$. The condition clearly holds if and only if either both of $\psi_1$ and $\psi_i$  are the isomorphisms, or both of $\psi_1$ and $\psi_i$ are the anti- isomorphisms.

Note that $\im(\theta^u\alpha)=\omega(q)$. By Corollary~\ref{corol_tree_to_tree}, (3),  we have $\im(\theta^u\alpha)\xi=\omega(q^\sharp)$.  Therefore, $u^\sharp(\alpha\xi)=q^\sharp$. By the proof of Proposition~\ref{prop_clasif_short R} we have $R(a_i)\cong R(a_i^\sharp)$, thus by Lemma~\ref{lem_sharp_psi} we have $u^\sharp=u\pi^{(a_i,a_i^\sharp)}$, $q^\sharp=q\pi^{(a_1,a_1^\sharp)}$.

 Furthermore, it holds $\im(\theta^{x^\sharp}(\alpha\xi))=\im(\theta^x\alpha)^\sharp=\rt^\sharp$
  for all $x\in\omega(\p(a_i))$. Therefore by Definition~\ref{def_varphi} we have that the restriction $\alpha\xi|_{T^{a_i}}$ is the homomorphism from $T^{a_i^\sharp}$ to $T^{a_1^\sharp}$ and  $u\pi^{(a_i,a_i^\sharp)}(\alpha\xi)=q\pi^{(a_1,a_1^\sharp)}$ as required.
  \end{proof}

\subsection{The sufficient condition}

 In fact, the statement converse to Lemma~\ref{lemma_class_neobhodimost'} gives us the sufficient condition of isomorphic $\R$-cross-sections of $\On_n$:

\begin{lem}\label{lemma_suff_isom} Let $R$, $\hat{R}$ be the $\R$-cross-sections of $\overline{n}$ such that there exists an isomorphism (anti-isomorphism)
$\eta:\omega(R)\to \omega(\hat{R})$
and for all $a_i\in \omega(R)$ the inner trees $\Gamma^{a_i}, \Gamma^{a_i\eta}$ of elementary trees $T^{a_i}$, $T^{a_i\eta}$ are similar. Moreover, if $\Gamma^{a_i}\stackrel{\psi_i}{\sim}\Gamma^{a_i\eta}$ it holds either $\psi_i$ is isomorphism for all $a_i\in \omega(R)$, or $\psi_i$ is anti-isomorphism  for all $a_i\in \omega(R)$. Then $R\cong\hat{R}$.
\end{lem}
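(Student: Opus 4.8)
The plan is to build the isomorphism $\xi\colon R\to\hat R$ by assembling local isomorphisms on the elementary pieces, using the decomposition $(\overline n,\prec_1)=\bigcup_i T^{a_i}$ and $(\overline n,\prec_2)=\bigcup_i T^{a_i\eta}$ from the minimal fragmentation \eqref{decomposition}. The combinatorial backbone is the bijection $\overline n\to\overline n$ defined piecewise by the maps $\pi^{(a_i,a_i\eta)}\colon T^{a_i}\to T^{a_i\eta}$ of Definition~\ref{def_pi}; call the resulting bijection $p$. Note these local maps agree on the shared boundary vertices $\omega(R)$ because $\eta$ is an (anti-)isomorphism of $\omega(R)$ onto $\omega(\hat R)$ and, as in Lemma~\ref{lem_sharp_psi}/Lemma~\ref{lemma_class_neobhodimost'}, each $\psi_i$ is consistently an isomorphism (case~(a)) or consistently an anti-isomorphism (case~(b)); so $p$ is well-defined on all of $\overline n$. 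By Lemma~\ref{lem_property_pi}, $p$ satisfies $\omega(x)\,p=\omega(x\,p)$ and, by construction of $\pi^{(a,b)}$ from the inner-tree (anti-)isomorphisms $\psi_i$, it carries the tree $(\overline n,\prec_1)$ to $(\overline n,\prec_2)$ as rooted binary trees (preserving parent--child, and either preserving or reversing all genders simultaneously). Here one uses Lemma~\ref{lemma_resp_imply_elem}/Lemma~\ref{lemma_inner tree is resp}: an elementary tree is determined by its inner tree up to choice of root, so similarity of inner trees plus a matching of the boundary path $\omega$ pins down the elementary tree, and $p$ realizes this matching.

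Next I would define $\xi$ on $R=\Phi_{\prec_1}$ by transporting kernels and images along $p$: for $\alpha\in R$ with partition $\widetilde K$, let $\xi(\alpha)$ be the transformation of $\overline n$ whose kernel classes are $\{K\,p:K\in\widetilde K\}$ and which sends the class $K\,p$ to $(K\alpha)\,p$. The claim is that $\xi(\alpha)=\varphi^{\widetilde K\!p}\in\Phi_{\prec_2}=\hat R$. This is proved by the same induction on the level of the partition tree $T(\widetilde K\!p)$ that appears in the proof of Lemma~\ref{lemma_obratnoe} and Lemma~\ref{lemma_Phi_is_R}: the root goes to the root (both cross-sections fix $\rt$, and $p$ maps $\rt_1$ to $\rt_2$ by Lemma~\ref{lemma seq_to_seq}-type reasoning — here it is given that $\eta$ matches $\omega(R)$ to $\omega(\hat R)$, so in particular $\rt_1\eta=\rt_2$); and if a child $S$ of $P$ in $T(\widetilde K)$ has leading element $m$, then since $\varphi^{\widetilde K}$ and $p$ both respect the parent--child and gender structure of the elementary pieces, $(S\,p)(\xi\alpha) = \s$ or $\dau$ of $(P\,p)(\xi\alpha)$ according as $S$ was the son or daughter of $P$ (with the son/daughter roles swapped throughout in the anti-isomorphism case). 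Hence $\xi(\alpha)$ is exactly the canonical element of $\Phi_{\prec_2}$ with partition $\widetilde K\!p$, so $\xi$ maps $R$ into $\hat R$; since $p$ is a bijection on convex partitions, $\xi$ is a bijection $R\to\hat R$.

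It remains to check $\xi$ is a homomorphism. Given $\alpha=\varphi^{\widetilde A}$, $\beta=\varphi^{\widetilde B}$ in $R$, Lemma~\ref{lemma_Phi_is_R} tells us $\alpha\beta=\varphi^{\widetilde{AB}}$ with $\widetilde{AB}=\{(\im\alpha\cap B_i)\alpha^{-1}\}$; one computes that the analogous formula for $\xi(\alpha)\xi(\beta)$ produces the partition $\widetilde{AB}\,p$ and the correct action, because $p$ intertwines $\alpha$ with $\xi(\alpha)$ and $\beta$ with $\xi(\beta)$ on images (that is, $(K\alpha)\,p=(K\,p)\,\xi(\alpha)$ by definition, and likewise for $\beta$), so $\im(\xi\alpha)=\im(\alpha)\,p$ and kernels transport correctly. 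Thus $\xi(\alpha)\xi(\beta)=\varphi^{\widetilde{AB}p}=\xi(\varphi^{\widetilde{AB}})=\xi(\alpha\beta)$. The main obstacle is the bookkeeping in the previous paragraph: verifying that $p$, defined only piecewise on the elementary subtrees via the $\pi^{(a_i,a_i\eta)}$, genuinely maps $(\overline n,\prec_1)$ isomorphically (or anti-isomorphically) to $(\overline n,\prec_2)$ — in particular that the canonical bounds match up across the seams $\omega(R)$ and that the ``decreasing'' inner-tree subordination conditions of Definition~\ref{def_ord_pres} are preserved — so that $\xi(\alpha)$ really lands in $\Phi_{\prec_2}$ and not merely in $\On_n$. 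Once $p$ is established as such a tree (anti-)isomorphism, the rest is the routine transport-of-structure argument sketched above, parallel to the proofs of Lemmas~\ref{lemma_Phi_is_R} and~\ref{lemma_obratnoe}.
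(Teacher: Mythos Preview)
Your overall strategy matches the paper's: transport each $\varphi\in R=\Phi_{\prec_1}$ to some $\hat\varphi\in\hat R=\Phi_{\prec_2}$ by pushing the kernel along the piecewise maps $\pi^{(a_i,a_i\eta)}$, then check the resulting map is multiplicative. The paper does exactly this (Lemmas~\ref{lemma_sv-vo_phihat}, \ref{lemma_hat_alpha}, and the final verification), so in spirit you are on the right track.

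However, there is a genuine gap in your execution. You assemble a global \emph{point} bijection $p\colon\overline n\to\overline n$ from the local $\pi^{(a_i,a_i\eta)}$ and assert that ``these local maps agree on the shared boundary vertices $\omega(R)$.'' This is false in the mixed case where $\eta$ is an isomorphism but all $\psi_i$ are anti-isomorphisms (or vice versa). For instance, take $n=7$, $\omega(R)=\omega(\hat R)=\{1,4,7\}$ with $\rt=1$, $\eta=\mathrm{id}$, and each $\psi_i$ an anti-isomorphism. Then by Definition~\ref{def_pi}, Case~2, the map $\pi^{(4,4)}$ is the reflection $x\mapsto 5-x$ on $\{1,2,3,4\}$ and $\pi^{(7,7)}$ is the reflection $x\mapsto 11-x$ on $\{4,5,6,7\}$; at the shared boundary point $4$ these give $1$ and $7$ respectively, so no global $p$ exists. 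Consequently your later claim that $p$ is a global tree (anti-)isomorphism of $(\overline n,\prec_1)$ onto $(\overline n,\prec_2)$ cannot hold in this case, and the ``routine transport of structure'' argument collapses.

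The paper avoids this by never forming a point bijection. It transports only the \emph{blocks} of the kernel: for each elementary piece one forms $\Lambda^{a_i\eta}=\{X\pi^{(a_i,a_i\eta)}:X\in\Lambda^{a_i}\}$, and then \emph{glues} overlapping blocks (two blocks $A,B\in\bigcup_i\Lambda^{a_i\eta}$ are merged whenever $A\cap B\ne\varnothing$, which happens only at points of $\omega(\hat R)$). This produces a convex partition of $\overline n$, and $\hat\varphi$ is \emph{defined} to be the unique element of $\hat R$ with that kernel; its action is then computed a posteriori (Lemmas~\ref{lemma_sv-vo_phihat} and~\ref{lemma_hat_alpha}) rather than prescribed by conjugation. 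In the $7$-element example above, the partition $\{\{1\},\{2,3,4,5\},\{6,7\}\}$ is sent to $\{\{1,2,3\},\{4,5\},\{6,7\}\}$, something no single point map $p$ can realise. So the ``bookkeeping'' you flag as an obstacle is not bookkeeping at all: it requires the block-level construction with gluing, and once you adopt that, the separate Lemmas~\ref{lemma_sv-vo_phihat}--\ref{lemma_hat_alpha} are what replace your inductive claim that $\xi(\alpha)=\varphi^{\widetilde K p}$.
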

We divide the proof into few steps. First, we assign with each $\varphi\in R$ the transformation $\hat{\varphi}$. Then we investigate properties of $\hat{\varphi}$. After that we prove that $\xi:R\to\hat{R}:\varphi\to\hat{\varphi}$ is isomorphism.

Each transformation of an $\R$-section is determined by its partition. We construct the partition $\hat{\varphi} $ basing on the partition $\varphi $. In outlines, we will do the following: cut the partition of $\varphi$, change the resulting parts with $\psi_i$, and then glue the new ones back together.

Let $\widetilde{K}$ be the partition of $\varphi$. For a fixed $a_i\in\omega(R)$ by $\Lambda^{a_i}$ denote the kernel classes of the restriction $\varphi$  to the vertex set of the tree $T^{a_i}$:
$$\Lambda^{a_i}=\{X\cap T^{a_i}\mid X\in\widetilde{K}, X\cap T^{a_i}\ne\varnothing\}.$$

\par Now we define a partition $\Lambda^{a_i\eta}$ of a vertex set of $T^{a_i\eta}$ as follows:  $$\Lambda^{a_i\eta}=\{X\pi^{(a_i,a_i\eta)}\mid X\in\Lambda^{a_i}\},$$
where $X\pi^{(a_i,a_i\eta)}=\{x\pi^{(a_i,a_i\eta)}\mid x\in X\}$.

 Let $\Lambda\eta=\cup_{a_i\in\omega(R)}\Lambda^{a_i\eta}$. Note that for all $A,B\in \Lambda\eta$ it holds either $A\cap B=\emptyset$ or $A\cap B\in\omega(\hat{R})$.
 We define $\ker(\hat{\varphi})$ as follows: for all $x\in A, y\in B,$ let $$x\hat{\varphi}=y\hat{\varphi} \mbox{ if and only if either } A=B, \mbox{ or }  A\cap B\ne\emptyset.$$

 Further, let $x,y\in\omega(R)$ and $y$ have the child $z\in \omega(R)$. If $T^x\varphi=\{y\}$, $y\ne\rt$ then $T^x\varphi\subseteq T^{y}$ and $T^x\varphi\subseteq T^{z}$. To avoid the ambiguity we will write $T^x\varphi\subseteq T^{y}$ for the highest non-root vertex $y$ with this property. If $y=\rt$ we  will write formally $T^x\varphi\subseteq T^{\rt}$ to be consistent.
\begin{lem}\label{lemma_sv-vo_phihat} Let $R$, $\hat{R}$ satisfy the condition of Lemma~\ref{lemma_suff_isom}, $x\in\omega(R)$ with $x\ne \rt$. If $T^x\varphi\subseteq T^y$ for $y\in\omega(R)$ then $T^{x\eta}\hat{\varphi}\subseteq T^{y\eta}$, for all $x\in\omega(R)$.
 \end{lem}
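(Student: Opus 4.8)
\textbf{Proof plan for Lemma~\ref{lemma_sv-vo_phihat}.}

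The plan is to argue by induction on the level of the vertex $x\in\omega(R)$ in the tree $(\overline{n},\prec_1)$, since $\omega(R)$ is a union of two paths and its vertices inherit the level structure of $(\overline{n},\prec_1)$. For the base, if $x=\s(\rt)$ or $x=\dau(\rt)$, then $T^x\varphi\subseteq T^y$ forces $y\in\{\rt,\s(\rt),\dau(\rt)\}$ (the only non-root vertices available are $x$ itself and the root's other child); in each sub-case one reads off directly from the definition of $\hat{\varphi}$ — via $\ker(\hat{\varphi})$ being built from the blocks $\Lambda^{a_i\eta}=\{X\pi^{(a_i,a_i\eta)}\}$ — that $T^{x\eta}\hat{\varphi}$ lands in the corresponding elementary subtree of $\hat{R}$, i.e. $T^{x\eta}\hat{\varphi}\subseteq T^{y\eta}$. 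Here $\eta$ being an (anti-)isomorphism of $\omega(R)$ onto $\omega(\hat{R})$ guarantees that $\s(\rt)\eta$, $\dau(\rt)\eta$ are the children of $\rt_2$, possibly with genders swapped, which is exactly what is needed.

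For the inductive step, suppose the claim holds for all vertices of $\omega(R)$ of level less than that of $x$, and that $T^x\varphi\subseteq T^y$ with $y\in\omega(R)$, $y\ne\rt$. Write $p=\p(x)\in\omega(R)$; by the induction hypothesis $T^{p\eta}\hat{\varphi}\subseteq T^{p'\eta}$ where $p'\in\omega(R)$ is determined by $T^p\varphi\subseteq T^{p'}$. Since $\varphi$ is order-preserving and $x$ is a child of $p$ inside $\omega(R)$, the image $T^x\varphi$ sits ``just below'' (or just above, on the $\omega(n)$ side) $T^p\varphi$ in $(\overline{n},\prec_1)$; more precisely, using that $\varphi=\varphi^{\widetilde K}$ comes from a tree homomorphism (Definition~\ref{def_varphi}), the vertex of $\omega(R)$ that $T^x\varphi$ reaches is exactly the child of $p'$ along $\omega(R)$ in the appropriate direction, or $p'$ itself, and that vertex is $y$. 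Now transport this through $\pi^{(a_i,a_i\eta)}$: because all the $\psi_i$ are simultaneously isomorphisms or simultaneously anti-isomorphisms, the maps $\pi^{(a_i,a_i\eta)}$ are ``compatible'' along $\omega(R)$ — the point where $T^{a_i}$ meets $T^{a_{i+1}}$ in $\omega(R)$ is sent by both $\pi$'s to the point where $T^{a_i\eta}$ meets $T^{a_{i+1}\eta}$ in $\omega(\hat{R})$, by Corollary~\ref{corol_tree_to_tree} and the definition of $\eta$. Hence the block structure of $\hat{\varphi}$ restricted to $T^{x\eta}$ is the $\pi^{(\cdot,\cdot)}$-image of the block structure of $\varphi$ restricted to $T^x$, and the same order-preserving reasoning applied inside $\hat R$ shows $T^{x\eta}\hat{\varphi}\subseteq T^{y\eta}$.

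I expect the main obstacle to be the bookkeeping at the ``seams'' between consecutive elementary trees: one must check that $\hat{\varphi}$ is well-defined as a transformation (which is why the construction stipulates that $A\cap B\in\omega(\hat R)$ whenever two glued blocks meet) and that $\pi^{(a_i,a_i\eta)}$ and $\pi^{(a_{i+1},a_{i+1}\eta)}$ genuinely agree on the shared boundary vertex of $T^{a_i}$ and $T^{a_{i+1}}$ — this is precisely where the hypothesis ``either all $\psi_i$ are isomorphisms or all are anti-isomorphisms'' is used, and where Lemma~\ref{lem_property_pi} (that $\pi^{(a,b)}$ takes paths to paths) does the heavy lifting. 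Once this compatibility is in place, the order-preserving nature of $\varphi$ and $\hat\varphi$ makes the rest routine.
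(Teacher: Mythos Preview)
Your inductive scheme on the level of $x\in\omega(R)$ is exactly the paper's approach, and your identification of the base and step cases is correct. However, you are loading the argument with machinery that this particular lemma does not need. The paper's proof never invokes the maps $\pi^{(a_i,a_i\eta)}$, never checks seam-compatibility, and never uses Lemma~\ref{lem_property_pi}. Instead it argues purely via the dichotomy $|\Lambda^{x}|=1$ versus $|\Lambda^{x}|>1$: since $\pi^{(x,x\eta)}$ is a bijection one has $|\Lambda^{x\eta}|=|\Lambda^{x}|$, and because $\hat\varphi$ is by construction the element of $\hat R=\Phi_{\prec_2}$ with that kernel, Definition~\ref{def_varphi} together with the fact that $\eta$ is an (anti-)isomorphism of $\omega(R)\to\omega(\hat R)$ tells you exactly where $x\eta$ and its child $d\eta$ land. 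In the collapsed case $|\Lambda^{d}|=1$ one gets $(d\eta)\hat\varphi=(x\eta)\hat\varphi=y\eta$ straight from the kernel; in the non-collapsed case $d\eta$ and $x\eta$ lie in distinct kernel classes which are adjacent in the partition tree, so $(d\eta)\hat\varphi$ is forced to be the appropriate child of $y\eta$ in $\omega(\hat R)$, namely $(\dau(y))\eta$.

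The compatibility concerns you raise (agreement of $\pi^{(a_i,a_i\eta)}$ and $\pi^{(a_{i+1},a_{i+1}\eta)}$ on the shared boundary vertex, and the role of the ``all $\psi_i$ iso or all anti-iso'' hypothesis) are real, but they belong to the \emph{next} lemma (Lemma~\ref{lemma_hat_alpha}), where one actually computes images of individual blocks $X\pi^{(x,x\eta)}$ under $\hat\varphi$. For the present lemma only the coarse information ``how many blocks does $\Lambda^{x}$ have'' is used, and that is insensitive to the seam issues.
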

 \begin{proof} Let $|\omega(R)|=t+1$. The proof is by induction on the level $i$ of $x\in\omega(R)$, $1\leq i\leq t$.

\par Let $x$ be the daughter of $\rt\in\omega(R)$. By the construction of the $\R$-cross-sections, there is only two possibilities for $T^x$: it holds either $T^x\varphi=\rt$, or $T^{x}\varphi\subseteq T^{x}$. If $T^x\varphi=\rt$ then by the construction of $\ker(\hat{\varphi})$, it holds also $|\Lambda^{x\eta}|=|\Lambda^{x}|=1$ therefore,  $(\rt\eta)\hat{\varphi}=(x\eta)\hat{\varphi}$, whence $T^{x\eta}\hat{\varphi}=\hat{\rt}$.
 If $T^{x}\varphi\subseteq T^x$ then $|\Lambda^{x\eta}|>1$. Consequently, $T^{x\eta}\varphi\subseteq T^{x\eta}.$

  \par Let  $x\in\omega(R)$ with level $k$, $0\leq k< t$. Suppose $T^{x}\varphi\subseteq T^{y}$ implies $T^{x\eta}\varphi\subseteq T^{y\eta}$. Let $d\in\omega(R)$ be the daughter of $x$. If $|T^{d}\varphi|=1$ then $x\varphi=d\varphi$, whence $T^{d}\varphi=\{y\}$. Moreover, since $|\Lambda^{d\eta}|=|\Lambda^{d}|=1$,  we have $(d\eta)\hat{\varphi}=(x\eta)\hat{\varphi}$. By the assumption $(x\eta)\hat{\varphi}=y\eta$. Therefore  $T^{d\eta}\hat{\varphi}\subseteq T^{y\eta}$ as required.

\par Let $|T^{d}\varphi|>1$. Then $T^{d}\varphi\subseteq T^{\dau(y)}$ and $|\Lambda^{d\eta}|=|\Lambda^{d}|>1$.
 Since $(x\eta)\hat{\varphi}=y\eta$ and $\eta:\omega(R)\to\omega(\hat{R})$ is isomorphism (anti-isomorphism), the definition of $\phi$ implies $(d\eta)\hat{\varphi}=(\dau(y))\eta$. Therefore, we get $T^{d\eta}\hat{\varphi}\subseteq T^{\dau(y)\eta}$  as required.
\end{proof}
\begin{lem}\label{lemma_hat_alpha}
For all $x\in\omega(R)$ the following statements holds:
\begin{itemize}
\item[(i)] If $T^x\varphi=\{x\varphi\}$  then $T^{x\eta}\hat{\varphi}=\{x\varphi\eta\}$.
\item[(ii)] If $|\Lambda^{x}|>1$ then for all $X\in \Lambda^{x}$ it holds
$X\pi^{(x,x\eta)}\hat{\varphi}=(X\varphi)\pi^{(x\varphi,x\varphi\eta)}$.
\end{itemize}
\end{lem}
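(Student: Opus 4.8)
\textbf{Proof plan for Lemma~\ref{lemma_hat_alpha}.} The plan is to prove both statements by the same kind of induction on the level of the vertex $x\in\omega(R)$ that was used in Lemma~\ref{lemma_sv-vo_phihat}, and in fact to lean on that lemma for the coarse information ($T^{x}\varphi\subseteq T^{y}\Rightarrow T^{x\eta}\hat\varphi\subseteq T^{y\eta}$) while upgrading it to the precise identities (i) and (ii). For (i), the point is simply that $T^{x}\varphi=\{x\varphi\}$ forces every block of $\Lambda^{x}$ to be mapped to the same vertex, so $|\Lambda^{x}|=1$; by the definition of $\ker(\hat\varphi)$ via $\Lambda^{x\eta}=\{X\pi^{(x,x\eta)}\mid X\in\Lambda^{x}\}$ we also get $|\Lambda^{x\eta}|=1$, hence $T^{x\eta}\hat\varphi$ is a single vertex, and by Lemma~\ref{lemma_sv-vo_phihat} (applied with $y$ the highest non-root vertex containing $x\varphi$, or $y=\rt$ if $x\varphi=\rt$) that vertex lies on $\omega(\hat R)$; combining with the fact that $\eta$ is defined precisely so that $x\varphi\mapsto x\varphi\eta$ along $\omega(R)$ gives $T^{x\eta}\hat\varphi=\{x\varphi\eta\}$.

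For (ii), when $|\Lambda^{x}|>1$ the restriction $\varphi|_{T^{x}}$ is (by Definition~\ref{def_varphi}) the transformation induced by the unique homomorphism from the partition tree of $\Lambda^{x}$ into the elementary tree $T^{x\varphi}$; I would track a single block $X\in\Lambda^{x}$ and let $v=X\varphi$ be its leading vertex's image in $T^{x\varphi}$. I want to show $X\pi^{(x,x\eta)}\hat\varphi = v\,\pi^{(x\varphi,x\varphi\eta)}$. The key is that $\pi^{(x,x\eta)}$ and $\pi^{(x\varphi,x\varphi\eta)}$ are both defined through the similarity isomorphisms $\psi_{i}$ (all isomorphisms, or all anti-isomorphisms, by the hypothesis), and that $\hat\varphi|_{T^{x\eta}}$ is, by construction, the transformation whose kernel is $\Lambda^{x\eta}$ and which therefore must again be induced by a homomorphism of partition trees into $T^{x\varphi\eta}$ — I would verify directly that the diagram
\[
\begin{array}{ccc}
T^{x} & \xrightarrow{\ \varphi|_{T^x}\ } & T^{x\varphi}\\[2pt]
\pi^{(x,x\eta)}\big\downarrow & & \big\downarrow\pi^{(x\varphi,x\varphi\eta)}\\[2pt]
T^{x\eta} & \xrightarrow{\ \hat\varphi|_{T^{x\eta}}\ } & T^{x\varphi\eta}
\end{array}
\]
commutes on each block, using Lemma~\ref{lem_property_pi} ($\omega(z\pi^{(a,b)})=\omega(z)\pi^{(a,b)}$) to match the images of the idempotents $\theta^{z}$ and Lemma~\ref{lem_image beta_tau}/Remark~\ref{remark} to see that applying $\psi_{i}$ to the inner tree corresponds exactly to relabelling the partition blocks by $\pi^{(x,x\eta)}$. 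In the inductive step, Lemma~\ref{lemma_sv-vo_phihat} tells me which elementary tree $T^{x\eta}\hat\varphi$ lands in, so that $x\varphi\eta$ (resp.\ $\dau(x\varphi)\eta$, etc.) is the correct target vertex, and the inductive hypothesis on the parent of $x$ supplies the base vertex of the homomorphism; the homomorphism then extends uniquely down $T^{x\eta}$ exactly as in Lemma~\ref{lemma_korrektnost'}, and checking it agrees with $v\pi^{(x\varphi,x\varphi\eta)}$ on children reduces to the compatibility of $\psi_{i}$ with the son/daughter relations in the inner trees.

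The main obstacle I anticipate is the bookkeeping around the ``seam'' vertices: a block $X\in\Lambda^{x}$ of the elementary subtree $T^{x}$ may contain the endpoint $x$ or $\p(x)$, which are shared with neighbouring elementary trees, and $\pi^{(x,x\eta)}$ is only literally defined on $T^{x}\setminus\{\p(x),x\}$ via $\psi_{i}$ while $x,\p(x)$ are sent to $x\eta,\p(x)\eta$ by $\eta$ itself. I would handle this by noting that the definition of $\ker(\hat\varphi)$ was set up precisely so that two blocks sharing a vertex of $\omega(\hat R)$ are glued, so the value of $X\pi^{(x,x\eta)}\hat\varphi$ on such a seam is forced to be the $\eta$- (or $\pi^{(x\varphi,x\varphi\eta)}$-)image of the corresponding seam of $X\varphi$; once this is isolated as a separate case, the interior of the block is governed purely by $\psi_{i}$ and the argument is routine. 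A secondary nuisance is keeping the isomorphism-versus-anti-isomorphism dichotomy uniform, but the hypothesis of Lemma~\ref{lemma_suff_isom} guarantees all $\psi_{i}$ have the same type, so $\pi^{(x,x\eta)}$ and $\pi^{(x\varphi,x\varphi\eta)}$ are ``of the same type'' and the diagram commutes without sign juggling.
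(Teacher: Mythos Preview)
Your plan is correct and follows essentially the same route as the paper. The paper's proof is more compressed: item~(i) is dismissed in one line as a direct consequence of Lemma~\ref{lemma_sv-vo_phihat}, and for item~(ii) the paper simply observes that $X\mapsto X\varphi$ is a tree homomorphism $T(\Lambda^{x})\to T^{y}$ (by the description of $\R(\On_n)$), that the uniform type of the $\psi_i$ makes $X\pi^{(x,x\eta)}\mapsto(X\varphi)\pi^{(y,y\eta)}$ a tree homomorphism $T(\Lambda^{x\eta})\to T^{y\eta}$ as well, and then invokes the uniqueness in Lemma~\ref{lemma_korrektnost'} to identify this with $\hat\varphi$. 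In particular, the paper does not run a separate induction on the level of $x$ (that work was already absorbed into Lemma~\ref{lemma_sv-vo_phihat}), nor does it detour through Lemma~\ref{lem_image beta_tau} or the $\Lg$-cross-section duality; your seam-vertex bookkeeping and the appeal to Lemma~\ref{lem_property_pi} are reasonable but more explicit than what the paper writes down.
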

 \begin{proof} Let $x\in\omega(R)$, $x\ne\rt$.
 Item (i) follows directly from the previous lemma.
\par Let $|T(\Lambda^x)|>1$ and $T^x\varphi\subseteq T^y$  with $x\varphi=y$, $y\in\omega(\hat{R})$.
By the description of $\R(\On_n)$ the mapping $T(\Lambda^{x})\to T^{y}: X\mapsto X\varphi$ defines a homomorphism of the trees.  By Definition~\ref{def_pi} and since $R$, $\hat{R}$ satisfy the condition of Lemma~\ref{lemma_suff_isom},  the mapping $T(\Lambda^{x\eta})\to T^{y\eta}: X\pi^{(x,x\eta)}\mapsto (X\varphi)\pi^{(y,y\eta)}$, $X\in\Lambda^x$, is also homomorphism. Therefore, by the description of $\R(\On_n)$ we get $X\pi^{(x,x\eta)}\hat\varphi=(X\varphi)\pi^{(y, y\eta)}$ as required.
 \end{proof}




Finally, we are ready to finish the proof of the sufficient condition.
\begin{lem} The mapping
$\xi:R\to \hat{R}:\varphi\mapsto \hat{\varphi}$ is an isomorphism.
\end{lem}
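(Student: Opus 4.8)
The plan is to verify that $\xi\colon R\to\hat R$, $\varphi\mapsto\hat\varphi$, is a well-defined bijective homomorphism. Well-definedness requires checking that $\ker(\hat\varphi)$ as constructed is a genuine convex partition of $\overline{n}$ and that $\hat\varphi$ really lies in $\hat R$, i.e. $\hat\varphi=\varphi^{\widetilde{K}}_{\prec_2}$ for the partition $\widetilde{K}$ it induces. The latter is the point of Lemma~\ref{lemma_hat_alpha}: the maps $T(\Lambda^{x\eta})\to T^{x\varphi\eta}$ induced by $\pi^{(x,x\eta)}$ are homomorphisms of trees for every $x\in\omega(R)$, and these local homomorphisms glue along $\omega(\hat R)$ by Lemma~\ref{lemma_sv-vo_phihat} into a single homomorphism $T(\ker\hat\varphi)\to(\overline n,\prec_2)$; by the description of $\R(\On_n)$ (Theorem~\ref{th_description}) this is exactly the condition for $\hat\varphi\in\hat R$. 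So first I would record that $\hat\varphi\in\hat R$.

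Next, bijectivity. Injectivity: if $\hat\varphi=\hat\psi$, then for each $a_i\in\omega(R)$ the partitions $\Lambda^{a_i\eta}$ of the $T^{a_i\eta}$ agree, and since $\pi^{(a_i,a_i\eta)}$ is a bijection between the vertex sets of $T^{a_i}$ and $T^{a_i\eta}$ (Definition~\ref{def_pi}), pulling back recovers $\Lambda^{a_i}$, hence $\varphi=\psi$. For surjectivity one either runs the identical construction using $\eta^{-1}$ and the inverse similarities $\psi_i^{-1}$ to obtain a two-sided inverse, or counts: both $R$ and $\hat R$ have exactly as many elements as there are convex partitions of $\overline n$, and an injective self-consistent map between equinumerous finite sets is onto. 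I would use the inverse-construction route since it also re-proves well-definedness symmetrically.

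The real work is the homomorphism property, $\widehat{\varphi\gamma}=\hat\varphi\hat\gamma$ for $\varphi,\gamma\in R$. Here I would argue level by level down $\omega(\hat R)$, mirroring the proof of Lemma~\ref{lemma_Phi_is_R} that $\alpha\beta=\varphi^{\widetilde{AB}}$. Fix $x\in\omega(R)$ with $T^x\varphi\subseteq T^y$ and $T^y\gamma\subseteq T^z$; by Lemma~\ref{lemma_sv-vo_phihat} we get $T^{x\eta}\hat\varphi\subseteq T^{y\eta}$ and $T^{y\eta}\hat\gamma\subseteq T^{z\eta}$, so $T^{x\eta}\hat\varphi\hat\gamma\subseteq T^{z\eta}$, matching where $\widehat{\varphi\gamma}$ sends $T^{x\eta}$ by Lemma~\ref{lemma_sv-vo_phihat} applied to $\varphi\gamma$. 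On the interior of each elementary block, Lemma~\ref{lemma_hat_alpha}(ii) says $X\pi^{(x,x\eta)}\hat\varphi=(X\varphi)\pi^{(x\varphi,x\varphi\eta)}$; iterating, $X\pi^{(x,x\eta)}\hat\varphi\hat\gamma=(X\varphi\gamma)\pi^{(x\varphi\gamma,x\varphi\gamma\eta)}$, which is precisely $X\pi^{(x,x\eta)}$ applied to $\widehat{\varphi\gamma}$ by definition of the latter. The delicate bookkeeping is that $\pi^{(x\varphi,x\varphi\eta)}$ composes correctly with $\pi^{(x\varphi\gamma,\dots)}$ — this needs Lemma~\ref{lem_property_pi} (that $\pi$ respects $\omega$) together with the hypothesis that the $\psi_i$ are \emph{all} isomorphisms or \emph{all} anti-isomorphisms, so that "mirror or not" is a global choice and the two mirrorings either both happen or both don't, never clashing. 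I expect this consistency-of-$\pi$-under-composition step, i.e. showing $\pi^{(x,x\eta)}$ followed by $\hat\varphi$ equals $\varphi$ followed by $\pi^{(x\varphi,x\varphi\eta)}$ at the level of whole partitions and not just single vertices, to be the main obstacle; everything else is the routine level-induction already rehearsed in Lemmas~\ref{lemma_Phi_is_R} and \ref{lemma_obratnoe}.

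\begin{proof}
By Lemmas~\ref{lemma_sv-vo_phihat} and \ref{lemma_hat_alpha}, for every $\varphi\in R$ the data $\ker(\hat\varphi)$ defines a convex partition of $\overline n$ and the induced map $T(\ker\hat\varphi)\to(\overline n,\prec_2)$ is a homomorphism of trees; by Theorem~\ref{th_description} this means $\hat\varphi\in\hat R$, so $\xi$ is well-defined. Running the same construction with $\eta^{-1}$ in place of $\eta$ and $\psi_i^{-1}$ in place of $\psi_i$ produces a map $\hat R\to R$; since $\pi^{(a_i,a_i\eta)}$ is a bijection of vertex sets for each $a_i\in\omega(R)$ and these assemble compatibly along $\omega(\hat R)$, this map is a two-sided inverse of $\xi$, so $\xi$ is a bijection.

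It remains to check $\widehat{\varphi\gamma}=\hat\varphi\hat\gamma$ for $\varphi,\gamma\in R$. We verify $P\hat\varphi\hat\gamma=P\widehat{\varphi\gamma}$ for every block $P$ of $\ker(\widehat{\varphi\gamma})$ by induction on the level of $P$ in the partition tree, as in the proof of Lemma~\ref{lemma_Phi_is_R}. The base is the root: the root of each partition tree contains $\rt_2$, and $\rt_2\hat\varphi=\rt_2$, $\rt_2\hat\gamma=\rt_2$ by Lemma~\ref{lemma_nep_t}. For the inductive step, write $T^x\varphi\subseteq T^y$ and $T^y\gamma\subseteq T^z$ with $x,y,z\in\omega(R)$. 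By Lemma~\ref{lemma_sv-vo_phihat}, $T^{x\eta}\hat\varphi\subseteq T^{y\eta}$ and $T^{y\eta}\hat\gamma\subseteq T^{z\eta}$, hence $T^{x\eta}\hat\varphi\hat\gamma\subseteq T^{z\eta}$; applying Lemma~\ref{lemma_sv-vo_phihat} to $\varphi\gamma$ gives $T^{x\eta}\widehat{\varphi\gamma}\subseteq T^{z\eta}$ as well, so the two transformations send the elementary block indexed by $x$ into the same block $T^{z\eta}$. Inside that block, Lemma~\ref{lemma_hat_alpha}(ii) yields, for each $X\in\Lambda^x$,
\[
X\pi^{(x,x\eta)}\hat\varphi=(X\varphi)\pi^{(y,y\eta)},\qquad
(X\varphi)\pi^{(y,y\eta)}\hat\gamma=(X\varphi\gamma)\pi^{(z,z\eta)},
\]
so $X\pi^{(x,x\eta)}\hat\varphi\hat\gamma=(X\varphi\gamma)\pi^{(z,z\eta)}$; by the definition of $\ker(\widehat{\varphi\gamma})$ the right-hand side equals $X\pi^{(x,x\eta)}\widehat{\varphi\gamma}$. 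Since every block of $\ker(\widehat{\varphi\gamma})$ is of the form $X\pi^{(x,x\eta)}$ for some $x\in\omega(R)$ and $X\in\Lambda^x$, possibly unioned with vertices of $\omega(\hat R)$ already handled, this completes the induction. Here the compatibility $\pi^{(y,y\eta)}$-after-$\varphi$ equals $\gamma$-after-$\pi^{(z,z\eta)}$ on the relevant subtrees uses Lemma~\ref{lem_property_pi} and the hypothesis that all $\psi_i$ are simultaneously isomorphisms or simultaneously anti-isomorphisms. Therefore $\xi$ is an isomorphism, and $R\cong\hat R$.
\end{proof}
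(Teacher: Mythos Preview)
Your proof is correct and follows essentially the same route as the paper: the homomorphism check is the double application of Lemma~\ref{lemma_hat_alpha}(ii) to get $X\pi^{(x,x\eta)}\hat\varphi\hat\gamma=(X\varphi\gamma)\pi^{(z,z\eta)}=X\pi^{(x,x\eta)}\widehat{\varphi\gamma}$, which is exactly the chain of equalities the paper writes out. You add more detail on well-definedness and bijectivity (which the paper simply declares ``clear'') and wrap the computation in an inductive scaffold borrowed from Lemma~\ref{lemma_Phi_is_R}, but the substance is identical.
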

\begin{proof} It is clear that  $\xi$ is a bijective mapping. Let $\hat{\alpha}, \hat{\beta}\in \hat{R}$. Then for all $x\in\omega(R)$ and if $|\Lambda^x|=1$ according to Lemma~\ref{lemma_hat_alpha} we have
$$(x\eta)\widehat{\alpha\beta}=x(\alpha\beta)\eta=(x\alpha)\beta\eta=(x\alpha)\eta\hat{\beta}=x\hat{\alpha}\hat{\beta}.$$

Let $|\Lambda^x|>1$, $A\in \Lambda^x$. Denote by $a'$ an element of $A\pi^{(x,x\eta)}$. Then
\begin{gather*}a'\widehat{\alpha\beta}=A\pi^{(x,x\eta)}(\alpha\beta)\eta=
A(\alpha\beta)\pi^{[x(\alpha\beta),x(\alpha\beta)\eta]}=
(A\alpha)\beta\pi^{[(x\alpha)\beta,(x\alpha)\beta\eta]}=\\
=(A\alpha)\pi^{[x\alpha,x\alpha\eta]}\hat{\beta}=
(A\alpha\pi^{(x\alpha,x\alpha\eta)})\hat{\beta}=
A\pi^{(x,x\eta)}\hat{\alpha}\hat{\beta}=a'\hat{\alpha}\hat{\beta}.
\end{gather*}

 Therefore, $\xi$ is isomorphism as required.
\end{proof}

Lemmas~\ref{lemma_class_neobhodimost'} and~\ref{lemma_suff_isom} now give the classification of $\R$-cross-sections of $\On_n$ up to an isomorphism:
\begin{thm}\label{th_classification} The $\R$-cross-sections  $R, \hat{R}$ of $\On_n$ are isomorphic if and only if  there exists an isomorphism (anti-isomorphism) $\eta$ of the trees $\omega(R)$ and $\omega(\hat{R})$; and  for all $a_i\in \omega(R)$ with $a_i>\rt_1$ ($a_i<\rt_1$ respectively) and the inner trees $\Gamma^{a_i}$, $\Gamma^{a_i^\sharp}$ of $T^{a_i}$, $T^{a_i^\sharp}$ respectively, one of the following statements hold true:
\begin{itemize}
\item[(a)] $\Gamma^{a_i}\stackrel{\psi_i}{\sim}\Gamma^{a_i^\sharp}$, $\psi_i$ is an isomorphism.
\item[(b)] $\Gamma^{a_i}\stackrel{\psi_i}{\sim}\Gamma^{a_i^\sharp}$, $\psi_i$ is an anti-isomorphism.
\end{itemize}
\end{thm}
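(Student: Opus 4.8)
The plan is to obtain Theorem~\ref{th_classification} by assembling the two halves that are already in place: the necessity direction is Lemma~\ref{lemma_class_neobhodimost'} (fed by Lemma~\ref{lemma seq_to_seq}, Corollary~\ref{corol_sharp} and Corollary~\ref{corol_tree_to_tree}), and the sufficiency direction is Lemma~\ref{lemma_suff_isom}. Almost all of the work has therefore been done; the remaining task is to glue the pieces together and to reconcile the two formulations — in particular to check that the map $\eta$ appearing in Lemma~\ref{lemma_suff_isom} may be taken to be the skeleton restriction of the level-preserving bijection $\sharp$ that an isomorphism induces.

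For the \emph{necessity}, suppose $R\stackrel{\xi}{\cong}\hat{R}$. First I would invoke Lemma~\ref{lemma_Theta_x_phi=Theta_y} to obtain the level-preserving bijection $\sharp\colon(\overline{n},\prec_1)\to(\overline{n},\prec_2)$, and Corollary~\ref{corol_sharp}(2) to see that $\sharp$ preserves $\prec$. By Lemma~\ref{lemma seq_to_seq}, $\omega(R)^\sharp=\omega(\hat{R})$ and for every $a_i\in\omega(R)$ either $a_i^\sharp=a_i$ or $a_i^\sharp=n+1-a_i$; together with the length equalities $|\p(a_i)-a_i|=|\p(a_i)^\sharp-a_i^\sharp|$ from Lemma~\ref{lemma_kardinality_Theta}(2) this makes $\eta:=\sharp|_{\omega(R)}$ a genuine isomorphism of the tree $\omega(R)$ onto $\omega(\hat{R})$ when $\rt_1^\sharp=\rt_1$, and an anti-isomorphism when $\rt_1^\sharp=n+1-\rt_1$ (the non-root vertices of a one-sided skeleton all have the same gender, so a reflecting bijection reverses them). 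Using Corollary~\ref{corol_tree_to_tree} and the mirror-reflection automorphism of $\On_n$, I would normalize to a one-sided diagram, say $\rt_1=1$, $\omega(R)=\omega(n)$, which is the setting in which Lemma~\ref{lemma_class_neobhodimost'} was established. Applying that lemma then yields, for each $a_i\in\omega(R)$, a similarity $\Gamma^{a_i}\stackrel{\psi_i}{\sim}\Gamma^{a_i^\sharp}$, and — as its ``Step'' argument actually shows — the $\psi_i$ are uniformly isomorphisms or uniformly anti-isomorphisms, which is exactly the alternative (a)/(b) in the theorem.

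For the \emph{sufficiency}, the hypothesis of the theorem is precisely the input of Lemma~\ref{lemma_suff_isom}: an isomorphism or anti-isomorphism $\eta\colon\omega(R)\to\omega(\hat{R})$ together with similarities $\Gamma^{a_i}\stackrel{\psi_i}{\sim}\Gamma^{a_i\eta}$ that are simultaneously all isomorphisms or all anti-isomorphisms. That lemma produces the map $\xi\colon\varphi\mapsto\hat{\varphi}$, where $\hat{\varphi}$ is built by restricting $\varphi$ to each elementary block $T^{a_i}$, transporting the induced partition through $\pi^{(a_i,a_i\eta)}$, and regluing along the skeleton; Lemmas~\ref{lem_property_pi}, \ref{lemma_sv-vo_phihat} and~\ref{lemma_hat_alpha} guarantee that $\hat\varphi$ is a well-defined order-preserving transformation and that $\xi$ is multiplicative, so $\xi$ is the desired isomorphism. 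Both implications then give the stated equivalence.

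The part that I expect to need the most care is the bookkeeping at the seam between the two lemmas rather than any new mathematics: one must verify that the tree $\omega(R)$ together with its canonical-bound data is faithfully what $\sharp$ records (so that $\eta$ really is a tree (anti-)isomorphism and not merely a level-preserving bijection), and that the ``choose one of (a), (b)'' produced by Lemma~\ref{lemma_class_neobhodimost'} is in fact the uniform alternative required as input to Lemma~\ref{lemma_suff_isom}. The genuinely technical obstacles — well-definedness of the cut-transport-glue construction and multiplicativity of $\xi$ — are already discharged inside Lemma~\ref{lemma_suff_isom}, so in the proof of the theorem they need only be cited.
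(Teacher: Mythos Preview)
Your proposal is correct and follows essentially the same approach as the paper: the theorem is obtained by combining Lemma~\ref{lemma_class_neobhodimost'} for necessity with Lemma~\ref{lemma_suff_isom} for sufficiency, and the paper's own proof is in fact just the one-line citation of these two lemmas. Your write-up is actually more careful than the paper's, since you spell out the bookkeeping (that $\eta=\sharp|_{\omega(R)}$ really is a tree (anti-)isomorphism via Lemma~\ref{lemma seq_to_seq} and Lemma~\ref{lemma_kardinality_Theta}(2), and that the uniformity of the $\psi_i$ needed for Lemma~\ref{lemma_suff_isom} is what the ``Step'' in Lemma~\ref{lemma_class_neobhodimost'} establishes) which the paper leaves implicit.
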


\end{document}